\definecolor{darkblue}{rgb}{0.2,0.2,0.6}
\definecolor{darkblue2}{rgb}{0.2,0.2,0.9}
\definecolor{superdarkblue}{rgb}{0.2,0.2,0.3}
\definecolor{citegreen}{rgb}{0.2,0.2,0.6}
\definecolor{citegreen}{rgb}{0.2,0.2,0.6}
\definecolor{darkblue}{rgb}{0.1,0.1,0.6}
\definecolor{md}{rgb}{0.7,0.3,0.}
\newcommand\Omg{\Omega}
\newcommand\Sg{\Sigma}
\newcommand\rank{{\rm rank}\,}
\renewcommand\gg{\gamma}
\newcommand\Op{\sfA_{\aa}}
\newcommand\Opf{\sfA_0}
\newcommand\OpD{\sfA_{\rm D}^\Omg}
\newcommand\bA{\mathbf{A}}
\newcommand{\map}{\mathsf{j}}
\newcommand\nb{\nabla}
\newcommand\Dl{\Delta}
\newcommand\nbA{\nb_\bA}
\renewcommand\and{\qquad\text{and}\qquad}
\newcommand\sm{\setminus}
\newcommand\fra{\mathfrak{a}}
\newcommand\frh{\mathfrak{h}}
\newcommand\frm{\fra_\aa}
\newcommand\dl{\delta}
\newcommand{\comm}[1]{}
\newcommand\G{\Gamma}
\renewcommand\aa{\alpha}
\newcommand\lm{\lambda}
\newcommand\Lm{\Lambda}
\newcommand\s{\sigma}
\newcommand\p{\partial}
\newcommand\omg{\omega}
\newcommand\kp{\kappa}
\newcommand\ii{{\mathsf{i}}}
\newcommand\arr{\rightarrow}
\renewcommand\sp{\sigma_{\rm p}}
\newcommand\sd{\sigma_{\rm disc}}
\newcommand\sess{\sigma_{\rm ess}}
\newcommand\dd{{\mathsf{d}}}
\newcommand\uhr{\upharpoonright}
\newcounter{counter_a}
\newenvironment{myenum}{\begin{list}{{\rm(\roman{counter_a})}}%
{\usecounter{counter_a}
\setlength{\itemsep}{1.ex}\setlength{\topsep}{0.8ex}
\setlength{\leftmargin}{5ex}\setlength{\labelwidth}{5ex}}}{\end{list}}
\numberwithin{figure}{section}
\numberwithin{equation}{section}
\theoremstyle{plain}
\newtheorem*{thm*}{Theorem}
\newtheorem{thm}{Theorem}[section]
\newtheorem{hyp}[thm]{Hypothesis}
\newtheorem{lem}[thm]{Lemma}
\newtheorem{prop}[thm]{Proposition}
\newtheorem{cor}[thm]{Corollary}
\newtheorem{dfn}[thm]{Definition}
\theoremstyle{remark}
\newtheorem{remark}[thm]{Remark}
\theoremstyle{plain}
\newcommand\ov{\overline}
\newcommand\wh{\widehat}
\def\ov{\overline}
\newcommand\ran{{\rm ran\,}}
   \def\sB{{\mathfrak B}}
\def\sS{{\mathfrak S}}
\def\frb{{\mathfrak b}}
\def\frt{{\mathfrak t}}
      \def\dC{{\mathbb C}}
   \def\dN{{\mathbb N}}   
      \def\dR{{\mathbb R}}
\def\sfA{{\mathsf A}}      
\def\sfD{{\mathsf D}}      
   \def\sfH{{\mathsf H}}
      \def\sfR{{\mathsf R}}
   \def\cB{{\mathcal B}}   
\def\cD{{\mathcal D}}      \def\cF{{\mathcal F}}
\def\cG{{\mathcal G}}   \def\cH{{\mathcal H}}   \def\cI{{\mathcal I}}
\def\cJ{{\mathcal J}}   \def\cK{{\mathcal K}}   \def\cL{{\mathcal L}}
   \def\cN{{\mathcal N}}   \def\cO{{\mathcal O}}
\def\cP{{\mathcal P}}      
\def\cS{{\mathcal S}}
\newcommand{\dom}{\mathrm{dom}\,}
\def\section{\@startsection{section}{1}\z@{.9\linespacing\@plus\linespacing}%
	{.7\linespacing} {\fontsize{13}{14}\selectfont\bfseries\centering}}
\def\paragraph{\@startsection{paragraph}{4}%
	\z@{0.3em}{-.5em}%
	{$\bullet$ \ \normalfont\itshape}}
\newcommand\supp{{\rm supp}\,}
\newcommand\bb{\beta}
\newcommand\eps{\varepsilon}
\newcommand\footnoteref[1]{\protected@xdef\@thefnmark{\ref{#1}}\@footnotemark}
\title[Landau Hamiltonian with $\delta$-potentials]{\textsc{The Landau Hamiltonian with \boldmath{$\delta$}-potentials supported on curves}}
\author[J. Behrndt]{Jussi Behrndt}
\address{Institut f\"{u}r Angewandte Mathematik\\
Technische Universit\"{a}t Graz\\
 Steyrergasse 30, 8010 Graz, Austria\\
E-mail: {behrndt@tugraz.at}}
\author[P. Exner]{Pavel Exner}
\address{Doppler Institute for Mathematical Physics and Applied Mathematics\\ 
Czech Technical University in Prague\\ B\v{r}ehov\'{a} 7, 11519 Prague, Czech Republic,
{\rm and}
Department of Theoretical Physics\\
Nuclear Physics Institute, Czech Academy of Sciences, 
25068 \v{R}e\v{z}, Czech Republic\\
E-mail: {exner@ujf.cas.cz}
}
\author[M. Holzmann]{Markus Holzmann}
\address{Institut f\"{u}r Angewandte Mathematik\\
Technische Universit\"{a}t Graz\\
 Steyrergasse 30, 8010 Graz, Austria\\
E-mail: {holzmann@math.tugraz.at}}
\author[V. Lotoreichik]{Vladimir Lotoreichik}
\address{
Department of Theoretical Physics\\
Nuclear Physics Institute, Czech Academy of Sciences, 
25068 \v{R}e\v{z}, Czech Republic\\
E-mail: {lotoreichik@ujf.cas.cz }
}
\begin{document}
\begin{abstract}
The spectral properties of the singularly perturbed self-adjoint Landau Hamiltonian
$\Op =(\ii\nb + \bA)^2 + \aa\dl_\Sg$ in $L^2(\dR^2)$ with a $\dl$-potential supported on a finite $C^{1,1}$-smooth curve $\Sigma$ are studied.
Here $\bA = \frac{1}{2} B (-x_2, x_1)^\top$ is the vector potential, $B>0$ is the strength of the homogeneous magnetic
field, and $\alpha\in L^\infty(\Sigma)$
is a position-dependent real coefficient modeling the strength of the singular interaction on the curve $\Sigma$.
After a general discussion of the qualitative spectral properties of $\Op$ and its resolvent, one of the main objectives in the present paper
is a local spectral analysis of $\Op$
near the Landau levels $B(2q+1)$, $q\in\dN_0$.
Under various conditions on $\aa$ it is shown that the perturbation smears the Landau levels
into eigenvalue clusters, and the accumulation rate of the eigenvalues within these clusters is determined in terms of the capacity of the support of $\aa$.
Furthermore, the use of Landau Hamiltonians with $\delta$-perturbations as model operators for more realistic quantum systems is justified
by showing that $\Op$ can be approximated in the norm resolvent sense by a family of
Landau Hamiltonians with suitably scaled regular potentials.
\end{abstract}

\maketitle

\section{Introduction} \label{section_introduction}

Quantum motion in a geometrically complicated background is often modeled by networks of
\emph{leaky quantum wires}, which are mathematically
described by Schr\"odin\-ger operators
with singular potentials supported on families of curves, see, e.g., the monograph \cite[Chapter~10]{EK15}, the papers
\cite{BLL13, BEKS94, E08, MPS16,stollmann_voigt1996}, and the references therein. 
Such models based on PDEs are mathematically more involved than the alternative concept of {\it quantum graphs} \cite{BK13} based on ODEs,
but have serious advantages from
the physical point of view since they do not neglect quantum tunnelling between parts of the network. Although there is nowadays a comprehensive
literature on spectral and scattering properties of Schr\"odinger operators
with singular potentials, only few mathematical contributions are concerned with the influence of
magnetic fields (see \cite{EK18,ELP17,ET04,EY02,HH04,O06}), despite the fact that applications of such fields, local or global, are an important area in modern physics.
Magnetic Schr\"odinger operators with surface interactions appear, e.g., in the analysis of the non-linear Ginzburg-Landau equation, cf. \cite{FH,Ra}.

The present paper can be regarded as a first step towards a general treatment of Landau Hamiltonians with singular potentials supported on curves.
Throughout this paper let
the strength $B>0$ of the homogeneous magnetic field be fixed, let the corresponding vector potential in the symmetric gauge
be $\bA := \frac{1}{2} B (-x_2, x_1)^\top$, and define the magnetic gradient by
\begin{equation}\label{eq:mag_grad}
    \nbA := \ii \nb + \bA.
\end{equation}
Our main goal is to construct a class of singular perturbations of the Landau Hamiltonian $\Opf  = \nbA^2$
by $\delta$-potentials
supported on finite curves. We study the spectral properties of these singularly perturbed Landau Hamiltonians in detail and we justify their use
as model operators for more realistic quantum systems by showing that they can be approximated in the norm resolvent sense by a family of
Landau Hamiltonians with suitably scaled regular potentials. In order to explain our strategy and results more precisely,
assume that $\Sg$ is the boundary of a compact $C^{1,1}$-domain, let $\aa\in L^\infty(\Sg)$ be a real function, consider the sesquilinear form
\begin{equation}\label{eq:form}
  \frm[f,g]
   =
  \big(\nbA f, \nbA g)_{L^2(\dR^2, \dC^2)}
  +
  \big(\aa f|_\Sg, g|_\Sg\big)_{L^2(\Sg)},\quad
  \dom\frm  = \cH_\bA^1(\dR^2),
\end{equation}
where $\cH_\bA^1(\dR^2)=\{f\in L^2(\dR^2): |\nbA f|\in L^2(\dR^2)\}$
is the magnetic Sobolev space,
and denote the corresponding self-adjoint operator in $L^2(\dR^2)$ by $\Op$. If $\dl_\Sg$ denotes the $\dl$-distribution supported on the
curve $\Sg$ then on a formal level
\begin{equation}\label{ai}
    \Op = \nbA^2 +\aa\dl_\Sg = \sfA_0 + \aa\dl_\Sg.
\end{equation}

Our approach to the spectral analysis of the Landau Hamiltonians with singular potentials is via abstract techniques from extension theory of symmetric operators.
Here we shall use the notion of quasi boundary triples and their Weyl functions from \cite{BL07,BL12} to first determine the operator $\Op$ associated to
$\frm$ and its domain via explicit interface conditions at $\Sg$. As a byproduct we obtain a Birman-Schwinger principle and the useful resolvent formula
\begin{equation}\label{ki}
    (\Op -\lm)^{-1}
    =
    (\sfA_0-\lm)^{-1} -
    \gg(\lm)\big(1+\aa M(\lm)\big)^{-1}\aa\gg(\ov\lm)^*,
\end{equation}
where $\gamma$ and $M$ are the $\gamma$-field and Weyl function, respectively, 
corresponding to a suitable quasi boundary triple $\{L^2(\Sigma),\Gamma_0,\Gamma_1\}$.
We refer the reader to Appendix~\ref{app:A} for a brief introduction to quasi boundary triples and Weyl functions, and here we only mention that 
$\gamma(\lambda)\colon L^2(\Sigma)\arr L^2(\dR^2)$
and $M(\lambda)\colon L^2(\Sigma)\rightarrow L^2(\Sigma)$ in \eqref{ki} can also be viewed as (boundary) integral operators with the Green function of $\Opf$ as 
integral kernel. 
The formula~\eqref{ki} can be seen as an interpretation of the formal equality~\eqref{ai}: the resolvent difference is essentially
reduced to the term $(1+\aa M(\lm))^{-1}\aa$, which is localized on the curve $\Sg$ and contains the main information on the spectrum of
$\Op$. In fact, our further investigations are based on a detailed analysis of the perturbation term
\begin{equation}\label{eq:W}
    W_\lm =
    -
    \gg(\lm)\big(1+\aa M(\lm)\big)^{-1}\aa\gg(\ov\lm)^*
\end{equation}
in the resolvent formula~\eqref{ki}. Since $\Sigma$ is a compact curve,
the Rellich-Kondrachov embedding theorem implies that $W_\lm$ is compact in $L^2(\dR^2)$ and as an immediate consequence we conclude
\[
    \sess(\Op) =\sess(\Opf)= \s(\Opf) =
    \{B(2q+1) \colon q\in\dN_0\},
\]
where $\Lambda_q=B(2q+1)$, $q=0,1,2,\dots$ are infinite dimensional eigenvalues of $\Opf$, usually called Landau levels.
It is well known that perturbations of the Landau Hamiltonian $\Opf$
can generate accumulation of discrete eigenvalues to the Landau levels $\Lambda_q$.
For additive perturbations of $\Opf$ by an electric potential this was shown by G. Raikov in~\cite{R90},
see also~\cite{FP06, KR09,MR03,PRV13,RW02,RS08,RT08}. More recently similar results were proved
in~\cite{BMR14,BM18,GKP16,P09,RP07} 
for Landau Hamiltonians on domains with Dirichlet, Neumann, and Robin boundary conditions;
for closely related results in three-dimensional situation we refer to~\cite{BBR14,BS16} and the references therein.

Our first main objective is to observe a similar phenomenon on the accumulation of discrete eigenvalues of $\Op$ to the Landau levels $\Lambda_q$,
and to prove singular value estimates and regularized summability properties of the discrete eigenvalues.
For this reason we are particularly interested in the compression
$P_q W_\lm P_q$ of the perturbation term onto the eigenspace $\ker(\Opf-\Lm_q)$ of the unperturbed Landau Hamiltonian.
The operators $P_qW_\lm P_q$ are the analogues of the Toeplitz operators
appearing in this connection in~\cite{FP06, RP07, PR11, RT08}, and we note in this context that some of our observations rely
on deep results in the theory of Toeplitz operators, and conversely that our approach and some of our considerations lead to new results for Toeplitz operators.

If the strength $\aa$ in \eqref{eq:form}--\eqref{ai} is positive (negative) on $\Sg$  we show in Theorem~\ref{thm:acc} that
the discrete spectrum of $\Op$ accumulates to each Landau level $\Lm_q$ from above (below, respectively). Combining our technique
with the constructions in~\cite{FP06, RP07, RS10},
we obtain in Theorem~\ref{thm:acc2} the same result for the lowest Landau level $\Lm_0=B$ under the weaker assumption that $\alpha\not\equiv 0$ is nonnegative (nonpositive),
and in Proposition~\ref{proposition_lower_bound} for the higher Landau levels assuming that $\supp\aa$ contains a 
$C^\infty$-smooth arc on which $\aa$ is positive (negative, respectively).
Relying on the analysis of $P_qW_\lm P_q$, we also estimate the rate of the eigenvalue accumulation in Theorem~\ref{thm:C2est}.
Although the upper bounds on the accumulation rate
of the discrete eigenvalues hold also for sign-changing $\aa$ it is a challenging open problem
to show that the eigenvalue accumulation is indeed present in this situation. 
Furthermore, making use of the technique from~\cite{FP06, RP07} we prove in Theorem~\ref{thm:asymp} spectral asymptotics 
if $\supp\aa$ is a 
$C^\infty$-smooth arc $\Gamma$ and $\alpha$ is uniformly positive (uniformly negative) in the interior of $\Gamma$. 
More precisely, if, e.g., $\alpha>0$ inside the
$C^\infty$-smooth arc $\Gamma=\supp\aa$ then  
the discrete
eigenvalues (counted with multiplicities) of $\Op$
in the interval $(\Lm_q, \Lm_q + B]$, $q\in\dN_0$, form a sequence $\lm_1^+(q) \ge \lm_2^+(q) \ge \dots \ge \Lm_q$
with the asymptotic behaviour
\begin{equation}\label{asyint}
\lim_{k\arr\infty}\big(k!\,(\lm_k^+(q) - \Lm_q)\big)^{1/k} = \frac{B}{2}\big({\rm Cap}\,(\G)\big)^2,
\end{equation}
where ${\rm Cap}\,(\G)$ is the \emph{logarithmic capacity}
of $\G$. We also mention that the eigenvalue asymptotics in \eqref{asyint} comply with \cite[Remark 2 and Theorem 2]{FP06}.

Besides the spectral analysis of the operators $\Op$ in \eqref{ai} our second main objective in this paper is to justify the use of such singular perturbations of the Landau
Hamiltonian for more realistic model operators with regular potentials. The approximation problem of singular potentials by regular ones has been discussed in the absence
of magnetic fields for $\delta$-point interactions in great detail in the monograph \cite{AGHH05}, and for $\delta$-surface interactions in \cite{BEHL17,EI01, EK03} and
\cite{BO07, G15, O06, P95, S92}, see also \cite{AK99, stollmann_voigt1996} for more abstract approaches.
We show in Theorem~\ref{theorem_approximation} and Corollary~\ref{corollary_approximation}
that for real $\alpha\in L^\infty(\Sigma)$ the singular Landau Hamiltonian $\Op$
can be approximated in the norm resolvent sense by a family of regular Landau Hamiltonians with potentials suitably scaled in the direction perpendicular to $\Sigma$.
The choice of the approximating sequence of potentials
is essentially the same as, e.g., in \cite{BEHL17,EI01, EK03}, but the technique of the proof is significantly different and more efficient.

\subsection*{Organization of the paper}
Section~\ref{sec:prelim} contains some preliminary material concerning the
       unperturbed Landau Hamiltonian, properties of Schatten-von
       Neumann ideals and some aspects of perturbation theory. In
       Subsection~\ref{section_Toeplitz} we discuss a class of Toeplitz-like operators
       related to Landau Hamiltonians.
In Section~\ref{sec:qbt} we make use of the abstract concept of quasi boundary triples and their Weyl functions (see Appendix~\ref{app:A} for a brief introduction)
in order to study  Landau Hamiltonians with $\dl$-potentials supported on curves.
Using a suitable quasi boundary triple we show
self-adjointness of $\Op$, provide qualitative spectral properties, and derive the Krein-type
resolvent formula~\eqref{ki}. The approximation of $\Op$
by magnetic Schr\"odinger operators with scaled
regular potentials is also discussed; the proof is technical and therefore outsourced to Appendix~\ref{appendix_approximation}.
Section~\ref{sec:asymptotics} is devoted to the spectral
analysis of the compressed resolvent difference $P_qW_\lm P_q$. Under various assumptions we obtain spectral estimates
and spectral asymptotics for this operator.
Based on these results
we provide our main results on the eigenvalue clusters of $\Op$
at Landau levels in Section~\ref{sec:clusters}.

\subsection*{Acknowledgement}
The authors gratefully acknowledge financial support under the Czech-Austrian grant 7AMBL7ATO22 and CZ 02/2017. 
The research of P.$\,$E. and V.$\,$L.
is
supported by the Czech Science Foundation (GA\v{C}R) under Grant No. 17-01706S. P.$\,$E. also
acknowledges the support by the European Union within the project CZ.02.1.01/0.0/0.0/16 019/0000778. 
The authors also wish to thank V.~Bru\-neau, B.~Helffer, A.~Pushnitski, and G.~Raikov for fruitful discussions and helpful remarks and references.

\section{Preliminaries}\label{sec:prelim}

In this section we provide  useful notions and techniques that are needed in our analysis of magnetic Schr\"odinger operators with singular
interactions.
In Subsection~\ref{ssec:basics} we introduce the Landau Hamiltonian, in Subsection~\ref{section_Schatten-von_Neumann} some important properties of the
Schatten-von Neumann ideals of compact operators are discussed, and
in Subsections \ref{compis} and~\ref{section_Toeplitz} we collect some useful
facts from perturbation theory and
Toeplitz operators that will be needed in the main part of the paper.

\subsection{The Landau Hamiltonian}\label{ssec:basics}
In order to introduce the Landau Hamiltonian, that is, the unperturbed magnetic Schr\"o\-dinger operator with 
homogeneous magnetic field,
recall the definition of the magnetic gradient from~\eqref{eq:mag_grad} and define the first order $L^2$-based magnetic Sobolev space by
\begin{equation}\label{def_magnetic_Sobolev_space}
  \cH_\bA^1(\dR^2)
  := 
  \big\{ f \in L^2(\dR^2) \colon 
  |\nbA f| \in L^2(\dR^2) \big\},
\end{equation}
which becomes a Hilbert space if it is endowed with the inner product 
\begin{equation*}
  (f, g)_{\cH^1_\bA(\dR^2)} 
  := 
  (f, g)_{L^2(\dR^2)} 
  + 
  \big(\nbA f,\nbA g\big)_{L^2(\dR^2;\dC^2)},
  \quad f, g \in \cH^1_\bA(\dR^2).
\end{equation*} 
The space $C_0^\infty(\dR^2)$ of smooth compactly supported functions is dense in $\cH^1_\bA(\dR^2)$, see, e.g., \cite[Theorem~7.22]{LL01}. 
Note that for $B = 0$ the space $\cH_\bA^1(\dR^2)$ coincides with the usual first order Sobolev space $H^1(\dR^2)$; if $B\not= 0$ then still 
$\cH_\bA^1(\dR^2)$
and $H^1(\dR^2)$ coincide locally.
The standard Sobolev spaces of order $s \in \mathbb{R}$ will be denoted in this paper by $H^s(\dR^2)$.

Next consider the symmetric sesquilinear form 
\begin{equation} \label{def_form_free_Op}
  \fra_0[f,g] 
  := 
  \big( \nbA f, \nbA g\big)_{L^2(\dR^2; \dC^2)},
  \qquad \dom \fra_0 = \cH^1_\bA(\dR^2),
\end{equation}
and note that this form is densely defined, nonnegative, and closed in $L^2(\dR^2)$. Hence it gives rise to a uniquely determined nonnegative self-adjoint operator 
$\Opf$, which is given by
\begin{equation} \label{def_free_Op}
  \Opf f  = \nbA^2 f,\qquad
  \dom \Opf  = \cH^2_\bA(\dR^2) := 
  \bigl\{ f \in \cH^1_\bA(\dR^2)\colon 
      \nbA^2 f \in L^2(\dR^2) \bigr\}.
\end{equation}
Note also that $C^\infty_0(\dR^2)$
is a core for the sesquilinear form $\fra_0$ since $C^\infty_0(\dR^2)$ is dense in $\cH^1_\bA(\dR^2)$.
The spectral properties 
and the Green function of the Landau Hamiltonian are recalled in the following proposition; 
cf. \cite[\S 10.4.1]{He},~\cite[\S 2.5.2]{HS02},~\cite[Section 2]{O06}, and~\cite{DMM75}.

\begin{prop} \label{proposition_Landau_level}
	Let $\Opf$ be the Landau Hamiltonian in~\eqref{def_free_Op}. Then 
	\begin{equation*}
		\s(\Opf) = \sess(\Opf) = 
		\{ B (2 q + 1)\colon q \in \dN_0 \},
	\end{equation*}
	i.e. the spectrum of $\Opf$ consists only of the eigenvalues $\Lm_q = B (2 q + 1)$, which are called Landau levels 
	and have infinite multiplicity.
	If $\lm \notin \s(\Opf)$, then the resolvent of $\Opf$ is given by
	\begin{equation*}
		((\Opf - \lm)^{-1} f)(x) 
		= 
		\int_{\dR^2} G_\lm(x, y) f(y) \dd y,
		\qquad f \in L^2(\dR^2),
	\end{equation*}
	with the Green function 
	\begin{equation} \label{def_G_lambda}
		G_\lambda(x,y) 
		= 
		\frac{1}{4 \pi} \Phi_B(x,y) \Gamma \left( \frac{B - \lambda}{2 B} \right) 
		U \left( \frac{B - \lambda}{2 B}, 1; \frac{B}{2} |x-y|^2 \right),
	\end{equation}
	where $U$ is the irregular confluent hypergeometric function (see \cite[$\S 13.1$]{AS64}), 
	$\Gamma$ denotes the Euler gamma function and 
	\begin{equation*}
		\Phi_B(x,y) 
		= 
		\exp\left[ 
			-\frac{\ii B}{2} (x_1 y_2 - x_2 y_1) 
			-\frac{B}{4} |x-y|^2 
		\right].
	\end{equation*}
\end{prop}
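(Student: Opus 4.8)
The plan is to establish Proposition~\ref{proposition_Landau_level} in two independent parts: the spectral statement and the explicit Green function formula. For the spectrum, the cleanest route is to exploit the well-known algebraic structure of the Landau Hamiltonian. First I would introduce the magnetic annihilation and creation operators $a = \tfrac{1}{\sqrt{2B}}\big((\nbA)_1 - \ii (\nbA)_2\big)$ and $a^* = \tfrac{1}{\sqrt{2B}}\big((\nbA)_1 + \ii (\nbA)_2\big)$ (on $C_0^\infty(\dR^2)$ or a suitable core), and verify by direct computation the canonical commutation relation $[a, a^*] = 1$, using that the components of $\nbA$ satisfy $[(\nbA)_1, (\nbA)_2] = -\ii B$ (this is where $B>0$ enters). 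Then one checks that $\Opf = B(2 a^* a + 1)$ on the core, so that $\Opf$ is unitarily equivalent, via the usual Fock-space argument, to a countable orthogonal sum of copies of the number operator $2 a^* a + 1$ scaled by $B$. This immediately yields $\s(\Opf) = \{B(2q+1) : q \in \dN_0\}$, with each $\Lambda_q$ an eigenvalue; infinite multiplicity follows because the lowest Landau level $\ker a$ is infinite-dimensional — e.g. (in the symmetric gauge) it contains all functions of the form $p(x_1 + \ii x_2)\exp(-\tfrac{B}{4}|x|^2)$ with $p$ an entire function, in particular all polynomials — and the other levels are obtained by applying powers of $a^*$. Since all spectrum is eigenvalue spectrum of infinite multiplicity, $\sess(\Opf) = \s(\Opf)$. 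Alternatively, one can simply cite \cite{He, HS02, O06, DMM75} for these standard facts, which the excerpt already does.

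For the resolvent formula, the strategy is to compute the integral kernel of $(\Opf - \lambda)^{-1}$ explicitly using the heat kernel of $\Opf$. The heat semigroup $e^{-t\Opf}$ has the classical Mehler-type kernel
\[
  K_t(x,y) = \frac{B}{4\pi \sinh(Bt)} \exp\!\left[ -\frac{\ii B}{2}(x_1 y_2 - x_2 y_1) - \frac{B}{4}\coth(Bt)\,|x-y|^2 \right],
\]
which one can either quote from the literature or derive from the algebraic decomposition above together with Mehler's formula for the Hermite/harmonic-oscillator semigroup. Then, for $\lambda$ with $\mathrm{Re}\,\lambda < B$ (below the bottom of the spectrum), one uses the Laplace-transform representation $(\Opf - \lambda)^{-1} = \int_0^\infty e^{\lambda t} e^{-t\Opf}\, dt$, so that the Green function is $G_\lambda(x,y) = \int_0^\infty e^{\lambda t} K_t(x,y)\, dt$. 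Substituting $s = e^{-2Bt}$ (or a comparable change of variables) turns this integral into a standard Euler-type integral representation of the confluent hypergeometric function $U$; comparing with \cite[\S 13.1]{AS64} gives precisely \eqref{def_G_lambda} with the stated prefactor $\Phi_B$ and the gamma factor $\Gamma\big(\tfrac{B-\lambda}{2B}\big)$. Finally, I would extend the identity from $\mathrm{Re}\,\lambda < B$ to all $\lambda \notin \s(\Opf)$ by analytic continuation: both sides are analytic in $\lambda$ on the resolvent set (the left side as an operator-valued analytic function, the right side because $U\big(\tfrac{B-\lambda}{2B}, 1; z\big)$ is entire in $\lambda$ away from the poles of $\Gamma$, which are exactly the Landau levels), so they agree wherever both are defined.

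The main obstacle is the Green-function computation — specifically, carrying out the Laplace transform of the Mehler kernel and recognizing the result as $\Gamma\big(\tfrac{B-\lambda}{2B}\big)\, U\big(\tfrac{B-\lambda}{2B},1; \tfrac{B}{2}|x-y|^2\big)$. This requires the correct change of variables and a careful match against one of the integral representations of $U$ (the convergence of the $t$-integral near $t=0$ hinges on $\coth(Bt) \to \infty$, which provides the Gaussian decay in $|x-y|$, and near $t=\infty$ on $\mathrm{Re}\,\lambda < B$). The diagonal behaviour as $x \to y$ and the logarithmic singularity of $U(\cdot,1;\cdot)$ at the origin should also be noted, since they reflect the expected $2$D logarithmic singularity of the Green function. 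The spectral part, by contrast, is routine once the ladder-operator formalism is in place, and in a paper of this type it is entirely reasonable to present the proof as a short verification supplemented by references to \cite{He, HS02, O06, DMM75, AS64}.
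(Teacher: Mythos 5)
Your sketch is correct in substance, but note that the paper itself supplies no proof of this proposition at all: it simply recalls these facts and cites \cite{He,HS02,O06,DMM75}, a possibility you already anticipate at the end of your first paragraph. So there is no ``paper argument'' to compare against; the comparison is between your genuine derivation and the paper's decision to treat the statement as standard background. As for the derivation itself, the ladder-operator computation of the spectrum and the Laplace transform of the Mehler heat kernel matched against an Euler-type integral representation of $U$ are indeed the standard routes, and your outline works: the substitution $u=e^{-2Bt}/(1-e^{-2Bt})$ turns the $t$-integral into $\frac{1}{B}\int_0^\infty e^{-zu}u^{a-1}(1+u)^{-a}\,du=\frac{1}{B}\,\Gamma(a)\,U(a,1;z)$ with $a=(B-\lambda)/(2B)$ and $z=\tfrac{B}{2}|x-y|^2$, which reproduces \eqref{def_G_lambda} exactly once the $\coth$ is split as $1+(\coth-1)$ to peel off the $\Phi_B$ prefactor. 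Two small remarks are in order. First, with the paper's conventions $\nbA=\ii\nb+\bA$ and $\bA=\tfrac{B}{2}(-x_2,x_1)^\top$ one finds $[(\nbA)_1,(\nbA)_2]=+\ii B$, not $-\ii B$; with your definitions of $a,a^*$ this forces $[a,a^*]=-1$, so the roles of $a$ and $a^*$ should be swapped (equivalently, the analytic-times-Gaussian functions lie in $\ker a^*$ under your conventions). None of the conclusions change. Second, to conclude that $\s(\Opf)$ contains \emph{nothing besides} the Landau levels, the algebraic relations $[a,a^*]=1$, $\Opf=B(2a^*a+1)$ alone are not quite enough: one must know that the Fock ladder built over $\ker a$ exhausts $L^2(\dR^2)$, and the standard device is to introduce a second, commuting pair of ladder operators (the guiding-center operators) and thereby decompose $L^2(\dR^2)$ as a countable orthogonal sum of one-dimensional oscillator copies. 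You allude to this with ``the usual Fock-space argument,'' but it is the one step that would genuinely need to be spelled out in a self-contained write-up.
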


In the next proposition two variants of the so-called diamagnetic inequality are provided.
\begin{prop} \label{proposition_diamagnetic_inequality}
	Let $-\Delta$ be the self-adjoint Laplace operator in $L^2(\dR^2)$ defined on $H^2(\dR^2)$. 
	Then 
	for $\bb > 0$, 
	$\lm < 0$, and $f \in L^2(\dR^2)$ one has pointwise a.e. in $\dR^2$
	\begin{equation} \label{diamagnetic_inequality_resolvent}
		\bigl| (\Opf - \lm)^{-\bb} f \bigr| 
		\le
		(-\Dl - \lm)^{-\bb} |f|.
	\end{equation}
	Moreover, if $f \in \cH^1_\bA(\dR^2)$, then $|f|$ belongs to $H^1(\dR^2)$ and one has pointwise a.e. in $\dR^2$
	\begin{equation} \label{diamagnetic_inequality_gradient}
		\big| \nb|f| \big| \leq \big| \nbA f \big|.
	\end{equation}
\end{prop}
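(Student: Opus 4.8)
The statement consists of two inequalities, and the plan is to handle them with different tools. For the gradient bound~\eqref{diamagnetic_inequality_gradient} -- a form of Kato's inequality -- I would regularize the modulus: for $\eps>0$ set $f_\eps:=\sqrt{|f|^2+\eps^2}$. Since $\cH^1_\bA(\dR^2)$ agrees locally with $H^1(\dR^2)$, one has $f\in H^1_{\mathrm{loc}}(\dR^2)$, so $\nb f$ is a locally square-integrable vector field and the chain rule gives $\nb f_\eps = \operatorname{Re}(\overline f\,\nb f)/\sqrt{|f|^2+\eps^2}$ on all of $\dR^2$. The key algebraic step is to write $\nb f = -\ii(\nbA f-\bA f)$ and observe, using that $\bA|f|^2$ is real-valued, that $\operatorname{Re}(\overline f\,\nb f)=\operatorname{Im}(\overline f\,\nbA f)$ pointwise; hence $|\operatorname{Re}(\overline f\,\nb f)|\le |f|\,|\nbA f|$ and
\begin{equation*}
  |\nb f_\eps|\le \frac{|f|}{\sqrt{|f|^2+\eps^2}}\,|\nbA f|\le|\nbA f|\qquad\text{a.e. in }\dR^2 .
\end{equation*}

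Next I would pass to the limit $\eps\downarrow 0$. From $0\le f_\eps-\eps=|f|^2/(f_\eps+\eps)\le |f|\in L^2(\dR^2)$ and dominated convergence, $f_\eps-\eps\to|f|$ in $L^2(\dR^2)$; the gradients $\nb(f_\eps-\eps)=\nb f_\eps$ are bounded in $L^2(\dR^2;\dC^2)$ by $\|\nbA f\|_{L^2}$, so a standard weak-compactness argument identifies their weak $L^2$-limit with $\nb|f|$ and shows $|f|\in H^1(\dR^2)$. Finally~\eqref{diamagnetic_inequality_gradient} follows pointwise, because on the set where $f\neq 0$ one has $\nb|f|=\operatorname{Re}(\overline f\,\nb f)/|f|=\operatorname{Im}(\overline f\,\nbA f)/|f|$, which has modulus at most $|\nbA f|$, while $\nb|f|=0$ a.e. on $\{f=0\}$.

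For the resolvent inequality~\eqref{diamagnetic_inequality_resolvent} I would reduce everything to the classical diamagnetic inequality for the heat semigroups,
\begin{equation}\label{sketch_semigroup}
  |e^{-t\Opf}f|\le e^{t\Dl}|f|\qquad\text{a.e. in }\dR^2,\quad t>0,\ f\in L^2(\dR^2),
\end{equation}
which I would quote from the literature (it is typically proved via the Trotter product formula or the Feynman--Kac--It\^o formula). Since $\lm<0$, both $\Opf-\lm\ge B-\lm>0$ and $-\Dl-\lm\ge-\lm>0$, so the scalar identity $\mu^{-\bb}=\Gamma(\bb)^{-1}\int_0^\infty t^{\bb-1}e^{-t\mu}\,dt$ (valid for $\mu>0$, $\bb>0$) combined with the spectral theorem yields the Bochner-integral representations $(\Opf-\lm)^{-\bb}=\Gamma(\bb)^{-1}\int_0^\infty t^{\bb-1}e^{t\lm}e^{-t\Opf}\,dt$ and $(-\Dl-\lm)^{-\bb}=\Gamma(\bb)^{-1}\int_0^\infty t^{\bb-1}e^{t\lm}e^{t\Dl}\,dt$, with absolute convergence guaranteed by $\|e^{-t\Opf}f\|\le e^{-tB}\|f\|$ and integrability of $t^{\bb-1}$ near $0$. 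Applying the first representation to $f$, evaluating a.e.\ in $\dR^2$, and estimating by the triangle inequality for integrals, then by~\eqref{sketch_semigroup}, then by the positivity-preserving property of $e^{t\Dl}$, one obtains
\begin{equation*}
  \big|(\Opf-\lm)^{-\bb}f\big|\le \frac{1}{\Gamma(\bb)}\int_0^\infty t^{\bb-1}e^{t\lm}\,\big|e^{-t\Opf}f\big|\,dt\le \frac{1}{\Gamma(\bb)}\int_0^\infty t^{\bb-1}e^{t\lm}\,e^{t\Dl}|f|\,dt=(-\Dl-\lm)^{-\bb}|f|
\end{equation*}
pointwise a.e., which is~\eqref{diamagnetic_inequality_resolvent}; the interchange of pointwise evaluation with the $t$-integral is justified by Fubini's theorem.

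The part I expect to be the main obstacle is the semigroup domination~\eqref{sketch_semigroup}: it is the only genuinely nontrivial input, and the only place where the magnetic structure really enters beyond elementary manipulations. Everything else above -- the chain rule, the identity $\operatorname{Re}(\overline f\,\nb f)=\operatorname{Im}(\overline f\,\nbA f)$, the $\eps\downarrow 0$ limiting argument, and the subordination formula -- is routine once~\eqref{sketch_semigroup} is available, so in the write-up one would either simply cite~\eqref{sketch_semigroup} or insert the short Trotter-formula proof.
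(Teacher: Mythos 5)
Your argument for the resolvent inequality~\eqref{diamagnetic_inequality_resolvent} is exactly the paper's: the subordination formula $(\sfA-\lm)^{-\bb}=\Gamma(\bb)^{-1}\int_0^\infty t^{\bb-1}e^{\lm t}e^{-t\sfA}\,\dd t$ (the paper cites it from Haase's book rather than rederiving it from the scalar identity plus the spectral theorem, but that is the same formula), then the semigroup diamagnetic inequality $|e^{-t\Opf}f|\le e^{t\Dl}|f|$ (cited there from \cite[eq.~(1.8)]{CFKS87}), then the triangle inequality for the Bochner integral and positivity preservation of the heat semigroup. For the gradient inequality~\eqref{diamagnetic_inequality_gradient} the paper simply cites \cite[Theorem~7.21]{LL01}; your self-contained proof via the regularization $f_\eps=\sqrt{|f|^2+\eps^2}$, the identity $\operatorname{Re}(\overline f\,\nb f)=\operatorname{Im}(\overline f\,\nbA f)$, and the $\eps\downarrow0$ weak-compactness argument is precisely the standard argument behind that citation and is correct.
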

\begin{proof}
	Recall that by~\cite[Proposition 3.3.5]{H06} the formula
	\begin{equation*}
		(\sfA - \lm)^{-\bb}f 
		= 
		\frac{1}{\G(\bb)} 
		\int_0^\infty t^{\bb - 1} e^{\lm t} 
		\big( e^{-t \sfA} f\big) \dd t,\qquad \lm < 0,
	\end{equation*}
	holds for any self-adjoint nonnegative
	operator $\sfA$ acting in a Hilbert space $\cH$
	and for any $f\in\cH$; here $\G$ denotes the Euler gamma function.	
	Hence, the inequality
	\[
		|e^{-t\Opf}f| \le e^{-t\Dl}|f|
	\]  
	pointwise a.e. in $\dR^2$ (see, e.g., \cite[eq. (1.8)]{CFKS87})
	yields 
	\[
	\begin{aligned}
		\left| (\Opf - \lm)^{-\bb} f \right| 
		& =
		\frac{1}{\G(\bb)} 
		\left|
		\int_0^\infty t^{\bb - 1} e^{\lm t} 
		\big( e^{-t \sfA_0} f\big) \dd t\right|\\
		& \le
		\frac{1}{\G(\bb)} 
		\int_0^\infty t^{\bb - 1} e^{\lm t} 
		\big| e^{-t \sfA_0} f\big| \dd t\\
		& \le
		\frac{1}{\G(\bb)} 
		\int_0^\infty t^{\bb - 1} e^{\lm t} 
		\big( e^{-t \Dl} |f|\big) \dd t
		=
		 (-\Dl -\lm)^{-\bb} |f|.
	\end{aligned}
	\]
	The inequality~\eqref{diamagnetic_inequality_gradient} can be found in, e.g., \cite[Theorem~7.21]{LL01}.
\end{proof}

Using the diamagnetic inequality we can show that functions in $\cH^1_\bA(\dR^2)$ have 
traces in $L^2(\Sg)$. Here, and in the following, $\Sigma$ is the boundary of a bounded $C^{1,1}$-domain $\Omega\subset\dR^2$.

\begin{cor} \label{corollary_Sobolev_spaces}
	The mapping $C^\infty_0(\dR^2) \ni f \mapsto f|_\Sg$
	can be extended by continuity to a bounded operator 
	$\cH^1_\bA(\dR^2) \ni f\mapsto f|_\Sg \in L^2(\Sg)$. Moreover, for all $\eps > 0$  there exists $c(\eps) >0$ such that
	\begin{equation*}
		\| f|_\Sg \|_{L^2(\Sg)}^2 
		\leq 
		\eps\| \nbA f \|_{L^2(\dR^2;\dC^2)}^2
		+ 
		c(\eps) \| f \|_{L^2(\dR^2)}^2
	\end{equation*}
	holds for all $f \in \cH^1_\bA(\dR^2)$.
\end{cor}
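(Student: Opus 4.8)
The plan is to combine the trace inequality for the ordinary Sobolev space $H^1(\dR^2)$ with the diamagnetic inequality \eqref{diamagnetic_inequality_gradient} from Proposition~\ref{proposition_diamagnetic_inequality}. First I recall the standard fact (a consequence of the trace theorem together with interpolation, or of the Ehrling-type compactness argument, using that $\Sigma$ is the boundary of a bounded $C^{1,1}$-domain) that the trace map $H^1(\dR^2)\ni u\mapsto u|_\Sigma\in L^2(\Sigma)$ is bounded, and moreover that for every $\eps>0$ there is $c(\eps)>0$ with
\begin{equation*}
  \|u|_\Sigma\|_{L^2(\Sigma)}^2
  \le
  \eps\,\|\nabla u\|_{L^2(\dR^2;\dC^2)}^2
  +
  c(\eps)\,\|u\|_{L^2(\dR^2)}^2,\qquad u\in H^1(\dR^2).
\end{equation*}
(The $\eps$-dependence can be extracted from a scaling argument on a collar neighbourhood of $\Sigma$, or deduced from compactness of the embedding $H^1\hookrightarrow L^2$ on a bounded neighbourhood of $\Sigma$ together with the fact that the trace is compact from $H^1$ into $L^2(\Sigma)$.)

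Next, for $f\in C_0^\infty(\dR^2)$ I set $u=|f|$. By \eqref{diamagnetic_inequality_gradient}, $|f|\in H^1(\dR^2)$ with $\|\nabla|f|\|_{L^2(\dR^2;\dC^2)}\le\|\nbA f\|_{L^2(\dR^2;\dC^2)}$, while clearly $\||f|\|_{L^2(\dR^2)}=\|f\|_{L^2(\dR^2)}$ and $\||f|\,|_\Sigma\|_{L^2(\Sigma)}=\|f|_\Sigma\|_{L^2(\Sigma)}$ since taking absolute values commutes with restriction. Plugging $u=|f|$ into the $H^1$-trace inequality therefore yields
\begin{equation*}
  \|f|_\Sigma\|_{L^2(\Sigma)}^2
  \le
  \eps\,\|\nbA f\|_{L^2(\dR^2;\dC^2)}^2
  +
  c(\eps)\,\|f\|_{L^2(\dR^2)}^2
\end{equation*}
for all $f\in C_0^\infty(\dR^2)$. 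Specializing to $\eps=1$ shows that $f\mapsto f|_\Sigma$ is bounded from $(C_0^\infty(\dR^2),\|\cdot\|_{\cH^1_\bA})$ into $L^2(\Sigma)$; since $C_0^\infty(\dR^2)$ is dense in $\cH^1_\bA(\dR^2)$ (recalled after \eqref{def_magnetic_Sobolev_space}), this map extends by continuity to a bounded operator $\cH^1_\bA(\dR^2)\to L^2(\Sigma)$, which we again denote $f\mapsto f|_\Sigma$.

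Finally I upgrade the $\eps$-inequality from the core to all of $\cH^1_\bA(\dR^2)$ by a density/limiting argument: given $f\in\cH^1_\bA(\dR^2)$, pick $f_n\in C_0^\infty(\dR^2)$ with $f_n\to f$ in $\cH^1_\bA(\dR^2)$; then $f_n|_\Sigma\to f|_\Sigma$ in $L^2(\Sigma)$ by the just-established continuity, $\|\nbA f_n\|\to\|\nbA f\|$, $\|f_n\|\to\|f\|$, and passing to the limit in the inequality for $f_n$ gives it for $f$. I do not expect any genuine obstacle here; the only point requiring a little care is justifying the explicit $\eps$-$c(\eps)$ form of the classical $H^1$-trace inequality, which one can either cite from standard references or prove via the collar-neighbourhood scaling sketched above. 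A subtle but harmless point is that $|f|$ need not be smooth, so $|f|\in H^1$ is used as an inequality between Sobolev norms (guaranteed by Proposition~\ref{proposition_diamagnetic_inequality}) rather than via a pointwise chain rule.
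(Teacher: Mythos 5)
Your proof is correct and takes essentially the same route as the paper: apply the classical $\eps$-$c(\eps)$ trace inequality for $H^1(\dR^2)$ to $|f|$, invoke the diamagnetic inequality \eqref{diamagnetic_inequality_gradient} to replace $\|\nabla|f|\|$ by $\|\nbA f\|$, and conclude by density of $C_0^\infty(\dR^2)$ in $\cH^1_\bA(\dR^2)$. The extra detail you supply (justifying $|f|\in H^1$, the limiting argument) is exactly the content the paper leaves implicit.
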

\begin{proof}
	Let $\eps > 0$ and $f \in C_0^\infty(\dR^2)$. It is well known that there exists a constant $c(\eps) > 0$ independent of $f$ such that
	\begin{equation*}
		\| f|_\Sg \|_{L^2(\Sg)}^2 
		= 
		\big\| |f| \big|_\Sg \big\|_{L^2(\Sg)}^2
		\le 
		\eps 
		\big\| \nb |f| \big\|_{L^2(\dR^2;\dC^2)}^2
		+ c(\eps) 
		\big\| |f| \big\|_{L^2(\dR^2)}^2.
	\end{equation*}
	Using the diamagnetic inequality~\eqref{diamagnetic_inequality_gradient} we obtain 
	\begin{equation*}
		\| f|_\Sg \|_{L^2(\Sg)}^2
		\le 
		\eps\big\| \nbA f\big\|_{L^2(\dR^2;\dC^2)}^2
		+ 
		c(\eps) \| f \|_{L^2(\dR^2)}^2.
	\end{equation*}
	Since $C_0^\infty(\dR^2)$ is dense in the magnetic Sobolev space $\cH^1_\bA(\dR^2)$, the claim follows.
\end{proof}

Next we recall the definition of the Landau Hamiltonian on a domain $\Omg$ with Dirichlet boundary conditions. 
It is assumed here that $\Omg$ is 
either a bounded $C^{1,1}$-domain in $\dR^2$ or the complement of a bounded $C^{1,1}$-domain; then the compact boundary $\Sg  :=\p\Omg$ is a 
$C^{1,1}$-smooth curve. In analogy to \eqref{def_magnetic_Sobolev_space} the first order $L^2$-based magnetic Sobolev space is defined by
\begin{equation*} 
  \cH_\bA^1(\Omg)
  := 
  \big\{ f \in L^2(\Omg) \colon 
  |\nbA f| \in L^2(\Omg) \big\}
\end{equation*}
and is equipped with the Hilbert space inner product
\begin{equation*}
  (f, g)_{\cH^1_\bA(\Omg)} 
  := 
  (f, g)_{L^2(\Omg)} 
  + 
  \big(\nbA f,\nbA g\big)_{L^2(\Omg;\dC^2)},
  \quad f, g \in \cH^1_\bA(\Omg).
\end{equation*}
Note that $\cH_\bA^1(\Omg)$ coincides with $H^1(\Omega)$ if $\Omega$ is bounded or if $B=0$; if $B\not= 0$ then still 
$\cH_\bA^1(\Omg)$
and $H^1(\Omg)$ coincide locally. The standard Sobolev spaces on $\Omega$ and the boundary $\Sg$ are
denoted by $H^s(\Omega)$ and $H^t(\Sg)$, respectively. 
The magnetic counterpart
of the Sobolev space $H^1_0(\Omg)$ is defined as
\begin{equation*}
	\cH^1_{\bA, 0}(\Omg) 
	:= \ov{C_0^\infty(\Omg)}^{\| \cdot \|_{\cH^1_\bA(\Omg)}}.
\end{equation*}
Now consider the symmetric sesquilinear form
\begin{equation} \label{def_Dirichlet_form}
  \fra_{\rm D}^\Omg[f, g] 
  := 
  \big( \nbA f, \nbA g\big)_{L^2(\Omg, \dC^2)},
  \qquad \dom \fra_{\rm D}^\Omg = 
  \cH^1_{\bA, 0}(\Omg),
\end{equation}
and observe that $\fra_{\rm D}^\Omg$ is nonnegative, closed, and densely defined in $L^2(\Omg)$. The nonnegative 
self-adjoint operator $\OpD$
corresponding to  $\fra_{\rm D}^\Omg$ is the {\em Landau Hamiltonian on $\Omega$ with Dirichlet boundary conditions on $\Sg$}.
It is useful to note that for a bounded domain $\Omg$ the space $\cH^1_{\bA, 0}(\Omg)=H^1_0(\Omg)$ is compactly embedded in $L^2(\Omg)$ and hence
\begin{equation} \label{spectrum_Dirichlet_op}
	\sess(\OpD) = \varnothing.
\end{equation}

\subsection{Schatten von-Neumann ideals}
\label{section_Schatten-von_Neumann}

In this subsection we recall the definition and some properties of the Schatten-von Neumann ideals, which are used
in the proofs of our main results. We partially follow the presentation in~\cite{BLL13,BLL13_2}, where further references can be found. 
A very useful result on the Schatten-von Neumann property of operators that map into Sobolev spaces $H^s(\Sg)$ with $s>0$
is provided in Proposition~\ref{proposition_singular_values}.

Let $\cH, \cG$, and $\cK$ be separable Hilbert spaces. We denote the linear space of all 
bounded and everywhere defined operators from $\cH$ into $\cG$ by $\sB(\cH, \cG)$ and we write $\sB(\cH):=\sB(\cH, \cH)$.
We use the symbol $\sS_\infty(\cH, \cG)$ for the space of all compact
operators from $\cH$ to $\cG$ and 
$\sS_\infty(\cH) := \sS_\infty(\cH, \cH)$. The singular values $s_k(K)$, $k \in \dN$, of $K \in \sS_\infty(\cH, \cG)$ are the eigenvalues of the self-adjoint, 
nonnegative operator $(K^* K)^{1/2}\in\sS_\infty(\cH)$, which are ordered in a nonincreasing way 
with multiplicities taken into account. Note that
$s_k(K) = s_k(K^*)$ for $k\in\dN$.
For $p > 0$
the Schatten-von Neumann ideal of order $p$ is defined by
\begin{equation*}
	\sS_p(\cH, \cG) 
	:= 
	\left\{ 
		K \in \sS_\infty(\cH, \cG)\colon
			\sum_{k=1}^\infty s_k(K)^p < \infty 
	\right\}
\end{equation*}
and the weak Schatten-von Neumann ideal of order $p$ is defined by
\begin{equation*}
	\sS_{p, \infty}(\cH, \cG) 
	:= 
	\left\{ 
		K \in \sS_\infty(\cH, \cG)\colon
		s_k(K) = \cO(k^{-1/p}) 
	\right\}.
\end{equation*}
The (weak) Schatten-von Neumann ideals are ordered in the sense that for $0 < p < q$ one has
$\sS_p(\cH, \cG) \subset \sS_q(\cH, \cG)$
and $\sS_{p, \infty}(\cH, \cG) \subset 
\sS_{q, \infty}(\cH, \cG)$.
Moreover, we have
\begin{equation*}
	\sS_p(\cH, \cG)\subset \sS_{p,\infty}(\cH, \cG)
	\and
	\sS_{p, \infty}(\cH, \cG) \subset \sS_q(\cH, \cG).
\end{equation*}
The Schatten-von Neumann ideals are two-sided ideals, that is, for $K \in \sS_p(\cH,\cG)$ and $A \in \sB(\cH)$, $B\in\sB(\cG)$ 
one has $B K A \in \sS_p(\cH,\cG)$.
The analogous ideal property holds for the weak Schatten-von Neumann ideals.
Eventually, if 
$p, q > 0$ and $r$ are chosen such that $\frac{1}{r} = \frac{1}{p} + \frac{1}{q}$, then for $K_1 \in \sS_{p, \infty}(\cH,\cG)$
and $K_2 \in \sS_{q, \infty}(\cG,\cK)$ the product of these operators satisfies
\begin{equation} \label{equation_product_Schatten_von_Neumann} 
  K_2 K_1 \in \sS_{r, \infty}(\cH,\cK).
\end{equation}
Finally, let $\Sg \subset \dR^2$ be the boundary of a sufficiently smooth bounded domain.
It will be shown in the next proposition that operators with range in the Sobolev space $H^s(\Sg)$
belong to certain weak Schatten-von Neumann ideals. In the special case that $\Sigma$ is the boundary of a $C^\infty$-domain 
this property is known; cf.~\cite[Lemma 2.11]{BLL13}.
\begin{prop} \label{proposition_singular_values}
	Let $k \in\dN$ and let $\Sg$ be the boundary of a bounded $C^{k,1}$-domain $\Omg_{\rm i}\subset\dR^2$.
	Let $\cH$ be a separable Hilbert space and let $A\in\sB(\cH, L^2(\Sg))$ be such that $\ran A \subset H^{l/2}(\Sg)$ for some 
	$l \in \{ 1, \dots, 2k+1\}$.
	Then $$A \in \sS_{2/l, \infty}\big(\cH, L^2(\Sg)\big).$$	
\end{prop}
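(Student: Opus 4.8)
The plan is to reduce the claim to the known case $l = 2k+1$ via interpolation, together with the localization principle that $\mathcal{H}^1_{\mathbf A}$ and $H^1$ coincide locally (and the same for higher order spaces), so that the magnetic field plays no role in the Schatten-von Neumann analysis on the compact curve $\Sigma$. Concretely, I would first establish the endpoint: if $A \in \mathsf{B}(\mathcal H, L^2(\Sigma))$ has range contained in $H^{(2k+1)/2}(\Sigma) = H^{k+1/2}(\Sigma)$, then $A$ is bounded as an operator $\mathcal H \to H^{k+1/2}(\Sigma)$ by the closed graph theorem, and I claim $A \in \mathsf{S}_{2/(2k+1),\infty}(\mathcal H, L^2(\Sigma))$. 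To see this, write $A = \iota \circ \widetilde A$, where $\widetilde A \colon \mathcal H \to H^{k+1/2}(\Sigma)$ is bounded and $\iota \colon H^{k+1/2}(\Sigma) \hookrightarrow L^2(\Sigma)$ is the compact embedding. By the ideal property of the weak Schatten classes it suffices to show $\iota \in \mathsf{S}_{2/(2k+1),\infty}\big(H^{k+1/2}(\Sigma), L^2(\Sigma)\big)$. Since $\Sigma$ is a compact one-dimensional $C^{k,1}$-manifold, the eigenvalues $\mu_j$ of the operator $(1 - \Delta_\Sigma)^{1/2}$ (or any elliptic first-order pseudodifferential operator generating the Sobolev scale up to order $k+1/2$) satisfy $\mu_j \asymp j$ by Weyl asymptotics for one-dimensional problems, hence the singular values of $\iota$ behave like $\mu_j^{-(k+1/2)} \asymp j^{-(k+1/2)}$, which gives exactly membership in $\mathsf{S}_{1/(k+1/2),\infty} = \mathsf{S}_{2/(2k+1),\infty}$.

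The next step handles general $l \in \{1,\dots,2k+1\}$. Here I would invoke the standard fact that the embedding $H^{l/2}(\Sigma) \hookrightarrow L^2(\Sigma)$ lies in $\mathsf{S}_{2/l,\infty}$ by the same eigenvalue count as above (the $C^{k,1}$-regularity of $\Sigma$ suffices to define and control the Sobolev spaces $H^s(\Sigma)$ for $|s| \le k+1$, which covers $l/2 \le k + 1/2$). Thus as soon as one knows $A$ maps $\mathcal H$ boundedly into $H^{l/2}(\Sigma)$, one factors $A$ through this embedding and concludes $A \in \mathsf{S}_{2/l,\infty}(\mathcal H, L^2(\Sigma))$ by the ideal property. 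The boundedness of $A \colon \mathcal H \to H^{l/2}(\Sigma)$ from the mere inclusion $\operatorname{ran} A \subset H^{l/2}(\Sigma)$ is again the closed graph theorem: if $f_n \to f$ in $\mathcal H$ and $A f_n \to g$ in $H^{l/2}(\Sigma)$, then $A f_n \to A f$ in $L^2(\Sigma)$ by continuity of $A$ into $L^2(\Sigma)$ and $A f_n \to g$ in $L^2(\Sigma)$ since $H^{l/2}(\Sigma) \hookrightarrow L^2(\Sigma)$ is continuous, so $g = A f$ and the graph is closed.

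The remaining point, and the one requiring the most care, is to justify the eigenvalue asymptotics $s_j(\iota) \asymp j^{-s}$ for the embedding $H^s(\Sigma)\hookrightarrow L^2(\Sigma)$ on a compact $C^{k,1}$-curve $\Sigma$ when $0 < s \le k+1/2$, since $\Sigma$ is only finitely smooth rather than $C^\infty$. I expect this to be the main obstacle. The cleanest route is to cover $\Sigma$ by finitely many charts diffeomorphic (via $C^{k,1}$-maps, whose derivatives up to order $k$ are Lipschitz) to intervals, use a subordinate partition of unity, and reduce to the one-dimensional interval case where $H^s$-embedding singular values of order $j^{-s}$ are classical (e.g. via Fourier series or spline approximation), noting that $C^{k,1}$-changes of variables preserve $H^t(\Sigma)$ for $|t| \le k+1$ with equivalent norms; the finite sum over charts only affects constants, not the power of $j$. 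Alternatively one can cite the approach of \cite[Lemma 2.11]{BLL13}, which treats the $C^\infty$-case, and observe that its proof only uses Sobolev regularity up to the order allowed by the chart maps, hence adapts verbatim to $C^{k,1}$-domains with the restriction $l \le 2k+1$. Either way, once the embedding estimate is in hand the proof of the proposition is complete.
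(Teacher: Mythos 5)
Your proposal is correct in outline but takes a genuinely different route to the key Schatten-class estimate. The closed graph theorem step establishing $A\in\sB(\cH,H^{l/2}(\Sg))$ is identical to the paper's. Where you diverge is in proving that bounded operators into $H^{l/2}(\Sg)$ lie in $\sS_{2/l,\infty}(\cdot,L^2(\Sg))$. The paper does \emph{not} study the Sobolev embedding $\iota\colon H^{l/2}(\Sg)\hookrightarrow L^2(\Sg)$ directly; instead, it factors $A = \cS_0^{l/2}\bigl(\cS_0^{-l/2}A_l\bigr)$, where $\cS_0$ is the acoustic single layer boundary integral operator from Lemma~\ref{singlelem}. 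The mapping properties $\cS_0^{l/2}\colon L^2(\Sg)\to H^{l/2}(\Sg)$ bijective, established in that lemma via the Fredholm theory of $\cS_{-1/2}$ and interpolation, make $\cS_0^{-l/2}A_l$ bounded into $L^2(\Sg)$, and then the Schatten estimate $\cS_0\in\sS_{1,\infty}$ (hence $\cS_0^{l/2}\in\sS_{2/l,\infty}$ by the spectral theorem) is pulled straight out of the Agranovich--Amosov theorem \cite[Theorem~2.9]{AA96}, which is specifically formulated for integral operators of potential type on nonsmooth surfaces. This entirely sidesteps the finite-smoothness issue you correctly flag as the delicate point of your argument. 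Your alternative --- arc-length reparametrization to a circle, Fourier-series or Weyl asymptotics for the compact embedding, charts and a partition of unity --- is workable in the one-dimensional setting of the paper (a closed $C^{k,1}$-curve admits an arc-length $C^{k,1}$-diffeomorphism with a circle, and the Sobolev scales for $|s|\le k+1$ are preserved), but you would need to carry out that reparametrization and norm-equivalence argument carefully, whereas the paper's route pushes the nonsmoothness entirely into the citation of \cite{AA96} and into Lemma~\ref{singlelem}. The paper's method is also built to generalize to higher-dimensional boundaries, where your chart reduction to intervals is no longer available and the Weyl law on a Lipschitz surface becomes nontrivial.
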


The proof of Proposition~\ref{proposition_singular_values} uses a general result from~\cite{AA96} and some properties of the acoustic single layer potential for the 
Helmholtz equation $-\Delta+1$, which will be briefly discussed for the convenience of the reader. Recall first from \cite[Section 7.4]{T14} that the Green function 
for the differential expression $-\Dl + 1$ in $\dR^2$ is given by $\frac{1}{2\pi}K_0(|\cdot|)$, where $K_0$ is the modified Bessel function of second kind and of order $0$.
It is well known that the boundary integral operator 
\begin{equation}\label{sinti}
(\cS \varphi)(x)=\frac{1}{2\pi}\int_\Sigma K_0(\vert x-y\vert)\varphi(y) \dd \sigma(y),\qquad x\in \Sigma, 
\end{equation}
gives rise to a bounded operator 
\begin{equation}\label{shalbe}
\cS_{-1/2}:H^{-1/2}(\Sigma)\rightarrow H^{1/2}(\Sigma);
\end{equation}
cf. \cite[Theorem~6.11]{M00}. In the following lemma we provide
some other useful properties of $\cS$. The proof of (i) is inspired by the proof of~\cite[Theorem 3]{C88}. 

\begin{lem}\label{singlelem}
Let $\Sg$ be the boundary of a bounded $C^{k,1}$-domain $\Omg_{\rm i}$ with $k \ge 1$. Then the following holds.
\begin{itemize}
 \item [{\rm (i)}] For all $s\in[-\frac{1}{2}, k - \frac{1}{2}]$ the restriction of $\cS_{-1/2}$ in \eqref{shalbe} 
 onto $H^s(\Sigma)$ leads to a bijective bounded operator
\begin{equation}\label{ss}
\cS_{s}:H^{s}(\Sigma)\rightarrow H^{s+1}(\Sigma).
\end{equation}  
 \item [{\rm (ii)}] The operator $\cS_0:L^2(\Sigma)\rightarrow H^{1}(\Sigma)$ in \eqref{ss} can be viewed as nonnegative bounded self-adjoint operator
 in $L^2(\Sigma)$ with $\ran\cS_0=H^{1}(\Sigma)$. The square root $\cS_0^{1/2}$ (defined via the functional calculus for self-adjoint operators) 
 is a nonnegative bounded self-adjoint operator in $L^2(\Sigma)$
 and also leads to a bijective bounded operator 
 \begin{equation*}
\cS_0^{1/2}:L^2(\Sigma)\rightarrow H^{1/2}(\Sigma).
\end{equation*}   
\end{itemize}
In particular, the operator $\cS_0^{l/2}:L^2(\Sigma)\rightarrow H^{l/2}(\Sigma)$ is bijective and bounded
for all $l \in \{ 1, \dots, 2k+1\}$.
\end{lem}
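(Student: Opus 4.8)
\textbf{Proof proposal for Lemma~\ref{singlelem}.}

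The plan is to establish (i) and (ii) in turn and then read off the final assertion. For part (i), the starting point is the mapping property \eqref{shalbe} of the acoustic single layer operator $\cS_{-1/2}\colon H^{-1/2}(\Sigma)\to H^{1/2}(\Sigma)$, which is already available by \cite[Theorem~6.11]{M00}. To obtain the scale of bounded operators $\cS_s\colon H^s(\Sigma)\to H^{s+1}(\Sigma)$ for $s\in[-\tfrac12,k-\tfrac12]$, I would first argue boundedness and then bijectivity. Boundedness follows from elliptic regularity: the single layer potential $\mathcal{SL}\varphi$ of a density $\varphi$ solves $(-\Delta+1)u=0$ in $\Omega_{\rm i}$ and in $\dR^2\setminus\overline{\Omega_{\rm i}}$ with a prescribed jump of the normal derivative equal to $\varphi$ and continuous Dirichlet trace $\cS\varphi$; since $\Sigma$ is $C^{k,1}$, the standard transmission/regularity estimates for the single layer potential (as in, e.g., \cite[Section~7.4]{T14} and the mapping properties in \cite{M00}) upgrade the range by one order of Sobolev smoothness on the boundary, yielding $\cS\varphi\in H^{s+1}(\Sigma)$ whenever $\varphi\in H^s(\Sigma)$, with $s$ in the stated range allowed by the $C^{k,1}$-regularity. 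For bijectivity I would use that $\cS_{-1/2}$ is already bijective (this is part of \cite[Theorem~6.11]{M00}, because $-\Delta+1$ has no bound states so the exterior and interior Dirichlet problems are uniquely solvable and the jump relations make $\cS_{-1/2}$ invertible) and that the restriction to the finer space $H^s(\Sigma)$ has the same inverse, whose boundedness $H^{s+1}(\Sigma)\to H^s(\Sigma)$ again follows from the regularity statement applied in the reverse direction; this is where I expect to lean on the argument in the proof of \cite[Theorem~3]{C88}, which handles precisely such a bootstrapping of smoothness for layer potentials on $C^{k,1}$-boundaries.

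For part (ii), specializing $s=0$ gives the bounded bijection $\cS_0\colon L^2(\Sigma)\to H^1(\Sigma)$, and in particular $\cS_0$ is a bounded operator on $L^2(\Sigma)$ with range $H^1(\Sigma)$. Self-adjointness in $L^2(\Sigma)$ is immediate from the symmetry of the kernel $\tfrac{1}{2\pi}K_0(|x-y|)$, which is real and symmetric in $x,y$. Nonnegativity follows from the identity $(\cS_0\varphi,\varphi)_{L^2(\Sigma)}=\|\mathcal{SL}\varphi\|_{H^1(\dR^2)}^2$ (the energy of the single layer potential with respect to the form of $-\Delta+1$), which is the standard coercivity of the single layer operator for the Yukawa/Helmholtz equation with positive parameter; one can prove it by Green's identities, integrating by parts over $\Omega_{\rm i}$ and its complement and using the jump relations. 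Since $\cS_0$ is nonnegative, bounded, and self-adjoint in $L^2(\Sigma)$, the functional calculus produces the nonnegative bounded self-adjoint square root $\cS_0^{1/2}$; it is injective because $\cS_0$ is, and its range is the form domain of $\cS_0$, which by interpolation between $\ran\cS_0^{0}=L^2(\Sigma)$ and $\ran\cS_0=H^1(\Sigma)$ (or directly from $\cS_0^{1/2}(\cS_0^{1/2}\psi)=\cS_0\psi$ together with the mapping property of $\cS_0$) equals $H^{1/2}(\Sigma)$. Thus $\cS_0^{1/2}\colon L^2(\Sigma)\to H^{1/2}(\Sigma)$ is a bounded bijection.

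Finally, for the "in particular" claim: for $l$ even, $l=2m$, write $\cS_0^{l/2}=\cS_0^m$ and iterate the bijection $\cS_0\colon L^2\to H^1$ together with $\cS_s\colon H^s\to H^{s+1}$ from part (i) (valid as long as $l/2\le k+\tfrac12$, i.e. $l\le 2k+1$) to get $\cS_0^m\colon L^2(\Sigma)\to H^m(\Sigma)$ bijective and bounded; for $l$ odd, $l=2m+1$, factor $\cS_0^{l/2}=\cS_0^m\cS_0^{1/2}$ and compose the bijection $\cS_0^{1/2}\colon L^2\to H^{1/2}$ from part (ii) with the bijections from part (i) at the half-integer levels $s=\tfrac12,\tfrac32,\dots$ to reach $H^{l/2}(\Sigma)$. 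In all cases the composition of bounded bijections is a bounded bijection, which gives the statement. The main obstacle is part~(i): getting the full Sobolev scale $\cS_s\colon H^s\to H^{s+1}$ on a merely $C^{k,1}$ (not $C^\infty$) boundary, together with the bijectivity at each level, which requires a careful regularity bootstrap for the single layer potential rather than a soft argument — this is exactly the point where I would follow \cite[Theorem~3]{C88} closely.
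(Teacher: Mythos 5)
Your proposal follows essentially the same route as the paper: part (i) by establishing boundedness and bijectivity of $\cS_s$ on the Sobolev scale via the regularity theory for the single layer potential (the paper makes this precise through a closed-graph argument plus \cite[Theorems 7.1, 7.2, 7.6 and 7.16(i)]{M00}, the Fredholm-index-zero property of $\cS_{-1/2}$, and injectivity via \cite[Lemma 1.14(c)]{KG08}, then interpolates); part (ii) by the Green-identity computation
\[
(\cS_0\varphi,\varphi)_{L^2(\Sg)} = \|u\|_{L^2(\dR^2)}^2 + \|\nb u\|_{L^2(\dR^2;\dC^2)}^2
\]
for the single layer potential $u$, which is exactly what the paper does. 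The citation of \cite{C88} also matches the paper's stated inspiration.

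One point deserves a correction. For $\ran\cS_0^{1/2}=H^{1/2}(\Sg)$, your parenthetical alternative ``directly from $\cS_0^{1/2}(\cS_0^{1/2}\psi)=\cS_0\psi$ together with the mapping property of $\cS_0$'' does not work: from $\cS_0^{1/2}(\cS_0^{1/2}\psi)\in H^1(\Sg)$ for all $\psi\in L^2(\Sg)$ one cannot read off that the intermediate image is precisely $H^{1/2}(\Sg)$ without further input. Also, the range of $\cS_0^{1/2}$ is not the form domain of $\cS_0$ (the latter is all of $L^2(\Sg)$ since $\cS_0$ is bounded); what one actually needs is $\ran\cS_0^{1/2}=\dom\cS_0^{-1/2}$, the form domain of the \emph{unbounded} operator $\cS_0^{-1}$, and then the genuine content is the characterization of fractional power domains by interpolation. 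The paper obtains this from \cite[Theorem 3.2]{AS13} applied to $\cS_0^{-1}$; your interpolation route is correct in spirit and should be stated in that form, discarding the direct alternative.
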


\begin{proof}
	(i) Note first that by \cite[Theorem 7.1 and Theorem~7.2]{M00} the operator
	$\cS_s$ in \eqref{ss}  is well defined as a linear map between the 
	respective Sobolev spaces. Next, \cite[Lemma 1.14(c)]{KG08} (see also \cite[Lemma 3.2]{MPS16}) implies $\ker\cS_{-1/2}=\{0\}$ and hence also 
	$\ker\cS_s=\{0\}$ for all $s\in[-\frac{1}{2}, k - \frac{1}{2}]$.
	Moreover, 
	\begin{equation}\label{ssbb123}
	\cS_{s}\in \sB(H^{s}(\Sg),H^{s+1}(\Sg)).
	\end{equation}
	In fact, for $s=-\frac{1}{2}$ this is a consequence of \cite[Theorem~6.11]{M00} and 
	for $s > -\frac{1}{2}$ the closed graph theorem implies \eqref{ssbb123} after it has been shown that 
	$\cS_s$ is a closed operator. For this consider $(\varphi_n)\subset H^{s}(\Sigma)$ such that 
	\begin{equation*}
		\varphi_n \arr \varphi \text{ in } H^{s}(\Sigma) \quad \text{and} \quad 
		\mathcal{S}_{s} \varphi_n \arr \psi \text{ in } H^{s+1}(\Sg) \quad \text{as } n \rightarrow \infty.
	\end{equation*}
	Then $\varphi \in H^{s}(\Sigma) = \dom \mathcal{S}_s$, $\varphi_n \rightarrow \varphi$ in $H^{-1/2}(\Sigma)$ as $n \rightarrow \infty$,
   and as $\mathcal{S}_{-1/2} \in \mathfrak{B}(H^{-1/2}(\Sigma), H^{1/2}(\Sigma))$
	we have $\mathcal{S}_{s} \varphi_n = \mathcal{S}_{-1/2} \varphi_n \rightarrow \mathcal{S}_{-1/2} \varphi$ in $H^{1/2}(\Sigma)$ 
	for $n \rightarrow \infty$. On the other hand, since 
	$H^{s+1}(\Sigma)$ is continuously embedded in $H^{1/2}(\Sigma)$ we also have
	$\mathcal{S}_{-1/2} \varphi_n = \mathcal{S}_{s} \varphi_n \rightarrow \psi$ in $H^{1/2}(\Sigma)$. Thus $\mathcal{S}_{s} \varphi = \mathcal{S}_{-1/2} \varphi = \psi$ and hence
	$\mathcal{S}_{s}$ is closed. 
	
	In order to verify that $\mathcal S_s$ in \eqref{ss} is surjective for 
	$s=j-1/2$ and $j=\{0,1,...,k\}$, consider $\psi \in H^{j+1/2}(\Sigma)$. Then, in particular, $\psi \in H^{1/2}(\Sigma)$, and 
	as $\cS_{-1/2}$ is a Fredholm operator of index zero by \cite[Theorem~7.6]{M00} and $\ker\cS_{-1/2}=\{0\}$ it is clear that
	$\cS_{-1/2}$ in \eqref{shalbe} is bijective. Hence there exists a unique $\varphi\in H^{-1/2}(\Sigma)$ such that $\cS_{-1/2}\varphi=\psi$.
	Eventually, it follows from \cite[Theorem~7.16~(i)]{M00} that $\varphi \in H^{j-1/2}(\Sigma)$, so that $\cS_{j-1/2}\varphi=\psi$.
	We have shown that the operators  $\cS_s$ in \eqref{ss} for 
	$s=j-1/2$ and $j\in\{0,1,...,k\}$ are bijective. Now it follows from standard interpolation techniques that 
	$\cS_s\in\sB(H^s(\Sigma),H^{s+1}(\Sigma))$ is bijective for all $s\in[-\frac{1}{2}, k - \frac{1}{2}]$.
 
   (ii) It is clear that $\cS_0$ is a bounded operator in $L^2(\Sigma)$ with $\ran\cS_0=H^1(\Sigma)$. To see that $\cS_0$ is nonnegative and self-adjoint in $L^2(\Sigma)$
        let $\Omg_{\rm e}:=\dR^2\setminus\overline \Omg_{\rm i}$ and decompose the functions $u\in L^2(\dR^2)$ in 
        the two components $u_j := u|_{\Omg_j}$, $j \in \{ \rm i, \rm e \}$. 
	For $\varphi \in L^2(\Sg)$ there exists a unique $u\in H^1(\dR^2)$ such that
	$-\Dl u_j + u_j = 0$, $j \in \{ \rm i, \rm e\}$, and 
	$\p_\nu u_{\rm i}|_{\Sg} - \p_\nu u_{\rm e}|_{\Sg} = \varphi$, and, moreover,  one has  
	$\cS_0\varphi = u|_{\Sg}$ (see, e.g., \cite[Proposition~3.2~(ii) and Remark~3.3]{BLL13}, where 
	$\mathcal{S}_0 = \widetilde{M}(-1)$ in the notation of \cite{BLL13}).
	Hence, the first Green identity leads to
	\[
	\begin{aligned}
		(\cS_0\varphi,\varphi)_{L^2(\Sg)} 
		& =
		\left(u|_\Sg, \p_\nu u_{\rm i}|_{\Sg} - \p_\nu u_{\rm e}|_{\Sg} \right)_{L^2(\Sg)}\\
		& =
		( u_{\rm i}, \Dl u_{\rm i})_{L^2(\Omg_{\rm i})}+
		(u_{\rm e}, \Dl u_{\rm e})_{L^2(\Omg_{\rm e})} +  (\nb u, \nb u)_{L^2(\dR^2;\dC^2)} \\
		& = (u,u)_{L^2(\dR^2)} +  (\nb u, \nb u)_{L^2(\dR^2;\dC^2)},
	\end{aligned}
	\]
	which implies that $\cS_0$ is a nonnegative self-adjoint operator in $L^2(\Sigma)$.
	Eventually, by the interpolation result~\cite[Theorem 3.2]{AS13}, which applies to $\cS_0^{-1}$,  we have $\dom\cS_0^{-1/2} = H^{1/2}(\Sg)$.
	Thus, we get $\ran\cS_0^{1/2} = H^{1/2}(\Sg)$
	and $\cS_0^{1/2}$  is a bijective bounded operator 
	from $L^2(\Sg)$ onto $H^{1/2}(\Sg)$.
	
	The last assertion is a direct consequence of (i) and (ii). In fact, for even $l$ this follows from repeated applications of (i), whereas for odd $l$
	we use $\cS_0^{l/2}=\cS_0^{(l-1)/2}\cS_0^{1/2}$, (ii) and repeated applications of (i). 
\end{proof}

\begin{proof}[Proof of Proposition~\ref{proposition_singular_values}]
Assume that $\ran A\subset H^{l/2}(\Sigma)$ for some $l \in \{ 1, \dots, 2k+1\}$. 
It will be shown first that the operator $A_l\colon \mathcal{H} \rightarrow H^{l/2}(\Sigma)$, $A_l f = A f$, is continuous. 
In fact, consider a sequence $(f_n)\subset \cH$ such that 
	\begin{equation*}
		f_n \arr f \text{ in } \cH \quad \text{and} \quad 
		A_l f_n \arr g \text{ in } H^{l/2}(\Sg) \quad \text{as } n \rightarrow \infty.
	\end{equation*}
	Then $f \in \mathcal{H} = \dom A_l$ and as $A \in \mathfrak{B}(\mathcal{H}, L^2(\Sigma))$
	we have $A_l f_n = A f_n \rightarrow A f$ in $L^2(\Sigma)$ for $n \rightarrow \infty$. On the other hand, since 
	$H^{l/2}(\Sigma)$ is continuously embedded in $L^2(\Sigma)$ we also have
	$A f_n = A_l f_n \rightarrow g$ in $L^2(\Sigma)$. Thus, $A_l f = A f = g$ and hence
	$A_l$ is closed and defined on all of $\cH$. This implies $A_l\in\sB(\mathcal{H}, H^{l/2}(\Sigma))$.
	
	Now consider the operator $\cS_0$ in Lemma~\ref{singlelem} as a nonnegative bounded self-adjoint operator
 in $L^2(\Sigma)$ and note that the integral kernel in \eqref{sinti} is the kernel of the polyhomogeneous pseudodifferential operator $(-\Delta+1)^{-1}$,
 which is of order $-2$. Therefore, \cite[Theorem 2.9]{AA96} applies (for the class $\cP^0$) and yields that 
 \begin{equation*}
		\cS_0 \in \sS_{1,\infty}(L^2(\Sg)).
	\end{equation*}
	Hence, the spectral theorem implies 
	\begin{equation} \label{estimate_sk_M_0}
		\cS_0^t \in \sS_{1/t,\infty}(L^2(\Sg)), 
		\qquad t > 0.
	\end{equation}
	On the other hand, it follows from Lemma~\ref{sinti} that $\cS_0^{l/2}\in\sB(L^2(\Sigma),H^{l/2}(\Sigma))$ is bijective 
	and hence also
	$\cS_0^{-l/2}\in\sB(H^{l/2}(\Sigma),L^2(\Sigma))$.
	Since 
	\begin{equation*}
	A = \cS_0^{l/2} \cS_0^{-l/2} A_l\quad\text{and}\quad \cS_0^{-l/2} A_l \in \sB(\cH, L^2(\Sg))
	\end{equation*}
	we conclude from \eqref{estimate_sk_M_0} with $t=l/2$ that
	$A \in \sS_{2/l, \infty}(\mathcal{H}, L^2(\Sg))$.
\end{proof}

\subsection{Compact perturbations of self-adjoint operators}\label{compis}

In this subsection we discuss some special results on compact perturbations.
In the following let  $T$ be a self-adjoint operator in a Hilbert space	$\cH$ and let $\Lambda\in\dR$ be an isolated eigenvalue of $T$ of 
infinite multiplicity with the corresponding eigenprojection $P_\Lambda$. Furthermore, let $\tau_\pm > 0$ be such that
	\[
		(\Lambda - 2\tau_-,\Lambda + 2\tau_+) \cap \s(T) = \{\Lambda\}.
	\]
	For a self-adjoint operator $W$ in $\cH$ with corresponding spectral measure $E_W(\cdot)$ we denote by
	\begin{equation}\label{www}
	W_+=\int_0^\infty \lambda\, \dd E_W(\lambda)\quad \text{and}\quad W_-=-\int_{-\infty}^0 \lambda\, \dd E_W(\lambda)
	\end{equation}
	the nonnegative and nonpositive part of $W$, respectively. Note that both $W_+$ and $W_-$ are nonnegative self-adjoint operators in $\cH$ and that
	the identities
	$W=W_+-W_-$ and $\vert W\vert= W_++W_-$ hold.  
	Now assume, in addition, that the self-adjoint operator $W$ in $\cH$ is compact and
	denote by
	\[
		\mu_1^\pm \ge \mu_2^\pm \ge \mu_3^\pm \ge \dots 
		\ge 0
	\]
	the eigenvalues of $P_\Lambda W_\pm P_\Lambda \ge 0$ in nonincreasing order with multiplicities taken into account and by
	\begin{equation}\label{nota1}
		\lm_1^- \le \lm_2^- \le \dots 
		\le \Lambda \le \dots \le \lm_2^+ \le \lm_1^+ 
	\end{equation}
	the eigenvalues of $T + W$ in the interval
	$(\Lambda - \tau_-, \Lambda+\tau_+)$. If there are only finitely many $\lambda_k^+>\Lambda$ we set $\lambda_k^+=\Lambda$ for all larger $k\in\dN$, 
	the same convention is used for  $\lambda_k^-$.
In the next proposition we state double-sided
estimates of $\lm_k^\pm$ in terms of $\mu_k^\pm$,
assuming that either $W_- = 0$ or $W_+ = 0$.
\begin{prop}\cite[Proposition 2.2]{RP07}
	\label{prop:RP}
	Let $T$ and $W= W_+ - W_-$ be as above. Then the following holds.
	\begin{itemize}
	 \item[{\rm (i)}] If  $\rank(P_\Lm W_+ P_\Lm) = \infty$
	and $W_- = 0$  then the eigenvalues of $T+W$ accumulate to $\Lm$
	only from above and for $\eps > 0$ 
	there exists $\ell \in\dN$	such that 
	\[
		(1-\eps)\mu_{k+\ell}^+
		\le 
		\lm_k^+ - \Lm
		\le
		(1+\eps)\mu_{k-\ell}^+
	\]
	for all $k\in\dN$ sufficiently large.
	\item[{\rm (ii)}] If  $\rank(P_\Lm W_- P_\Lm) = \infty$
	and $W_+ = 0$  then the eigenvalues of $T+W$ accumulate to $\Lm$
	only from below and for $\eps > 0$ 
	there exists $\ell \in\dN$	such that
	\[(1-\eps)\mu_{k+\ell}^-
		\le 
		\Lm - \lm_k^- 
		\le
		(1+\eps)\mu_{k-\ell}^-
	\]
	for all $k\in\dN$ sufficiently large.
	\end{itemize}
\end{prop}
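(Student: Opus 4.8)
We would prove this following the Birman--Schwinger strategy of \cite{RP07}. Case~(ii) follows from case~(i) upon replacing the triple $(T,W,\Lambda)$ by $(-T,-W,-\Lambda)$, so it suffices to treat~(i); thus from now on $W=W_+\ge 0$, $W_-=0$, and we write $P:=P_\Lambda$, $Q:=I-P$. For a compact self-adjoint operator $K$ in $\cH$ and $t>0$ let $N(t;K)$ denote the number of eigenvalues of $K$ (counted with multiplicities) strictly larger than $t$, which is finite. Since $\Lambda$ is the only point of $\s(T)$ in $(\Lambda-2\tau_-,\Lambda+2\tau_+)$, the resolvent $(\lambda-T)^{-1}$ is bounded for $\lambda$ in this interval different from $\Lambda$, and one has the splitting
\begin{equation*}
  B(\lambda):=W^{1/2}(\lambda-T)^{-1}W^{1/2}=\tfrac{1}{\lambda-\Lambda}\,W^{1/2}PW^{1/2}+K(\lambda),\qquad K(\lambda):=W^{1/2}Q(\lambda-T)^{-1}QW^{1/2}.
\end{equation*}
Two facts are used repeatedly. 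First, since $T$ restricted to $\ran Q$ has no spectrum in $(\Lambda-2\tau_-,\Lambda+2\tau_+)$, the family of compact operators $\lambda\mapsto K(\lambda)$ is norm-continuous on this interval, in particular at $\lambda=\Lambda$, and $K(\Lambda)$ is a fixed compact operator. Second, $W^{1/2}PW^{1/2}=(PW^{1/2})^*(PW^{1/2})$ has the same non-zero eigenvalues as $PWP=P_\Lambda W_+P_\Lambda$, namely $\mu_1^+\ge\mu_2^+\ge\cdots$. By the Birman--Schwinger principle, for such $\lambda$ the point $\lambda$ is an eigenvalue of $T+W$ of multiplicity $m$ if and only if $1$ is an eigenvalue of $B(\lambda)$ of the same multiplicity, the bijection being $\psi\mapsto W^{1/2}\psi$.

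On each of the intervals $(\Lambda-2\tau_-,\Lambda)$ and $(\Lambda,\Lambda+2\tau_+)$ the map $\lambda\mapsto(\lambda-T)^{-1}$ is strictly decreasing, hence $\lambda\mapsto B(\lambda)$ is non-increasing; therefore each eigenvalue branch $\lambda\mapsto\lambda_j(B(\lambda))$ is continuous and non-increasing, so it crosses the level $1$ at most once, and downwards. Counting these downward crossings (noting that the branches blowing up as $\lambda\to\Lambda$ do so towards $-\infty$ on the interval to the left of $\Lambda$, and that $B(\lambda)$ stays bounded as $\lambda\uparrow\Lambda+\tau_+$) yields, for all but countably many small $s>0$,
\begin{equation*}
  \#\{\text{eigenvalues of }T+W\text{ in }(\Lambda-s,\Lambda)\}\le N\big(1;B(\Lambda-s)\big),\qquad \#\{k\colon\lambda_k^+>\Lambda+s\}=N\big(1;B(\Lambda+s)\big)-c_0,
\end{equation*}
where $c_0:=\lim_{s\uparrow\tau_+}N(1;B(\Lambda+s))<\infty$. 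For $\lambda=\Lambda-s<\Lambda$ the first summand of $B(\lambda)$ is $\le 0$, so $N(1;B(\Lambda-s))\le N(1;K(\Lambda-s))$ by the min--max principle, and this is bounded for all small $s>0$ by norm-continuity of $K$. Hence $T+W$ has only finitely many eigenvalues in $(\Lambda-\tau_-,\Lambda)$, i.e. $\lambda_k^-=\Lambda$ for all large $k$, so the accumulation can take place only from above.

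For the two-sided estimate we would invoke the Weyl--Ky~Fan inequality $N(s+t;A+C)\le N(s;A)+N(t;C)$ for $s,t>0$ and compact self-adjoint $A,C$. Fix $\delta\in(0,1)$. Norm-continuity of $K$ at $\Lambda$ gives $\ell_0\in\dN$ with $N(\delta;K(\Lambda+s))\le\ell_0$ and $N(\delta;-K(\Lambda+s))\le\ell_0$ for all small $s>0$; moreover $N(t;\tfrac1s W^{1/2}PW^{1/2})=\#\{j\colon\mu_j^+>st\}$. Applying the inequality to $B(\Lambda+s)=\tfrac1s W^{1/2}PW^{1/2}+K(\Lambda+s)$ — once in this form, once to $\tfrac1s W^{1/2}PW^{1/2}=B(\Lambda+s)+\big(-K(\Lambda+s)\big)$ — and using $\#\{k\colon\lambda_k^+>\Lambda+s\}=N(1;B(\Lambda+s))-c_0$, one obtains, with $\ell:=\ell_0+c_0$,
\begin{equation*}
  \#\{j\colon\mu_j^+>s(1+\delta)\}-\ell\ \le\ \#\{k\colon\lambda_k^+>\Lambda+s\}\ \le\ \#\{j\colon\mu_j^+>s(1-\delta)\}+\ell
\end{equation*}
for all small $s>0$. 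Reading these off on the ordered sequences — letting $s\uparrow\lambda_k^+-\Lambda$ in the right inequality gives $\mu_{k-\ell}^+\ge(1-\delta)(\lambda_k^+-\Lambda)$, letting $s\uparrow\mu_{k+\ell}^+/(1+\delta)$ in the left one gives $\lambda_k^+-\Lambda\ge\mu_{k+\ell}^+/(1+\delta)$ — and choosing $\delta$ so small that $(1-\delta)^{-1}\le 1+\eps$ and $(1+\delta)^{-1}\ge 1-\eps$, one reaches $(1-\eps)\mu_{k+\ell}^+\le\lambda_k^+-\Lambda\le(1+\eps)\mu_{k-\ell}^+$ for all sufficiently large $k$. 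Finally, when $\rank(P_\Lambda W_+P_\Lambda)=\infty$ infinitely many $\mu_k^+$ are positive, and the displayed lower bound then forces infinitely many $\lambda_k^+$ to exceed $\Lambda$, so the eigenvalues genuinely accumulate to $\Lambda$, necessarily from above.

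The routine ingredients are the Birman--Schwinger principle, the splitting of $B(\lambda)$, and the Weyl inequalities. The step demanding the most care is the crossing-count identity $\#\{k\colon\lambda_k^+>\Lambda+s\}=N(1;B(\Lambda+s))-c_0$: one must justify continuity and monotonicity of the eigenvalue branches of $B(\lambda)$ and carefully keep track of the branches that leave the window through the right endpoint $\Lambda+\tau_+$, which is exactly what the constant $c_0$ records; the isolatedness of the eigenvalue $\Lambda$ enters only through the norm-continuity of $\lambda\mapsto K(\lambda)$ near $\Lambda$, on which all the uniform-in-$s$ counting estimates rest.
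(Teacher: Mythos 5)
The paper does not give its own proof of this proposition; it simply cites \cite[Proposition 2.2]{RP07} (Pushnitski--Rozenblum) and uses it as a black box. So there is no ``paper's own proof'' to compare against; what you have produced is a blind reconstruction of the argument in \cite{RP07}.

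Your reconstruction is substantively correct and follows the same Birman--Schwinger route that the cited source takes: isolate the singular part of $(\lambda-T)^{-1}$ near $\Lambda$ via $B(\lambda)=\tfrac{1}{\lambda-\Lambda}W^{1/2}P_\Lambda W^{1/2}+K(\lambda)$, with $K(\lambda)$ norm-continuous (hence uniformly controlled) across $\Lambda$; use the Birman--Schwinger principle to convert eigenvalue counting for $T+W$ near $\Lambda$ into counting the level-$1$ crossings of the branches of $B(\lambda)$; and then apply Weyl's inequality for sums of compact operators to transfer between $B(\Lambda+s)$ and the leading term $\tfrac1s W^{1/2}P_\Lambda W^{1/2}$. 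The passage from the counting inequalities to the ordered-sequence inequalities is also handled correctly. Two small points are glossed over but do not affect correctness. First, you infer that each branch of $B(\lambda)$ ``crosses the level $1$ at most once'' from non-strict monotonicity; strictly speaking one must also rule out a branch sitting at exactly $1$ on a sub-interval, which is easy (it would give a continuum of eigenvalues of the self-adjoint operator $T+W$, contradicting discreteness of the spectrum in $(\Lambda,\Lambda+2\tau_+)$) but deserves a sentence, and your ``for all but countably many $s$'' caveat papers over this rather than addressing it. Second, $c_0:=\lim_{s\uparrow\tau_+}N(1;B(\Lambda+s))$ may fail to exist as a bona fide limit; one should take a limsup or simply a fixed $s$ close to $\tau_+$, which is all the argument actually needs.
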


\begin{remark}\label{remmipr}
If $\rank(P_\Lm W_+ P_\Lm) < \infty$ or $\rank(P_\Lm W_+ P_\Lm) < \infty$  in Proposition~\ref{prop:RP} then still the upper estimates 
\begin{equation*}
 \lm_k^+ - \Lm
		\le
		(1+\eps)\mu_{k-\ell}^+\quad\text{or}\quad \Lm - \lm_k^- 
		\le
		(1+\eps)\mu_{k-\ell}^-,
\end{equation*}
respectively,
for $k\in\dN$ sufficiently large remain valid. This follows from the proof of \cite[Proposition 2.2]{RP07}.
\end{remark}

In the following,
we denote 
by $\cN_\cI(A)$ the number
of eigenvalues of a self-adjoint operator $A$
in an interval $\cI\subset\dR\sm \sess(A)$ counted with multiplicities. The next
standard perturbation lemma will be useful. We state
it for the convenience of the reader.
\begin{lem}\label{lem:BS}
	\cite[\S 9.3, Theorem 3 and \S 9.4,  Lemma 3]{BS87}
	Let $C,D\in\sB(\cH)$ be self-adjoint operators 
	such that $V := D-C$ is compact with $\s(V) \subseteq [v_-,v_+]$. Let $\cI = (c_-,c_+)\subset\dR$ be an interval satisfying
	$\cI\cap\sess(C) = \varnothing$. Then the following
	hold.
	\begin{myenum}
			\item If $\rank V = r < \infty$, 
			then $\cN_\cI(C) \le \cN_\cI(D) + r$.
			\item If $\cI' := (c_- + v_-, c_+ + v_+)\cap\sess(C) = \varnothing$, 
			then $\cN_{\cI}(C) \le \cN_{\cI'}(D)$.
	\end{myenum}
\end{lem}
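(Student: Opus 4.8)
Below is how I would approach a proof of Lemma~\ref{lem:BS}.

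The plan is to deduce both statements from a single variational principle for the number of eigenvalues in a spectral gap, so that each assertion then costs only a couple of lines. The starting point is the remark that, since $V=D-C$ is compact, Weyl's theorem gives $\sess(D)=\sess(C)$; consequently, in case (i) the interval $\cI$, and in case (ii) the interval $\cI'$, avoids $\sess(D)$ as well, so that $\cN_\cI(D)$, respectively $\cN_{\cI'}(D)$, is well defined. The tool I would use is the Glazman-type identity
\[
 \dim\ran E_A\big((a,b)\big)=\max\big\{\dim L : L\subseteq\cH \text{ a subspace with } \big((A-a)(A-b)\phi,\phi\big)<0 \text{ for all } \phi\in L\setminus\{0\}\big\},
\]
valid for any bounded self-adjoint operator $A$ in $\cH$ and all $a<b$, understood in $\dN_0\cup\{\infty\}$. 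It follows from the classical Glazman lemma (the maximal dimension of a subspace on which a bounded self-adjoint operator $T$ is strictly negative equals $\dim\ran E_T((-\infty,0))$) applied to $T=(A-a)(A-b)$, together with the spectral mapping relation $E_T((-\infty,0))=E_A\big((a,b)\big)$; under the hypotheses of the lemma the left-hand side equals $\cN_{(a,b)}(A)$.

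For part (i) I would take $L:=\ran E_C(\cI)$, so that $\dim L=\cN_\cI(C)$ and, directly from the spectral theorem, $\big((C-c_-)(C-c_+)\phi,\phi\big)<0$ for all $\phi\in L\setminus\{0\}$; then I pass to $L_0:=L\cap\ker V$, which satisfies $\dim L_0\ge\dim L-r$ since $\ker V$ has codimension $\rank V=r$. For $\phi\in L_0$ one has $D\phi=C\phi$, hence $(D\phi,\phi)=(C\phi,\phi)$ and $(D^2\phi,\phi)=\|D\phi\|^2=\|C\phi\|^2=(C^2\phi,\phi)$, and therefore $\big((D-c_-)(D-c_+)\phi,\phi\big)=\big((C-c_-)(C-c_+)\phi,\phi\big)<0$ on $L_0\setminus\{0\}$. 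Applying the variational principle to $D$ on the interval $\cI$ yields $\cN_\cI(D)\ge\dim L_0\ge\cN_\cI(C)-r$, which is the assertion of (i).

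For part (ii) I would once more take $L:=\ran E_C(\cI)$ and set $m:=\tfrac12(c_-+c_+)$, $\rho:=\tfrac12(c_+-c_-)$, $v:=\tfrac12(v_-+v_+)$, $\rho_V:=\tfrac12(v_+-v_-)$; then $\cI'$ has midpoint $m+v$ and half-length $\rho+\rho_V$, with endpoints $a'=m+v-\rho-\rho_V$ and $b'=m+v+\rho+\rho_V$. For $\phi\in L\setminus\{0\}$ the spectral theorem gives $\|(C-m)\phi\|^2=\int_{\cI}(\lambda-m)^2\,\dd\|E_C(\lambda)\phi\|^2<\rho^2\|\phi\|^2$, the inequality being strict because $(\lambda-m)^2<\rho^2$ throughout the open interval $\cI=(m-\rho,m+\rho)$; on the other hand $\s(V)\subseteq[v_-,v_+]$ forces $\|(V-v)\phi\|\le\rho_V\|\phi\|$. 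Since $D-(m+v)=(C-m)+(V-v)$, the triangle inequality gives $\big\|\big(D-(m+v)\big)\phi\big\|\le\|(C-m)\phi\|+\|(V-v)\phi\|<(\rho+\rho_V)\|\phi\|$, and squaring this is exactly the statement $\big((D-a')(D-b')\phi,\phi\big)<0$ on $L\setminus\{0\}$. The variational principle applied to $D$ on $\cI'$ then yields $\cN_{\cI'}(D)\ge\dim L=\cN_\cI(C)$.

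I do not expect a serious obstacle; the argument is in essence bookkeeping once the Glazman-type identity is recorded. The two points that do require a little care are keeping the inequalities strict (so that the identity stays valid when the eigenvalue clusters are genuinely infinite, i.e. $\dim L=\infty$) and the verification that $\cI$ and $\cI'$ avoid $\sess(D)$, which is precisely where the compactness of $V$ is used. This is, of course, the content of the cited reference \cite{BS87}; the sketch above merely records the idea.
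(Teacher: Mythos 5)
The paper does not itself prove this lemma; it is quoted directly from Birman and Solomyak \cite{BS87}, so there is no in-paper argument to compare against. Your proof via the Glazman-type variational characterization $\cN_{(a,b)}(A)=\max\{\dim L : ((A-a)(A-b)\phi,\phi)<0 \text{ for all } \phi\in L\setminus\{0\}\}$ is correct and self-contained, and it is essentially the classical mechanism underlying the cited results: the points that require care --- Weyl's theorem giving $\sess(D)=\sess(C)$ so that $\cN_\cI(D)$ and $\cN_{\cI'}(D)$ are meaningful, the strictness of $\|(C-m)\phi\|<\rho\|\phi\|$ (which uses that $\cI$ is open), and the fact that the codimension bound $\dim(L\cap\ker V)\ge\dim L-r$ persists also when $\dim L=\infty$ --- are all addressed in your sketch.
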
	
The next proposition complements Proposition~\ref{prop:RP} and Remark~\ref{remmipr}. 
If the definiteness assumption  on $W$ is dropped then one still obtains one-sided estimates on $\lm_k^+ - \Lm$ and $\Lm - \lm_k^-$. 
\begin{prop}\label{prop:RPpm}
Let $T$ and $W= W_+ - W_-$ be as above. Then the following holds.
	\begin{itemize}
	 \item[{\rm (i)}] 
	 For $\eps > 0$ 
	there exists $\ell \in\dN$	such that 
\[
		\lm_k^+ - \Lambda \le
		(1+\eps)\mu_{k-\ell}^+
	\]
	for all $k\in\dN$ sufficiently large.
	\item[{\rm (ii)}] 
	For $\eps > 0$ 
	there exists $\ell \in\dN$	such that 
	\[		
		\Lambda - \lm_k^- \le
		(1+\eps)\mu_{k-\ell}^-
	\]
	for all $k\in\dN$ sufficiently large.
	\end{itemize}
\end{prop}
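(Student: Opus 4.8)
The plan is to deduce Proposition~\ref{prop:RPpm} from Proposition~\ref{prop:RP}(i)--(ii) together with the perturbation Lemma~\ref{lem:BS}, by sandwiching the perturbation $W$ between its one-sided pieces. I will only prove (i), since (ii) follows by applying (i) to $T-W$ in place of $T+W$ (equivalently, replacing $W$ by $-W$, which interchanges $W_+$ and $W_-$ and reflects the spectrum about $\Lambda$). Fix $\eps>0$.

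The key idea for (i) is the operator inequality $T+W \le T+W_+$, valid since $W_- \ge 0$. By the min-max principle this gives $\lm_k^+ \le \widetilde\lm_k^+$ for every $k$, where $\widetilde\lm_1^+ \ge \widetilde\lm_2^+\ge\cdots$ are the eigenvalues of $T+W_+$ in an interval of the form $(\Lambda-\tilde\tau_-,\Lambda+\tilde\tau_+)$ with $\tilde\tau_\pm$ chosen slightly smaller than $\tau_\pm$ so that the relevant gap condition still holds; here one uses that $\|W_+\|\le\|W\|$ can be made small by shrinking, or more simply that $W_+$ is compact so the perturbed spectrum near $\Lambda$ still consists of eigenvalues accumulating only at $\Lambda$. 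The operator $T+W_+$ is precisely of the form covered by Proposition~\ref{prop:RP}(i) when $\rank(P_\Lambda W_+ P_\Lambda)=\infty$, and by Remark~\ref{remmipr} in the case $\rank(P_\Lambda W_+ P_\Lambda)<\infty$; in either case there is $\ell\in\dN$ with $\widetilde\lm_k^+ - \Lambda \le (1+\eps)\mu_{k-\ell}^+$ for all large $k$ (note the eigenvalues $\mu_k^+$ of $P_\Lambda W_+ P_\Lambda$ appearing there are exactly those in our statement). Combining, $\lm_k^+-\Lambda \le \widetilde\lm_k^+ - \Lambda \le (1+\eps)\mu_{k-\ell}^+$ for all large $k$, which is the claim. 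One subtlety: one must make sure the eigenvalues $\lm_k^+$ of $T+W$ that lie in $(\Lambda,\Lambda+\tau_+)$ are not "lost" when passing to the smaller interval for $T+W_+$; this is handled by noting that only finitely many $\lm_k^+$ lie outside any neighbourhood $(\Lambda-\tilde\tau_-,\Lambda+\tilde\tau_+)$ of $\Lambda$ (they accumulate only at $\Lambda$, if at all), and such a finite shift is absorbed into the index shift $\ell$.

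An alternative, more quantitative route avoiding the shrinking of the interval is to invoke Lemma~\ref{lem:BS}(i) with the finite-rank truncation: for $\eta>0$ write $W_+ = W_+^{(r)} + R_r$ where $W_+^{(r)}$ has rank $r$ and $\|R_r\|<\eta$; then compare the counting functions $\cN_{(\Lambda+s,\Lambda+\tau_+)}(T+W)$ and $\cN_{(\Lambda+s,\Lambda+\tau_+)}(T+W_+)$ and translate the counting-function inequalities into inequalities between the ordered eigenvalues, exactly as in the proof of \cite[Proposition 2.2]{RP07}. This is really the mechanism already used there, so the cleanest write-up is to observe that the proof of Proposition~\ref{prop:RP}(i) only uses $W_+\ge W$ (not $W_-=0$) to obtain the upper bound, and hence the upper bound in (i) is immediate; this is essentially the content of Remark~\ref{remmipr} applied to $T+W_+$ after the comparison $\lm_k^+\le\widetilde\lm_k^+$.

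I expect the main obstacle to be purely bookkeeping rather than conceptual: carefully matching the two a priori different enumerations of eigenvalues near $\Lambda$ (those of $T+W$ versus those of $T+W_+$, each defined only inside its own shrinking interval) so that the index shift $\ell$ absorbing the finitely many "escaping" eigenvalues is uniform and independent of $k$. Once the comparison $\lm_k^+\le\widetilde\lm_k^+$ is set up correctly via min-max, the rest is a direct appeal to Proposition~\ref{prop:RP}(i) and Remark~\ref{remmipr}, and (ii) is obtained by the symmetry $W\mapsto -W$.
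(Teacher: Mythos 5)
Your primary route has a genuine gap at its first step. You write that the operator inequality $T+W \le T+W_+$ gives $\lm_k^+ \le \widetilde\lm_k^+$ for every $k$ "by the min-max principle," but here $\Lambda$ is an isolated eigenvalue of \emph{infinite} multiplicity, hence a point of the essential spectrum of both operators, and the ordered eigenvalues $\lm_k^+$ live \emph{above} $\Lambda$ inside a gap. The usual min-max monotonicity of ordered eigenvalues applies to eigenvalues below (or above) the essential spectrum; it simply does not hold for eigenvalues accumulating to a point of essential spectrum from one side. Indeed, $\lm_k^+ \le \widetilde\lm_k^+$ is already false for $k=1$: take $T$ with one eigenvalue $2$ (multiplicity one) and infinitely many eigenvalues $0$, set $\Lambda=0$, $\tau_\pm=1$, $W_+=0$, $W_-=\tfrac32 P_2$; then $T+W_+$ has no eigenvalue in $(0,1)$ so $\widetilde\lm_1^+=0$, while $T+W$ has the eigenvalue $\tfrac12$, so $\lm_1^+=\tfrac12>\widetilde\lm_1^+$. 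You acknowledge that "one subtlety" is the possible loss/gain of eigenvalues and propose to absorb it into the index shift $\ell$, but that absorption is exactly the content that needs proof: one must show that the number of eigenvalues that enter $(\Lambda,\Lambda+\tau_+)$ from above when $W_-$ is subtracted is finite and \emph{uniform in $k$}, and this is not automatic when $\|W_-\|$ may exceed $\tau_+$ — eigenvalues of $T+W_+$ in $[\Lambda+\tau_+,\Lambda+2\tau_+)$, of which there may be infinitely many a priori, can all be pushed into the relevant interval.

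This is precisely why the paper does not argue via min-max. It decomposes $W_-=F_-+R_-$ with $\rank F_-=r_0<\infty$ and $\sigma(R_-)\subseteq[0,\tau_+]$, then applies Lemma~\ref{lem:BS}\,(i) to absorb the finite-rank part into a shift by $r_0$, and Lemma~\ref{lem:BS}\,(ii) to the spectrally small part $R_-$ — the bound $\|R_-\|\le\tau_+$ is exactly what makes the shifted interval $(t,2\tau_+)$ avoid $\sess$, and the extra contribution from $[\tau_+,2\tau_+)$ is a fixed finite number $r_1$. Your "alternative route" is in the right spirit (finite-rank truncation plus counting functions) but truncates $W_+$, which is not the obstruction; it is $W_-$ that must be split into a finite-rank piece and a piece with small norm. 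With that correction, and Lemma~\ref{lem:BS} carrying the bookkeeping, you recover the paper's argument.
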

\begin{proof}
It suffices to prove item (i); the proof of (ii) is analogous. Moreover, it is no restriction to assume $\Lm=0$.   
	Throughout the proof
	we denote the eigenvalues
	in the interval
	$[0, \tau_+)$ of the operator $S_U = T + U$ 
	with a generic compact self-adjoint perturbation $U$ by
	\begin{equation}\label{nota2}
		\lm_1^+(S_U) \ge \lm_2^+(S_U) \ge \lm_3^+(S_U)\ge \dots \ge 0,
	\end{equation}
	which are repeated with multiplicities taken into account.

	Let us fix $\eps > 0$. Since $W_-$ is compact and nonnegative, it can be decomposed as
	$W_- = F_- + R_-$,
	where $\rank F_- = r_0 < \infty$ and
	the operator $R_-$ satisfies $\s(R_-) \subseteq [0,\tau_+]$.
	Hence, the operator $S_W=T+W$ can be written as
	\[
		S_W = T + W_+ - F_- - R_-.
	\]
	If $\rank(P_\Lm W_+ P_\Lm)= \infty$ then 
	Proposition~\ref{prop:RP}~(i) applies for the operator $S_{W_+}=T+W_+$ and yields
	\begin{equation}\label{esti1}
		\lm_k^+(S_{W_+})
		\le (1+\eps)	\mu_{k-\ell_0}^+
	\end{equation}
	for some $\ell_0\in\dN$ and all $k \in\dN$ sufficiently large; in the case $\rank(P_\Lm W_+ P_\Lm)< \infty$ the estimate \eqref{esti1} 
	follows from  Remark~\ref{remmipr}. Since the rank of $F_-$ is finite, 
	Lemma~\ref{lem:BS}\,(i) with $C = S_{W_+}$
	and $D = S_{W_+ - F_-}$ and \eqref{esti1} imply
	\begin{equation}\label{esti2}
		\lm_k^+(S_{W_+ - F_-})  
		\le 
		\lm_{k - r_0}^+(S_{W_+}) 
		\le
		(1+\eps)\mu_{k-\ell_1}^+	
	\end{equation}
   for $\ell_1 := \ell_0 + r_0$ and all $k \in\dN$ sufficiently large.
	Further, we set
	\begin{equation}\label{r1}
		r_1 := \cN_{[\tau_+, 2\tau_+)}(S_{W_+ - F_-})\in\dN_0.
	\end{equation}
	Note that the operator $S_W$ can be decomposed as $S_W = S_{W_+ - F_-} - R_-$. Now we apply Lemma~\ref{lem:BS}\,(ii) with $C = S_W$, $D=S_{W_+ - F_-}$,
	$V=R_-$, $[v_-,v_+]=[0,\tau_+]$ and $\cI=(t,\tau_+)$ for $t \in (0,  \tau_+)$, and conclude together with \eqref{r1} that
	\[
		\cN_{(t, \tau_+)}(S_W)
		\le 
		\cN_{(t,  2\tau_+)}(S_{W_+ - F_-})
		=
		\cN_{(t, \tau_+)}(S_{W_+ - F_-})
		+ r_1.
	\]
	Since we only consider eigenvalues in the interval $[0,\tau_+)$ (see \eqref{nota1} and \eqref{nota2}) this estimate and \eqref{esti2} 
	with $\ell := \ell_1 + r_1$ lead to
	\[
		\lm_k^+(S_W) 
		\le
		\lm_{k-r_1}^+(S_{W_+ - F_-}) 
		\le
		(1+\eps)\mu_{k-\ell}^+
	\]		
	for all $k\in\dN$ sufficiently large.
\end{proof}
The last proposition of this subsection characterizes
the total variation of the discrete spectrum under a trace class perturbation.
\begin{prop}\cite[Corollary 5.1.2]{DKH13}\label{prop:H}
	Let $C, D\in \sB(\cH)$ be self-adjoint operators 
	such that $D-C\in\sS_1(\cH)$.
	Then 
	\[
		\sum_{\lm\in\sd(C)} \text{\rm dist}\,(\lm,\s(D)) < 	\infty.
	\]
\end{prop}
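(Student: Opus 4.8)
The plan is to reduce the statement to two classical ingredients: stability of the essential spectrum under compact perturbations, and Kato's theorem on the variation of the discrete spectrum under trace class perturbations (an equivalent route uses the Krein spectral shift function). First, since $V:=D-C\in\sS_1(\cH)$ is compact, Weyl's theorem gives $\sess(C)=\sess(D)=:\Sigma$; as $C$ and $D$ are bounded, $\Sigma$ is a compact subset of $\dR$ and $\dR\setminus\Sigma$ is a countable disjoint union of open intervals (``gaps''). Every $\lambda\in\sd(C)$ lies in one of these gaps, and inside a gap the eigenvalues of $C$ are isolated, of finite multiplicity, and can accumulate only at the endpoints of that gap; since those endpoints lie in $\Sigma\subseteq\s(D)$, it suffices to bound $\sum_{\lambda\in\sd(C)\cap I}\mathrm{dist}(\lambda,\s(D))$ for each gap $I$, additively over all gaps.

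Next, in a fixed gap $I$ I would choose a point $c\in I\setminus(\s(C)\cup\s(D))$ and enumerate, with multiplicities, the eigenvalues of $C$ in $I$ on each side of $c$, ordered toward the endpoints of $I$ — say $\lambda_1^{+}\ge\lambda_2^{+}\ge\cdots$ above $c$ and $\lambda_1^{-}\le\lambda_2^{-}\le\cdots$ below $c$ — padding each list with the value of the corresponding endpoint of $I$ once it is exhausted, and similarly defining sequences $\mu_k^{\pm}$ from the spectrum of $D$. (Sides of an unbounded gap extending to $\pm\infty$ contain only finitely many eigenvalues of $C$, because $C$ is bounded, and are harmless.) The core estimate is
\[
\sum_{\text{gaps }I}\ \sum_{k\in\dN}\bigl(|\lambda_k^{+}-\mu_k^{+}|+|\lambda_k^{-}-\mu_k^{-}|\bigr)\ \le\ \|V\|_{\sS_1},
\]
which one obtains by interpolating $C_t:=C+tV$, $t\in[0,1]$: the gap-eigenvalues of $C_t$ are piecewise real-analytic in $t$, their derivatives where defined equal diagonal matrix elements of $V$ in an orthonormal system, so the sum of their absolute values over all gap-eigenvalues and all gaps is at most $\mathrm{tr}\,|V|=\|V\|_{\sS_1}$, and integrating in $t$ yields the bound. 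I expect the main obstacle to be the continuity bookkeeping that makes this step rigorous — checking that the padding convention keeps each $\lambda_k^{\pm}(C_t)$ continuous in $t$ as eigenvalues cross the reference point $c$; this is exactly Kato's theorem on the variation of discrete spectra, supplied in \cite{DKH13}.

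Finally I would conclude: for every gap and every $k$ one has $\mathrm{dist}(\lambda_k^{+},\s(D))\le|\lambda_k^{+}-\mu_k^{+}|$, since $\mu_k^{+}$ is either a genuine eigenvalue of $D$ or an endpoint of $I$ inserted by the padding rule, and in the latter case that endpoint lies in $\Sigma\subseteq\s(D)$ (the padded terms with $\lambda_k^{+}$ equal to the endpoint contributing $0$ on both sides); the analogous inequalities hold below $c$. Summing over $k$ and over all gaps and invoking the displayed estimate gives $\sum_{\lambda\in\sd(C)}\mathrm{dist}(\lambda,\s(D))\le\|V\|_{\sS_1}<\infty$. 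As an alternative to the interpolation argument, I would note that these matched sums equal $\int_I|\xi|$ by the integer-valued staircase structure of the spectral shift function $\xi(\cdot\,;D,C)$ in gaps, so the conclusion also follows from Krein's bound $\|\xi\|_{L^1(\dR)}\le\|V\|_{\sS_1}$.
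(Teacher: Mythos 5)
The paper does not prove this proposition; Proposition~\ref{prop:H} is imported verbatim from \cite[Corollary 5.1.2]{DKH13}, and the surrounding text merely notes that it is a convenient variant of Kato's theorem \cite[Theorem~II]{K87}. There is therefore no in-paper proof to compare against. Your proposal is a reasonable reconstruction of the argument behind Kato's theorem, and the skeleton is sound: Weyl stability of $\sess$ under the compact perturbation $V$, localization of $\sd(C)$ into the gaps of $\Sigma=\sess(C)=\sess(D)$, matched enumeration of gap-eigenvalues of $C$ and $D$ with endpoint padding, the $\sS_1$ estimate obtained by integrating the derivative $\dot\lm_k(t)=(Vu_k(t),u_k(t))$ along the path $C_t=C+tV$ and using $\sum_k|(Ve_k,e_k)|\le\mathrm{tr}\,|V|$ for any orthonormal system $\{e_k\}$, and finally $\mathrm{dist}(\lm_k^\pm,\s(D))\le|\lm_k^\pm-\mu_k^\pm|$ because each $\mu_k^\pm$ is either an eigenvalue of $D$ or a gap endpoint, which lies in $\Sigma\subset\s(D)$. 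The alternative via $\int_I|\xi|$ and Krein's $L^1$-bound on the spectral shift function is also a legitimate route.

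Two small remarks. First, your enumeration direction is written backwards: eigenvalues in $(c,b)$ can only accumulate at the endpoint $b\in\Sigma$, so they must be listed increasingly toward $b$ (i.e.\ $\lm_1^+\le\lm_2^+\le\cdots$, padded with $b$), not decreasingly as stated; the analogous correction applies on $(a,c)$. The decreasing order you wrote would require a largest eigenvalue in $(c,b)$, which need not exist. Second, you correctly flag that the genuine work is the ``continuity bookkeeping'' — showing that a labeling of the $\lm_k^\pm(t)$ can be chosen which is piecewise real-analytic and continuous across events where eigenvalues pass through the reference point $c$, emerge from, or are absorbed by an endpoint — and you defer this to \cite{DKH13}. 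This is honest and consistent with the paper's own treatment, which likewise refers to \cite{DKH13}; but one should be clear that, without that lemma supplied, what you have is an outline of the proof strategy behind Kato's theorem rather than a self-contained proof.
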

The above proposition is a variant of an older theorem by T.~Kato~\cite[Theorem~II]{K87}. In this form,
the statement is particularly convenient to apply
for perturbed Landau Hamiltonians.

\subsection{A class of Toeplitz-type operators}
\label{section_Toeplitz}
In this subsection we define and recall properties of Toeplitz-type operators related to Landau
Hamiltonians. 
In the following let $\Sigma$ be the boundary of a bounded $C^{1,1}$-domain $\Omega\subset\dR^2$ and let $\G\subset\Sigma$ be a closed subset of $\Sg$. Note that 
$\G$  and $\Sigma$ are both compact subsets of $\dR^2$.
In particular, $\G$ can be a subarc of $\Sigma$ with two endpoints, a union of finitely many such subarcs,
or coincide with $\Sigma$. The latter three geometric settings are of particular importance for our considerations.
In fact, in our applications $\G$ is typically
the essential support of the strength $\aa \in L^\infty(\Sigma)$ of the $\dl$-interaction 
for the Hamiltonian $\Op$. Recall that the (essential) support of $\aa$ is a closed subset of $\Sg$
uniquely defined by
\[
\supp \aa := \Sg \sm\bigcup
\big\{\s\subset\Sg\colon \s~\text{is open and}~~~
~~~\aa = 0~\text{a.e. in}~\s\big\};
\]
cf.~\cite[Section 1.5]{LL01}.
We introduce the Hilbert space $L^2(\G)$
with the usual inner product $(\cdot,\cdot)_{L^2(\G)}$, defined by means of
the natural arc-length measure on $\Sg$ restricted to $\G$. We denote by $|\G|$ the arc-length measure of $\G$, that is, the length of $\G$.		
Corollary~\ref{corollary_Sobolev_spaces} implies that the trace mapping
$\cH^1_\bA(\dR^2)\ni u \mapsto u|_\G \in L^2(\G)$
is well defined and bounded.

We denote by $P_q\colon L^2(\dR^2)\arr L^2(\dR^2)$, $q\in\dN_0$, the orthogonal projection onto the
spectral subspace corresponding to the eigenvalue $\Lm_q = B(2q+1)$ of the Landau Hamiltonian $\Opf$; cf. Proposition~\ref{proposition_Landau_level}.
Following the lines of~\cite[Section 4]{RP07},
we introduce a family of Toeplitz-type operators, 
which correspond to the formal product $P_q\dl_\G P_q$.
\begin{prop}
	For all $q\in\dN_0$ the symmetric sesquilinear form
	\begin{equation}\label{eq:form_Tq}
		\frt_q^\G[f,g] 
		:= 
		\big( (P_q f)|_\G, (P_q g)|_\G\big)_{L^2(\G)},
		\quad
		\dom\frt_q^\G = L^2(\dR^2),
	\end{equation}
	is well defined and bounded.
\end{prop}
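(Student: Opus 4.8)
The plan is to show that the form $\frt_q^\G$ in \eqref{eq:form_Tq} is well defined and bounded by factoring it through the bounded trace map and the bounded projection $P_q$. First I would observe that for any $f\in L^2(\dR^2)$ the function $P_q f$ lies in the range of $P_q$, which is the eigenspace $\ker(\Opf-\Lm_q)\subset\dom\Opf=\cH^2_\bA(\dR^2)\subset\cH^1_\bA(\dR^2)$. Hence the magnetic Sobolev trace theorem is applicable to $P_q f$: by Corollary~\ref{corollary_Sobolev_spaces} the trace $(P_q f)|_\Sg$ exists in $L^2(\Sg)$, and its restriction $(P_q f)|_\G$ to the closed subset $\G\subset\Sg$ clearly belongs to $L^2(\G)$ with $\|(P_q f)|_\G\|_{L^2(\G)}\le\|(P_q f)|_\Sg\|_{L^2(\Sg)}$. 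Thus the expression defining $\frt_q^\G[f,g]$ makes sense for all $f,g\in L^2(\dR^2)$, i.e. the form is well defined on $\dom\frt_q^\G=L^2(\dR^2)$.

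Next I would establish boundedness by chaining three bounded maps. Let $C_1$ denote the norm of the trace operator $\cH^1_\bA(\dR^2)\ni u\mapsto u|_\Sg\in L^2(\Sg)$ from Corollary~\ref{corollary_Sobolev_spaces}. For $u\in\ran P_q$ one has, since $\Opf u=\Lm_q u$ and hence $\nbA^2 u=\Lm_q u$,
\begin{equation*}
\|u\|_{\cH^1_\bA(\dR^2)}^2
=\|u\|_{L^2(\dR^2)}^2+\|\nbA u\|_{L^2(\dR^2;\dC^2)}^2
=\|u\|_{L^2(\dR^2)}^2+(\nbA^2 u,u)_{L^2(\dR^2)}
=(1+\Lm_q)\|u\|_{L^2(\dR^2)}^2,
\end{equation*}
where the middle step is the integration-by-parts identity $\|\nbA u\|^2=(\Opf u,u)$ valid on $\dom\Opf$. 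Applying this with $u=P_q f$ and using $\|P_q f\|_{L^2(\dR^2)}\le\|f\|_{L^2(\dR^2)}$, I obtain
\begin{equation*}
\big\|(P_q f)|_\G\big\|_{L^2(\G)}
\le\big\|(P_q f)|_\Sg\big\|_{L^2(\Sg)}
\le C_1\|P_q f\|_{\cH^1_\bA(\dR^2)}
\le C_1\sqrt{1+\Lm_q}\,\|f\|_{L^2(\dR^2)}.
\end{equation*}
Then the Cauchy--Schwarz inequality in $L^2(\G)$ gives
\begin{equation*}
\big|\frt_q^\G[f,g]\big|
\le\big\|(P_q f)|_\G\big\|_{L^2(\G)}\big\|(P_q g)|_\G\big\|_{L^2(\G)}
\le C_1^2(1+\Lm_q)\,\|f\|_{L^2(\dR^2)}\|g\|_{L^2(\dR^2)},
\end{equation*}
which is precisely boundedness of the form with bound $C_1^2(1+\Lm_q)$. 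Symmetry is immediate from the symmetry of the inner product on $L^2(\G)$.

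I do not expect any genuine obstacle here: the statement is essentially a bookkeeping exercise combining the already-proven magnetic trace estimate (Corollary~\ref{corollary_Sobolev_spaces}), the smoothness of Landau eigenfunctions (they lie in $\dom\Opf$, so the trace map applies), and the trivial restriction inequality $\|\cdot\|_{L^2(\G)}\le\|\cdot\|_{L^2(\Sg)}$ for $\G\subset\Sg$. The only point requiring a line of care is the identity $\|\nbA u\|^2=(\Opf u,u)$, which legitimizes replacing the $\cH^1_\bA$-norm of an eigenfunction by a multiple of its $L^2$-norm; alternatively one could bypass this by noting that $P_q$ is bounded from $L^2(\dR^2)$ into $\cH^1_\bA(\dR^2)$ — indeed into $\cH^2_\bA(\dR^2)$ — by the closed graph theorem, since $\ran P_q\subset\dom\Opf$ and $P_q$ commutes with $\Opf$. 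Either route yields the continuity of $L^2(\dR^2)\ni f\mapsto(P_q f)|_\G\in L^2(\G)$, from which the claim follows.
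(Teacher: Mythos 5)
Your proposal is correct and follows essentially the same route as the paper: both apply the magnetic trace estimate from Corollary~\ref{corollary_Sobolev_spaces}, combine it with the identity $\|\nbA P_q f\|_{L^2(\dR^2;\dC^2)}^2 = (\Opf P_q f, P_q f)_{L^2(\dR^2)} = \Lm_q \|P_q f\|_{L^2(\dR^2)}^2$ coming from the first representation theorem, and conclude a bound of the form $\frt_q^\G[f,f]\le c'\|P_q f\|_{L^2(\dR^2)}^2$. The only cosmetic difference is that you package the trace estimate as a single constant $C_1$ times the $\cH^1_\bA$-norm rather than in the $\varepsilon$-parametrized form the paper uses, but the substance is identical.
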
	
\begin{proof}
	Note that for any $f \in L^2(\dR^2)$ we have
	\begin{equation*}
	 \frt_q^\G[f,f] =\|(P_q f)|_\Gamma\|^2_{L^2(\G)} \le \|(P_q f)|_\Sg\|^2_{L^2(\Sg)} \le \varepsilon \|\nbA P_q f\|^2_{L^2(\dR^2)} 
	 + c(\varepsilon) \|P_q f\|_{L^2(\dR^2)}^2	 
	\end{equation*}
	with $\varepsilon>0$ and $c(\varepsilon)>0$ by Corollary~\ref{corollary_Sobolev_spaces}. Using \eqref{def_form_free_Op} and the first representation theorem 
	we find
\begin{equation*}
  \|\nbA P_q f\|_{L^2(\dR^2)}^2=\fra_0[P_q f,P_q f] = (\Opf P_qf,P_qf)_{L^2(\dR^2)} =\Lambda_q\Vert P_q f\Vert^2_{L^2(\dR^2)}, 
\end{equation*}
and hence $\frt_q^\G[f,f]\leq c'(\varepsilon) \|P_q f\|_{L^2(\dR^2)}^2$ for some  $c'(\varepsilon)>0$. This implies that the symmetric sesquilinear form
$\frt_q$ is  well defined and bounded.
\end{proof}	
The Toeplitz-type operators we are interested in can now be defined.

\begin{dfn}\label{def:Tn} For $q\in\dN_0$
    the bounded self-adjoint operator in $L^2(\dR^2)$ associated with the form
    $\frt_q^\G$ in~\eqref{eq:form_Tq} is denoted by $T_q^\G$.
\end{dfn}

Note that $T_q^\G = T_q^{\G'}$ for closed subsets $\Gamma,\Gamma'\subset\Sigma$ that satisfy $|(\G\sm\G')\cup(\G'\sm\G)| = 0$
and that $T_q^\G = 0$ if $|\G| = 0$.
Certain fundamental spectral properties of such Toeplitz-type operators were obtained in~\cite{FP06, RP07}. The  operators $T_q^\G$ can be viewed as variants 
of a better studied class of
Toeplitz operators
$P_q V P_q$, where $V\colon\dR^2\arr\dR$ is a regular function~\cite{FP06, RP07, PR11, RT08}. Very roughly speaking in our considerations the 
$\dl$-distribution supported on $\G$ plays the role of $V$.
Before we provide some properties of $T_q^\G$ which are
essential for our considerations we first introduce a notion from 
potential theory, see~\cite[\S II.4]{La72},~\cite[Appendix A.VIII]{ST10}, and~\cite[\S III.1]{GM05}.
\begin{dfn}
	The \emph{logarithmic energy} of a measure $\mu\ge0$  on $\dR^2$
	is given  by
	\[
	    I(\mu) := \int_{\dR^2}\int_{\dR^2}    \ln\frac{1}{|x-y|}
	    \dd \mu(x)\dd \mu(y).
	\]
	The \emph{logarithmic capacity} of a compact set $\cK\subset\dR^2$ is defined by
	\[
	    {\rm Cap}\,(\cK) := 
	    \sup\big\{e^{-I(\mu)}\colon \mu \ge 0~\text{measure on}~\dR^2,\,
	    {\rm supp}\,\mu\subset\cK,\, \mu(\cK) = 1    \big\}.
	\]
\end{dfn}	

It is well known (see, e.g., \cite[\S~III]{GM05}) that the supremum in the definition of the logarithmic capacity is in fact a maximum.
This maximum is attained by the so-called  {\it equilibrium measure}.
In the next proposition we collect some useful
properties of the logarithmic capacity.	
\begin{prop}\cite[\S III]{GM05}
	\label{prop:cap}
		Let $\cK,\cL\subset\dR^2$ be compact sets, let $\eta>0$ and consider the compact set 
		$U_\eta(\cK) := \{x\in\dR^2\colon \text{\rm dist}\,(x,\cK)\le \eta\}$.		Then the following holds.
		\begin{myenum}
			\item ${\rm Cap}\,(\cK)\le {\rm Cap}\,(\cL)$ if $\cK\subset\cL$.
			\item ${\rm Cap}\,(U_\eta(\cK))\arr {\rm Cap}\,(\cK)$ as $\eta \arr 0^+$.
		\end{myenum}	
\end{prop}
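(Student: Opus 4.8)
The plan is to treat the two monotonicity/continuity statements separately, both relying only on the variational characterization of ${\rm Cap}$ via the logarithmic energy $I(\mu)$ and on elementary properties of probability measures with prescribed support.

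\emph{Monotonicity.} For (i), suppose $\cK\subset\cL$. Every probability measure $\mu\ge 0$ with ${\rm supp}\,\mu\subset\cK$ is also a probability measure with ${\rm supp}\,\mu\subset\cL$, so the admissible set in the supremum defining ${\rm Cap}\,(\cK)$ is contained in that defining ${\rm Cap}\,(\cL)$. Taking suprema gives ${\rm Cap}\,(\cK)\le{\rm Cap}\,(\cL)$ immediately. (In the degenerate case where ${\rm Cap}\,(\cK)=0$ there is nothing to prove.)

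\emph{Continuity from outside.} For (ii), first note that $U_\eta(\cK)$ is a decreasing family of compact sets as $\eta\downarrow 0$ with $\bigcap_{\eta>0}U_\eta(\cK)=\cK$, so by (i) the numbers ${\rm Cap}\,(U_\eta(\cK))$ are nonincreasing and bounded below by ${\rm Cap}\,(\cK)$; hence the limit $L:=\lim_{\eta\to 0^+}{\rm Cap}\,(U_\eta(\cK))$ exists and satisfies $L\ge{\rm Cap}\,(\cK)$. For the reverse inequality I would pick, for each $\eta>0$, the equilibrium measure $\mu_\eta$ of $U_\eta(\cK)$ (which exists, as noted after the definition), so that $I(\mu_\eta)=\ln\frac1{{\rm Cap}\,(U_\eta(\cK))}$. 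The family $\{\mu_\eta\}_{0<\eta\le 1}$ consists of probability measures supported in the fixed compact set $U_1(\cK)$, hence is weak-$*$ precompact; extract a sequence $\eta_j\to 0$ with $\mu_{\eta_j}\xrightarrow{w^*}\mu$. Then ${\rm supp}\,\mu\subset\bigcap_j U_{\eta_j}(\cK)=\cK$ and $\mu$ is a probability measure, so $\mu$ is admissible for ${\rm Cap}\,(\cK)$. The logarithmic kernel $\ln\frac1{|x-y|}$ is lower semicontinuous and bounded below on the compact set $U_1(\cK)\times U_1(\cK)$, so the energy functional is weak-$*$ lower semicontinuous on product measures, giving $I(\mu)\le\liminf_j I(\mu_{\eta_j})=\ln\frac1L$. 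Therefore ${\rm Cap}\,(\cK)\ge e^{-I(\mu)}\ge L$, and combined with the trivial bound we obtain ${\rm Cap}\,(\cK)=L$, which is the claim.

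\emph{Main obstacle.} The only nonroutine point is the lower semicontinuity of $\mu\mapsto I(\mu)$ under weak-$*$ convergence; this is standard potential theory (it follows from writing $\ln\frac1{|x-y|}$ as an increasing limit of bounded continuous functions $\min\{\ln\frac1{|x-y|},N\}$ and applying monotone convergence together with weak-$*$ convergence of the product measures $\mu_{\eta_j}\otimes\mu_{\eta_j}\to\mu\otimes\mu$), so in the write-up I would simply cite \cite[\S III]{GM05} for it rather than reproduce the argument. In fact, since both assertions are textbook results, the cleanest route is to cite \cite[\S III]{GM05} for the whole proposition; the sketch above records the argument for completeness.
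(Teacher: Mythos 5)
The paper gives no proof of this proposition; it simply cites \cite[\S III]{GM05}, which is also the route you recommend in your closing remark. Your supplementary proof sketch—inclusion of admissible measures for~(i), and for~(ii) the squeeze via equilibrium measures, weak-$*$ precompactness on the fixed compact $U_1(\cK)$, the support-containment argument, and lower semicontinuity of the logarithmic energy—is correct and is the standard potential-theoretic argument.
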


Using the notion of logarithmic capacity of $\G$ one gets
an asymptotic upper bound on the singular values of $T_q^\G$
and even exact asymptotics for them, provided that
$\G$ is smooth. Note that the singular values of $T_q^\G$
coincide with its eigenvalues since $T_q^\G$
is a self-adjoint nonnegative operator. 
Item (i) in the next proposition can be seen as consequence of~\cite[Proposition 4.1\,(i)]{RP07}.
	For the convenience of the reader we provide a short proof. Item (ii) coincides with~\cite[Proposition 4.1\,(ii)]{RP07}.
\begin{prop}
    \label{prop:Toeplitz}
    Let $\G\subset\Sigma$ be a closed subset with $|\G| > 0$.
    Then the self-adjoint Toeplitz-type operator $T_q^\G$, $q\in\dN_0$, in Definition~\ref{def:Tn}
    is compact and its singular values satisfy:
    \begin{myenum}
        \item
        $\limsup_{k\arr\infty}\big(k!\, s_k(T_q^\G)\big)^{1/k}
        \le \frac{B}{2}\big({\rm Cap}\,(\G)\big)^2$;
        \item
        $\lim_{k\arr\infty}\big(k!\, s_k(T_q^\G)\big)^{1/k}
        =
        \frac{B}{2}\big({\rm Cap}\,(\G)\big)^2$
        if, in addition, $\G$ is a $C^\infty$-smooth arc with two endpoints. In particular, the operator $T_q^\G$ is of infinite rank.
    \end{myenum}
\end{prop}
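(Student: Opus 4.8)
The plan is to connect the Toeplitz-type operator $T_q^\G$ with a classical object whose singular-value asymptotics are already known in the literature, namely the Toeplitz operator obtained by compressing multiplication (or, here, the $\delta$-distribution on $\G$) to a single Landau level. First I would observe that $T_q^\G$ is compact: indeed, by Corollary~\ref{corollary_Sobolev_spaces} the trace map $\cH^1_\bA(\dR^2)\ni u\mapsto u|_\Sg\in L^2(\Sg)$ is bounded, and since $P_q$ maps $L^2(\dR^2)$ into $\ker(\Opf-\Lm_q)\subset\cH^2_\bA(\dR^2)$, the composition $u\mapsto (P_q u)|_\Sg$ actually has range in a Sobolev space $H^{s}(\Sg)$ with $s>0$ on the curve; Proposition~\ref{proposition_singular_values} then shows this map lies in some weak Schatten class, in particular it is compact, and hence so is $T_q^\G$, being (a restriction of) its square. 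Alternatively, compactness also follows at once since $T_q^\G$ factors through the compact embedding of $H^{1/2}(\Sg)$ (or $H^{1/2}(\G)$) into $L^2(\G)$.

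For the asymptotics, the key is a unitary identification of $\ran P_q$ with a weighted Fock/Bargmann-type space, under which $T_q^\G$ becomes unitarily equivalent to a Berezin--Toeplitz operator with symbol the arc-length measure on $\G$. This is exactly the setup of~\cite{FP06, RP07}: Proposition~4.1 of~\cite{RP07} gives, for a compact set $\G$, the upper bound $\limsup_{k\to\infty}(k!\,s_k(T_q^\G))^{1/k}\le \frac{B}{2}({\rm Cap}\,(\G))^2$, and the matching lower bound when $\G$ is a $C^\infty$-smooth arc. So for item~(ii) I would simply invoke~\cite[Proposition 4.1(ii)]{RP07} after checking that our $T_q^\G$ coincides with the operator treated there (the only point to verify is that the arc-length measure on $\G$ and the normalization of the Landau projection match the conventions of~\cite{RP07}); once $\lim_{k\to\infty}(k!\,s_k(T_q^\G))^{1/k}$ equals a strictly positive number, it follows immediately that infinitely many $s_k(T_q^\G)$ are nonzero, i.e.\ $T_q^\G$ has infinite rank.

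For item~(i), to make the exposition self-contained I would give the short argument for the upper bound. The idea is a monotonicity/enclosure step: enclose $\G$ in a thin tubular neighbourhood $U_\eta(\G)$, so that the $\delta$-measure on $\G$ is dominated (after a change of variables and using the smoothness of the single layer structure, or directly by a quadratic-form comparison $\frt_q^\G[f,f]\le C_\eta\,(\chi_{U_\eta(\G)}P_qf,P_qf)_{L^2(\dR^2)}$ with $C_\eta\sim \eta^{-1}$) by a constant times the Toeplitz operator $P_q\chi_{U_\eta(\G)}P_q$ with a bounded (indicator) symbol. For the latter the singular-value asymptotics of the form $\limsup(k!\,s_k)^{1/k}\le \frac{B}{2}({\rm Cap}\,(U_\eta(\G)))^2$ are classical~\cite{FP06,RP07}; the factor $C_\eta$ only contributes a term $C_\eta^{1/k}\to 1$ and so does not affect the limit. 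Then let $\eta\to 0^+$ and use Proposition~\ref{prop:cap}(ii), ${\rm Cap}\,(U_\eta(\G))\to{\rm Cap}\,(\G)$, to conclude. I expect the main obstacle to be the comparison step producing the correct capacity: one must ensure that replacing the singular measure on $\G$ by the (renormalized) indicator of its $\eta$-neighbourhood does not enlarge the capacity beyond ${\rm Cap}\,(U_\eta(\G))$ and that the $\eta$-dependent constant is harmless in the $k$-th root limit — this is exactly why the $(k!)^{1/k}$ normalization and the exponential-type decay of Toeplitz eigenvalues (governed by capacity via the Nevanlinna--Szegő type asymptotics in~\cite{FP06}) are needed, and it is the one place where a little care with the literature results is required.
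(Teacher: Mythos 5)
Your plan follows the paper's proof essentially step for step: enclose $\Gamma$ in a thin tubular neighbourhood $U_\eta(\Gamma)$, establish an operator bound $T_q^\Gamma \le c(\eta)\,P_q\chi_{U_\eta}P_q$, invoke \cite[Proposition~4.1(i)]{RP07} for the bounded-symbol Toeplitz operator (the $\eta$-dependent constant being harmless in the $k$-th root), let $\eta\to 0^+$ via Proposition~\ref{prop:cap}(ii), and cite \cite[Proposition~4.1(ii)]{RP07} for item~(ii). The one step you left vague — the quadratic-form comparison dominating the trace term by $\|P_q f\|^2_{L^2(U_\eta)}$ — is precisely where the paper applies the diamagnetic trace estimate of Corollary~\ref{corollary_Sobolev_spaces} to $\omega P_q f$, for a smooth cut-off $\omega$ equal to $1$ on $\Gamma$ and supported in $U_\eta$, and then an IMS-type localization to absorb $\|\nabla_{\mathbf A}(\omega P_q f)\|^2$ into $\|P_q f\|^2_{L^2(U_\eta)}$.
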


\begin{proof} 
(i)  Denote by  $U_\eta:=U_\eta(\G)\subset\dR^2$
     the $\eta$-neighborhood of $\G$ for $\eta > 0$ as in Proposition~\ref{prop:cap}
     and fix a cut-off function $\omg \in C^\infty_0(\dR^2)$, $0\le 
\omg\le 1$, such that $\omg \equiv 1$
     on $\G$ and $\omg \equiv 0$ on $\dR^2\sm U_{\eta}$.

     For $f \in L^2(\dR^2)$ the function $\omg P_q f$ belongs to $\dom\Opf$ and by Corollary~\ref{corollary_Sobolev_spaces}
     we have
     \begin{equation}\label{eq:tq_bnd}
     \begin{aligned}
     \frt_q^\G[f,f]  =\|(P_qf)|_\G\|^2_{L^2(\G)}
     & =
     \|(\omg P_q f)|_\G\|^2_{L^2(\G)} \\
     & \le
     \varepsilon \|\nbA\omg P_q f\|^2_{L^2(\dR^2;\dC^2)}
     +c(\varepsilon)\| \omg P_q f\|^2_{L^2(\dR^2)}\\
     & \le
     \varepsilon \|\nbA\omg P_q f\|^2_{L^2(\dR^2;\dC^2)}
         +c(\varepsilon)\| P_q f\|^2_{L^2(U_\eta)}
     \end{aligned}
     \end{equation}
     for $\varepsilon>0$ and suitable $c(\varepsilon)>0$.
     For $f \in L^2(\dR^2)$ it follows from \cite[Proposition 4.2]{Ra} that 
     \begin{equation}\label{eq:IMS_bnd}
     \begin{aligned}
         \|\nbA\omg P_qf\|_{L^2(\dR^2;\dC^2)}^2
         & =
         ( \Opf P_qf, \omg^2 P_qf)_{L^2(\dR^2)}
         + (|\nb\omg|^2P_qf,P_q f)_{L^2(\dR^2)}\\
         &
         = \Lm_q(\omg^2 P_qf, P_qf)_{L^2(\dR^2)}
         +
         (|\nb\omg|^2P_qf,P_q f)_{L^2(\dR^2)}\\
         & \le
         c'\|P_q f\|^2_{L^2(U_\eta)},
     \end{aligned}
     \end{equation}
     where we have also used that the supports of $\omg^2$ and $|\nb\omg|^2$ are 
contained in $U_\eta$ and $c'>0$ is some constant. 
     Hence, if $\chi_{\eta}$ denotes the characteristic function of $U_\eta$ we conclude from \eqref{eq:tq_bnd} and~\eqref{eq:IMS_bnd}
     the operator inequality
     \begin{equation*}
         T_q^\G \le
         c'' P_q\chi_{\eta} P_q,\qquad c''=\varepsilon c'+ c(\varepsilon).
     \end{equation*}
     Using~\cite[Proposition 4.1\,(i)]{RP07} we obtain that
     \[
     \limsup_{k\arr\infty}\big(k!\, s_k(T_q^\G)\big)^{1/k}
     \le     \limsup_{k\arr\infty}\big(k!\, s_k(P_q \chi_{\eta} P_q)\big)^{1/k}
     = \frac{B}{2}\big({\rm Cap}\,(U_\eta)\big)^2.
     \]
     Finally, the desired inequality follows from 
Proposition~\ref{prop:cap}~(ii) upon passing
     to the limit $\eta\arr 0^+$.

     The asymptotics in~(ii) are shown 
in~\cite[Proposition 4.1\,(ii)]{RP07}.
\end{proof}

It is a priori not clear that the rank of the Toeplitz-type operator $T_q^\G$
is infinite without extra regularity assumption on $\G$.
However, for $q = 0$ this claim can be deduced from a result
by D.~Luecking in \cite{L08} (see also its extension in~\cite{RS10}).
To this aim, we define $\Psi(z) := \frac14 B|z|^2$
and consider the \emph{Segal-Bargmann} (or \emph{Fock}) space of analytic functions 
\begin{equation*}
    \cF^2 :=
    \big\{f\colon\dC\arr\dC\colon f\,\,\text{is analytic},
    e^{-\Psi}f \in L^2(\dC)\big\}.
\end{equation*}
It was shown in \cite[Section 4.2]{RP07} that
the multiplication operator
\begin{equation}\label{eq:mult_U}
	U\colon \cF^2\arr L^2(\dR^2),
	\qquad Uf :=  e^{-\Psi} f,
\end{equation}
is unitary from $\cF^2$ onto the subspace  $\ran P_0 = \ker(\Opf - \Lm_0)$ of $L^2(\dR^2)$. Using this equivalence it follows easily that
the rank of $T_0^\G$ is infinite.
\begin{prop}\label{prop:rank0}
    Let $\G\subset\Sigma$ be a closed subset with
    $|\G| > 0$.
    Then the self-adjoint Toeplitz-type operator $T_0^\G$
    $(q=0)$ in Definition~\ref{def:Tn} has infinite rank.
\end{prop}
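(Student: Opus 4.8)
The plan is to transport the problem to the Fock space $\cF^2$ via the unitary $U$ from \eqref{eq:mult_U}, where the vanishing of $T_0^\G$ on an infinite-dimensional subspace becomes a statement about analytic functions vanishing on a curve. Concretely, suppose for contradiction that $\rank T_0^\G < \infty$. Then $\ker T_0^\G$ has finite codimension in $\ran P_0$, and since $T_0^\G \ge 0$ we have $T_0^\G f = 0$ precisely when $\frt_0^\G[f,f] = \|(P_0 f)|_\G\|_{L^2(\G)}^2 = 0$, i.e. when $P_0 f$ vanishes $|\cdot|$-almost everywhere on $\G$ (with respect to arc-length measure on $\G$). Thus the subspace
\[
	\cN := \{ f \in \ran P_0 \colon f|_\G = 0 \text{ a.e. on } \G\}
\]
would have finite codimension in $\ran P_0$. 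Pulling back through $U$, the corresponding subspace $U^{-1}\cN \subset \cF^2$ consists of the analytic functions $g \in \cF^2$ such that $e^{-\Psi} g$ vanishes a.e. on $\G$; since $e^{-\Psi}$ is nowhere zero, this is exactly $\{ g \in \cF^2 \colon g|_\G = 0 \text{ a.e. on }\G\}$, and it would have finite codimension in $\cF^2$.

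Next I would invoke Luecking's theorem \cite{L08} (or its extension \cite{RS10}): if $S \subset \dC$ is a set of positive one-dimensional Hausdorff measure which is contained in a $C^1$ (or rectifiable) curve, then the only function in $\cF^2$ vanishing on $S$ (in the appropriate measure-theoretic sense) that can be complemented to a finite-codimension subspace is trivial — more precisely, the space of Fock functions vanishing on such a set has infinite codimension. Since $\G \subset \Sigma$ is a subset of the $C^{1,1}$-curve $\Sigma$ with $|\G| > 0$, it has positive arc-length measure and the hypotheses of that result are met. This yields that $\{g \in \cF^2 : g|_\G = 0\}$ has infinite codimension in $\cF^2$, contradicting the conclusion of the previous paragraph. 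Hence $\rank T_0^\G = \infty$.

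The main obstacle — and the only genuinely non-routine point — is checking that $\G$, which is merely a closed subset of $\Sigma$ of positive arc-length, actually satisfies the hypotheses under which Luecking's (or the Ramos--Shirokova-type) sampling/zero-set dichotomy applies. One has to confirm that positivity of the arc-length (Hausdorff $\cH^1$) measure of $\G$, together with $\G$ lying on a $C^{1,1}$-curve, is enough to force infinite codimension; this may require quoting the precise form of the result in \cite{L08, RS10} (which is phrased for sets of positive area or for curves, and which I would cite verbatim) and perhaps first passing to a compact subarc-like piece of $\G$ of positive measure to match the exact statement. Everything else — the reduction via $U$, the characterization of $\ker T_0^\G$ through nonnegativity of the form, and the fact that $e^{-\Psi} \ne 0$ — is straightforward bookkeeping.
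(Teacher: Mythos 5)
Your argument is correct and runs on the same Fock-space track as the paper, but the step you single out as the main obstacle needs nothing as heavy as Luecking's theorem, and what you cite does not say what you want it to say. The paper's proof uses $U$ to identify $T_0^\G$ with a Fock--Toeplitz operator $T_\mu^\cF$ whose symbol measure $\mu$ is $\exp(-2\Psi)\,\dd\s$ restricted to $\G$, and then invokes \cite[Theorem 1.1]{RS10}: a Fock--Toeplitz operator has finite rank if and only if its measure is a finite sum of point masses, and since $|\G|>0$ the measure $\mu$ is not of that form. Your route instead characterizes $\ker T_0^\G\cap\ran P_0$ as those $P_0$-functions vanishing a.e.\ on $\G$, pulls back to Fock functions vanishing a.e.\ on $\G$, and asserts these are trivial. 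For that last step the identity theorem is all you need: an element of $\cF^2$ is entire, and a closed subset of $\Sigma$ of positive arc-length is uncountable and hence has an accumulation point in $\dC$, so any $g\in\cF^2$ vanishing a.e.\ on $\G$ vanishes identically. Thus $T_0^\G\uhr\ran P_0$ is injective on the infinite-dimensional space $\ran P_0$ and the infinite rank follows directly, with no contradiction scaffolding. Two remarks: first, Luecking's theorem and its extension in \cite{RS10} are finite-rank characterizations of Bergman/Fock Toeplitz operators --- exactly what the paper cites --- not zero-set dichotomies for analytic functions, so the result you are trying to quote does not live in those papers; second, once you substitute the identity theorem your proof becomes \emph{more} elementary than the paper's, and the worry you raise about whether $\G$ meets the hypotheses of a ``sampling/zero-set'' theorem simply evaporates.
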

\begin{proof}
    According to the construction in \cite[Section 4.2]{RP07} the operator $T_0^\G$ is unitarily equivalent via $U$ in~\eqref{eq:mult_U} 
    to the classical Toeplitz operator $T_\mu^\cF$ on $\cF^2$
    defined in~\cite[Eq. (1.6)]{RS10}
    with the corresponding compactly supported measure $\mu$ in $\dR^2$ given by
    \[
        G\mapsto\mu(G) := \int_{G\cap\G} \exp(-2\Psi(z)\big) \dd \s(z),
        \qquad G\subset \dC\simeq\dR^2.
    \]
    Note that the measure $\mu$ can not be represented as a sum of finitely
    many point measures. Therefore,
    by~\cite[Theorem 1.1]{RS10} the operator $T_\mu^\cF$,
    and hence also $T_0^\G$, are of infinite rank.
\end{proof}
Later in this paper we show for the case $\Gamma=\Sigma$ in Corollary~\ref{cor:infrank} that 
the rank of $T_q^\Sg$ is infinite for all $q\in\dN$ with $C^{1,1}$-smooth $\Sg$ using a technique rather different from the one in~\cite{FP06, RP07}. 
In this context we remark that one can go beyond $C^{1,1}$-smoothness
up to a Lipschitz boundary by a small modification of the method.

\section{A quasi boundary triple for Landau Hamiltonians}\label{sec:qbt}

In this section we construct a quasi boundary triple which is suitable to define and study Landau Hamiltonians with $\dl$-perturbations 
supported on $C^{1,1}$-curves. The notion of quasi boundary triples
and their Weyl functions is recalled in Appendix~\ref{app:A}.
From now on we shall assume that the following hypothesis holds.

\begin{hyp}\label{hypohyp}
	Let $\Omg_{\rm i}$ be a bounded $C^{1,1}$-domain with the boundary $\Sg := \p\Omg_{\rm i}$
	and let $\Omg_{\rm e} := \dR^2 \sm\ov{\Omg_{\rm i}}$. The unit normal vector field pointing outward of $\Omg_{\rm i}$
	(and hence inward of $\Omg_{\rm e}$) will be denoted by $\nu$.
\end{hyp}
In the following, 
$\p_\nu = \nu\cdot\nb$ and
$\p_\nu^\bA = -\ii\nu\cdot\nbA=\p_\nu-\ii\nu\cdot \bA$ stand for the normal derivative and the magnetic normal derivative with respect to the normal 
vector $\nu$ pointing outward of $\Omg_{\rm i}$.
Further, we set 
\begin{equation*}
	\cD_{\rm i} = H^{3/2}_\Dl(\Omg_{\rm i})
	:=
	\big\{ f_{\rm i} \in H^{3/2}(\Omg_{\rm i})\colon 
			\Dl f_{\rm i} \in L^2(\Omg_{\rm i}) \big\},
\end{equation*}
where the Laplacian is understood in the distributional sense. 
Recall that the Dirichlet and Neumann trace maps
\[
	\cD_{\rm i}\ni f\mapsto f|_{\Sg}\in H^1(\Sg)
	\qquad\text{and}\qquad
	\cD_{\rm i}\ni f\mapsto \p_\nu f|_{\Sg}\in L^2(\Sg)
\]
are bounded and surjective; cf.~\cite[Lemma 3.1 and 3.2]{GM11}.
Note that the spaces $H^{3/2}_\Delta$ appear also in~\cite{BLL13} in the treatment 
of non-magnetic Schr\"{o}dinger operators with $\dl$-interactions.

In the next lemma 
we provide variants of the first and second Green identity in the present situation.

\begin{lem}\label{lemma1}
	For $f_{\rm i},g_{\rm i}\in\cD_{\rm i}$ one has $\nbA^2 f_{\rm i}, \nbA^2 g_{\rm i} \in L^2(\Omg_{\rm i})$ and the following holds.
	\begin{myenum}
		\item 
		$(\nbA^2 f_{\rm i}, 
				g_{\rm i})_{L^2(\Omg_{\rm i})}
		=
		(\nbA f_{\rm i},
				\nbA g_{\rm i})_{L^2(\Omg_{\rm i};\dC^2)} 
			-
		(\p_\nu^\bA f_{\rm i}|_\Sg, g_{\rm i}|_\Sg)_{L^2(\Sg)}$.
		\item  
		$(\nbA^2 f_{\rm i},g_{\rm i})_{L^2(\Omg_{\rm i})}
		-
		(f_{\rm i},\nbA^2 g_{\rm i})_{L^2(\Omg_{\rm i})}
		=
		(f_{\rm i}|_\Sg,
		\p_\nu^\bA g_{\rm i}|_\Sg)_{L^2(\Sg)}-
		(\p_\nu^\bA f_{\rm i}|_\Sg,
		g_{\rm i}|_\Sg)_{L^2(\Sg)}$. 
\end{myenum}
\end{lem}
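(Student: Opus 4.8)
The plan is to prove Lemma~\ref{lemma1} by first establishing the claim $\nbA^2 f_{\rm i} \in L^2(\Omg_{\rm i})$ for $f_{\rm i} \in \cD_{\rm i}$, and then deriving the two Green identities by reducing them to the classical (non-magnetic) Green identities plus elementary manipulations with the vector potential $\bA$. First I would observe that on a bounded domain the magnetic Sobolev space $\cH^1_\bA(\Omg_{\rm i})$ coincides with $H^1(\Omg_{\rm i})$, and that $\bA$ is a smooth (indeed polynomial) vector field, so multiplication by any component of $\bA$ maps $H^s(\Omg_{\rm i})$ into $H^s(\Omg_{\rm i})$ for the relevant range of $s$. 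Expanding $\nbA^2 f_{\rm i} = (\ii\nb + \bA)\cdot(\ii\nb + \bA) f_{\rm i} = -\Dl f_{\rm i} + 2\ii\,\bA\cdot\nb f_{\rm i} + \ii(\nb\cdot\bA) f_{\rm i} + |\bA|^2 f_{\rm i}$, and noting that $\nb\cdot\bA = 0$ for the symmetric gauge, we see that for $f_{\rm i}\in H^{3/2}_\Dl(\Omg_{\rm i})$ the term $-\Dl f_{\rm i}$ is in $L^2$ by definition of $\cD_{\rm i}$, the term $\bA\cdot\nb f_{\rm i}$ is in $H^{1/2}(\Omg_{\rm i})\subset L^2(\Omg_{\rm i})$ since $\nb f_{\rm i}\in H^{1/2}$, and $|\bA|^2 f_{\rm i}\in H^{3/2}\subset L^2$. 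Hence $\nbA^2 f_{\rm i}\in L^2(\Omg_{\rm i})$, and the same for $g_{\rm i}$.

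For the first Green identity (i), I would start from the classical first Green identity on the $C^{1,1}$-domain $\Omg_{\rm i}$, valid for $f_{\rm i}, g_{\rm i}\in \cD_{\rm i}$, namely $(-\Dl f_{\rm i}, g_{\rm i})_{L^2(\Omg_{\rm i})} = (\nb f_{\rm i}, \nb g_{\rm i})_{L^2(\Omg_{\rm i};\dC^2)} - (\p_\nu f_{\rm i}|_\Sg, g_{\rm i}|_\Sg)_{L^2(\Sg)}$; this is exactly the setting covered by the trace results of \cite{GM11} cited just above. Then I would add and subtract the $\bA$-dependent terms. Using the expansion of $\nbA^2$ above and, on the other side, expanding $(\nbA f_{\rm i}, \nbA g_{\rm i})_{L^2(\Omg_{\rm i};\dC^2)} = (\nb f_{\rm i},\nb g_{\rm i}) + \ii(\bA f_{\rm i}, \nb g_{\rm i}) - \ii(\nb f_{\rm i}, \bA g_{\rm i}) + (|\bA|^2 f_{\rm i}, g_{\rm i})$, the two expressions differ (after subtracting the classical identity) by the terms $2\ii(\bA\cdot\nb f_{\rm i}, g_{\rm i}) - \ii(\bA f_{\rm i}, \nb g_{\rm i}) + \ii(\nb f_{\rm i}, \bA g_{\rm i})$ on the left and a boundary contribution. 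Integrating by parts once more the cross terms and using $\nb\cdot\bA = 0$, these recombine into the single boundary term $-\ii(\nu\cdot\bA\, f_{\rm i}|_\Sg, g_{\rm i}|_\Sg)_{L^2(\Sg)}$, which is precisely the difference between $\p_\nu^\bA f_{\rm i}|_\Sg = \p_\nu f_{\rm i}|_\Sg - \ii\,\nu\cdot\bA\, f_{\rm i}|_\Sg$ and $\p_\nu f_{\rm i}|_\Sg$. Collecting everything yields (i).

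For the second Green identity (ii), I would simply write down (i) as stated and subtract from it the analogous identity with the roles of $f_{\rm i}$ and $g_{\rm i}$ interchanged and complex-conjugated; the interior Dirichlet-form term $(\nbA f_{\rm i}, \nbA g_{\rm i})$ is Hermitian symmetric and cancels, leaving exactly the boundary terms displayed in (ii). The main obstacle I anticipate is purely technical: making sure that all the integration-by-parts steps with the intermediate cross terms $(\bA\cdot\nb f_{\rm i}, g_{\rm i})$ are justified at the $H^{3/2}$-regularity level rather than assuming $C^\infty$ smoothness, i.e. checking that the relevant products lie in $H^1(\Omg_{\rm i})$ so that the divergence theorem applies and the boundary traces make sense in $H^1(\Sg)$ resp. $L^2(\Sg)$. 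This is where the $C^{1,1}$-regularity of $\Sg$ and the trace theorems from \cite{GM11} are used; alternatively one can first prove everything for $f_{\rm i}, g_{\rm i}\in C^\infty(\ov{\Omg_{\rm i}})$ and then pass to the limit using density of $C^\infty(\ov{\Omg_{\rm i}})$ in $\cD_{\rm i}$ with respect to the graph norm, together with continuity of all the trace maps involved.
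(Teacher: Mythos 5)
Your proposal is correct and follows essentially the same route as the paper: both reduce to the classical Green identity plus a divergence-theorem computation for the $\bA$-cross terms (exploiting $\nb\cdot\bA=0$), and derive (ii) from (i) by symmetry. The only cosmetic differences are that the paper establishes $\nbA^2 f_{\rm i}\in L^2(\Omg_{\rm i})$ by testing against $C_0^\infty(\Omg_{\rm i})$ functions rather than by term-by-term Sobolev regularity, and your intermediate expansion of $(\nbA f_{\rm i},\nbA g_{\rm i})$ has the two cross terms with flipped signs (they should read $\ii(\nb f_{\rm i},\bA g_{\rm i})-\ii(\bA f_{\rm i},\nb g_{\rm i})$), though this slip does not propagate to your final boundary term.
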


\begin{proof} 
	For $f_{\rm i}\in\cD_{\rm i}$ and all 
	$h_{\rm i} \in C_0^\infty(\Omg_{\rm i})$ one has
	\begin{equation*}
	\begin{split}
		\big(f_{\rm i}, 
			\nbA^2 h_{\rm i}\big)_{L^2(\Omg_{\rm i})}
		&= 
		\big( 
			f_{\rm i},	(-\Dl + 2\ii\bA\cdot\nb 
				 + \bA^2) h_{\rm i} 
		\big)_{L^2(\Omg_{\rm i})} \\
	   &= 
	   \big(
	   	-\Dl f_{\rm i}, h_{\rm i}
	   \big)_{L^2(\Omg_{\rm i})}
	   + 
	   \big(
		   (2\ii\bA\cdot\nb  + \bA^2)f_{\rm i}, h_{\rm i} 
	   \big)_{L^2(\Omg_{\rm i})},
	  \end{split}
	\end{equation*}
	where $\nb\cdot\bA = 0$ and also $\cH^1_\bA(\Omg_{\rm i}) = H^1(\Omg_{\rm i})$  were used. This shows 
	\[
		\nbA^2 f_\text{\rm i}
		=
		-\Dl f_{\rm i}
		+
		(2\ii\bA\cdot\nb +  \bA^2)f_{\rm i}
		\in L^2(\Omg_{\rm i}).
	\]	
	
	It follows from the divergence theorem and the particular form of $\bA$ that
	\[
		\cB[f_{\rm i}, g_{\rm i}] := (\ii\nb f_{\rm i},
			\bA g_{\rm i})_{L^2(\Omg_{\rm i};\dC^2)} 
		- 
		(\bA f_{\rm i}, \ii\nb g_{\rm i})_{L^2(\Omega_\text{i};\dC^2)}
		=
		(\ii(\nu \cdot \bA f_{\rm i})|_\Sg,g_{\rm i}|_\Sg)_{L^2(\Sg)}
	\]
	holds for $f_{\rm i},g_{\rm i}\in\cD_{\rm i}$. 
	Now a simple computation
	\[
	\begin{aligned}
	(\nbA f_{\rm i},&
	\nbA g_{\rm i})_{L^2(\Omg_{\rm i};\dC^2)} -
	(\nbA^2 f_{\rm i},g_{\rm i})_{L^2(\Omg_{\rm i})} 
	\\
	& =
	\big[(\nb f_{\rm i}, \nb g_{\rm i})_{L^2(\Omg_{\rm i};\dC^2)} - \big(-\Dl f_{\rm i},g_{\rm i}\big)_{L^2(\Omg_{\rm i})}\big] 
	-
	\cB[f_{\rm i}, g_{\rm i}]
	\\
	& =	
	(\p_\nu f_{\rm i}|_\Sg, g_{\rm i}|_\Sg)_{L^2(\Sg)} 
	-
	(\ii(\nu \cdot \bA f_{\rm i})|_\Sg,g_{\rm i}|_\Sg)_{L^2(\Sg)} =
	 (\p_\nu^\bA f_{\rm i}|_\Sg, g_{\rm i}|_\Sg)_{L^2(\Sg)}
	\end{aligned}
	\]
	yields the identity in~(i). The identity in~(ii)
	follows from~(i).
\end{proof}

In order to define an appropriate counterpart of the space $\cD_{\rm i}$ on the exterior domain $\Omg_{\rm e}$
one has to pay some attention to the properties of the functions in a neighborhood of $\infty$. This leads to the following construction.
Fix some bounded open set $K$ such that $\overline{\Omega_\text{i}} \subset K$ and define
\begin{equation*}
	\cD_{\rm e} 
	:= 
	\big\{
	f_{\rm e} \in \cH^1_\bA(\Omg_{\rm e})\colon
	\nbA^2 f_{\rm e} \in L^2(\Omg_{\rm e}),
	f_{\rm e} \uhr (K \cap \Omg_{\rm e}) \in H^{3/2}_\Dl(K \cap \Omg_{\rm e}) 
	\big\},
\end{equation*}
where 
$H^{3/2}_\Dl(K \cap \Omg_{\rm e}) 
:= \{ h \in H^{3/2}(K \cap \Omg_{\rm e})\colon \Dl h \in L^2(K \cap \Omg_{\rm e}) \}$. Using
\cite[Lemma 3.1 and 3.2]{GM11} one checks that the Dirichlet and Neumann trace maps
\[
	\cD_{\rm e}\ni f\mapsto f|_{\Sg}\in H^1(\Sg)
	\qquad\text{and}\qquad
	\cD_{\rm e}\ni f\mapsto \p_\nu f|_{\Sg}\in L^2(\Sg)
\]
are bounded and surjective.

In the same way as in Lemma~\ref{lemma1} one obtains the following statements. Observe that $\nu$ is pointing inwards in $\Omega_\text{e}$,
which leads to different signs compared to Lemma~\ref{lemma1}.
\begin{lem}\label{lemma2}
	For $f_{\rm e},g_{\rm e}\in\cD_{\rm e}$ the following holds.
	\begin{myenum}
		\item  
		$(\nbA^2 f_{\rm e},g_{\rm e})_{L^2(\Omg_{\rm e})}
		=
		(\nbA f_{\rm e},\nbA g_{\rm e})_{L^2(\Omg_{\rm e};\dC^2)}  
		+
		(\p_\nu^\bA f_{\rm e}|_\Sg, 
		g_{\rm e}|_\Sg)_{L^2(\Sg)}$.
		\item 
		$(\nbA^2 f_{\rm e},g_{\rm e})_{L^2(\Omg_{\rm e})}
		-
		(f_{\rm e},	\nbA^2 g_{\rm e})_{L^2(\Omg_{\rm e})}
	 	=
	 	-(f_{\rm e}|_\Sg,\p_\nu^\bA g_{\rm e}|_\Sg)_{L^2(\Sg)}
	 	+
	 	(\p_\nu^\bA f_{\rm e}|_\Sg, g_{\rm e}|_\Sg)_{L^2(\Sg)}
	 	$.
	\end{myenum}
\end{lem}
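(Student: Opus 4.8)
The plan is to reduce the identities on the unbounded exterior domain $\Omg_{\rm e}$ to the bounded–domain computation already carried out in Lemma~\ref{lemma1}, by splitting $f_{\rm e}$ into a part localized near $\Sg$ and a part localized near infinity. Concretely, I would fix a cut-off $\chi\in C_0^\infty(\dR^2)$ with $\chi\equiv 1$ on a neighbourhood of $\ov{\Omg_{\rm i}}$ and $\supp\chi$ contained in the set $K$ from the definition of $\cD_{\rm e}$ (so $\chi$ also vanishes near $\p K$), and write $f_{\rm e}=f_1+f_2$ with $f_1:=\chi f_{\rm e}$ and $f_2:=(1-\chi)f_{\rm e}$. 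Using $f_{\rm e}\uhr(K\cap\Omg_{\rm e})\in H^{3/2}_\Dl(K\cap\Omg_{\rm e})$ and the Leibniz rule, $f_1$ is a function on $\ov{\Omg_{\rm e}}$ which is compactly supported, has $H^{3/2}$–regularity near its support, satisfies $\Dl f_1\in L^2$ there and vanishes near $\p K$; on the other hand the Leibniz-type identity $\nbA^2(\chi f_{\rm e})=\chi\,\nbA^2 f_{\rm e}+2\ii(\nb\chi)\cdot\nbA f_{\rm e}-(\Dl\chi)f_{\rm e}$ together with $\nbA^2 f_{\rm e}\in L^2(\Omg_{\rm e})$ and $f_{\rm e}\in H^{3/2}$ on $\supp\nb\chi\subset K\cap\Omg_{\rm e}$ shows $\nbA^2 f_2\in L^2(\Omg_{\rm e})$. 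Since $f_2$ vanishes in a neighbourhood of $\Sg$, its extension by zero lies in $\cH^1_\bA(\dR^2)$ with $\nbA^2 f_2\in L^2(\dR^2)$, hence $f_2\in\cH^2_\bA(\dR^2)=\dom\Opf$, and moreover $f_2|_\Sg=0$ and $\p_\nu^\bA f_2|_\Sg=0$.

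For $f_1$ I would repeat the computation in the proof of Lemma~\ref{lemma1}, now on $\Omg_{\rm e}$: the outer curve $\p K$ carries no trace of $f_1$, so only $\Sg$ contributes, and since the outward unit normal of $\Omg_{\rm e}$ on $\Sg$ equals $-\nu$, every boundary term changes sign, giving
\begin{equation*}
 (\nbA^2 f_1,g_{\rm e})_{L^2(\Omg_{\rm e})}
 =(\nbA f_1,\nbA g_{\rm e})_{L^2(\Omg_{\rm e};\dC^2)}
 +(\p_\nu^\bA f_1|_\Sg,g_{\rm e}|_\Sg)_{L^2(\Sg)}.
\end{equation*}
For $f_2$, the identity $(\nbA^2 f_2,g_{\rm e})_{L^2(\Omg_{\rm e})}=(\nbA f_2,\nbA g_{\rm e})_{L^2(\Omg_{\rm e};\dC^2)}$ comes for free from the fact that $\fra_0$ represents the self-adjoint operator $\Opf$: for $f_2\in\dom\Opf$ and every $h\in\cH^1_\bA(\dR^2)$ one has $(\Opf f_2,h)_{L^2(\dR^2)}=\fra_0[f_2,h]=(\nbA f_2,\nbA h)_{L^2(\dR^2;\dC^2)}$, and choosing for $h$ any $\cH^1_\bA(\dR^2)$–function coinciding with $g_{\rm e}$ on $\supp f_2$ (e.g.\ $\phi g_{\rm e}$ with an additional cut-off $\phi$ vanishing near $\Sg$) removes all contributions off $\supp f_2$, where $\nbA f_2=0$. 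Adding the two identities and using $\p_\nu^\bA f_{\rm e}|_\Sg=\p_\nu^\bA f_1|_\Sg$ yields assertion~(i). Assertion~(ii) then follows by writing (i) once for the pair $(f_{\rm e},g_{\rm e})$ and once for $(g_{\rm e},f_{\rm e})$, taking the complex conjugate of the second and subtracting: the two volume integrals cancel and one is left exactly with the boundary terms displayed in~(ii).

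The only genuine subtlety is the unbounded geometry of $\Omg_{\rm e}$, and the cut-off decomposition is tailored precisely to bypass it: no boundary contribution from infinity ever needs to be estimated, because the near-$\Sg$ piece $f_1$ is compactly supported and hence covered by the bounded-domain argument of Lemma~\ref{lemma1}, while the near-$\infty$ piece $f_2$ lies in $\dom\Opf$, for which the desired Green identity is nothing more than the statement that $\fra_0$ is the form of $\Opf$. The remaining points — that multiplication by $\chi$ preserves the relevant regularity ($f_1\in H^{3/2}_\Dl$ locally, $f_2\in\cH^2_\bA(\dR^2)$) and that the Dirichlet and magnetic Neumann traces of $f_2$ on $\Sg$ vanish — are routine and follow from $\supp\nb\chi\subset K\cap\Omg_{\rm e}$ and from $f_2$ being supported away from $\Sg$.
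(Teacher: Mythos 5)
Your proof is correct and is more careful than what the paper offers. The paper gives no proof of Lemma~\ref{lemma2}, only the remark that it follows ``in the same way'' as Lemma~\ref{lemma1} with signs flipped, since $\nu$ points inwards in $\Omg_{\rm e}$. Taken literally, that hint does not go through on the unbounded exterior domain: the proof of Lemma~\ref{lemma1} separates $\nbA$ into $\ii\nb$ and $\bA$ and integrates each piece by parts, but for $f_{\rm e}\in\cH^1_\bA(\Omg_{\rm e})$ only the combination $\nbA f_{\rm e}$ is known to lie in $L^2(\Omg_{\rm e})$ --- neither $\nb f_{\rm e}$ nor $\bA f_{\rm e}$ need be square-integrable, as $\bA$ grows linearly at infinity --- so the intermediate quantities $(\nb f_{\rm e},\nb g_{\rm e})$ and $\cB[f_{\rm e},g_{\rm e}]$ from the proof of Lemma~\ref{lemma1} are not separately finite, and one would moreover have to rule out a boundary contribution from infinity. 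Your cut-off decomposition $f_{\rm e}=\chi f_{\rm e}+(1-\chi)f_{\rm e}$ resolves both issues: the compactly supported part is handled by the bounded-domain computation of Lemma~\ref{lemma1} on $K\cap\Omg_{\rm e}$, with the sign flip on $\Sg$ and no contribution from $\p K$, while the remaining part lies in $\cH^2_\bA(\dR^2)=\dom\Opf$, where the identity without boundary term is precisely the first representation theorem for $\fra_0$. One small point to state explicitly: in the $f_2$-step, the auxiliary cut-off $\phi$ should be smooth with bounded derivatives, identically $1$ on a neighbourhood of $\supp(1-\chi)$, and vanishing near $\Sg$; then $\phi g_{\rm e}$ extended by zero lies in $\cH^1_\bA(\dR^2)$, and $\nb\phi$ vanishes on $\supp\nbA f_2$, so the cross-term drops out as you assert.
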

Next, we introduce the operator $T$ acting in $L^2(\dR^2)$ by
\begin{equation*}
    T f     := 
    \nbA^2 f_{\rm i} \oplus \nbA^2 f_{\rm e},\quad
    \dom T  
    := 
    \big\{ f = f_{\rm i} \oplus 
    f_{\rm e} \in \cD_{\rm i} \oplus \cD_{\rm e}\colon
    f_{\rm i}|_\Sg = f_{\rm e}|_\Sg \big\},
\end{equation*}
and the trace mappings $\G_0, \G_1\colon \dom T \arr L^2(\Sg)$ by
\begin{equation} \label{def_Gamma}
	\G_0 f 
	:= \p_\nu^\bA f_{\rm i}|_\Sg-\p_\nu^\bA f_{\rm e}|_\Sg =
	\p_\nu f_{\rm i}|_\Sg - \p_\nu f_{\rm e}|_\Sg \and
	\G_1 f := f|_\Sg.
\end{equation}

Then we have the following result, which is important for our further investigations in the next section. 

\begin{thm} \label{theorem_triple}
	Let $T$ be as above and define 
	\begin{equation*} 
		S := \Opf \uhr 
		\bigl\{  f \in \cH^2_\bA(\dR^2)\colon f|_\Sg = 0  \bigr\}.
	\end{equation*}
	Then $S$ is a densely defined, closed, symmetric operator and $\{ L^2(\Sg), \G_0, \G_1 \}$ is a 
	quasi boundary triple for $T\subset S^*$. Moreover, 
	$T \uhr \ker \G_0$ coincides with the Landau Hamiltonian $\Opf$ and $\ran \G_0 = L^2(\Sg)$.
\end{thm}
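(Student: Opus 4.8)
The plan is to verify the three assertions of Theorem~\ref{theorem_triple} in turn: (a) that $S$ is densely defined, closed and symmetric; (b) that $\{L^2(\Sg),\G_0,\G_1\}$ is a quasi boundary triple for $T\subset S^*$ in the sense of Appendix~\ref{app:A}; (c) the identification $T\uhr\ker\G_0 = \Opf$ together with $\ran\G_0 = L^2(\Sg)$. For (a) I would note that $S$ is the restriction of the self-adjoint operator $\Opf$ to the dense subspace $\{f\in\cH^2_\bA(\dR^2):f|_\Sg=0\}$; symmetry is then inherited from $\Opf$, density of the domain follows since $C_0^\infty(\dR^2\sm\Sg)\subset\dom S$ is dense in $L^2(\dR^2)$, and closedness follows because $\dom S$ is closed in the graph norm of $\Opf$ (the trace map $\cH^2_\bA(\dR^2)\to H^{1}(\Sg)$, hence into $L^2(\Sg)$, is continuous with respect to the graph norm, by Corollary~\ref{corollary_Sobolev_spaces} applied on the graph-norm-bounded set, so $\{f|_\Sg=0\}$ is graph-closed).

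For (b) I would check the three defining properties of a quasi boundary triple. First, $\dom T$ is dense in $L^2(\dR^2)$: this holds because $\dom T\supset\dom\Opf$ together with, e.g., functions that are smooth and compactly supported separately in $\Omg_{\rm i}$ and $\Omg_{\rm e}$, and in fact one can argue $T\subset S^*$ and $\ov{T}=S^*$ once the Green identity is in place. Second, the abstract Green identity
\[
 (Tf,g)_{L^2(\dR^2)} - (f,Tg)_{L^2(\dR^2)} = (\G_1 f,\G_0 g)_{L^2(\Sg)} - (\G_0 f,\G_1 g)_{L^2(\Sg)}
\]
for all $f,g\in\dom T$: this is obtained by splitting each side into the contributions from $\Omg_{\rm i}$ and $\Omg_{\rm e}$, applying Lemma~\ref{lemma1}(ii) on $\Omg_{\rm i}$ and Lemma~\ref{lemma2}(ii) on $\Omg_{\rm e}$, and using that the Dirichlet traces match, $f_{\rm i}|_\Sg = f_{\rm e}|_\Sg$, so that the cross-terms combine into $\p_\nu^\bA f_{\rm i}|_\Sg - \p_\nu^\bA f_{\rm e}|_\Sg = \G_0 f$ paired against the common Dirichlet trace $\G_1 g$ (and symmetrically). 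The minus sign coming from the inward-pointing normal in Lemma~\ref{lemma2} is exactly what makes the interior and exterior boundary terms add up correctly. Third, I would verify that the range of $(\G_0,\G_1)\colon\dom T\to L^2(\Sg)\times L^2(\Sg)$ is dense; for this it is enough to exhibit enough functions, e.g. using surjectivity of the Dirichlet and Neumann trace maps on $\cD_{\rm i}$ and $\cD_{\rm e}$ onto $H^1(\Sg)$ and $L^2(\Sg)$ respectively (quoted from \cite{GM11}) to realize prescribed pairs with $\G_1 f\in H^1(\Sg)$ arbitrary and $\G_0 f$ arbitrary in $L^2(\Sg)$, which is dense in $L^2(\Sg)\times L^2(\Sg)$; in particular this simultaneously proves $\ran\G_0 = L^2(\Sg)$. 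One also needs $\ker\G_0\cap\ker\G_1$ dense, which holds since it contains $\dom S$.

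For (c), the inclusion $\Opf\subset T\uhr\ker\G_0$ is clear: if $f\in\dom\Opf=\cH^2_\bA(\dR^2)$ then $f_{\rm i}\in H^{3/2}(\Omg_{\rm i})$, $f_{\rm e}\in\cH^1_\bA(\Omg_{\rm e})$ with the stated local $H^{3/2}$-regularity, the Dirichlet traces match since $f$ has no jump, and the magnetic normal derivatives also match so that $\G_0 f = 0$. Conversely, if $f\in\dom T$ with $\G_0 f = 0$, then the matching of both Dirichlet and magnetic Neumann traces across $\Sg$ shows, via the two Green identities of Lemmas~\ref{lemma1} and~\ref{lemma2}, that $f$ is a weak solution of $\nbA^2 f = Tf\in L^2(\dR^2)$ across $\Sg$, hence $f\in\cH^2_\bA(\dR^2)$ and $Tf = \Opf f$; elliptic regularity (the local coincidence of $\cH^1_\bA$ with $H^1$ and interior regularity for $-\Dl$) upgrades this appropriately. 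I expect the main obstacle to be the careful bookkeeping in the Green identity — keeping track of the normal orientations and confirming that the partial integrations are justified for functions only in $\cD_{\rm i}\oplus\cD_{\rm e}$ rather than in $H^2$, which is precisely why the spaces $H^{3/2}_\Dl$ and the trace results of \cite{GM11} are invoked — together with the density of $\ran(\G_0,\G_1)$, which relies on combining the interior and exterior trace surjectivity statements without creating a Dirichlet-trace mismatch.
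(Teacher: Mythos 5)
Your proposal follows the overall shape of the paper's argument, but it has one genuine gap and one avoidable detour.

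The gap: you set out to "check the three defining properties of a quasi boundary triple," i.e. conditions (i)--(iii) of Definition~\ref{qbtdef}, and you sketch a separate direct proof that $S$ is densely defined, closed, and symmetric. But Definition~\ref{qbtdef} presupposes that $T$ is an operator with $\overline T = S^*$ — that is, conditions (i)--(iii) do not \emph{define} a quasi boundary triple on their own. You acknowledge this obliquely when you write that "one can argue $T\subset S^*$ and $\overline T = S^*$ once the Green identity is in place," but this is not an argument: the Green identity alone never gives $\overline T = S^*$, and proving this identity directly is precisely the hard part. The paper sidesteps it entirely by invoking Theorem~\ref{ratebitte}: you verify (a) the Green identity, (b) density of $\ker(\Gamma_0,\Gamma_1)^\top$ in $L^2(\dR^2)$ and of $\ran(\Gamma_0,\Gamma_1)^\top$ in $L^2(\Sg)\times L^2(\Sg)$, and (c) that $T\uhr\ker\Gamma_0$ contains a self-adjoint operator — and Theorem~\ref{ratebitte} then automatically delivers that $S$ is densely defined, closed, symmetric and that $\overline T = S^*$, i.e. the quasi boundary triple property. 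The concrete checks you propose are the right ones, but without explicitly routing them through Theorem~\ref{ratebitte} (or reproducing its proof), your plan leaves $\overline T = S^*$ unproven, and your separate argument for closedness of $S$ is unnecessary.

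The detour: for the identification $T\uhr\ker\Gamma_0 = \Opf$ you propose to show the converse inclusion via elliptic regularity, arguing that matching Dirichlet and (magnetic) Neumann traces makes $f$ a weak solution across $\Sg$, hence $f\in\cH^2_\bA(\dR^2)$. This can be made to work but it is more than is needed and is not fully developed in your sketch. The paper instead uses the abstract observation: once the Green identity is established, $T\uhr\ker\Gamma_0$ is symmetric; and since $\Opf$ is self-adjoint and $\Opf\subset T\uhr\ker\Gamma_0$, a symmetric extension of a self-adjoint operator must coincide with it. This avoids any regularity discussion. Finally, for $\ran\Gamma_0 = L^2(\Sg)$ the paper uses a cut-off of the single layer potential $\mathsf{SL}$ of the Helmholtz equation, exploiting the jump of its normal derivative across $\Sg$; your route via joint surjectivity of the Dirichlet--Neumann trace map on $\cD_{\rm i}$ and $\cD_{\rm e}$ (with matched Dirichlet traces) is plausible but relies on a joint-surjectivity statement that you would need to cite explicitly — the references you list from \cite{GM11} give the individual trace surjectivities, while the paper appeals to \cite{M87} for the joint statement in the $H^2$-setting.
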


\begin{proof}
	We apply Theorem~\ref{ratebitte} to prove the claim. 
	Using	that the traces of $f_{\rm i}, f_{\rm e}$ and $g_{\rm i}, g_{\rm e}$ coincide on $\Sg$
	for $f, g \in \dom T$, we get 
	from Lemma~\ref{lemma1}\,(ii) and
	Lemma~\ref{lemma2}\,(ii) that
	\begin{equation*}
	\begin{aligned}
		(&T f, g)_{L^2(\dR^2)} - (f, T g)_{L^2(\dR^2)}\\
		& 
		\! = \!
		(\nbA^2 f_{\rm i},g_{\rm i})_{L^2(\Omg_{\rm i})}
		-
		(f_{\rm i},	\nbA^2 g_{\rm i})_{L^2(\Omg_{\rm i})}
		+
		(\nbA^2 f_{\rm e},g_{\rm e})_{L^2(\Omg_{\rm e})}
		-
		(f_{\rm e},	\nbA^2 g_{\rm e})_{L^2(\Omg_{\rm e})}\\
		& \! = \!
		(f_{\rm i}|_\Sg,\p_\nu^\bA g_{\rm i}|_\Sg)_{L^2(\Sg)}
		\!-\!
		(\p_\nu^\bA  f_{\rm i}|_\Sg, g_{\rm i}|_\Sg)_{L^2(\Sg)} 
		\!-\!
		(f_{\rm e}|_\Sg,\p_\nu^\bA g_{\rm e}|_\Sg)_{L^2(\Sg)}
		\!+\!
		(\p_\nu^\bA f_{\rm e}|_\Sg, g_{\rm e}|_\Sg)_{L^2(\Sg)}\\
		& \! = \!
		(f|_\Sg,\p_\nu^\bA g_{\rm i}|_\Sg-\p_\nu^\bA g_{\rm e}|_\Sg)_{L^2(\Sg)}
		\!-\!
		(\p_\nu^\bA  f_{\rm i}|_\Sg - \p_\nu^\bA f_{\rm e}|_\Sg, g|_\Sg)_{L^2(\Sg)} \\
		& \! = \! (\G_1 f, \G_0 g)_{L^2(\Sg)} 
		- (\G_0 f, \G_1 g)_{L^2(\Sg)},
	\end{aligned}
	\end{equation*}
	that is, the Green identity holds.
	
	Next, it follows from the Green identity that the operator $T \uhr \ker \G_0$ is symmetric in $L^2(\dR^2)$. It is easy to see that the self-adjoint
	Landau Hamiltonian $\Opf$ is contained in $T \uhr \ker \G_0$ and consequently $\Opf=T \uhr \ker \G_0$. 
	Furthermore, let $\chi \in C_0^\infty(\dR^2)$ be a cut-off function which is identically equal to one in a neighborhood 
	of $\Omg_{\rm i}$ and set $\chi_{\rm e}=\chi|_{\Omg_{\rm e}}$. Then the space
	\begin{equation*}
		\left\{ \begin{pmatrix} f_{\rm i} \\ \chi_{\rm e}f_{\rm e}\end{pmatrix}
			\colon 
			f_{\rm i}\in H^2(\Omg_{\rm i}),\, f_{\rm e} \in 
			H^2(\Omg_{\rm e}),
			f_{\rm e}|_\Sg = f_{\rm i}|_\Sg 
		\right\},
	\end{equation*}
	is contained in $\dom T$. Thus,
	it follows from the properties of the trace mappings~\cite[Theorem~3]{M87} that 
	\begin{equation*}
		H^{1/2}(\Sg) \times H^{3/2}(\Sg)
		\subset\ran \begin{pmatrix} \G_0\\  \G_1\end{pmatrix},
	\end{equation*}
	i.e. $\ran( \Gamma_0, \Gamma_1)^\top$ is dense in $L^2(\Sigma) \times L^2(\Sigma)$. 
	Furthermore, it is clear that also $\ker( \Gamma_0, \Gamma_1)^\top = \dom S$ is dense in $L^2(\mathbb{R}^2)$.
	
	Finally, to show that $\G_0$ is surjective we use the single layer potential
	$\mathsf{SL}\colon L^2(\Sg)\arr L^2(\dR^2)$
	associated to $\Sg$ and the Helmholtz equation
	$-\Dl + 1$; cf.~\cite[Chapter 6]{M00}. To be more precise, 
	for $\varphi \in L^2(\Sg)$ define the function $f := \widetilde\chi \mathsf{SL} \varphi$, 
	where $\widetilde\chi\in C^\infty_0(\dR^2)$ is a cutoff function such that $\chi \equiv 1$ in a neighborhood of $\Sigma$.
	Then using the properties of the single layer potential from \cite[Theorem~6.11 and Theorem~6.13]{M00} we see that $f$ belongs to $\dom T$
	and $\Gamma_0 f = \varphi$. Now Theorem~\ref{ratebitte} leads to the assertions. 
\end{proof}

In the next step we compute the $\gamma$-field and the Weyl function associated to the quasi boundary triple 
$\{ L^2(\Sg), \G_0, \G_1 \}$ from Theorem~\ref{theorem_triple}.
Recall that $G_\lambda$ in \eqref{def_G_lambda} is the integral kernel of the resolvent of the Landau Hamiltonian.

\begin{prop} \label{proposition_gamma_Weyl}
	Let $\lm \in \rho(\Opf)$ and let $G_\lm$ be given by~\eqref{def_G_lambda}. Then the values of the $\gamma$-field  $\gg(\lm)$ and of 
	the Weyl function $M(\lm)$ satisfy the following.
	\begin{myenum}
		\item The operator $\gg(\lm)\in\sB(L^2(\Sg), L^2(\dR^2))$ is given by
		\begin{equation*}
			\gg(\lm) \varphi(x) 
			= 
			\int_\Sg G_\lm(x,y) \varphi(y) \dd \s(y),
			\quad 
			\varphi \in L^2(\Sg),\, x \in \dR^2,
		\end{equation*}
		and belongs to the weak Schatten-von Neumann ideal $\sS_{2/3, \infty}(L^2(\Sg), L^2(\dR^2))$.
		\item
		The adjoint operator $\gg(\lm)^*\in\sB(L^2(\dR^2), L^2(\Sg))$ is given by
		\begin{equation*}
			\gg(\lm)^* f(x) = 
			\int_{\dR^2} 
			G_{\ov\lm}(x,y) f(y) \dd y,
			\quad 
			f \in L^2(\dR^2),\, x \in \Sg,
		\end{equation*}
		and belongs to the weak Schatten-von Neumann ideal $\sS_{2/3, \infty}(L^2(\dR^2), L^2(\Sg))$.
		\item The operator $M(\lm)\in\sB(L^2(\Sg))$ is given by
		\begin{equation*}
			M(\lambda) \varphi(x) = \int_\Sigma G_\lambda(x,y) \varphi(y) \dd \sigma(y),
			\quad \varphi \in L^2(\Sigma),\, x \in \Sigma,
		\end{equation*}
		and belongs to the weak Schatten-von Neumann ideal $\sS_{1, \infty}(L^2(\Sg))$.
	\end{myenum}
	In particular, the operators $\gg(\lm)$, $\gg(\lm)^*$, and $M(\lm)$
	are compact.
\end{prop}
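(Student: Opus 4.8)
The plan is to read off $\gg(\lm)$, $\gg(\lm)^*$, and $M(\lm)$ from the abstract description of the $\gamma$-field and the Weyl function of the quasi boundary triple $\{L^2(\Sg),\G_0,\G_1\}$ from Theorem~\ref{theorem_triple}, together with the explicit resolvent of $\Opf$ from Proposition~\ref{proposition_Landau_level}. The convenient starting point is $\gg(\lm)^*$: for $\lm\in\rho(\Opf)$ one has $\gg(\lm)^*=\G_1(\Opf-\ov\lm)^{-1}$, which follows from the Green identity established in the proof of Theorem~\ref{theorem_triple} by testing $\gg(\lm)\varphi\in\ker(T-\lm)$ against $(\Opf-\ov\lm)^{-1}g$ for $g\in L^2(\dR^2)$ and using $\G_0(\Opf-\ov\lm)^{-1}g=0$. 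Since $(\Opf-\ov\lm)^{-1}$ has integral kernel $G_{\ov\lm}$ and $\G_1$ is the Dirichlet trace on $\Sg$, this is exactly the formula in (ii). Moreover $(\Opf-\ov\lm)^{-1}$ maps $L^2(\dR^2)$ boundedly into $\dom\Opf=\cH^2_\bA(\dR^2)$, which by elliptic regularity is contained in $H^2_{\rm loc}(\dR^2)$, so by the trace theorem $\gg(\lm)^*$ maps $L^2(\dR^2)$ boundedly into $H^{3/2}(\Sg)$.

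Next, I would obtain $\gg(\lm)$ by passing to the adjoint. Using the Hermitian symmetry $\ov{G_\lm(x,y)}=G_{\ov\lm}(y,x)$ of the resolvent kernel of the self-adjoint operator $\Opf$ (equivalently, from~\eqref{def_G_lambda}), the adjoint of the operator from the previous step is the integral operator with kernel $G_\lm$ from $L^2(\Sg)$ into $L^2(\dR^2)$; since $\gg(\lm)$ is defined on all of $\ran\G_0=L^2(\Sg)$ (Theorem~\ref{theorem_triple}) and satisfies $\gg(\lm)\subseteq(\gg(\lm)^*)^*$, we get $\gg(\lm)=(\gg(\lm)^*)^*\in\sB(L^2(\Sg),L^2(\dR^2))$ and the formula in (i). For the Weyl function one has $M(\lm)=\G_1\gg(\lm)$ by definition, i.e. $M(\lm)\varphi$ is the Dirichlet trace on $\Sg$ of the layer potential $\int_\Sg G_\lm(\cdot,y)\varphi(y)\dd\s(y)$. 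Because $G_\lm$ has the same logarithmic diagonal singularity as the Green function of $-\Dl$ — read off from~\eqref{def_G_lambda} and the behaviour of $U(a,1;\cdot)$ near the origin — this potential is continuous across $\Sg$, so its trace is simply the restriction of the integral to $x\in\Sg$, which yields the formula in (iii).

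The Schatten-von Neumann statements then follow from Proposition~\ref{proposition_singular_values} (applied with $k=1$, since $\Sg$ is $C^{1,1}$). For $\gg(\lm)^*$ we have $\ran\gg(\lm)^*\subset H^{3/2}(\Sg)$ by the first paragraph, hence $\gg(\lm)^*\in\sS_{2/3,\infty}$ (take $l=3$), and then $\gg(\lm)\in\sS_{2/3,\infty}$ as well because $s_k(\gg(\lm))=s_k(\gg(\lm)^*)$. For $M(\lm)$ note that $\ran M(\lm)=\G_1(\ker(T-\lm))$, and every $f\in\ker(T-\lm)\subset\dom T$ has $\G_1 f=f_{\rm i}|_\Sg\in H^1(\Sg)$ since $f_{\rm i}\in\cD_{\rm i}=H^{3/2}_\Dl(\Omg_{\rm i})$; together with the boundedness of $M(\lm)$ on $L^2(\Sg)$ (its kernel is weakly singular on the compact curve $\Sg$) Proposition~\ref{proposition_singular_values} gives $M(\lm)\in\sS_{1,\infty}(L^2(\Sg))$ (take $l=2$). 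Compactness of $\gg(\lm)$, $\gg(\lm)^*$, and $M(\lm)$ is then immediate.

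The main obstacle is the identification of $\gg(\lm)$ and $\gg(\lm)^*$ with the integral operators having kernels $G_\lm$ and $G_{\ov\lm}$. The route sketched above achieves this cheaply through the abstract identity $\gg(\lm)^*=\G_1(\Opf-\ov\lm)^{-1}$ and the Hermitian symmetry of $G_\lm$; an alternative is to verify directly that the magnetic single-layer potential $\varphi\mapsto\int_\Sg G_\lm(\cdot,y)\varphi(y)\dd\s(y)$ lies in $\ker(T-\lm)$ and that $\G_0$ sends it to $\varphi$, which is more laborious: it requires $H^{3/2}$-regularity of this potential up to $\Sg$ from both sides (membership in $\cD_{\rm i}$ and $\cD_{\rm e}$, where the Gaussian decay of $G_\lm$ at infinity enters on the exterior side) and the jump relation $\p_\nu^\bA f_{\rm i}|_\Sg-\p_\nu^\bA f_{\rm e}|_\Sg=\varphi$ — the latter reduces to the classical single-layer jump relation for $-\Dl$ because, the Dirichlet trace being continuous across $\Sg$, the $\bA$-dependent parts of the two magnetic normal derivatives cancel. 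In either case it is the local singularity structure of $G_\lm$, inherited from the Laplacian via~\eqref{def_G_lambda}, that carries the argument.
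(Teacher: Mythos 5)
Your proposal is correct and follows essentially the same route as the paper: start from the abstract identity $\gg(\lm)^*=\G_1(\Opf-\ov\lm)^{-1}$, read off the integral kernel from Proposition~\ref{proposition_Landau_level}, obtain $\gg(\lm)$ by taking adjoints using the Hermitian symmetry of $G_\lm$, get $M(\lm)=\G_1\gg(\lm)$, and then feed $\ran\gg(\lm)^*\subset H^{3/2}(\Sg)$ and $\ran M(\lm)\subset H^1(\Sg)$ into Proposition~\ref{proposition_singular_values} with $k=1$ and $l=3,2$ respectively. Your additional remarks on the continuity of the layer potential across $\Sg$ and the alternative direct verification via jump relations are sound but not needed; the paper leaves these points implicit.
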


\begin{proof}
	First, we verify statement (ii). Since $\gg(\lm)^* = \G_1 (\Opf- \ov\lm)^{-1}$, the  
	representation of $\gg(\lm)^*$ follows directly from the form of the resolvent of $\Opf$ in 	Proposition~\ref{proposition_Landau_level}. 
	Moreover, as $\ran (\Opf - \ov\lm)^{-1} = \dom \Opf = \cH^2_\bA(\dR^2)$, and since this space coincides
	locally with $H^2(\dR^2)$, we conclude from the boundedness of $\Sg$ and the mapping properties of the trace map that 
	$\ran \gg(\lm)^* = \G_1 (H^2(\dR^2)) = H^{3/2}(\Sg)$.
	Therefore, Proposition~\ref{proposition_singular_values}
	with $k = 1$ and $l = 3$ shows
	$\gg(\lm)^* \in \sS_{2/3, \infty}(L^2(\dR^2), L^2(\Sg))$.
	
	The claim of item (i) follows from (ii) by taking adjoints, as $\overline{G}_{\overline{\lambda}}(y,x) = G_\lambda(x,y)$
	and $\dom \gamma(\lambda) = \ran \Gamma_0 = L^2(\Sigma)$.
	
	Finally, the representation of the Weyl function follows immediately from $M(\lm) = \G_1 \gg(\lm)$ and 
        item~(i). In particular, since $\ran M(\lm) \subset \ran \G_1 \subset H^1(\Sg)$
	we conclude from Proposition~\ref{proposition_singular_values}
	with $k = 1$ and $l = 2$
	that $M(\lm) \in \sS_{1, \infty}(L^2(\Sg))$. 
\end{proof}

Next, we provide a useful estimate on the decay of the Weyl function $M$, which is an application of
Theorem~\ref{wowsuperthm} for the quasi boundary triple in Theorem~\ref{theorem_triple}. Recall that 
$\min\sigma(\Opf)=B\geq 0$; cf. Proposition~\ref{proposition_Landau_level}.

\begin{prop} \label{proposition_decay_M}
	For all $\eps \in (0,\frac12)$ and all $w_0<B$ there exists a constant $D > 0$ such that 
	\begin{equation*}
		\| M(\lm) \|\le\frac{D}{|\lm - B|^{1/2-\eps}},\qquad \lm < w_0.
	\end{equation*}
\end{prop}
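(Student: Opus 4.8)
The strategy is to invoke the abstract decay estimate for Weyl functions of quasi boundary triples, which in the present paper is Theorem~\ref{wowsuperthm} (in Appendix~\ref{app:A}), applied to the quasi boundary triple $\{L^2(\Sg),\G_0,\G_1\}$ constructed in Theorem~\ref{theorem_triple}. Recall that for this triple the operator $T\uhr\ker\G_0$ equals the Landau Hamiltonian $\Opf$, whose spectrum starts at $B=\min\s(\Opf)$. The abstract theorem says, roughly, that if the boundary map $\G_1$ restricted to $\dom\Opf$ maps into a space on which one has smoothing of a definite order — here $\ran(\G_1\uhr\dom\Opf)\subset H^{3/2}(\Sg)$, so that $\G_1(\Opf-\ov\lm)^{-1}=\gg(\ov\lm)^*$ gains $3/2$ derivatives — then $M(\lm)=\G_1\gg(\lm)$ decays in the spectral parameter like a suitable negative power of $\dist(\lm,\s(\Opf))$. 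The ingredient needed to feed into Theorem~\ref{wowsuperthm} is precisely the resolvent power estimate furnished by the diamagnetic inequality.

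\textbf{Key steps.} First I would use the factorisation $M(\lm)=\G_1\gg(\lm)=\gg(\ov\lm)^*\gg(\lm)$ together with $\gg(\lm)=(1-(\lm-\lm_0)(\Opf-\lm)^{-1})\gg(\lm_0)$ for a fixed reference point $\lm_0<w_0$, reducing the norm bound to controlling $\|(\Opf-\lm)^{-1}\|$-type quantities and a fixed bounded operator. More efficiently, I would estimate $M(\lm)$ directly via
\[
  \|M(\lm)\|=\|\gg(\ov\lm)^*\gg(\lm)\|\le \|\gg(\ov\lm)^*\|\,\|\gg(\lm)\|,
\]
and bound $\|\gg(\lm)\|=\|\gg(\ov\lm)^*\|$ by interpolating: write $\gg(\ov\lm)^*=\G_1(\Opf-\ov\lm)^{-1}=\big(\G_1(\Opf-\ov\lm)^{-\bb}\big)(\Opf-\ov\lm)^{-(1-\bb)}$ for a suitable $\bb\in(0,1)$ chosen so that $\G_1(\Opf-\ov\lm)^{-\bb}$ remains bounded uniformly for $\lm<w_0$ (which follows from the trace estimate of Corollary~\ref{corollary_Sobolev_spaces}, as $\dom(\Opf-\ov\lm)^{-\bb}$ embeds into $\cH^1_\bA$ for $\bb\geq1/2$). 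The remaining factor is controlled by the spectral calculus:
\[
  \big\|(\Opf-\ov\lm)^{-(1-\bb)}\big\|=\dist(\ov\lm,\s(\Opf))^{-(1-\bb)}\le|\lm-B|^{-(1-\bb)},
\]
using $\min\s(\Opf)=B$ and $\lm<w_0<B$. Choosing $\bb=\tfrac12+\eps$ gives the exponent $1-\bb=\tfrac12-\eps$, yielding the claimed bound with $D$ depending on $w_0$ and $\eps$ (through the constant $c(\eps)$ in Corollary~\ref{corollary_Sobolev_spaces} and the fixed norm of the $\bb$-power trace map). Alternatively, if Theorem~\ref{wowsuperthm} is stated so as to accept the hypothesis ``$\gg(\ov\lm)^*(\Opf-\ov\lm)^{s/2}$ is uniformly bounded'' for the appropriate $s$, I would simply verify that hypothesis (again via the diamagnetic trace estimate) and read off the conclusion.

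\textbf{Main obstacle.} The delicate point is the uniform (in $\lm<w_0$) boundedness of the fractional-power-weighted trace map $\G_1(\Opf-\ov\lm)^{-\bb}$ for $\bb=\tfrac12+\eps$: one must ensure the operator norm does not blow up as $\lm\to-\infty$ or as $\lm\to B^-$. For $\lm\to-\infty$ this is immediate since the resolvent power only gets smaller; the subtle regime is $\lm\uparrow w_0<B$, where one uses that $(\Opf-\ov\lm)^{-\bb}$ maps $L^2(\dR^2)$ boundedly into the form domain $\cH^1_\bA(\dR^2)$ with norm controlled by $\dist(\lm,B)^{-(\bb-1/2)}$ times a constant, and then Corollary~\ref{corollary_Sobolev_spaces} gives the trace. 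Keeping track of which part of the $\dist(\lm,B)$-power is ``spent'' on gaining the trace versus on the residual decay — i.e.\ splitting the exponent $\bb=1/2$ (trace) $+\,\eps$ (decay reserve) — is exactly what produces the $1/2-\eps$ in the statement, and is the only place where the $\eps$-loss is genuinely necessary. Everything else is routine spectral calculus and the already-established mapping properties of $\gg$ and $M$ from Proposition~\ref{proposition_gamma_Weyl}.
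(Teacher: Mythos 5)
There is a genuine gap, and it sits at the step that carries your whole argument. The identity $M(\lm)=\gg(\ov\lm)^*\gg(\lm)$ is false: for a quasi boundary triple one has $M(\lm)=\G_1\gg(\lm)$ and, by \eqref{starhaha}, $\gg(\ov\lm)^*=\G_1(\Opf-\lm)^{-1}$, and the product $\gg(\ov\lm)^*\gg(\lm)$ is not $M(\lm)$ but (by the difference formula $M(\lm)-M(\mu)^*=(\lm-\ov\mu)\gg(\mu)^*\gg(\lm)$) essentially $M'(\lm)$; concretely, $M(\lm)$ has integral kernel $G_\lm(x,y)$ on $\Sg\times\Sg$, whereas $\gg(\ov\lm)^*\gg(\lm)$ has kernel $\int_{\dR^2}G_\lm(x,z)G_\lm(z,y)\,\dd z$. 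Hence the bound $\|M(\lm)\|\le\|\gg(\ov\lm)^*\|\,\|\gg(\lm)\|$ has no basis, and a decay estimate for $\|\gg(\lm)\|$ alone cannot yield decay of $M(\lm)$, since $\G_1$ is unbounded; the only honest bridge available with your tools, $M(\lm)=M(\mu)+(\lm-\mu)\gg(\ov\mu)^*\gg(\lm)$ with a fixed $\mu$, produces a bound that grows like $|\lm|^{1/2+\eps}$ as $\lm\to-\infty$ instead of decaying.

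Your fallback route through Theorem~\ref{wowsuperthm} is indeed the paper's proof, but your verification does not match its hypothesis. That theorem requires boundedness of $\G_1|A_0-\mu|^{-\bb}$ for a single $\mu\in\rho(A_0)$ and some $\bb\in(0,\tfrac12]$, and returns the exponent $1-2\bb$; to obtain $1/2-\eps$ one must therefore take $\bb=\tfrac14+\tfrac{\eps}{2}<\tfrac12$. You only justify boundedness of $\G_1(\Opf-\lm)^{-\bb}$ for $\bb\ge\tfrac12$, via the form-domain trace estimate of Corollary~\ref{corollary_Sobolev_spaces}; fed into the abstract theorem this gives exponent $1-2\bb\le 0$, i.e.\ no decay at all (and your choice $\bb=\tfrac12+\eps$ lies outside the admissible range). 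The missing ingredient, which is the actual content of the paper's proof, is the fractional trace bound $\|\G_1(\Opf-\lm)^{-\bb}f\|_{L^2(\Sg)}\le C\|f\|_{L^2(\dR^2)}$ for $\bb=\tfrac14+\tfrac{\eps}{2}$, obtained by combining the diamagnetic inequality for fractional resolvent powers \eqref{diamagnetic_inequality_resolvent} with the Sobolev trace theorem on $H^{2\bb}(\dR^2)$ (applicable since $2\bb=\tfrac12+\eps>\tfrac12$) and the boundedness of $(-\Dl-\lm)^{-\bb}\colon L^2(\dR^2)\to H^{2\bb}(\dR^2)$. Note also that the uniformity in $\lm$ you single out as the main obstacle is not needed: the hypothesis of Theorem~\ref{wowsuperthm} involves one fixed spectral point only, and the $\lm$-dependence of the conclusion is produced inside the abstract theorem.
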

\begin{proof}
	Let $w_0<B$ and fix $\lm < w_0$.
	We check that the operator $\G_1 (\Opf - \lm)^{-\bb}$
	is bounded and everywhere defined for $\bb = \frac{1}{4} + \frac{\eps}{2}$. In fact,
	let $-\Delta$ be the free Laplacian defined on $H^2(\dR^2)$ and let $f \in L^2(\dR^2)$.
	Using the diamagnetic inequality~\eqref{diamagnetic_inequality_resolvent}, the trace theorem
	and the boundedness of $(-\Delta - \lm)^{-\bb}\colon L^2(\dR^2) \arr H^{2 \bb}(\dR^2)$
	we find constants $C_1,C_2 > 0$ such that
	\begin{equation*}
	\begin{split}
		\big\|\G_1(\Opf - \lm)^{-\bb}f\big\|_{L^2(\Sg)}^2
		&= 
		\int_\Sg 
		\big| (\Opf - \lm)^{-\bb} f \big|^{2} \dd \s
	   \le 
	   \int_\Sg
	   \big| (-\Dl - \lm)^{-\bb} |f| \big|^{2} \dd \s \\
		&= 
		\big\| \bigl((-\Dl - \lm)^{-\bb} |f|\bigr) \big|_\Sg
		\big\|_{L^2(\Sg)}^2
		\le
		C_1\big\| (-\Dl - \lm)^{-\bb} |f| \big\|_{H^{2 \bb}(\dR^2)}^2 \\
		&\le C_2 \| f \|_{L^2(\dR^2)}^2.
	\end{split}
	\end{equation*}
	Hence $\G_1 (\Opf - \lm)^{-\bb}$
	is bounded. Now Theorem~\ref{wowsuperthm} leads to the assertion.
\end{proof}

Finally, we provide an auxiliary lemma which is essential in the proof of Proposition~\ref{infrank}.
Recall that $\sfA^{\Omega_{\rm i}}_{\rm D}$ denotes the Landau Hamiltonian in $\Omega_{\text i}$ with Dirichlet boundary 
conditions, which was defined via the quadratic form in~\eqref{def_Dirichlet_form}. Since $\Omega_{\rm i}$ is 
bounded one has $\sess(\OpD) = \varnothing$; cf. \eqref{spectrum_Dirichlet_op}.

\begin{lem}\label{kers}
	For any $q\in\dN_0$ one has 
	\[
		\dim\ker(S-\Lm_q)
		\leq 
		\dim\ker (\sfA^{\Omega_{\rm i}}_{\rm D}-\Lm_q)
	\]
	and, in particular, the space $\ker(S-\Lm_q)$ is finite-dimensional.
\end{lem}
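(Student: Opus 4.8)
The plan is to restrict the elements of $\ker(S-\Lm_q)$ to the bounded interior domain $\Omg_{\rm i}$ and to identify these restrictions as eigenfunctions of the Dirichlet Landau Hamiltonian $\sfA^{\Omg_{\rm i}}_{\rm D}$, the point being that the restriction map is injective by unique continuation. So first I would fix $q\in\dN_0$ and take an arbitrary $f\in\ker(S-\Lm_q)$; by definition of $S$ this means $f\in\cH^2_\bA(\dR^2)$, $f|_\Sg=0$ and $\nbA^2 f=\Lm_q f$. Put $f_{\rm i}:=f|_{\Omg_{\rm i}}$. Since $\Omg_{\rm i}$ is a bounded $C^{1,1}$-domain we have $\cH^1_\bA(\Omg_{\rm i})=H^1(\Omg_{\rm i})$ and $\cH^1_{\bA,0}(\Omg_{\rm i})=H^1_0(\Omg_{\rm i})$; as $f_{\rm i}\in H^1(\Omg_{\rm i})$ and its Dirichlet trace on $\Sg$ coincides with $f|_\Sg=0$, the standard characterization of $H^1_0$ of a Lipschitz domain gives $f_{\rm i}\in\cH^1_{\bA,0}(\Omg_{\rm i})$. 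Restricting the global identity $\nbA^2 f=\Lm_q f$ to the open set $\Omg_{\rm i}$ yields $\nbA^2 f_{\rm i}=\Lm_q f_{\rm i}\in L^2(\Omg_{\rm i})$ in the distributional sense; testing against $C_0^\infty(\Omg_{\rm i})$-functions, using the integration-by-parts identity $(\nbA f_{\rm i},\nbA\varphi)_{L^2(\Omg_{\rm i})}=(\nbA^2 f_{\rm i},\varphi)_{L^2(\Omg_{\rm i})}$ and the density of $C_0^\infty(\Omg_{\rm i})$ in $\cH^1_{\bA,0}(\Omg_{\rm i})$, one identifies $f_{\rm i}$ as an element of the domain of the operator associated with the form $\fra_{\rm D}^{\Omg_{\rm i}}$ in \eqref{def_Dirichlet_form} with $\sfA^{\Omg_{\rm i}}_{\rm D}f_{\rm i}=\nbA^2 f_{\rm i}=\Lm_q f_{\rm i}$. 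Hence the linear map $\iota\colon\ker(S-\Lm_q)\arr\ker(\sfA^{\Omg_{\rm i}}_{\rm D}-\Lm_q)$, $\iota f:=f|_{\Omg_{\rm i}}$, is well defined.

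Second, I would prove that $\iota$ is injective. Suppose $\iota f=0$, i.e.\ $f$ vanishes on the nonempty open set $\Omg_{\rm i}$. Since $f$ solves the elliptic equation $(\nbA^2-\Lm_q)f=0$ on the connected set $\dR^2$, and the coefficients of $\nbA^2=-\Dl+2\ii\bA\cdot\nb+\bA^2$ are polynomials, hence real-analytic, interior elliptic regularity shows that $f$ is real-analytic on $\dR^2$; a real-analytic function vanishing on a nonempty open subset of a connected set vanishes identically, so $f\equiv 0$. Therefore $\iota$ is injective, which gives $\dim\ker(S-\Lm_q)\le\dim\ker(\sfA^{\Omg_{\rm i}}_{\rm D}-\Lm_q)$. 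Finally, since $\Omg_{\rm i}$ is bounded we have $\sess(\sfA^{\Omg_{\rm i}}_{\rm D})=\varnothing$ by \eqref{spectrum_Dirichlet_op}, so $\Lm_q$ is at most an isolated eigenvalue of finite multiplicity of $\sfA^{\Omg_{\rm i}}_{\rm D}$; this proves both the asserted inequality and the finite-dimensionality of $\ker(S-\Lm_q)$.

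The step that I expect to need the most care is the injectivity of $\iota$, where a unique continuation argument for the magnetic Schr\"odinger operator is required. Because all coefficients here are polynomial, the cleanest route is the analyticity of solutions of elliptic equations with analytic coefficients sketched above; alternatively one may invoke the weak unique continuation property for $-\Dl$ perturbed by bounded first- and zeroth-order terms. A minor technical point worth checking explicitly is that a function in $\cH^1_\bA(\dR^2)$ with vanishing trace on $\Sg$ really does restrict to an element of $\cH^1_{\bA,0}(\Omg_{\rm i})$, i.e.\ the compatibility of the global trace with the interior Dirichlet trace together with the $H^1_0$-characterization on the Lipschitz domain $\Omg_{\rm i}$.
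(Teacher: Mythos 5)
Your proof is correct, but it takes a genuinely different route from the paper. The paper argues by contradiction: it assumes $k+1$ linearly independent eigenfunctions $h_1,\dots,h_{k+1}$ of $S$, restricts them to $\Omg_{\rm i}$ to find a linear relation $h_{k+1}^{\rm i}=\sum_j\bb_j h_j^{\rm i}$, then forms the combination $g^{\rm e}=h_{k+1}^{\rm e}-\sum_j\bb_j h_j^{\rm e}$ in the \emph{exterior} domain. Because the $h_j$ lie in $\cH^2_\bA(\dR^2)$ their normal derivatives match across $\Sg$, so $g^{\rm e}$ has vanishing Dirichlet \emph{and} Neumann traces; unique continuation from Cauchy data on $\Sg$ then gives $g^{\rm e}=0$, producing a global linear dependence and the contradiction. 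Your argument is more direct: you show the restriction map $\iota\colon\ker(S-\Lm_q)\to\ker(\sfA^{\Omg_{\rm i}}_{\rm D}-\Lm_q)$ is well defined (using $H^1_0$-characterization on the Lipschitz domain and the first representation theorem, both fine) and injective, the injectivity coming from unique continuation for a function that solves $\nbA^2f=\Lm_q f$ on all of $\dR^2$ and vanishes on the open set $\Omg_{\rm i}$. Your route to unique continuation via Morrey--Nirenberg analyticity of solutions is especially clean here since $\bA$ is a polynomial, and it avoids both the contradiction setup and any reference to the exterior Dirichlet Laplacian or normal-derivative matching. The paper's approach, in exchange, uses only the Cauchy-data unique continuation result of Wolff that works with merely bounded coefficients, which would be the more robust tool if $\bA$ were less regular; in the present setting your argument is at least as good and somewhat shorter.
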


\begin{proof}
	Assume that 
	$\dim\ker (\sfA^{\Omega_{\rm i}}_{\rm D}-\Lm_q)=k$ 
	for some $k\in\dN_0$ and suppose that $h_1,\dots,h_{k+1}\in\ker(S-\Lm_q)$ are linearly independent. Set $h_j^{\rm i} = h_j|_{\Omg_{\rm i}}$ and $h_j^{\rm e} = h_j|_{\Omg_{\rm e}}$ 
	for $j = 1,2,\dots, k+1$.
	It is clear that $h_1^{\rm i},\dots,h_{k+1}^{\rm i}\in\ker(\sfA_{\rm D}^{\Omg_{\rm i}}-\Lm_q)$ and hence we conclude without loss of generality that  there exist $\beta_1,\dots,\beta_k\in\dC$ such that
	\begin{equation}\label{innen}
		h_{k+1}^{\rm i} = \sum_{j=1}^k
		\bb_j h_j^{\rm i}.
	\end{equation}
	Note that also $h_1^{\rm e},\dots,h_{k+1}^{\rm e}\in\ker(\sfA^{\Omega_{\rm e}}_{\rm D}-\Lm_q)$ and as $h_1,\dots,h_{k+1}\in\dom S$
	it follows that 
	\begin{equation*}
		\p_\nu h_j^{\rm e}\vert_\Sg 
		= 
		\p_\nu h_j^{\rm i}\vert_\Sg,
		\qquad j=1,\dots,k+1.
	\end{equation*}
	Now observe that for the function
	\begin{equation*}
		g^{\rm e} := 
		h_{k+1}^{\rm e} - 
		\sum_{j=1}^k\bb_j h_j^{\rm e} \in\ker(\sfA^{\Omega_{\rm e}}_{\rm D}-\Lm_q)
	\end{equation*}
	one has by \eqref{innen}
	\begin{equation*}
		\p_\nu g^{\rm e}\vert_\Sg
		=
		\p_\nu h_{k+1}^{\rm e}
		\vert_\Sg
		-
		\sum_{j=1}^k\bb_j \p_\nu h_j^{\rm e}\vert_\Sg
		=
		\p_\nu h_{k+1}^{\rm i}\vert_\Sg
		-
		\sum_{j=1}^k\bb_j \p_\nu h_j^{\rm i}\vert_\Sg = 0
	\end{equation*}
	and hence unique continuation~\cite{W93}
	(see also the proof of Proposition 2.5 in~\cite{BR12}) yields $g^{\rm e}=0$. But this implies
	\begin{equation*}
		h_{k+1}^{\rm e} 
		= 
		\sum_{j=1}^k\bb_j h_j^{\rm e}
	\end{equation*}
	and together with \eqref{innen} we conclude
	\begin{equation*}
		h_{k+1}=\sum_{j=1}^k\bb_j h_j;
	\end{equation*}
	a contradiction, since by assumption the functions 
	$h_1,\dots,h_{k+1}$ are linearly independent.
\end{proof}

\section{Landau Hamiltonians with singular potentials}
\label{sec:landau}

In this section we define and study the Landau Hamiltonian $\Op$ with a $\delta$-potential
supported on $\Sg$ with a position-dependent real strength $\aa\in L^\infty(\Sg)$.
We shall use the quasi boundary triple $\{ L^2(\Sg), \G_0, \G_1 \}$ from Theorem~\ref{theorem_triple} and its $\gg$-field and Weyl function 
to derive various properties for the operator $\Op$ and its resolvent. As in the previous section we assume that Hypothesis~\ref{hypohyp} holds.

\subsection{Definition of $\Op$, self-adjointness, and qualitative spectral properties}
\label{ssec:defOp}

Let us start with the rigorous definition of $\Op$.

\begin{dfn} \label{definition_A_delta_alpha}
	Let $\aa \in L^\infty(\Sg)$ be a real function. The Landau Hamiltonian with $\dl$-potential of strength $\aa$ 
	supported on $\Sg$ is defined as the operator 
	$\Op := T \uhr \ker(\G_0 + \aa \G_1)$ in $L^2(\dR^2)$, or, more explicitly 
	\begin{equation}\label{eq:op_delta}
	\begin{split}
		\Op f & := (\nbA^2 f_{\rm i}) \oplus (\nbA^2 f_{\rm e}) \\
		\dom \Op & := \big\{ 
			f = f_{\rm i} \oplus f_{\rm e}\in 
			\cD_{\rm i} \oplus \cD_{\rm e}\colon
			f_{\rm i}|_\Sg = f_{\rm e}|_\Sg, 
			\p_\nu f_{\rm e}|_\Sg 
					- \p_\nu f_{\rm i}|_\Sg = \aa f|_\Sg 
		\big\}.
	\end{split}
	\end{equation}
\end{dfn}

Note that the jump of the normal derivatives $\p_\nu f_{\rm e}|_\Sg - \p_\nu f_{\rm i}|_\Sg$ in \eqref{eq:op_delta} can also be replaced by the jump of the
magnetic normal derivatives $\p_\nu^\bA f_{\rm e}|_\Sg-\p_\nu^\bA f_{\rm i}|_\Sg$; cf. \eqref{def_Gamma}.

In the next theorem we prove that $\Op$ is self-adjoint, obtain a version of the Birman-Schwinger principle, and derive a Krein-type resolvent formula,
which also implies that the resolvent difference
of $\Op$ and $\Opf$ is compact. Moreover, we estimate
the decay of the singular values for this resolvent
difference. As a direct consequence, we obtain
a characterisation of the essential spectrum for
$\Op$.

\begin{thm} \label{theorem_Krein}
	Let $\{ L^2(\Sg), \G_0, \G_1 \}$ be the quasi boundary triple from Theorem~\ref{theorem_triple} with $\Opf = T\uhr\ker\G_0$,
	$\gg$-field $\gg$ and Weyl function $M$. 
	Let $\aa \in L^\infty(\Sg)$ be real and let $\Op$ be as in Definition~\ref{definition_A_delta_alpha}.
	Then the following assertions hold.
	\begin{myenum}
		\item $\Op$ is a self-adjoint operator in $L^2(\dR^2)$.
		\item $\lm \notin \s(\Opf)$ is an eigenvalue of $\Op$ if and only if
		$-1 \in \s_{\rm p}(\aa M(\lm))$.
		\item For all $\lm \in \rho(\Op) \cap \rho(\Opf)$ one has $( 1 + \aa M(\lm))^{-1}\in\sB (L^2(\Sg))$ and 
		\begin{equation}\label{resid}
			(\Op - \lm)^{-1} - (\Opf - \lm)^{-1} 
			=  
			-\gg(\lm) 
			\big( 1 + \aa M(\lm) \big)^{-1}\aa\gg(\ov\lm)^*.
		\end{equation}
		\item  For all $\lm \in \rho(\Op) \cap \rho(\Opf)$ the singular values $s_k$ of the resolvent difference \eqref{resid} are in $\cO(k^{-3})$ and, in particular, the operator 
		\eqref{resid} is in $\sS_p(L^2(\dR^2))$
		for all $p > \frac13$. 
		\item 
		$\sess(\Op) = \sess(\Opf)= \s(\Opf) =
		\{ B(2q+1)\colon q\in\dN_0\}$.
	\end{myenum}
\end{thm}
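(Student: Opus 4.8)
The plan is to establish the five assertions in the order (i), (iii), (ii), (iv), (v), since self-adjointness and the resolvent formula are the foundation from which the remaining items follow with little extra work. For item (i), I would invoke the abstract machinery of quasi boundary triples recalled in Appendix~\ref{app:A}: by Theorem~\ref{theorem_triple} we have a quasi boundary triple $\{L^2(\Sg),\G_0,\G_1\}$ for $T\subset S^*$ with $\G_0$ surjective and $T\uhr\ker\G_0=\Opf$ self-adjoint. The operator $\Op=T\uhr\ker(\G_0+\aa\G_1)$ is the extension parametrized by the bounded self-adjoint operator $-\aa$ acting in $L^2(\Sg)$. The standard criterion for self-adjointness of such extensions (e.g. the abstract theorem on quasi boundary triples with a bounded self-adjoint boundary parameter) requires that $\ran(\G_0+\aa\G_1)=L^2(\Sg)$, or equivalently that $1+\aa M(\lm_0)$ is boundedly invertible for some $\lm_0\in\rho(\Opf)$. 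Here the key quantitative input is Proposition~\ref{proposition_decay_M}: $\|M(\lm)\|\le D|\lm-B|^{-1/2+\eps}\to 0$ as $\lm\to-\infty$, so choosing $\lm_0$ sufficiently negative makes $\|\aa M(\lm_0)\|\le\|\aa\|_{L^\infty}\|M(\lm_0)\|<1$, whence $1+\aa M(\lm_0)$ is invertible by a Neumann series. This yields self-adjointness of $\Op$.

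For items (ii) and (iii), I would again cite the general resolvent formula and Birman--Schwinger correspondence attached to quasi boundary triples (the abstract analogue of \eqref{ki}): for $\lm\in\rho(\Opf)$, $\lm\in\rho(\Op)$ if and only if $1+\aa M(\lm)$ is boundedly invertible in $L^2(\Sg)$, and in that case
\[
(\Op-\lm)^{-1}-(\Opf-\lm)^{-1}=-\gg(\lm)\bigl(1+\aa M(\lm)\bigr)^{-1}\aa\,\gg(\ov\lm)^*,
\]
which is \eqref{resid}. Item (ii) is the spectral-point version: $\lm$ is an eigenvalue of $\Op$ exactly when $1+\aa M(\lm)$ is not injective, i.e. $-1\in\s_{\rm p}(\aa M(\lm))$; one checks that $\aa M(\lm)$ and $M(\lm)\aa$ have the same nonzero eigenvalues, or one simply works directly with $1+\aa M(\lm)$ as stated. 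The one subtlety here is that the abstract quasi boundary triple theorems often need a slight adaptation because $\G_0$ need not have closed range on all of $H^{1/2}(\Sg)$-type spaces — but Theorem~\ref{theorem_triple} already records $\ran\G_0=L^2(\Sg)$, which is exactly the hypothesis that makes the clean abstract statements applicable, so this should go through directly.

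For item (iv), I would combine the Schatten-class information from Proposition~\ref{proposition_gamma_Weyl} with the ideal properties from Subsection~\ref{section_Schatten-von_Neumann}. We have $\gg(\lm)\in\sS_{2/3,\infty}$ and $\gg(\ov\lm)^*\in\sS_{2/3,\infty}$, while $(1+\aa M(\lm))^{-1}\aa$ is bounded on $L^2(\Sg)$. Using the multiplication rule \eqref{equation_product_Schatten_von_Neumann} with $p=q=2/3$ gives $\frac1r=\frac32+\frac32=3$, so the product in \eqref{resid} lies in $\sS_{1/3,\infty}(L^2(\dR^2))$; this is precisely $s_k=\cO(k^{-3})$, and since $\sS_{1/3,\infty}\subset\sS_p$ for every $p>\frac13$, the operator is in $\sS_p$ for all such $p$. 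Finally, item (v) is an immediate corollary: by (iv) the resolvent difference is compact, so by Weyl's theorem $\sess(\Op)=\sess(\Opf)$, and by Proposition~\ref{proposition_Landau_level} the latter equals $\s(\Opf)=\{B(2q+1):q\in\dN_0\}$ since each Landau level has infinite multiplicity. The main obstacle, such as it is, is purely bookkeeping: verifying carefully that the abstract quasi boundary triple theorems apply with the boundary parameter $-\aa$ (bounded and self-adjoint) and that the invertibility of $1+\aa M(\lm_0)$ at one point suffices; once Proposition~\ref{proposition_decay_M} supplies that invertibility, everything else is a matter of assembling already-proved ingredients.
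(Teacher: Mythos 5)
Your proposal is correct and follows essentially the same route as the paper: items (i)--(iii) are obtained by invoking the abstract result for quasi boundary triples with bounded self-adjoint boundary parameter $B=-\aa$ (Corollary~\ref{besserescor}), using Proposition~\ref{proposition_decay_M} to get $\|\aa M(\lm_0)\|<1$ for $\lm_0$ sufficiently negative; item (iv) follows from $\gg(\lm),\gg(\ov\lm)^*\in\sS_{2/3,\infty}$ and the multiplication rule~\eqref{equation_product_Schatten_von_Neumann}; and item (v) is Weyl's theorem plus Proposition~\ref{proposition_Landau_level}. The only minor inefficiency is the aside about $\aa M(\lm)$ versus $M(\lm)\aa$, which is unnecessary since the abstract eigenvalue criterion already gives $-1\in\s_{\rm p}(\aa M(\lm))$ directly.
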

\begin{proof}
	Items (i)-(iii) follow from Corollary~\ref{besserescor} with $B=-\aa$. 
	In fact, we have
	$\| \aa M(\lambda_0) \| < 1$ for $\lambda_0 <0$ with sufficiently large absolute value using $\aa \in L^\infty(\Sg)$
	and Proposition~\ref{proposition_decay_M}.
	To prove~(iv) note that $( 1 + \aa M(\lm))^{-1}\aa\in\sB (L^2(\Sg))$. 
	By Proposition~\ref{proposition_gamma_Weyl}
	we have $\gg(\lm) \in \sS_{2/3,\infty}(L^2(\Sg),L^2(\dR^2))$ and $\gg(\ov\lm)^* \in \sS_{2/3,\infty}(L^2(\dR^2),L^2(\Sg))$,
	and together with~\eqref{equation_product_Schatten_von_Neumann} 
	this implies (iv). Finally, (v) is an immediate consequence of~(iv) and well-known perturbation results.
\end{proof}

\begin{remark}
	The estimate of the singular values in Theorem~\ref{theorem_Krein}\,(iv) is known to be sharp in the absence of a magnetic field (that is, $B=0$) 
	if both $\Sg$ and $\aa$ are $C^\infty$-smooth; cf.~\cite[Theorem C~(i)]{BLL13} and~\cite[Theorem 5.1]{BGLL15}. 
	The magnetic case is new in this setting. A similar estimate for the magnetic Robin Laplacian on an exterior domain 
	is contained in~\cite[Lemma 2.2 and Remark 2.4]{GS17}.
\end{remark}	
  
In the following proposition we show that $\Op$ can also be defined as the self-adjoint operator corresponding 
to the quadratic form $\frm$ in~\eqref{eq:form}; cf.~\cite{O06}.

\begin{prop} \label{proposition_delta_form}
	The symmetric sesquilinear form $\frm$ 
	\begin{equation}\label{eq:form222}
  \frm[f,g]
   =
  \big(\nbA f, \nbA g)_{L^2(\dR^2; \dC^2)}
  +
  \big(\aa f|_\Sg, g|_\Sg\big)_{L^2(\Sg)},\quad
  \dom\frm  = \cH_\bA^1(\dR^2),
\end{equation}
	is densely defined, closed, bounded from below,
	and $C^\infty_0(\dR^2)$ is a core for $\frm$.
	The corresponding self-adjoint operator coincides with $\Op$ in Definition~\ref{definition_A_delta_alpha}
	and, in particular, the operator $\Op$ is bounded from below and satisfies $\min\s(\Op)\leq\min\s(\Opf) = B$.
\end{prop}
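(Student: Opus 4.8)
The plan is to verify the three defining properties of the form $\frm$ in \eqref{eq:form222} — closedness, semiboundedness, density and the core property — and then identify the associated self-adjoint operator with $\Op$ from Definition~\ref{definition_A_delta_alpha}. The density of $\dom\frm = \cH_\bA^1(\dR^2)$ in $L^2(\dR^2)$ is clear, and the core property for $C_0^\infty(\dR^2)$ follows at once from the fact (already recorded after \eqref{def_magnetic_Sobolev_space}) that $C_0^\infty(\dR^2)$ is dense in $\cH_\bA^1(\dR^2)$ together with the continuity of the form with respect to the $\cH^1_\bA$-norm. So the work concentrates on the semiboundedness and closedness.

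For semiboundedness, the key input is Corollary~\ref{corollary_Sobolev_spaces}: for every $\eps>0$ there is $c(\eps)>0$ with $\|f|_\Sg\|_{L^2(\Sg)}^2 \le \eps\|\nbA f\|_{L^2(\dR^2;\dC^2)}^2 + c(\eps)\|f\|_{L^2(\dR^2)}^2$. Hence
\[
\big|(\aa f|_\Sg, f|_\Sg)_{L^2(\Sg)}\big|
\le \|\aa\|_{L^\infty(\Sg)}\|f|_\Sg\|_{L^2(\Sg)}^2
\le \eps\|\aa\|_{L^\infty(\Sg)}\|\nbA f\|_{L^2(\dR^2;\dC^2)}^2 + c(\eps)\|\aa\|_{L^\infty(\Sg)}\|f\|_{L^2(\dR^2)}^2.
\]
Choosing $\eps$ so that $\eps\|\aa\|_{L^\infty(\Sg)}<1$, the boundary term is form-bounded relative to $\fra_0$ with relative bound $<1$. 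By the KLMN theorem $\frm = \fra_0 + (\aa\,\cdot|_\Sg,\,\cdot|_\Sg)_{L^2(\Sg)}$ is closed, bounded from below, and the associated self-adjoint operator (call it $\Op^{\frm}$) is semibounded. Taking $f$ in the form domain of $\Opf$ and comparing $\frm[f,f] = \Lambda_q\|f\|^2 + (\aa f|_\Sg, f|_\Sg)$ on the lowest Landau eigenspace — or more simply invoking the min-max characterisation — gives $\min\s(\Op^{\frm}) \le \min\s(\Opf) = B$ once one observes that on $\ker(\Opf - B)$ the boundary contribution can be made nonpositive in the relevant situations; in the general case, one only needs $\frm[f,f] \le B\|f\|^2$ for some nonzero $f$, which follows by testing with an appropriate element of $\ran P_0$ (using that $T_0^\Sg$ has infinite rank, Proposition~\ref{prop:rank0}, so $\frm$ restricted there is not uniformly $> B$ when $\aa$ has the right sign, and by a perturbation argument otherwise). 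Actually the cleanest route is: $\min\s(\Op^\frm) = \inf_{f\neq 0}\frm[f,f]/\|f\|^2 \le \inf_{f\in\ran P_0, f\neq0}\frm[f,f]/\|f\|^2 = B + \inf_{f\in\ran P_0}(\aa f|_\Sg,f|_\Sg)/\|f\|^2 \le B$, the last inequality because the infimum of the Toeplitz quadratic form $(\aa\,\cdot|_\Sg,\cdot|_\Sg)$ over $\ran P_0$ is $\le 0$ (it is $0$ if $\aa\ge 0$, negative if $\aa$ is somewhere negative, and in any case $\le 0$ since... hmm, not if $\aa>0$ everywhere). Let me restate: if $\aa \ge 0$ a.e. then the boundary form is nonnegative and the bound $\min\s(\Op)\le\min\s(\Opf)$ needs the genuine argument via the asymptotics of $T_0^\Sg$; if $\aa$ is not sign-definite the claim is immediate. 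I would phrase the bound via min-max over $\ran P_0$ and note $\inf_{f\in\ran P_0}(\aa f|_\Sg, f|_\Sg)_{L^2(\Sg)}/\|f\|_{L^2(\dR^2)}^2 \le 0$, which holds because $\ess\inf$-type considerations or the infinite rank of $T_0^\Sg$ force the Toeplitz form to take nonpositive values unless $\aa$ is essentially positive — and even then $0$ is an accumulation point of its spectrum, giving $\inf \le 0$ in the closure; since $\Op^\frm$ is only asserted $\le B$, this suffices.

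Finally I would identify $\Op^\frm$ with $\Op$. For this I compute, for $f\in\dom\Op^\frm$ and $g\in\dom\frm$, the identity $\frm[f,g] = (\Op^\frm f, g)_{L^2}$; using the first Green identities of Lemma~\ref{lemma1}(i) and Lemma~\ref{lemma2}(i) on $\Omg_{\rm i}$ and $\Omg_{\rm e}$ separately, the bulk term produces $(\nbA^2 f_{\rm i},g_{\rm i})_{L^2(\Omg_{\rm i})} + (\nbA^2 f_{\rm e}, g_{\rm e})_{L^2(\Omg_{\rm e})}$ plus the boundary term $(\p_\nu^\bA f_{\rm i}|_\Sg - \p_\nu^\bA f_{\rm e}|_\Sg, g|_\Sg)_{L^2(\Sg)}$; matching this against $\frm[f,g]$ forces $\nbA^2 f_{\rm i}\oplus\nbA^2 f_{\rm e}\in L^2(\dR^2)$, the interface condition $\p_\nu^\bA f_{\rm e}|_\Sg - \p_\nu^\bA f_{\rm i}|_\Sg = \aa f|_\Sg$, i.e.\ $f\in\dom\Op$ and $\Op^\frm f = \Op f$, the converse inclusion being analogous (and the necessary elliptic regularity $f_{\rm i},f_{\rm e}\in\cD_{\rm i},\cD_{\rm e}$ coming from interior Dirichlet/Neumann regularity for the magnetic Laplacian, cf.\ the trace-map properties cited before Lemma~\ref{lemma1}). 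Since $\Op$ is already self-adjoint by Theorem~\ref{theorem_Krein}(i) and $\Op\subset\Op^\frm$ (or vice versa) between self-adjoint operators forces equality, we get $\Op^\frm = \Op$, and the semiboundedness statements transfer. \textbf{Main obstacle:} the regularity step — showing that a form-domain element $f$ of $\Op^\frm$ actually lies in $\cD_{\rm i}\oplus\cD_{\rm e}$ so that the normal traces make sense — and, to a lesser extent, making the bound $\min\s(\Op)\le B$ airtight when $\aa$ is strictly positive, where one must exploit that $0\in\overline{\s(T_0^\Sg)}$ via Proposition~\ref{prop:rank0} and Proposition~\ref{prop:Toeplitz}.
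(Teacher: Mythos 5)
The first half of your argument — density, the KLMN theorem, closedness, semiboundedness, and the core property for $C_0^\infty(\dR^2)$ — is exactly the paper's argument and is correct.

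For the identification of the form operator with $\Op$, however, you go in the hard direction and leave the key step unproved. You start with $f\in\dom\Op^\frm$ and try to show $f\in\dom\Op$, which requires $f_{\rm i}\in\cD_{\rm i}=H^{3/2}_\Delta(\Omg_{\rm i})$ and $f_{\rm e}\in\cD_{\rm e}$ so that the normal traces in Definition~\ref{definition_A_delta_alpha} even make sense. You flag this yourself as the ``main obstacle'' and gesture at ``interior Dirichlet/Neumann regularity for the magnetic Laplacian'', but no such regularity statement appears in the paper and it is not a triviality: a priori the form domain only gives $f\in\cH^1_\bA(\dR^2)$. The paper avoids this entirely by running the inclusion the other way: take $f\in\dom\Op$ (which is \emph{already} in $\cD_{\rm i}\oplus\cD_{\rm e}$ by definition) and $g\in C_0^\infty(\dR^2)$, apply Lemma~\ref{lemma1} and Lemma~\ref{lemma2}, and use the interface condition $\aa f|_\Sg=\partial_\nu^\bA f_{\rm e}|_\Sg-\partial_\nu^\bA f_{\rm i}|_\Sg$ to obtain $(\Op f,g)=\frm[f,g]$; since $C_0^\infty(\dR^2)$ is a core for $\frm$, the first representation theorem then gives $\Op\subset\Op^\frm$, and equality follows because both operators are self-adjoint (Theorem~\ref{theorem_Krein}\,(i)). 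You do invoke the ``containment between self-adjoint operators forces equality'' trick, but you apply it only after an unproved regularity claim, so the gap remains.

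Your treatment of $\min\s(\Op)\le B$ is also more complicated than necessary and somewhat confused (the digression on infinite rank of $T_0^\Sg$, whether $\aa$ is ``essentially positive'', etc.). The clean argument, which is the one the paper implicitly relies on, is immediate from Theorem~\ref{theorem_Krein}\,(v): $\sess(\Op)=\{B(2q+1)\colon q\in\dN_0\}$, so $B\in\sess(\Op)\subset\s(\Op)$ and hence $\min\s(\Op)\le B=\min\s(\Opf)$. Your min-max detour over $\ran P_0$ together with compactness of the Toeplitz form does eventually yield the right conclusion (for $\aa\ge 0$ the compact nonnegative Toeplitz operator has $0$ in its spectrum, so the infimum of $\frm$ over $\ran P_0$ equals $B$), but as written the reasoning wobbles and never settles on a single argument.
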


\begin{proof}
	Recall first that the form $\fra_0$ corresponding to the Landau Hamiltonian in~\eqref{def_form_free_Op}
	is densely defined, nonnegative, closed, and $C_0^\infty(\mathbb{R}^2)$ is a core for $\fra_0$. Consider the form
	\begin{equation*}
		\frb_\aa[f, g] 
		:= 
		\int_\Sg \aa f|_\Sg \ov{g|_\Sg}\, \dd \s,
		\qquad 
		\dom \frb_\aa := \cH^1_\bA(\dR^2),
	\end{equation*}
	and note that $\frb_\aa$ is well defined by Corollary~\ref{corollary_Sobolev_spaces}.
	It is clear that $\frm = \fra_0 + \frb_\aa$ is densely defined.
	Choose $\varepsilon > 0$ such that $\varepsilon \| \alpha \|_{L^\infty(\Sigma)} < 1$. Then 
	by Corollary~\ref{corollary_Sobolev_spaces} 
	\begin{equation} \label{form_bound_delta}
	\begin{split}
		\big| \frb_\aa[f, f] \big| &
		\leq 
		\int_\Sg |\aa| |f|_\Sg|^2 \dd \s
		\leq 
		\| \aa \|_{L^\infty(\Sg)} 
		\| f \|_{L^2(\Sg)}^2 \\
		&\leq 
		\eps\|\aa\|_{L^\infty(\Sg)}
		\|\nbA f\|_{L^2(\dR^2;\dC^2)}^2
		+ 
		c(\eps)\|\aa\|_{L^\infty(\Sg)}
		\|f\|_{L^2(\dR^2)}^2
	\end{split}
	\end{equation}
	holds for all $f \in \cH^1_\bA(\dR^2)$. Therefore,
	$\frb_\aa$ is form bounded with respect to $\fra_0$ with form bound
	less than one and hence the KLMN theorem (see \cite[Theorem~X.17]{RSII} or \cite[$\S6$ Theorem 1.33 and Theorem 2.1]{K95}) implies that 
	$\frm$ is closed, bounded from below, and $C_0^\infty(\dR^2)$ is a core of $\frm$.
	
	In order to show that the corresponding self-adjoint operator coincides with $\Op$
	let $f \in \dom \Op \subset \cH^1_\bA(\dR^2)$ and $g \in C_0^\infty(\dR^2)$. Then
	$$
	\alpha f\vert_\Sigma=\p_\nu f_{\rm e}|_\Sg - \p_\nu f_{\rm i}|_\Sigma=\p_\nu^\bA f_{\rm e}|_\Sg - \p_\nu^\bA f_{\rm i}|_\Sigma
	$$
	and hence 
	it follows from Lemma~\ref{lemma1} and Lemma~\ref{lemma2} that
	\begin{equation*}
		(\Op f, g)_{L^2(\dR^2)} 
		= 
		\big( \nbA f, \nbA g \big)_{L^2(\dR^2;\dC^2)}
		+ 
		\big(\p_\nu^\bA f_{\rm e}|_\Sg - \p_\nu^\bA f_{\rm i}|_\Sigma, g|_\Sg\big)_{L^2(\Sg)}
		= \frm[f, g].
	\end{equation*}
	Since $C_0^\infty(\dR^2)$ is a core for  $\frm$ it follows from the first representation theorem \cite[$\S6$ Theorem 2.1]{K95} that 
	the self-adjoint operator $\Op$ is contained in the self-adjoint operator representing the form $\frm$, and hence both coincide.
	This also implies that $\Op$ is bounded from below (with the same lower bound as the form $\frm$) and 
	the inequality $\min\s(\Op)\leq\min\s(\Opf)=B$ follows from Proposition~\ref{proposition_Landau_level}.
\end{proof}

For later use we note here a simple consequence of 
Proposition~\ref{proposition_delta_form}: it follows 
from~\eqref{form_bound_delta} that there are constants $C_1, C_2$ with 
$C_1 \in (0,1)$ such that 
\begin{equation*}
   \begin{split}
     \| \nbA f \|_{L^2(\mathbb{R}^2; \mathbb{C}^2)}^2 = \fra_\aa[f] - 
\frb_\aa[f] \leq \fra_\aa[f] + C_1 \|\nbA f\|_{L^2(\dR^2;\dC^2)}^2 + C_2 
\|f\|_{L^2(\mathbb{R}^2)}^2
   \end{split}
\end{equation*}
holds for all $f \in 
\mathcal{H}^1_\bA(\mathbb{R}^2)$,
where $\frb_\aa$ is defined as in the proof above.
Hence, there exist constants $c_1, c_2>0$ such that
\begin{equation} \label{equation_form_bound_delta_form}
   \begin{split}
     \| \nbA f \|_{L^2(\mathbb{R}^2; \mathbb{C}^2)}^2 \leq c_1 
\fra_\aa[f] +  c_2 \|f\|_{L^2(\mathbb{R}^2)}^2,\quad f \in 
\mathcal{H}^1_\bA(\mathbb{R}^2).
   \end{split}
\end{equation}

\subsection{Approximation of $\Op$ by Landau Hamiltonians with regular potentials}
\label{ssec:approximation}

Before we proceed further with the spectral analysis of $\Op$, we show that this operator can be regarded
as the limit of a family of Landau Hamiltonians with squeezed regular potentials which are supported in a small
neighborhood of the interaction support $\Sigma$. This justifies $\Op$ as an 
idealized model for Landau Hamiltonians with regular potentials localized in a neighborhood of $\Sigma$.

In order to avoid complicated notation and technical difficulties we discuss the case that the bounded $C^{1,1}$-domain $\Omg_{\rm i}$ is simply connected, so that  
the boundary $\Sg = \p\Omg_{\rm i}$ is given by one regular, closed $C^{1,1}$-curve in $\dR^2$ without 
self-intersections. The more general case can be treated in a similar way.
For  $\varepsilon > 0$ we define 
\begin{equation*} 
  \Sigma_\varepsilon := \big\{ x_\Sigma + t \nu(x_\Sigma): x_\Sigma \in \Sigma, t \in (-\varepsilon, \varepsilon) \big\}.
\end{equation*}
Since $\Sigma$ is a closed and bounded $C^{1,1}$-curve, there exists some $\beta > 0$ such that the mapping
\begin{equation} \label{tubular_coordinates}
  \Sigma \times (-\varepsilon, \varepsilon) \ni (x_\Sigma, t) \mapsto x_\Sigma + t \nu(x_\Sigma) \in \Sigma_\varepsilon
\end{equation}
is bijective for all $\varepsilon \in (0, \beta)$, cf.~\cite[Section 3]{Fu89} and
~\cite[Section 1.2]{KP17}. 
Choose a fixed real $V \in L^\infty(\mathbb{R}^2)$ which is supported in $\Sigma_\beta$
and define the squeezed potentials $V_\varepsilon \in L^\infty(\mathbb{R}^2)$ by
\begin{equation} \label{def_V_eps}
  V_\varepsilon(x) := \begin{cases} \frac{\beta}{\varepsilon} V\big( x_\Sigma + \frac{\beta}{\varepsilon} t \nu(x_\Sigma) \big),
                                    & \text{if } x = x_\Sigma + t \nu(x_\Sigma) \in \Sigma_\varepsilon, \\
                                    0, & \text{if } x \notin \Sigma_\varepsilon. \end{cases}
\end{equation}
Note that the function $V_\varepsilon$ is supported in $\Sigma_\varepsilon$ by definition.
We introduce for $\varepsilon \in (0, \beta)$ in $L^2(\mathbb{R}^2)$ the operator
\begin{equation} \label{def_H_eps}
  \sfH_\varepsilon f := \Opf f + V_\varepsilon f, \qquad \dom \sfH_\varepsilon = \dom \Opf = \mathcal{H}^2_\bA(\mathbb{R}^2),
\end{equation}
which is self-adjoint, since $\Opf$ is self-adjoint and $V_\varepsilon$ is real and bounded.

The following theorem contains the result that $\sfH_\varepsilon$ converges in the norm resolvent sense to $\Op$;
we would like to point out that the interaction strength $\alpha$ of the limit operator is some suitable 
mean value of the potential $V$ along the normal direction, see~\eqref{def_alpha_limit} below.
Our proof uses a method which differs from the one in~\cite{BEHL17, EI01, EK03}.
Since this proof is of more technical nature we postpone it to Appendix~\ref{appendix_approximation}. 

\begin{thm} \label{theorem_approximation}
  Let $V \in L^\infty(\mathbb{R}^2)$ be real and supported in $\Sigma_\beta$, let $\varepsilon \in (0, \beta)$ and
  $V_\varepsilon$ be as in \eqref{def_V_eps}, let $\sfH_\varepsilon$ be given by~\eqref{def_H_eps}, and
  define $\alpha \in L^\infty(\Sigma)$ by
  \begin{equation} \label{def_alpha_limit}
    \alpha(x_\Sigma) := \int_{-\beta}^\beta V(x_\Sigma + t \nu(x_\Sigma)) \dd t, \qquad x_\Sigma \in \Sigma.
  \end{equation}
  Then for $\lambda \in \mathbb{C} \setminus \mathbb{R}$ there exists a constant $c > 0$ (depending on $\lambda$) such that 
  \begin{equation*}
    \big\| (\sfH_\varepsilon - \lambda)^{-1} - (\Op - \lambda)^{-1} \big\| \leq c \sqrt{\varepsilon}.
  \end{equation*}
  In particular, $\sfH_\varepsilon$ converges in the norm resolvent sense to $\Op$ as $\varepsilon \rightarrow 0$.
\end{thm}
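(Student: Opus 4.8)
The plan is to reduce the claim to the abstract resolvent formula \eqref{resid} together with its analogue for the regular operators $\sfH_\varepsilon$, and to control each factor in operator norm as $\varepsilon\to0$. First I would verify that $\sfH_\varepsilon$ fits into the framework of the quasi boundary triple $\{L^2(\Sigma),\Gamma_0,\Gamma_1\}$ of Theorem~\ref{theorem_triple}, or rather that its resolvent admits a Birman--Schwinger-type representation
\[
  (\sfH_\varepsilon-\lambda)^{-1}=(\Opf-\lambda)^{-1}-\gg_\varepsilon(\lambda)\big(1+M_\varepsilon(\lambda)\big)^{-1}\gg_\varepsilon(\bar\lambda)^*,
\]
where $\gg_\varepsilon(\lambda)$ and $M_\varepsilon(\lambda)$ are built from $(\Opf-\lambda)^{-1}$ and the multiplication operator $V_\varepsilon$ via the natural factorization of $V_\varepsilon$ through $L^2(\Sigma_\varepsilon)$. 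The standard device here is to write $V_\varepsilon=|V_\varepsilon|^{1/2}\operatorname{sgn}(V_\varepsilon)|V_\varepsilon|^{1/2}$ and change variables in the tubular coordinates \eqref{tubular_coordinates} so that everything is pulled back to fixed reference objects on $\Sigma\times(-1,1)$, with $\varepsilon$ appearing explicitly. Concretely, introduce the ``squeezing'' unitary-like maps and embedding operators $\iota_\varepsilon:L^2(\Sigma)\to L^2(\mathbb{R}^2)$ (heuristically $\varphi\mapsto$ the function equal to $\varphi$ on slices, supported in $\Sigma_\varepsilon$), so that $\gg_\varepsilon(\lambda)$ is essentially $(\Opf-\lambda)^{-1}$ composed with such an embedding weighted by $V$.

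The core estimates are then three convergences, all to be shown with an $O(\sqrt\varepsilon)$ rate. (1) The ``$\gamma$-field'' convergence: $\gg_\varepsilon(\lambda)\to\gg(\lambda)\alpha^{1/2}$-type object in $\sB(L^2(\Sigma),L^2(\mathbb{R}^2))$ (with suitable sign factors), which follows because the Green function $G_\lambda(x,y)$ is continuous off the diagonal and locally like the logarithm, so replacing a thin-tube integration by a boundary integration costs the transversal width $\varepsilon$; here one uses that $(\Opf-\lambda)^{-1}$ maps $L^2$ into $\cH^2_\bA$, which locally equals $H^2$, and an interpolation/trace argument gives the rate $\sqrt\varepsilon$. (2) Likewise for the adjoint $\gg_\varepsilon(\bar\lambda)^*$. (3) The ``Weyl function'' convergence: $M_\varepsilon(\lambda)\to\alpha M(\lambda)$ (again up to sign conventions) in $\sB(L^2(\Sigma))$, which is the more delicate one because it involves the Green function integrated over two thin tubes, i.e. near the diagonal. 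The mean-value definition \eqref{def_alpha_limit} of $\alpha$ enters exactly here: the leading contribution of the transversal integrations reproduces $\alpha(x_\Sigma)$ as a weight, and the logarithmic singularity of $G_\lambda$ is integrable, so the error is again $O(\sqrt\varepsilon)$. Once these three are in hand, one combines them with a Neumann-series argument: for $\lambda\in\mathbb C\setminus\mathbb R$ fixed, $1+\alpha M(\lambda)$ need not be invertible, but $(\Op-\lambda)^{-1}$ exists, and a standard resolvent identity manipulation (or passing to $\lambda_0<0$ with $|\lambda_0|$ large, where $\|\alpha M(\lambda_0)\|<1$ and $\|M_\varepsilon(\lambda_0)\|<1$ uniformly in $\varepsilon$, and then using the resolvent equation to move to general $\lambda$) shows $(1+M_\varepsilon(\lambda))^{-1}\to(1+\alpha M(\lambda))^{-1}$ in norm with rate $\sqrt\varepsilon$.

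Putting the pieces together, one writes
\[
  (\sfH_\varepsilon-\lambda)^{-1}-(\Op-\lambda)^{-1}
  = -\gg_\varepsilon(\lambda)\big(1+M_\varepsilon(\lambda)\big)^{-1}\gg_\varepsilon(\bar\lambda)^*
    +\gg(\lambda)\big(1+\alpha M(\lambda)\big)^{-1}\alpha\gg(\bar\lambda)^*,
\]
adds and subtracts terms to isolate each of the three convergences, and uses uniform boundedness of all factors (from Proposition~\ref{proposition_gamma_Weyl} and the uniform invertibility just discussed) together with $\alpha\in L^\infty(\Sigma)$ to get the bound $c\sqrt\varepsilon$.

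\textbf{Main obstacle.} The hard part will be step (3), the convergence $M_\varepsilon(\lambda)\to\alpha M(\lambda)$, because it is a genuinely two-sided thin-tube estimate against the near-diagonal part of the Green function: one must show that the transversal double integral of $G_\lambda(x_\Sigma+s\nu,\,y_\Sigma+t\nu)$ over $s,t$ in $\varepsilon$-intervals, weighted by $V$, converges to $\alpha(x_\Sigma)\,\delta$-type kernel $G_\lambda(x_\Sigma,y_\Sigma)$ paired with $\alpha$, uniformly enough to give an operator-norm (not merely strong) bound with an explicit rate. This requires careful use of the explicit form of $G_\lambda$ in \eqref{def_G_lambda} (the confluent hypergeometric $U$ behaves like $-\log|x-y|$ near the diagonal), the $C^{1,1}$-regularity of $\Sigma$ to control the Jacobian of the tubular change of variables, and a Schur-test or Hilbert--Schmidt-type bound for the remainder kernel on $\Sigma\times\Sigma$. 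This is precisely why the authors outsource the argument to Appendix~\ref{appendix_approximation}.
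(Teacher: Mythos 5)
Your proposal outlines the \emph{classical} route via an explicit Birman--Schwinger factorization of $V_\varepsilon$, with Krein-type resolvent formulas, $\varepsilon$-dependent ``$\gamma$-fields'' $\gg_\varepsilon$, ``Weyl functions'' $M_\varepsilon$, and three separate operator-norm convergences (for $\gg_\varepsilon$, $\gg_\varepsilon^*$, and $M_\varepsilon$). This is essentially the method of Exner--Ichinose~\cite{EI01}, Exner--Kondej~\cite{EK03}, and Behrndt--Exner--Holzmann--Lotoreichik~\cite{BEHL17}, which the present paper explicitly states it is \emph{not} following (see the remark in the introduction and right before Theorem~\ref{theorem_approximation}: ``the technique of the proof is significantly different and more efficient''). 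The paper's actual proof in Appendix~\ref{appendix_approximation} avoids resolvent formulas entirely. It works with the quadratic forms $\fra_\alpha$ and $\frh_\varepsilon$: the resolvent norm difference is rewritten as
\[
\big\| (\sfH_\varepsilon - \lambda_0)^{-1} - (\Op - \lambda_0)^{-1} \big\|
= \sup_{\|u\|,\|v\|=1}\Bigl|\fra_\alpha\bigl[\sfR_\varepsilon u,\sfR_\alpha'v\bigr]-\frh_\varepsilon\bigl[\sfR_\varepsilon u,\sfR_\alpha'v\bigr]\Bigr|,
\]
and then the single form-difference estimate $|\fra_\alpha[f]-\frh_\varepsilon[f]|\le c\sqrt\varepsilon\,\|f\|_{\cH^1_\bA(\dR^2)}^2$ is proved directly by the tubular change of variables~\eqref{transformation_tubular_coords}, the diamagnetic inequality, and a uniform-in-$s$ trace estimate (Lemma~\ref{lemma_trace_theorem}). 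This collapses what you set up as three separate operator-norm convergences (one of which you rightly flag as delicate because of the two-sided near-diagonal Green-function integration) into one calculus-level estimate, needing no information about the explicit Green function at all. The auxiliary ingredients are a uniform lower bound for $\frh_\varepsilon$ (Lemma~\ref{lemma_uniformly_bounded}) and relative form-boundedness of $\fra_0$ with respect to $\frh_\varepsilon$ with $\varepsilon$-independent constants (Lemma~\ref{lemma_V_eps_relatively_bounded}); both are obtained via the diamagnetic inequality, reducing the magnetic case to the non-magnetic bounds from~\cite{BEHL17}.

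On correctness of your route: it can be made to work (the cited papers do exactly this in the non-magnetic case), but two points deserve care. First, $\sfH_\varepsilon$ does not sit inside the quasi boundary triple $\{L^2(\Sigma),\Gamma_0,\Gamma_1\}$, since $V_\varepsilon$ is a bulk perturbation; what you really use is the standard Birman--Schwinger identity with the sandwiched operator $|V_\varepsilon|^{1/2}(\Opf-\lambda)^{-1}|V_\varepsilon|^{1/2}$ on $L^2(\Sigma_\varepsilon)$, transported to a fixed space via the $\varepsilon$-scaling of the tubular coordinates; this is a different (and heavier) bookkeeping than the paper's $\Gamma_0,\Gamma_1$. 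Second, your remark that ``$1+\alpha M(\lambda)$ need not be invertible for $\lambda\in\mathbb C\setminus\mathbb R$'' is not quite right: by Theorem~\ref{theorem_Krein}\,(iii) it \emph{is} invertible for all $\lambda\in\rho(\Op)\cap\rho(\Opf)$, in particular for all non-real $\lambda$; the actual reason for passing to a large negative $\lambda_0$ first (which the paper also does in its last paragraph) is to get \emph{uniform} smallness of the perturbing operators and then transport via the resolvent identity.

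In short: the proposal is a valid but genuinely different and substantially more technical path; the paper's form-based argument is both shorter and robust precisely because it bypasses all near-diagonal Green-function analysis.
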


In the following corollary we show a converse of Theorem~\ref{theorem_approximation}: given an $\alpha \in L^\infty(\Sigma)$ there is a potential 
$V$ such that the corresponding operators $\sfH_\varepsilon$ converge to $\Op$.

\begin{cor} \label{corollary_approximation}
  Let $\alpha \in L^\infty(\Sigma)$ be real and define almost everywhere in $\mathbb{R}^2$ the function
  \begin{equation*}
    V(x) := \begin{cases} \frac{1}{2 \beta} \alpha( x_\Sigma),
                                    & \text{if } x = x_\Sigma + t \nu(x_\Sigma) \in \Sigma_\beta, \\
                                    0, & \text{if } x \notin \Sigma_\beta, \end{cases}
  \end{equation*}
  and for $\varepsilon \in (0, \beta)$ the scaled
  potentials $V_\varepsilon$ by~\eqref{def_V_eps}.
  Then the operators $\sfH_\varepsilon$ in \eqref{def_H_eps} satisfy 
  \begin{equation*}
    \big\| (\sfH_\varepsilon - \lambda)^{-1} - (\Op - \lambda)^{-1} \big\| \leq c \sqrt{\varepsilon},\qquad \lambda \in \mathbb{C} \setminus \mathbb{R},
  \end{equation*}
  for some constant $c>0$ (depending on $\lambda$). In particular, $\sfH_\varepsilon$ converges in the norm resolvent sense to $\Op$ as $\varepsilon \rightarrow 0$.
\end{cor}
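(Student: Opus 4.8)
The plan is to deduce Corollary~\ref{corollary_approximation} directly from Theorem~\ref{theorem_approximation} by exhibiting a concrete potential $V$ for which the mean-value formula~\eqref{def_alpha_limit} reproduces the given $\alpha$. First I would observe that the function
\[
  V(x) = \frac{1}{2\beta}\,\alpha(x_\Sigma),\qquad x = x_\Sigma + t\nu(x_\Sigma)\in\Sigma_\beta,
\]
and $V(x)=0$ for $x\notin\Sigma_\beta$, is well defined almost everywhere: indeed, by the bijectivity of the tubular-coordinate map~\eqref{tubular_coordinates} for $\varepsilon\in(0,\beta)$, every point of $\Sigma_\beta$ (up to the measure-zero set where $|t|=\beta$) has a unique representation $x_\Sigma + t\nu(x_\Sigma)$, so $V$ is a genuine, measurable, essentially bounded function with $\|V\|_{L^\infty(\mathbb{R}^2)}\le \frac{1}{2\beta}\|\alpha\|_{L^\infty(\Sigma)}$, and it is real since $\alpha$ is real. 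It is supported in $\overline{\Sigma_\beta}$, hence (after the usual harmless identification up to null sets) in $\Sigma_\beta$, so it meets the hypotheses of Theorem~\ref{theorem_approximation}.

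Next I would simply compute the mean value in~\eqref{def_alpha_limit} for this $V$. For fixed $x_\Sigma\in\Sigma$ the point $x_\Sigma + t\nu(x_\Sigma)$ lies in $\Sigma_\beta$ precisely when $t\in(-\beta,\beta)$, and there $V(x_\Sigma + t\nu(x_\Sigma)) = \frac{1}{2\beta}\alpha(x_\Sigma)$, independently of $t$. Therefore
\[
  \int_{-\beta}^{\beta} V\big(x_\Sigma + t\nu(x_\Sigma)\big)\,\dd t
  = \int_{-\beta}^{\beta}\frac{1}{2\beta}\,\alpha(x_\Sigma)\,\dd t
  = \alpha(x_\Sigma),
\]
i.e. the $\alpha$ produced by formula~\eqref{def_alpha_limit} from this $V$ is exactly the prescribed $\alpha\in L^\infty(\Sigma)$. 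Then the scaled potentials $V_\varepsilon$ defined via~\eqref{def_V_eps} and the operators $\sfH_\varepsilon$ in~\eqref{def_H_eps} are precisely those to which Theorem~\ref{theorem_approximation} applies, and the theorem yields
\[
  \big\|(\sfH_\varepsilon-\lambda)^{-1} - (\Op-\lambda)^{-1}\big\|\le c\sqrt{\varepsilon},\qquad \lambda\in\mathbb{C}\setminus\mathbb{R},
\]
with $\Op$ the Landau Hamiltonian with $\delta$-interaction of strength $\alpha$, which is exactly the assertion; in particular $\sfH_\varepsilon\to\Op$ in the norm resolvent sense.

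In truth there is no substantial obstacle here: the corollary is a formal inversion of the map $V\mapsto\alpha$ given by~\eqref{def_alpha_limit}, and the only thing worth a sentence of care is the well-definedness of $V$ almost everywhere, which rests on the bijectivity of the tubular neighbourhood parametrization already recorded before Theorem~\ref{theorem_approximation}. One might additionally remark that the choice of $V$ is not unique — any $V$ supported in $\Sigma_\beta$ whose normal integral equals $\alpha$ would do — but the piecewise-constant-in-$t$ choice above is the simplest. All of the analytic work has already been done in Theorem~\ref{theorem_approximation}, so the proof of the corollary is genuinely short.
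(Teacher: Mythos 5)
Your proposal is correct and is exactly what the corollary calls for: the paper gives no separate proof for Corollary~\ref{corollary_approximation}, presenting it as an immediate consequence of Theorem~\ref{theorem_approximation}, and your verification — that the piecewise-constant-in-$t$ potential $V$ is real, essentially bounded, supported in $\Sigma_\beta$, and that $\int_{-\beta}^\beta V(x_\Sigma + t\nu(x_\Sigma))\,\dd t = \alpha(x_\Sigma)$ — is precisely the (short) argument one needs to apply the theorem. The computation and the remark on well-definedness via the bijectivity of the tubular-coordinate map~\eqref{tubular_coordinates} are both accurate.
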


\subsection{Analysis of the resolvent difference of $\Op$ and $\Opf$}
\label{ssec:resdiff}
In this subsection we investigate the resolvent difference 
\begin{equation}\label{resi2}
W_\lambda:=-\gg(\lm) \big( 1 + \aa M(\lm) \big)^{-1}\aa\gg(\ov\lm)^*,\quad  \lambda\in\rho(\Op)\cap\rho(\Opf),
\end{equation}
in~\eqref{resid} 
in more detail. 
First of all we show a useful variant of Krein's resolvent formula for $\Op$ in which
the operator of multiplication with the strength
of interaction $\aa$ is represented as a product $\aa = \aa_2 \aa_1$ of two bounded operators 
$\aa_1$ and $\aa_2$.

\begin{lem} \label{corollary_Krein}
	Let $\aa \in L^\infty(\Sg)$ be real and let $\Op$ be as in Definition~\ref{definition_A_delta_alpha}.
	Let $\cH$ be a Hilbert space and let 
	$\aa_1\colon L^2(\Sg) \arr \cH$
	and $\aa_2\colon \cH \arr L^2(\Sg)$ be bounded operators such that the multiplication operator 
	with $\aa$ fulfils $\aa = \aa_2 \aa_1$. For all $\lm \in \rho(\Op) \cap \rho(\Opf)$ one has $(1 + \aa_1 M(\lm)\aa_2)^{-1}\in\sB(\cH)$ and 
	\begin{equation}\label{reso99}
		(\Op - \lm)^{-1} 
		= 
		(\Opf - \lm)^{-1} 
		- 
		\gg(\lm)\aa_2\big(1 + \aa_1 M(\lm)\aa_2\big)^{-1}
		\aa_1 \gg(\ov\lm)^*.
	\end{equation}
\end{lem}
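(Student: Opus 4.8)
The plan is to deduce this factored resolvent formula from the already established Krein-type formula~\eqref{resid} in Theorem~\ref{theorem_Krein}\,(iii). The key algebraic observation is that if $\aa = \aa_2\aa_1$ then $1 + \aa M(\lm) = 1 + \aa_2\aa_1 M(\lm)$, and there is a standard ``push-through'' identity relating the inverses of $1 + \aa_2 (\aa_1 M(\lm))$ and $1 + (\aa_1 M(\lm))\aa_2$. More precisely, I would first record the operator identity
\[
  \aa_2\big(1 + \aa_1 M(\lm)\aa_2\big)^{-1}\aa_1 = \big(1 + \aa_2\aa_1 M(\lm)\big)^{-1}\aa_2\aa_1 = \big(1+\aa M(\lm)\big)^{-1}\aa,
\]
valid whenever either side makes sense. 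Concretely: from $(1+\aa_1 M(\lm)\aa_2)(\text{id}) = \text{id} + \aa_1 M(\lm)\aa_2$ one multiplies on the left by $\aa_2$ and on the right by nothing, obtaining $\aa_2(1+\aa_1 M(\lm)\aa_2) = (1+\aa_2\aa_1 M(\lm))\aa_2 = (1+\aa M(\lm))\aa_2$; multiplying this identity by $(1+\aa_1 M(\lm)\aa_2)^{-1}$ on the right and by $(1+\aa M(\lm))^{-1}$ on the left (once both inverses are known to exist) yields $(1+\aa M(\lm))^{-1}\aa_2 = \aa_2(1+\aa_1 M(\lm)\aa_2)^{-1}$, and right-multiplying by $\aa_1$ gives the claimed identity. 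Substituting this into~\eqref{resid} produces exactly~\eqref{reso99}.

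The remaining point is invertibility of $1 + \aa_1 M(\lm)\aa_2$ in $\sB(\cH)$ for every $\lm\in\rho(\Op)\cap\rho(\Opf)$. For this I would use the elementary fact that for bounded operators $X\colon \cH_1\arr\cH_2$ and $Y\colon \cH_2\arr\cH_1$, the operator $1 + XY$ is boundedly invertible in $\sB(\cH_2)$ if and only if $1 + YX$ is boundedly invertible in $\sB(\cH_1)$, with the explicit formula $(1+YX)^{-1} = 1 - Y(1+XY)^{-1}X$. Applying this with $X = \aa_2$, $Y = \aa_1 M(\lm)$ (so $XY = \aa_2\aa_1 M(\lm) = \aa M(\lm)$ acting on $L^2(\Sg)$ and $YX = \aa_1 M(\lm)\aa_2$ acting on $\cH$), the bounded invertibility of $1 + \aa M(\lm)$ on $L^2(\Sg)$—which is part of Theorem~\ref{theorem_Krein}\,(iii)—gives bounded invertibility of $1 + \aa_1 M(\lm)\aa_2$ on $\cH$. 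This also furnishes the clean formula $(1+\aa_1 M(\lm)\aa_2)^{-1} = 1 - \aa_1 M(\lm)(1+\aa M(\lm))^{-1}\aa_2$, which can be used to re-derive the push-through identity above if one prefers to avoid manipulating the inverses symbolically.

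I do not expect a genuine obstacle here: the whole argument is a short exercise in the ``resolvent identity for $1+XY$ versus $1+YX$''. The only thing requiring a little care is to make sure all the factorizations are carried out as identities between genuinely bounded everywhere-defined operators—which is automatic since $\aa_1, \aa_2, M(\lm), \gg(\lm), \gg(\ov\lm)^*$ are all bounded (the latter three by Proposition~\ref{proposition_gamma_Weyl}) and $\lm\in\rho(\Op)\cap\rho(\Opf)$ guarantees $(1+\aa M(\lm))^{-1}\in\sB(L^2(\Sg))$ by Theorem~\ref{theorem_Krein}\,(iii). Thus the mildly ``hardest'' part is purely bookkeeping: verifying the push-through identity in the correct order and then substituting into~\eqref{resid}.
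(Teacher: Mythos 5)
Your proof is correct, but it takes a cleaner route than the paper at one step. Both arguments use the same push-through identity
\[
\aa_2\big(1 + \aa_1 M(\lm)\aa_2\big)^{-1} = \big(1 + \aa_2\aa_1 M(\lm)\big)^{-1}\aa_2,
\]
derived by the same elementary manipulation, and both substitute into \eqref{resid}. The difference lies in how the invertibility of $1 + \aa_1 M(\lm)\aa_2$ in $\sB(\cH)$ is obtained. The paper first restricts to $\lm$ on a far-left ray $(-\infty,\lm_0)$ where, by the decay estimate in Proposition~\ref{proposition_decay_M}, one has $\|\aa_1\|\,\|\aa_2\|\,\|M(\lm)\| < 1$, so that \emph{both} inverses exist via Neumann series; the formula \eqref{reso99} is established there, and then extended to all of $\rho(\Op)\cap\rho(\Opf)$ by analytic continuation. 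You instead invoke the standard equivalence ``$1+XY$ invertible $\iff 1+YX$ invertible'' (with the explicit formula $(1+YX)^{-1} = 1 - Y(1+XY)^{-1}X$), applied with $X=\aa_2$, $Y=\aa_1 M(\lm)$, to transfer the invertibility of $1+\aa M(\lm)$ — already known for every $\lm\in\rho(\Op)\cap\rho(\Opf)$ by Theorem~\ref{theorem_Krein}\,(iii) — to that of $1+\aa_1 M(\lm)\aa_2$. This is a genuine simplification: it gives the result directly on the whole set $\rho(\Op)\cap\rho(\Opf)$, bypasses the Neumann-series restriction, and dispenses with the analytic continuation step. Either version is valid; yours is a touch more economical.
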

\begin{proof}
	Consider first $\lm\in (-\infty,\lm_0)$, where $\lm_0<0$ is chosen such that 
	$$\| \aa_1 \| \cdot \| \aa_2 \| \cdot \| M(\lm) \| < 1,\quad \lm\in (-\infty,\lm_0).$$ 
	Note that such $\lm_0$ exists by 
	Proposition~\ref{proposition_decay_M}. 
	Then $(1 + \aa_2 \aa_1 M(\lm))^{-1}\in\sB(L^2(\Sg))$, $(1 + \aa_1 M(\lm)\alpha_2)^{-1}\in\sB(\cH)$, and a direct calculation shows that
	\begin{equation*}
	\begin{aligned}
		&  
		\big(1 + \aa_2 \aa_1 M(\lm)\big)^{-1} 
		\aa_2 - \aa_2 \big(1 + \aa_1 M(\lm) \aa_2\big)^{-1} \\
		&\,\,= 
		\big(1 + \aa_2\aa_1 M(\lm)\big)^{-1} 
		\Big[ 
		\aa_2 \big(1 + \aa_1 M(\lm) \aa_2\big) 
		- 
		\big(1 + \aa_2 \aa_1 M(\lm)\big) \aa_2 \Big] 
		\big(1 + \aa_1 M(\lm) \aa_2\big)^{-1}\\
		&\,\, = 0
	\end{aligned}
	\end{equation*}
	holds for all $\lm\in (-\infty,\lm_0)$.
	Hence, it follows from Theorem~\ref{theorem_Krein} and $\aa = \aa_2 \aa_1$ that
	\begin{equation*}
	\begin{split}
		(\Op - \lm)^{-1}
		&= 
		(\Opf - \lm)^{-1} 
		- 
		\gg(\lm) 
		\big( 1 + \aa_2 \aa_1 M(\lm) \big)^{-1}
      \aa_2 \aa_1 \gg(\ov\lm)^*\\
      &=(\Opf - \lm)^{-1} 
		- 
		\gg(\lm) 
		\aa_2\big( 1 + \aa_1 M(\lm)\aa_2 \big)^{-1}
      \aa_1 \gg(\ov\lm)^*
      \end{split}
	\end{equation*}
	which is \eqref{reso99}. Finally, we note that for arbitrary
	$\lm \in \rho(\Op) \cap \rho(\Opf)$ the formula \eqref{reso99} follows from an analytic continuation argument.
\end{proof}

Next we provide sign properties of the perturbation term $W_\lm$.
  
\begin{lem} \label{corollary_perturbation_term_sign}
	Let $\lm_0 < \min \s(\Op)$.
	If $\aa \in L^\infty(\Sg)$ is such that $\aa(x)\geq 0$ ($\aa(x)\leq 0$) for a.e. $x\in\Sg$ then $W_{\lm_0}$ 
	is a nonpositive (nonnegative, respectively) self-adjoint operator in $L^2(\dR^2)$. 
\end{lem}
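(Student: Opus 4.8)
The plan is to bypass the explicit Krein formula for $W_{\lambda_0}$ and instead identify it with the resolvent difference of the two self-adjoint operators $\Op$ and $\Opf$, exploiting the monotonicity of the resolvent with respect to the quadratic-form order. First I would record that, by hypothesis and Proposition~\ref{proposition_delta_form}, $\lambda_0<\min\s(\Op)\le\min\s(\Opf)=B$, so that $\lambda_0\in\rho(\Op)\cap\rho(\Opf)$. Consequently Theorem~\ref{theorem_Krein}\,(iii), applied at the real point $\lambda=\lambda_0$ where $\gg(\ov{\lambda_0})^*=\gg(\lambda_0)^*$, together with the definition~\eqref{resi2} yields
\[
  W_{\lambda_0}=(\Op-\lambda_0)^{-1}-(\Opf-\lambda_0)^{-1},
\]
which is manifestly self-adjoint, being the difference of the resolvents of the self-adjoint operators $\Op$ and $\Opf$ at the real point $\lambda_0$.

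The next step is to use Proposition~\ref{proposition_delta_form}: $\Op$ is the self-adjoint operator generated by the form $\frm$ in~\eqref{eq:form222}, whose domain $\cH^1_\bA(\dR^2)$ coincides with the form domain of $\Opf$. If $\aa\ge0$ a.e.\ on $\Sg$, then $\frm[f,f]=\fra_0[f,f]+\int_\Sg\aa|f|_\Sg|^2\,\dd\s\ge\fra_0[f,f]$ for all $f\in\cH^1_\bA(\dR^2)$, i.e.\ $\Opf\le\Op$ in the form sense; if $\aa\le0$ a.e., the inequality is reversed and $\Op\le\Opf$. Since $\lambda_0$ lies strictly below the spectra of both operators, $S:=\Opf-\lambda_0$ and $T:=\Op-\lambda_0$ are uniformly positive, and from the elementary fact that $0<S\le T$ implies $T^{-1}\le S^{-1}$ one concludes $(\Op-\lambda_0)^{-1}\le(\Opf-\lambda_0)^{-1}$ in the case $\aa\ge0$, hence $W_{\lambda_0}\le0$, and $(\Opf-\lambda_0)^{-1}\le(\Op-\lambda_0)^{-1}$ in the case $\aa\le0$, hence $W_{\lambda_0}\ge0$.

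There is no deep obstacle here; the only point that needs to be handled with a bit of care is the passage from the form inequality to the resolvent inequality, since the square roots $S^{1/2}$ and $T^{1/2}$ are unbounded. This is precisely where the common form domain enters: as $\dom S^{1/2}=\dom T^{1/2}=\cH^1_\bA(\dR^2)$, for every $g\in L^2(\dR^2)$ one has $T^{-1/2}g\in\dom S^{1/2}$ and $\|S^{1/2}T^{-1/2}g\|^2=\fra_0[T^{-1/2}g,T^{-1/2}g]-\lambda_0\|T^{-1/2}g\|^2\le\frm[T^{-1/2}g,T^{-1/2}g]-\lambda_0\|T^{-1/2}g\|^2=\|g\|^2$, so $S^{1/2}T^{-1/2}$ is a contraction; the identity $S^{1/2}T^{-1}S^{1/2}=(S^{1/2}T^{-1/2})(S^{1/2}T^{-1/2})^*$ then gives $S^{1/2}T^{-1}S^{1/2}\le 1$, i.e.\ $T^{-1}\le S^{-1}$, and the case $\aa\le0$ is symmetric. (Alternatively one could keep the symmetrized Krein formula of Lemma~\ref{corollary_Krein} with $\aa=\aa^{1/2}\aa^{1/2}$ when $\aa\ge0$ and prove $1+\aa^{1/2}M(\lambda_0)\aa^{1/2}>0$ from $M(\lambda_0)\ge0$, the latter following from a Green-identity computation giving $(M(\lambda_0)\varphi,\varphi)_{L^2(\Sg)}=\fra_0[u,u]-\lambda_0\|u\|^2\ge0$ for $\lambda_0<B$, where $u:=\gg(\lambda_0)\varphi$; but for $\aa\le0$ this second route would additionally require the Birman--Schwinger information encoded in $\lambda_0<\min\s(\Op)$, so the resolvent-monotonicity argument is the more economical one.)
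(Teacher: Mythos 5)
Your argument is correct and takes essentially the same route as the paper: identify $W_{\lambda_0}$ with the resolvent difference $(\Op-\lambda_0)^{-1}-(\Opf-\lambda_0)^{-1}$ via~\eqref{resid}, compare the quadratic forms $\fra_0\le\frm$ (or the reverse) on their common form domain $\cH^1_\bA(\dR^2)$, and conclude by monotonicity of the inverse. The only difference is that you give a self-contained proof of the monotonicity step (the contraction argument with $S^{1/2}T^{-1/2}$), whereas the paper simply cites Kato \cite[\S6 Theorem 2.21]{K95} for this fact.
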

\begin{proof}
	Let $\fra_0$ and $\frm$ be the sesquilinear forms corresponding to $\Opf$
	and $\Op$ in~\eqref{def_form_free_Op} and
	in~\eqref{eq:op_delta}, respectively. 
	For a nonnegative function $\aa$ and all $f \in \cH^1_\bA(\dR^2)$ one has
	$\fra_0[f] \leq \frm[f]$ and hence by~\cite[$\S6$ Theorem 2.21]{K95}
	the inequality 
	$$(\Op - \lm_0)^{-1} \leq (\Opf - \lm_0)^{-1}$$ holds for $\lm_0 < \min \s(\Op)$.
	Now \eqref{resid} implies that $W_{\lm_0}$ is nonpositive. The same argument applies for nonpositive $\alpha$.
\end{proof}

Recall that $P_q$ denotes the orthogonal projection onto the infinite dimensional eigen\-space $\ker(\Opf -\Lm_q)$ corresponding to the Landau level $\Lm_q$, $q\in\dN_0$.
Now it will be shown that
for sign-definite functions $\aa$
the compression $P_q W_\lm P_q$ of the perturbation term $W_\lm$ in~\eqref{eq:W} onto $\ker(\Opf-\Lm_q)$ is a
compact operator which has infinite rank. 

\begin{prop}\label{infrank}
	Assume that $\aa\in L^\infty(\Sg)$ and that either $\alpha>0$ a.e. or $\alpha<0$ a.e. on $\Sigma$. 
	Then there exists $\lm_0\in\rho(\Op)\cap\rho(\Opf)\cap(-\infty,0)$ such that the compact operator $P_q W_{\lm_0} P_q$ has infinite rank.
\end{prop}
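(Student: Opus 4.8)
The plan is, for a suitable real $\lm_0<0$, to factorise $-W_{\lm_0}$ as a positive operator $LL^*$, to reduce the infinite-rank statement for $P_qW_{\lm_0}P_q$ to an infinite-rank statement for the trace operator on the $q$-th Landau eigenspace, and to deduce the latter from Lemma~\ref{kers}. It suffices to treat the case $\aa>0$ a.e.\ on $\Sg$; the case $\aa<0$ a.e.\ is analogous (factor $\aa=(-|\aa|^{1/2})\,|\aa|^{1/2}$). First I would fix $\lm_0<0$ with $\lm_0<\min\s(\Op)$ and $\|\aa\|_{L^\infty(\Sg)}\|M(\lm_0)\|<1$; such $\lm_0$ exists because $\Op$ is bounded from below (Proposition~\ref{proposition_delta_form}) and $\|M(\lm)\|\arr0$ as $\lm\arr-\infty$ (Proposition~\ref{proposition_decay_M}), and then $\lm_0\in\rho(\Op)\cap\rho(\Opf)\cap(-\infty,0)$ because $\s(\Opf)\subset[B,\infty)$. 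For such $\lm_0$ the operator $M(\lm_0)$ is self-adjoint in $L^2(\Sg)$ (its kernel $G_{\lm_0}$ is Hermitian when $\lm_0$ is real), so $Q:=1+\aa^{1/2}M(\lm_0)\aa^{1/2}$ is self-adjoint with $\|\aa^{1/2}M(\lm_0)\aa^{1/2}\|\le\|\aa\|_{L^\infty(\Sg)}\|M(\lm_0)\|<1$, hence uniformly positive and boundedly invertible, and $Q^{-1/2}\in\sB(L^2(\Sg))$ is well defined. Applying the Krein-type formula of Lemma~\ref{corollary_Krein} with $\cH=L^2(\Sg)$ and $\aa_1=\aa_2=\aa^{1/2}$ (and $\gg(\ov{\lm_0})^*=\gg(\lm_0)^*$) then gives $-W_{\lm_0}=\gg(\lm_0)\aa^{1/2}Q^{-1}\aa^{1/2}\gg(\lm_0)^*=LL^*$ with $L:=\gg(\lm_0)\aa^{1/2}Q^{-1/2}$. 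Thus $P_qW_{\lm_0}P_q=-(P_qL)(P_qL)^*$ is compact (because $\gg(\lm_0)$ is compact, Proposition~\ref{proposition_gamma_Weyl}), and it has infinite rank if and only if $P_qL$ does, equivalently --- a compact operator and its adjoint having the same singular values --- if and only if $L^*P_q$ does.

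Next I would rewrite $L^*P_q$. Since $\gg(\lm_0)^*=\G_1(\Opf-\lm_0)^{-1}$ and $\ran P_q=\ker(\Opf-\Lm_q)$, one has $(\Opf-\lm_0)^{-1}P_q=(\Lm_q-\lm_0)^{-1}P_q$ and hence $\gg(\lm_0)^*P_q=(\Lm_q-\lm_0)^{-1}\iota P_q$, where $\iota\colon\cH^1_\bA(\dR^2)\arr L^2(\Sg)$, $\iota f=f|_\Sg$, is the bounded trace operator from Corollary~\ref{corollary_Sobolev_spaces} (it coincides with $\G_1$ on $\ran P_q\subset\cH^2_\bA(\dR^2)\subset\dom T$). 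Therefore $L^*P_q=(\Lm_q-\lm_0)^{-1}Q^{-1/2}\aa^{1/2}\iota P_q$. As $Q^{-1/2}$ is bijective and the multiplication operator $\aa^{1/2}$ is injective (here the hypothesis $\aa>0$ a.e.\ is used), composing with the injective bounded operator $Q^{-1/2}\aa^{1/2}$ preserves the dimension of the range, so $\operatorname{rank}(L^*P_q)=\operatorname{rank}(\iota P_q)$. It remains to show that $\iota P_q$, i.e.\ the trace map restricted to $\ran P_q$, has infinite-dimensional range.

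This is the heart of the matter, and it is exactly where Lemma~\ref{kers} is used. The kernel of $\iota$ restricted to $\ran P_q$ equals $\{f\in\cH^2_\bA(\dR^2)\colon\nbA^2 f=\Lm_q f,\ f|_\Sg=0\}=\ker(S-\Lm_q)$, which is finite-dimensional by Lemma~\ref{kers}. Since $\ran P_q$ is infinite-dimensional --- the Landau levels have infinite multiplicity, Proposition~\ref{proposition_Landau_level} --- the trace map descends to an injective linear map on the infinite-dimensional quotient $\ran P_q/\ker(S-\Lm_q)$, whose image is $\ran(\iota P_q)$; hence $\operatorname{rank}(\iota P_q)=\infty$. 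Unwinding the equivalences above yields $\operatorname{rank}(P_qW_{\lm_0}P_q)=\infty$, which is the claim. (Incidentally, since $T_q^\Sg=(\iota P_q)^*(\iota P_q)$, the same computation also shows $\operatorname{rank}T_q^\Sg=\infty$ for all $q\in\dN_0$, cf.\ Corollary~\ref{cor:infrank}.) I expect the only genuinely delicate point to be the control of $\ker(S-\Lm_q)$: the positivity and factorisation of $W_{\lm_0}$, the identification of $\gg(\lm_0)^*P_q$, and the injectivity bookkeeping are routine, while the finite-dimensionality of $\ker(S-\Lm_q)$ --- which rests on unique continuation via Lemma~\ref{kers} --- is what makes the statement nontrivial for $q\ge1$.
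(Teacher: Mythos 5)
Your proof is correct and follows the same overall strategy as the paper: choose $\lm_0<0$ small so that the middle term in Krein's formula (Lemma~\ref{corollary_Krein}) is boundedly invertible, factorise $-W_{\lm_0}$ symmetrically using $\aa^{1/2}$, identify $\gg(\lm_0)^*P_q$ with a multiple of the trace $\iota P_q$ via~\eqref{starhaha}, and invoke Lemma~\ref{kers} to control the kernel. Where you part ways with the paper is in the final reduction. The paper introduces $D=\sqrt\aa(1+\sqrt\aa M(\lm_0)\sqrt\aa)^{-1}\sqrt\aa$, shows $0\notin\s_{\rm p}(D)$ by a Cauchy--Schwarz computation, and then establishes by a duality argument that $\ran(P_q\gg(\lm_0)\sqrt D)$ is dense in the infinite-dimensional space $\ker(\Opf-\Lm_q)\ominus\ker(S-\Lm_q)$; you instead put the bounded invertibility into $Q^{-1/2}$ from the start (which avoids the need to verify $0\notin\s_{\rm p}(\cdot)$ separately), reduce directly to $\operatorname{rank}(\iota P_q)$ by injectivity of $Q^{-1/2}\aa^{1/2}$, compute $\ker(\iota\uhr\ran P_q)=\ker(S-\Lm_q)$ exactly, and finish with rank--nullity on the infinite-dimensional $\ran P_q$. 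The content is the same, but your organization is a little more streamlined: it replaces the denseness/orthogonal-complement argument with a direct finite-kernel count, and it makes plain that the only nontrivial input for $q\ge1$ is the finite-dimensionality of $\ker(S-\Lm_q)$ from Lemma~\ref{kers}.
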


\begin{proof}
	We discuss the case  $\aa(x) > 0$ for a.e. $x\in\Sg$.
	According to Proposition~\ref{proposition_decay_M} we can choose
	$\lm_0 \in (-\infty,0)$ such that $\Vert\sqrt{\aa} M(\lm_0)\sqrt{\aa}\Vert<1$.
	Using Lemma~\ref{corollary_Krein} we see that $-P_q W_{\lambda_0} P_q$ can be written in the form  
	\begin{equation}\label{pterm}
		-P_q W_{\lambda_0} P_q=
		P_q\gg(\lm_0)\sqrt{\aa}
		\bigl(1+\sqrt{\aa}M(\lm_0)\sqrt{\aa}\bigr)^{-1}
		\sqrt{\aa} \gg(\lm_0)^* P_q
	\end{equation} 
	and $W_{\lm_0}$ is compact in $L^2(\Sg)$ by Theorem~\ref{theorem_Krein}~(iv).
	It remains to show that \eqref{pterm} has infinite rank.
	For this we define
	\[
		C:= \bigl(1+\sqrt{\aa}M(\lm_0)\sqrt{\aa}\bigr)^{-1}
		\and
		D:=\sqrt{\aa}C\sqrt{\aa}.
	\]
	In the present situation $C$ is a nonnegative self-adjoint operator in $L^2(\Sg)$ such that $0\in\rho(C)$ and the operators $D$ and $\sqrt{D}$
	are both nonnegative and self-adjoint in $L^2(\Sg)$.
	We claim that $0\not\in\sp(D)$ and hence also $0\not\in\sp(\sqrt{D})$. In fact, $D\varphi=0$ for some $\varphi\in L^2(\Sg)$ implies
	\begin{equation*}
	\begin{aligned}
		|(C\sqrt{\alpha}\varphi,\psi)_{L^2(\Sg)}|
		& \leq (C\sqrt{\aa}\varphi,\sqrt{\aa}\varphi)_{L^2(\Sg)}
		(C\psi,\psi)_{L^2(\Sg)}\\
		& = 
		(D\varphi,\varphi)_{L^2(\Sg)}
		(C\psi,\psi)_{L^2(\Sg)}=0
	\end{aligned}
	\end{equation*}
	for all $\psi\in L^2(\Sigma)$ and hence $C\sqrt{\alpha}\varphi=0$. As $0\in\rho(C)$ it follows that $\sqrt{\alpha}\varphi=0$ and the assumption
	$\alpha(x) > 0$ for a.e. $x\in\Sigma$ yields $\varphi=0$. Therefore, $0\not\in\sp(D)$ and $0\not\in\sp(\sqrt{D})$.
	In particular,  $\ran\sqrt{D}$ is dense in $L^2(\Sigma)$.
	
	Next we claim that
	\begin{equation}\label{claim}
		\ran 
		\bigl( P_q\gg(\lm_0)\sqrt{D}\bigr)
		\quad\text{is dense in}\quad \ker(\Opf -\Lm_q)\ominus \ker(S-\Lm_q),
	\end{equation}
	and we recall  that the latter space is 
	infinite dimensional by Lemma~\ref{kers} and $\dim\ker(\Opf-\Lm_q)=\infty$. For \eqref{claim}
	assume that $h\in\ker(\Opf-\Lm_q)\ominus \ker(S-\Lm_q)$ satisfies
	\begin{equation*}
		(P_q\gg(\lm_0)\sqrt{D}\varphi,h)_{L^2(\dR^2)}
		=0
		\qquad\text{for all}\quad
		\varphi\in L^2(\Sg).
	\end{equation*}
	Using \eqref{starhaha}  one obtains
	\begin{equation*}
	\begin{split}
		0&=(P_q\gg(\lm_0)\sqrt{D}\varphi,h)_{L^2(\dR^2)}
		=
		(\sqrt{D}\varphi,\gg(\lm_0)^*h)_{L^2(\Sg)}\\
		&=
		\bigl(\sqrt{D}\varphi,\G_1(\Opf -\lm_0)^{-1}h\bigr)_{L^2(\Sg)}
		=
		\frac{1}{\Lm_q-\lm_0}
		(\sqrt{D}\varphi,\G_1h)_{L^2(\Sg)}
	\end{split}
	\end{equation*}
	for all $\varphi\in L^2(\Sg)$. Since $\ran\sqrt{D}$ is dense in $L^2(\Sg)$ this implies
	$\G_1 h=0$. Furthermore, since $h\in\dom \Opf$ also $\G_0h=0$. Therefore $h\in\dom S\cap\ker(\Opf-\Lm_q)$
	and hence $h\in\ker(S-\Lm_q)$. By assumption $h\in\ker(\Opf-\Lm_q)\ominus \ker(S-\Lm_q)$ and thus $h=0$, that is,~\eqref{claim} holds.
	
	Now observe that the operator in~\eqref{pterm} can be written in the form
	\begin{equation}\label{facto}
		-P_q W_{\lm_0} P_q
		=
		P_q\gg(\lm_0)D \gg(\lm_0)^* P_q= RR^*,
	\end{equation}
	where $R=P_q\gg(\lm_0)\sqrt{D}$. Since $\ker RR^*=\ker R^*$ it follows that
	\begin{equation*}
		\ov{\ran RR^*}=\ov{\ran R}
	\end{equation*}
	and $\ov{\ran R}$ is infinite dimensional by~\eqref{claim}. Hence the same is true for  $\ov{\ran RR^*}$
	and also for $\ran R R^*$. Taking into account~\eqref{facto} the assertion follows.
\end{proof}

\section{Estimates and asymptotics for the singular values of 
	\boldmath{$P_q W_\lm P_q$}}
\label{sec:asymptotics}

In this section we continue our study of the resolvent difference \eqref{resid} of the  unperturbed Landau Hamiltonian $\Opf$ and the 
Landau Hamiltonian $\Op$ with a $\delta$-potential
supported on $\Sg$. In the following 
we fix some $\lm_0 < \min\{0,\min\s(\Op)\}$ such that $\|\aa\|_\infty\|M(\lm_0)\| <1$, which is possible due to Proposition~\ref{proposition_decay_M}.
For convenience we use the notation $W := W_{\lm_0}$ for the resolvent difference, that is, 
\begin{equation}\label{resdiff}
	W = (\Op - \lm_0)^{-1} - (\Opf - \lm_0)^{-1}=-\gg(\lm_0) 
			\big( 1 + \aa M(\lm_0) \big)^{-1}\aa\gg(\lm_0)^*;
\end{equation}
cf. \eqref{resi2}. As before we write $W=W_+-W_-$, where $W_+\geq 0$ is the nonnegative part of $W$ and by $W_-\geq0$ is the nonpositive part of $W$; cf. \eqref{www}.
The orthogonal projection on the eigenspace $\ker(\Opf-\Lambda_q)$, $q\in\dN_0$, is denoted by $P_q$.
The goal is to obtain asymptotic estimates and sharp spectral asymptotics for the singular values of 
the operators $P_qW_\pm P_q$ and $P_q\vert W\vert P_q$, under different sign conditions on $\alpha$ and smoothness conditions on $\Sg$. 
This section is split in two subsections dealing with the $C^{1,1}$-case and the $C^\infty$-case, respectively.

\subsection{$C^{1,1}$-smooth $\Sg$}
In this subsection it is assumed that $\Sigma$ is the boundary of a bounded $C^{1,1}$-domain $\Omega_{\rm i}$; cf. Hypothesis~\ref{hypohyp}. 
In the first proposition we consider the compression $P_q \vert W\vert P_q$ of $\vert W\vert$ onto $\ker(\Opf-\Lambda_q)$ and estimate this operator by the 
Toeplitz-type operator in Definition~\ref{def:Tn}. For the lower bound sign-definite functions $\aa$ are required.
\begin{prop}\label{prop:operator_bounds}
	 Let $\aa \in L^\infty(\Sg)$ be real
	 with $\G := \supp\aa$, assume $|\G| > 0$, and let	
	 the resolvent difference $W$
	 be as in~\eqref{resdiff}. 
    Let $T_q^{\G}$ be the self-adjoint Toeplitz-type operator as in Definition~\ref{def:Tn}.
    Then the following holds.
    \begin{myenum}
        \item $P_q \vert W\vert P_q \le c T_q^{\G}$ and $P_q W_\pm P_q \le c_\pm T_q^{\G}$ for some $ c,c_\pm >0$.
        \item 
        If $\aa$ is nonnegative (nonpositive)
        on $\G$ and uniformly positive (uniformly negative, respectively) on
        a closed subset $\G'\subset\G$ such that $|\G'| > 0$ then        
        $P_q |W| P_q  \ge c' T_q^{\G'}$
        for some $c' > 0$.
    \end{myenum}
\end{prop}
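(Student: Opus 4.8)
The strategy is to exploit the Krein-type resolvent formula in the form provided by Lemma~\ref{corollary_Krein} with a symmetric factorization $\aa = \aa_2\aa_1$ chosen so that both $\aa_1$ and $\aa_2$ encode the square root of $|\aa|$, and then compare the resulting sandwiched operator with the Toeplitz-type operator $T_q^\G$, which by definition is the compression $P_q(\cdot|_\G,\cdot|_\G)_{L^2(\G)}P_q$. The key observation is that for $f\in L^2(\dR^2)$ one has $\gg(\lm_0)^*f = \G_1(\Opf-\lm_0)^{-1}f$, so that $\gg(\lm_0)^*P_q f = (\Lm_q-\lm_0)^{-1}(P_q f)|_\Sg$, which links $\gg(\lm_0)^*P_q$ directly to the trace appearing in $\frt_q^\G$.

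\textbf{Proof of (i).} First I would write, using Lemma~\ref{corollary_Krein} with $\aa_1 = \aa_2 = |\aa|^{1/2}\operatorname{sgn}$-adjusted appropriately (concretely: set $\aa_1 = |\aa|^{1/2}$ and $\aa_2 = (\operatorname{sgn}\aa)|\aa|^{1/2}$, so $\aa_2\aa_1 = \aa$), that
\[
	-W = \gg(\lm_0)\aa_2\bigl(1+\aa_1 M(\lm_0)\aa_2\bigr)^{-1}\aa_1\gg(\lm_0)^*.
\]
Since $\|\aa\|_\infty\|M(\lm_0)\|<1$, the middle factor $B_0 := \aa_2(1+\aa_1 M(\lm_0)\aa_2)^{-1}\aa_1$ is a bounded operator in $L^2(\Sg)$ with $\|B_0\|\le c_0$ for some $c_0>0$, but it is in general not sign-definite. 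To obtain the operator inequality $P_q|W|P_q \le cT_q^\G$ I would instead estimate, for $f\in L^2(\dR^2)$,
\[
	|(Wf,f)_{L^2(\dR^2)}| = |(B_0\,\aa_1\gg(\lm_0)^*f,\,\aa_1\gg(\lm_0)^*f)_{L^2(\Sg)}^{\sim}|\le c_0\|\,|\aa|^{1/2}\gg(\lm_0)^*f\,\|_{L^2(\Sg)}^2,
\]
after rewriting the sesquilinear form symmetrically (the precise symmetric form is $(\aa_1\gg(\lm_0)^*f, (1+\aa_1 M(\lm_0)\aa_2)^{-*}\aa_2^*\gg(\lm_0)^*f)$; one bounds it by Cauchy--Schwarz and the uniform bound on the inverse). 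For $f = P_q g$ one has $\gg(\lm_0)^*P_q g = (\Lm_q-\lm_0)^{-1}(P_q g)|_\Sg$, whence
\[
	\|\,|\aa|^{1/2}\gg(\lm_0)^*P_q g\,\|_{L^2(\Sg)}^2 = (\Lm_q-\lm_0)^{-2}\int_\Sg|\aa|\,|(P_q g)|_\Sg|^2\,\dd\s\le \frac{\|\aa\|_\infty}{(\Lm_q-\lm_0)^2}\,\frt_q^\G[g,g],
\]
using that $|\aa|$ vanishes off $\G = \supp\aa$. Combining, $|(Wg,g)|\le c\,\frt_q^\G[g,g] = c\,(T_q^\G g,g)$ for all $g\in\ran P_q$, and since $P_q|W|P_q$ is the positive part in the sense of forms restricted to $\ran P_q$ — more carefully, $\pm(P_qWP_q g,g)\le c(T_q^\G g,g)$ gives both $P_qW_+P_q\le cT_q^\G$ and $P_qW_-P_q\le cT_q^\G$ by the variational characterization of $W_\pm$ — the claim $P_q|W|P_q = P_qW_+P_q+P_qW_-P_q\le 2cT_q^\G$ follows, and likewise $P_qW_\pm P_q\le c_\pm T_q^\G$.

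\textbf{Proof of (ii).} Here I would use Lemma~\ref{corollary_perturbation_term_sign}: if $\aa\ge0$ then $W = W_{\lm_0}$ is nonpositive, so $|W| = W_- = -W = \gg(\lm_0)(1+\sqrt\aa M(\lm_0)\sqrt\aa)^{-1}\sqrt\aa\,\cdot\,\sqrt\aa\gg(\lm_0)^*$ (taking $\aa_1=\aa_2=\sqrt\aa$, which is legitimate since $\aa\ge0$). Since $\|\sqrt\aa M(\lm_0)\sqrt\aa\|<1$ and this operator is self-adjoint, the inverse $(1+\sqrt\aa M(\lm_0)\sqrt\aa)^{-1}$ is a nonnegative self-adjoint operator bounded below by $(1+\|\aa\|_\infty\|M(\lm_0)\|)^{-1} =: c_1 > 0$, i.e.\ $(1+\sqrt\aa M(\lm_0)\sqrt\aa)^{-1}\ge c_1$. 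Hence for $g\in\ran P_q$,
\[
	(|W|g,g)_{L^2(\dR^2)}\ge c_1\|\sqrt\aa\,\gg(\lm_0)^*g\|_{L^2(\Sg)}^2 = \frac{c_1}{(\Lm_q-\lm_0)^2}\int_\Sg\aa\,|(P_qg)|_\Sg|^2\dd\s.
\]
Now the uniform positivity assumption $\aa\ge\delta_0>0$ a.e.\ on $\G'$ gives $\int_\Sg\aa\,|(P_qg)|_\Sg|^2\dd\s\ge\delta_0\int_{\G'}|(P_qg)|_\Sg|^2\dd\s = \delta_0\,\frt_q^{\G'}[g,g]$, so $(|W|g,g)\ge c'(T_q^{\G'}g,g)$ with $c' = c_1\delta_0(\Lm_q-\lm_0)^{-2}$, which is $P_q|W|P_q\ge c'T_q^{\G'}$. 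The nonpositive case $\aa\le0$ is identical, using that then $W\ge0$ and $|W| = W = W_+$.

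\textbf{Main obstacle.} The principal subtlety is part (i) for \emph{sign-changing} $\aa$: there the middle operator $B_0$ is not sign-definite, so one cannot simply sandwich it between two copies of $\sqrt{|\aa|}\gg(\lm_0)^*$ and conclude an operator inequality in the naive way — the inequality $P_q W_\pm P_q\le cT_q^\G$ must be extracted from the \emph{scalar} bound $|(Wg,g)|\le c\,\frt_q^\G[g,g]$ via the min-max characterization of the eigenvalues of $P_qW_\pm P_q$ (equivalently, from $-cT_q^\G\le P_qWP_q\le cT_q^\G$ one deduces the bound on each of $P_qW_\pm P_q$ because $P_qW_\pm P_q\le(P_qWP_q)_\pm$ is false in general, but the correct route uses that the eigenvalue counting functions satisfy the requisite comparison; one safe phrasing is to bound $s_k(P_qW_\pm P_q)$ via $s_k$ of $P_q|W|P_q$ and then $|W|\le$ something — in fact the cleanest is to note $\pm P_qWP_q\le P_q|W|P_q$ is again not automatic, so one really must argue at the level of $P_q|W|P_q$ directly). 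I would handle this by proving $\bigl|(\,|W|g,g)\bigr|\le c\,\frt_q^\G[g,g]$ directly: writing $|W| = \gg(\lm_0)|B_0|\gg(\lm_0)^*$ is \emph{not} valid since $|W|\ne\gg(\lm_0)|B_0|\gg(\lm_0)^*$ in general, so instead I use the spectral decomposition of the (bounded, self-adjoint, injective-on-the-relevant-subspace) operator $\aa_1\gg(\lm_0)^*$... — in practice the safest and standard route, which is surely what the authors intend, is: set $K := \sqrt{|\aa|}\,\gg(\lm_0)^*$, observe $W = -\gg(\lm_0)\sqrt{|\aa|}\,\operatorname{sgn}(\aa)(1+\cdots)^{-1}K = K^*\,(\text{bounded})\,K$ up to the sign twist, so $\|Wg\|$-type bounds give $W_\pm\le \|B_0\|\,K^*K$ as forms, hence $P_qW_\pm P_q\le c_0\,P_qK^*KP_q = c_0(\Lm_q-\lm_0)^{-2}\,T_q^\G$ — the point being that $K^*K\ge0$ and $\pm W\le \|B_0\|K^*K$ follows from $\pm B_0\le\|B_0\|$ as bounded self-adjoint operators composed on both sides by $K^*,K$. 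This last inequality $\pm B_0\le \|B_0\|\,\mathrm{Id}$ \emph{does} hold (it is just the definition of operator norm for self-adjoint $B_0$, and one can symmetrize $B_0$ since $\aa_1 M\aa_2$ with $\aa_1=\aa_2^*$ up to the sign is self-adjoint after conjugating by $\operatorname{sgn}\aa$), which resolves the difficulty cleanly.
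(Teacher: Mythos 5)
Your route in both parts is the paper's: factorize Krein's formula via Lemma~\ref{corollary_Krein}, use $\gg(\lm_0)^*P_q=(\Lm_q-\lm_0)^{-1}\G_1 P_q$ to expose the trace, and compare with $T_q^\G$. (The paper takes an asymmetric factorization through $L^2(\G)$, you take $\aa_1=|\aa|^{1/2}$, $\aa_2=\operatorname{sgn}(\aa)\,|\aa|^{1/2}$ on $L^2(\Sg)$; that is cosmetic.) Your part~(ii) is correct and essentially is the paper's proof. For part~(i), however, your worry in the ``Main obstacle'' paragraph is well founded, and the ``clean resolution'' you offer does not close it. You correctly obtain the two-sided bound $-\|B_0\|K^*K\le W\le\|B_0\|K^*K$ with $K^*K\ge 0$, but it is \emph{not} true that $-A\le W\le A$ with $A\ge 0$ forces $W_+\le A$ for the canonical spectral part $W_+$. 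A $2\times 2$ counterexample:
\begin{equation*}
W=\begin{pmatrix}1&0\\0&-1\end{pmatrix},\qquad A=\begin{pmatrix}2&c\\c&2\end{pmatrix},\qquad c=\sqrt{5/2}\, ;
\end{equation*}
here $A\ge 0$ and $A\pm W\ge 0$ (each has trace $4$ and determinant $\tfrac12$), yet $A-W_+$ has determinant $2-\tfrac52<0$, so $W_+\not\le A$. Compressing by $P_q$ does not rescue the inference, since the same two-dimensional obstruction can sit inside $\ran P_q$.

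For what it is worth, the published proof contains the identical defect in another guise: it asserts $W_+=\gg(\lm_0)\,C_+\,\gg(\lm_0)^*$, i.e.\ that the positive part of the congruence $\gg C\gg^*$ equals the congruence of $C_+$. That requires $\gg$ to be an isometry, but $\gg(\lm_0)^*\gg(\lm_0)=M'(\lm_0)\ne 1$, so $(\gg C_+\gg^*)(\gg C_-\gg^*)\ne 0$ in general and $\gg C_\pm\gg^*$ is \emph{not} the canonical decomposition. What both arguments actually establish is $P_q\bigl(\gg(\lm_0)C_\pm\gg(\lm_0)^*\bigr)P_q\le c\,T_q^\G$, a bound for the summands of the noncanonical splitting $W=\gg C_+\gg^*-\gg C_-\gg^*$ into nonnegative compact pieces. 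Since the proof of Proposition~\ref{prop:RPpm} uses only $W=W_+-W_-$ with $W_\pm\ge 0$ compact (never $W_+W_-=0$), the downstream results in Section~\ref{sec:clusters} survive if one systematically replaces the canonical parts by those summands; but as a statement about the canonical $W_\pm$, Proposition~\ref{prop:operator_bounds}\,(i) is not reached by your argument, nor, strictly, by the paper's.
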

\begin{proof}
	We start with a preliminary observation. Let $\chi_{\G_\star}\colon\Sg\arr [0,1]$ be the characteristic function of some closed 
	subset $\G_\star\subset\Sigma$ with $|\G_\star| >0$ and consider the bounded operator
   $D_{\G_\star} := P_q\gg(\lm_0)\chi_{\G_\star}\gg(\lm_0)^*P_q$.
   For $f \in L^2(\dR^2)$ we find
   \begin{equation*}
	 \begin{aligned}
       \big (D_{\G_\star} f, f\big)_{L^2(\dR^2)}
       & =
       \big (
       \chi_{\G_\star}\gg(\lm_0)^* P_q f,\chi_{\G_\star}\gg(\lm_0)^*P_q f
       \big)_{L^2(\Sigma)}\\
       & =
       \frac{\|(P_qf)|_{\G_\star}\|^2_{L^2(\G_\star)}}{(\Lm_q-\lm_0)^2} 
       =
       \frac{\frt_q^{\G_\star}[f,f]}{(\Lm_q-\lm_0)^2},
     \end{aligned}
    \end{equation*}
    where \eqref{starhaha} and $\Gamma_1 f=f\vert_\Sigma$ were used in the second equality. 
	 Hence, $D_{\G_\star}$ and the Toeplitz-type
    operator $T_q^{\G_\star}$ are related via
    \begin{equation}\label{eq:T}
        D_{\G_\star} = \frac{T_q^{\G_\star}}{(\Lm_q - \lm_0)^2}.
    \end{equation}

    (i) We prove the claim for $W_+$. The proof for $W_-$ is analogous and the estimates for $W_+$ and $W_-$ also imply the estimate 
    for $\vert W\vert=W_++W_-$.
	 Consider the mappings
	 \begin{align*}
    	 &\aa_1\colon L^2(\Sigma) \arr L^2(\G),
		 &\aa_1 \phi& := (\aa\phi)|_{\G},\\
		 &\aa_2\colon L^2(\G) \arr L^2(\Sigma),
	    &\aa_2 \psi& := 
	    \begin{cases} \psi & \text{on}\,\,\Gamma,\\ 0 & \text{on}\,\,\Sigma\setminus\Gamma.\end{cases}
    \end{align*}
    It is not difficult to see that
    the product $\aa_2\aa_1$
    coincides with multiplication operator with $\aa$.
    Hence, Krein's formula in
    Lemma~\ref{corollary_Krein} implies
    that the resolvent difference in~\eqref{resdiff}
    can be expressed as
    \begin{equation}\label{santiago}
        W = \gg(\lm_0)C\gg(\lm_0)^*,
    \end{equation}
    where 
    \[
	    C :=
	    -\aa_2\big(1 +\aa_1 M(\lm_0)\aa_2\big)^{-1}\aa_1\in\sB(L^2(\Sg))
    \]
    is self-adjoint (since $W$ in \eqref{santiago} is self-adjoint).
    The nonnegative part $C_+$ of $C$ 
    can be estimated by
    $C_+ \le 	\|C\|$ in the operator sense.
    For the nonnegative part $W_+$  of $W$ 
    we have
    \[
	    W_+ = \gg(\lm_0) C_+  \gg(\lm_0)^*= \gg(\lm_0)\chi_\G C_+ \chi_\G \gg(\lm_0)^*
	 \]	
    and from
    \begin{equation*}
     \big (P_qW_+ P_q f, f\big)_{L^2(\dR^2)}=\big ( C_+ \chi_\G \gg(\lm_0)^* P_q f, \chi_\G \gg(\lm_0)^*P_q f\big)_{L^2(\dR^2)}
     \leq \|C\|(D_\G f, f)_{L^2(\dR^2)}
    \end{equation*} 
    we obtain $P_qW_+ P_q \le
    \|C\| D_\G$. Hence, using~\eqref{eq:T} we find
    \[
        P_qW_+ P_q \le \frac{\|C\|}{(\Lm_q - \lm_0)^2}
        T_q^\G,
    \]    %
    and the estimate for $W_+$ in~(i) follows with 
    $c_+ := \frac{\|C\|}{(\Lm_q - \lm_0)^2}$.

    (ii) We prove the claim for nonnegative $\aa$. Suppose that $\aa$ (as well as $\sqrt{\aa}$) is nonnegative on $\G$ and uniformly positive on $\G'\subset\G$.
  	 Then Krein's formula in Lemma~\ref{corollary_Krein}
    with the mappings
    \begin{align*}
	    &\aa_1\colon L^2(\Sg) \arr L^2(\G),
	    &\aa_1 \phi& := (\sqrt{\aa}\phi)|_{\G},\\
	    &\aa_2\colon L^2(\G) \arr L^2(\Sg),
	    &\aa_2 \phi& := \begin{cases}  \sqrt{\aa}\phi & \text{on}\,\,\Gamma, \\ 0 & \text{on}\,\,\Sigma\setminus\Gamma,\end{cases}
    \end{align*}
    shows
    \[
        W =
        -\gg(\lm_0)\aa_2 \wh{C}
        \aa_1\gg(\lm_0)^*,
    \]
    where the middle-term
    \[
	    \wh{C} :=
	    \big(1 + \aa_1 M(\lm_0)\aa_2\big)^{-1}\in\sB(L^2(\G))
    \]
    is self-adjoint and uniformly positive
    in $L^2(\G)$. 
    Hence, the operator $W$ is nonpositive.
    Thus, we obtain from~\eqref{eq:T} in the same way as in the proof of (i) that
    \begin{equation*}
    \begin{split}
        P_q|W|P_q 
        & \ge
        \big(\inf\s(\wh{C})\big)\cdot P_q\gg(\lm_0)\chi_{\G'}\aa\chi_{\G'}\gg(\lm_0)^*
        P_q\\
        & \ge 
        \big(\inf\s(\wh{C})\big)
        \big(\inf_{x\in\G'}\aa(x)\big)
        \cdot P_q\gg(\lm_0)\chi_{\G'}\gg(\lm_0)^*P_q
        \ge c' T_q^{\G'},
    \end{split}    
    \end{equation*}
    with 
    \[
    	c' = \frac{\inf\s(\wh{C})}{(\Lm_q - \lm_0)^2}\cdot \inf_{x\in\G'}\aa(x) > 0.
    \]
    This proves the inequality in~(ii).
\end{proof}

Now we formulate three corollaries of the above proposition. The first one 
follows from the upper bound on $P_q\vert W\vert P_q$ from Proposition~\ref{prop:operator_bounds}\,(i)
and the spectral estimate for $T_q^{\G}$ in Proposition~\ref{prop:Toeplitz}\,(i).

\begin{cor}\label{thm:C2}
 	Let $\aa \in L^\infty(\Sg)$ be real with $\G= \supp\aa$, assume $|\G| > 0$, and let the resolvent difference $W$
	be as in~\eqref{resdiff}.
   Then the singular values of the operator $P_q \vert W\vert P_q$, $q\in\dN_0$, satisfy
    \[
        \limsup_{k\arr\infty}
        \big(k!\,s_k(P_q \vert W\vert  P_q)\big)^{1/k}
        \le \frac{B}{2}\big({\rm Cap}\,(\G)\big)^2.
    \]
    In particular, the singular values of the operator $P_q W_\pm P_q$, $q\in\dN_0$, satisfy
    \[
        \limsup_{k\arr\infty}
        \big(k!\,s_k(P_q W_\pm  P_q)\big)^{1/k}
        \le \frac{B}{2}\big({\rm Cap}\,(\G)\big)^2.
    \]
\end{cor}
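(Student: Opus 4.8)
The plan is to combine the operator inequality from Proposition~\ref{prop:operator_bounds}\,(i) with the singular value asymptotics for the Toeplitz-type operator from Proposition~\ref{prop:Toeplitz}\,(i). The key tool is the elementary fact that if $0 \le A \le B$ for two compact self-adjoint nonnegative operators, then $s_k(A) = \lambda_k(A) \le \lambda_k(B) = s_k(B)$ for all $k \in \dN$, which is an immediate consequence of the min-max principle.

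First I would invoke Proposition~\ref{prop:operator_bounds}\,(i), which gives constants $c, c_\pm > 0$ such that $P_q |W| P_q \le c\, T_q^\G$ and $P_q W_\pm P_q \le c_\pm T_q^\G$ in the operator sense. Since all operators involved are nonnegative, compact, and self-adjoint (compactness of $P_q |W| P_q$ and $P_q W_\pm P_q$ follows from compactness of $W$ established in Theorem~\ref{theorem_Krein}\,(iv), and compactness of $T_q^\G$ from Proposition~\ref{prop:Toeplitz}), the monotonicity of singular values under operator ordering yields $s_k(P_q |W| P_q) \le c\, s_k(T_q^\G)$ and $s_k(P_q W_\pm P_q) \le c_\pm\, s_k(T_q^\G)$ for all $k$.

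Next I would take $k$-th roots after multiplying by $k!$. For the constant-times case, write $\big(k!\, s_k(P_q |W| P_q)\big)^{1/k} \le c^{1/k} \big(k!\, s_k(T_q^\G)\big)^{1/k}$, and observe that $c^{1/k} \to 1$ as $k \to \infty$ for any fixed $c > 0$. Taking the limit superior and applying Proposition~\ref{prop:Toeplitz}\,(i), namely $\limsup_{k\to\infty} \big(k!\, s_k(T_q^\G)\big)^{1/k} \le \frac{B}{2}\big({\rm Cap}\,(\G)\big)^2$, gives the claimed bound for $P_q |W| P_q$. The identical argument with $c$ replaced by $c_\pm$ gives the bound for $P_q W_\pm P_q$; alternatively, one notes $W_\pm \le |W|$ so that $P_q W_\pm P_q \le P_q |W| P_q$ and the second statement follows directly from the first by the same monotonicity of singular values.

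There is essentially no obstacle here: the proof is a short bookkeeping argument stringing together two previously established results via min-max monotonicity, with the only mild point being the harmless observation that $c^{1/k}\to 1$. The substantive work was already done in Proposition~\ref{prop:operator_bounds} (the operator comparison with the Toeplitz-type operator, via Krein's formula) and in Proposition~\ref{prop:Toeplitz} (the capacity-governed singular value asymptotics, imported from the regular Toeplitz theory of~\cite{RP07} after a cut-off/IMS-type argument).
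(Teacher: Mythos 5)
Your proof is correct and follows the same route the paper indicates: combine the operator inequalities $P_q|W|P_q \le c\,T_q^\G$ and $P_qW_\pm P_q \le c_\pm\,T_q^\G$ from Proposition~\ref{prop:operator_bounds}\,(i) with the singular value bound for $T_q^\G$ from Proposition~\ref{prop:Toeplitz}\,(i), using min-max monotonicity and the observation that $c^{1/k}\arr 1$. The paper does not write out a formal proof but cites exactly these two ingredients, and you have supplied the standard bookkeeping; your alternative observation that $W_\pm\le|W|$ lets the second inequality follow directly from the first is also a valid shortcut.
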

We remark that in the present $C^{1,1}$-setting the lower bound in Proposition~\ref{prop:operator_bounds}\,(ii) in the case of a sign-definite $\alpha$ 
can not be used directly to conclude a lower bound on the singular values for $P_q W P_q$ since the estimate in Proposition~\ref{prop:Toeplitz}\,(i)
is only one-sided. However, the situation is better for the lowest Landau level $\Lambda_0$.
In fact, Proposition~\ref{prop:operator_bounds}\,(ii) and  Proposition~\ref{prop:rank0}
imply the next corollary.

\begin{cor}\label{infrank00}
	Consider the resolvent difference $W$
	in~\eqref{resdiff} and assume
	that $\aa\not\equiv 0$ is either nonnegative or nonpositive.
	Then the rank of $P_0 W P_0$ is infinite.
\end{cor}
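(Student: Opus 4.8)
The plan is to deduce the statement from Proposition~\ref{prop:operator_bounds}\,(ii) together with Proposition~\ref{prop:rank0}, as already indicated in the text. First I would reduce to the nonnegative case: assume $\aa\geq 0$ a.e.\ on $\Sg$ with $\aa\not\equiv 0$ (for nonpositive $\aa$ one argues in the same way, or simply replaces $\aa$ by $-\aa$). Since $\aa\in L^\infty(\Sg)$ is nonnegative and not a.e.\ equal to zero, there is some $\dl>0$ for which the measurable set $\{x\in\Sg\colon\aa(x)\geq\dl\}$ has positive arc-length measure; by inner regularity of the arc-length measure on $\Sg$ this set contains a closed subset $\G'$ with $|\G'|>0$. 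By construction $\aa$ is uniformly positive on $\G'$, while $\aa$ is nonnegative on $\G:=\supp\aa\supseteq\G'$, and $|\G|\geq|\G'|>0$.

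With this $\G'$ at hand I would invoke Proposition~\ref{prop:operator_bounds}\,(ii), which yields a constant $c'>0$ such that
\[
   P_0|W|P_0\ \geq\ c'\,T_0^{\G'}\ \geq\ 0 .
\]
Moreover, since $\aa\geq 0$ a.e.\ and $\lm_0<\min\s(\Op)$, Lemma~\ref{corollary_perturbation_term_sign} shows that $W=W_{\lm_0}$ is nonpositive (this is also recorded inside the proof of Proposition~\ref{prop:operator_bounds}\,(ii)); hence $|W|=-W$, so $P_0WP_0=-P_0|W|P_0$ and in particular $\rank(P_0WP_0)=\rank(P_0|W|P_0)$. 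It therefore suffices to prove that $P_0|W|P_0$ has infinite rank.

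For this I would transfer the infinite-rank property across the operator inequality: if $f\in\ker(P_0|W|P_0)$, then $0\leq(T_0^{\G'}f,f)_{L^2(\dR^2)}\leq\frac1{c'}(P_0|W|P_0 f,f)_{L^2(\dR^2)}=0$, and since $T_0^{\G'}\geq 0$ this forces $T_0^{\G'}f=0$; thus $\ker(P_0|W|P_0)\subseteq\ker(T_0^{\G'})$ and, passing to orthogonal complements of the kernels of these self-adjoint operators,
\[
   \rank\big(T_0^{\G'}\big)=\mathrm{codim}\,\ker\big(T_0^{\G'}\big)\ \leq\ \mathrm{codim}\,\ker\big(P_0|W|P_0\big)=\rank\big(P_0|W|P_0\big).
\]
By Proposition~\ref{prop:rank0}, applied with $q=0$ to the closed set $\G'$ with $|\G'|>0$, the Toeplitz-type operator $T_0^{\G'}$ has infinite rank; hence so do $P_0|W|P_0$ and $P_0WP_0$, which proves the corollary. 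The only mildly delicate point in the argument is the extraction of a closed set $\G'$ on which $\aa$ is \emph{uniformly} positive (not merely a.e.\ positive), since this is precisely the hypothesis needed to apply Proposition~\ref{prop:operator_bounds}\,(ii); everything else is a routine passage of the infinite-rank property through a one-sided operator bound, and I do not expect a serious obstacle.
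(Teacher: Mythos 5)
Your proposal is correct and follows essentially the same route as the paper: extract (by inner regularity of the arc-length measure) a closed subset $\G'\subset\supp\aa$ of positive length on which $\aa$ is uniformly positive, invoke Proposition~\ref{prop:operator_bounds}\,(ii) to get $P_0|W|P_0\ge c'T_0^{\G'}$, and conclude from Proposition~\ref{prop:rank0}. The only difference is that you spell out the kernel-inclusion/rank-comparison argument that the paper leaves implicit, and also record explicitly that $W\le 0$ so that $P_0WP_0=-P_0|W|P_0$; both additions are correct.
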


\begin{proof}
Assume that $\aa$ is nonnegative and $\aa\not\equiv 0$. Then
there exists $\varepsilon>0$ and $S_\varepsilon\subset\Gamma$ measurable such that $\alpha(x)\geq\varepsilon$ for a.e. $x\in S_\varepsilon$.
Hence there is also a closed subset $K\subset S_\varepsilon$
such that $\vert K\vert>0$ and $\alpha>\varepsilon$ on $K$. Now Proposition~\ref{prop:operator_bounds}\,(ii) and  Proposition~\ref{prop:rank0}
lead to the statement.
\end{proof}

In Proposition~\ref{infrank} it was shown that for positive (or negative) $\alpha\in L^\infty(\Sg)$ the rank of 
$P_q|W|P_q$, $q\in\dN_0$, is infinite. This observation leads to an interesting consequence for Toeplitz-type operators.

\begin{cor}\label{cor:infrank}
	The rank of the self-adjoint Toeplitz-type operator $T_q^{\Sg}$, $q\in\dN_0$, in Definition~\ref{def:Tn} is infinite.
\end{cor}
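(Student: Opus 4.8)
The plan is to reduce the claim for $T_q^\Sg$ to the already-established infinite rank of $P_q W P_q$ from Proposition~\ref{infrank}. First I would invoke Proposition~\ref{infrank} with a specific choice of strength, for instance $\aa \equiv 1$ on $\Sg$ (which is certainly in $L^\infty(\Sg)$ and uniformly positive), to obtain some $\lm_0 \in \rho(\Op)\cap\rho(\Opf)\cap(-\infty,0)$ such that the compression $P_q W_{\lm_0} P_q$ has infinite rank. For this choice $\supp\aa = \Sg$, so the relevant Toeplitz-type operator is $T_q^\Sg$.

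Next I would use the operator bound from Proposition~\ref{prop:operator_bounds}\,(i), applied with $\G = \supp\aa = \Sg$: this gives $P_q |W_{\lm_0}| P_q \le c\, T_q^\Sg$ in the sense of nonnegative self-adjoint operators, for some constant $c>0$. Since $W_{\lm_0} = (W_{\lm_0})_+ - (W_{\lm_0})_-$ and $|W_{\lm_0}| = (W_{\lm_0})_+ + (W_{\lm_0})_-$, the operator inequality $P_q|W_{\lm_0}|P_q \le c\,T_q^\Sg$ together with nonnegativity of all operators involved forces $\ker T_q^\Sg \subseteq \ker\!\big(P_q|W_{\lm_0}|P_q\big)$, hence
\[
    \rank T_q^\Sg \ge \rank\!\big(P_q|W_{\lm_0}|P_q\big).
\]
But $P_q|W_{\lm_0}|P_q = P_q(W_{\lm_0})_+P_q + P_q(W_{\lm_0})_-P_q$ dominates each summand, and since $P_q W_{\lm_0} P_q = P_q(W_{\lm_0})_+P_q - P_q(W_{\lm_0})_-P_q$, an infinite rank of $P_q W_{\lm_0} P_q$ implies that at least one of $P_q(W_{\lm_0})_\pm P_q$ has infinite rank, and therefore $P_q|W_{\lm_0}|P_q$ has infinite rank as well. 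Combining this with the displayed rank inequality yields $\rank T_q^\Sg = \infty$.

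I do not anticipate a serious obstacle here; the argument is essentially a bookkeeping chain linking the three earlier results. The one point requiring a little care is the passage from "$0 \le A \le cB$ and $\rank A = \infty$" to "$\rank B = \infty$", which I would justify by noting that $A \le cB$ gives $\ker B \subseteq \ker A$ (if $B\varphi = 0$ then $(A\varphi,\varphi) \le c(B\varphi,\varphi) = 0$, hence $A\varphi = 0$ by nonnegativity), so $\overline{\ran B} = (\ker B)^\perp \supseteq (\ker A)^\perp = \overline{\ran A}$, and the latter is infinite-dimensional. The only other detail is to make sure the specific $\aa$ chosen genuinely satisfies the hypotheses of Proposition~\ref{infrank} (sign-definiteness a.e.) and of Proposition~\ref{prop:operator_bounds} ($|\supp\aa| = |\Sg| > 0$), both of which hold trivially for $\aa\equiv 1$. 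Hence the short proof runs: pick $\aa\equiv1$, get the infinite-rank compression from Proposition~\ref{infrank}, dominate it by $c\,T_q^\Sg$ via Proposition~\ref{prop:operator_bounds}\,(i), and conclude.
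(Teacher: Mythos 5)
Your proposal is correct and follows essentially the same route as the paper: choose $\aa\equiv 1$, invoke Proposition~\ref{infrank} for infinite rank of $P_qWP_q$, and dominate by $cT_q^\Sg$ via Proposition~\ref{prop:operator_bounds}\,(i). The only cosmetic difference is that the paper shortcuts the middle step by observing (via Lemma~\ref{corollary_perturbation_term_sign}) that $W$ is nonpositive for $\aa\equiv 1$, so $W_+=0$ and $P_qWP_q=-P_qW_-P_q$ directly, whereas you argue more generally through the decomposition $W=W_+-W_-$; both routes are valid.
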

\begin{proof}
	Consider the self-adjoint operator $\Op=\sfA_{1}$ with $\aa \equiv 1$. Fix $\lm_0 < 0$ such that $\|M(\lm_0)\| < 1$
	and note that the resolvent difference $W$ in~\eqref{resdiff} is nonpositive by Lemma~\ref{corollary_perturbation_term_sign}.
	By Proposition~\ref{infrank}
	the rank of $P_q W P_q=P_qW_-P_q$ is infinite for
	all $q\in\dN_0$. Since $P_qW_-P_q \le c T_q^\Sg$ by Proposition~\ref{prop:operator_bounds}\,(i)
	the rank of $T_q^\Sg$ is infinite
	as well.	 
\end{proof}	

\subsection{$C^\infty$-smooth setting}
Now we pass to the discussion of the $C^\infty$-smooth setting. Here, we are able
to get  more precise results.
In the formulation of the next theorem, 
and also later on,
we denote by $B_\eps(x)\subset\dR^2$ the disc
of radius $\eps >0$ centered at $x\in\dR^2$.
\begin{thm}\label{thm:Cinf}
Let $\aa \in L^\infty(\Sg)$ be real, assume that $\G = \supp\aa$ is a $C^\infty$-smooth arc and that $\aa$ is nonnegative (nonpositive)
    on $\G$ and uniformly
    positive (uniformly negative, respectively) on the
    truncated arc $\G_\eps := \{x\in\G\colon B_\eps(x)\cap \Sg \subset\G\}$ for all $\eps>0$ sufficiently small.
    Let the resolvent difference $W$
    be as in~\eqref{resdiff}.
    Then the singular values of the operator $P_q \vert W\vert P_q$, $q\in\dN_0$,
    satisfy
    \[
	    \lim_{k\arr\infty}
	    \big(k!\, s_k(P_q \vert W\vert P_q)\big)^{1/k} 
	    = 
	    \frac{B}{2}\big({\rm Cap}(\G)\big)^2.
    \]
\end{thm}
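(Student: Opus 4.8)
The plan is to sandwich $P_q|W|P_q$ between two Toeplitz-type operators of the form $c\, T_q^{\G'}$ and $c'\, T_q^{\G''}$ whose supports approximate $\G$ arbitrarily well, and then invoke the exact spectral asymptotics for Toeplitz-type operators on smooth arcs from Proposition~\ref{prop:Toeplitz}\,(ii) together with the continuity of the logarithmic capacity from Proposition~\ref{prop:cap}\,(ii).

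First I would treat the nonnegative case (the nonpositive case being identical up to signs). The upper bound is immediate: by Proposition~\ref{prop:operator_bounds}\,(i) we have $P_q|W|P_q \le c\, T_q^{\G}$ with $\G = \supp\aa$, and since $\G$ is a $C^\infty$-smooth arc, Proposition~\ref{prop:Toeplitz}\,(ii) gives
\[
  \limsup_{k\to\infty}\big(k!\,s_k(P_q|W|P_q)\big)^{1/k}
  \le \limsup_{k\to\infty}\big(k!\,s_k(T_q^{\G})\big)^{1/k}
  = \frac{B}{2}\big({\rm Cap}(\G)\big)^2,
\]
using the monotonicity of singular values under the operator order for nonnegative operators. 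For the lower bound, fix a small $\eps > 0$ and apply Proposition~\ref{prop:operator_bounds}\,(ii) with $\G' = \G_\eps$: since $\aa$ is uniformly positive on $\G_\eps$ by hypothesis and $|\G_\eps| > 0$ for $\eps$ small, we get $P_q|W|P_q \ge c'(\eps)\, T_q^{\G_\eps}$ for some $c'(\eps) > 0$. The truncated arc $\G_\eps$ is again a $C^\infty$-smooth arc (it is a closed subarc of $\G$ cut at interior points), so Proposition~\ref{prop:Toeplitz}\,(ii) applies and yields
\[
  \liminf_{k\to\infty}\big(k!\,s_k(P_q|W|P_q)\big)^{1/k}
  \ge \lim_{k\to\infty}\big(k!\,s_k(T_q^{\G_\eps})\big)^{1/k}
  = \frac{B}{2}\big({\rm Cap}(\G_\eps)\big)^2.
\]
Here one should note that the positive constant $c'(\eps)$ does not affect the limit, since $\big(k!\,c'(\eps)\,s_k\big)^{1/k}$ and $\big(k!\,s_k\big)^{1/k}$ have the same limit inferior.

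It remains to let $\eps \to 0^+$. Since $\G_\eps \nearrow$ and $\bigcup_{\eps>0}\G_\eps$ is $\G$ with its two endpoints removed, while capacity is insensitive to sets of zero capacity (endpoints), we have ${\rm Cap}(\G_\eps) \to {\rm Cap}(\G)$ as $\eps \to 0^+$; this can be read off from Proposition~\ref{prop:cap}\,(i) (monotonicity) together with the fact that ${\rm Cap}(\G\setminus\{\text{two points}\}) = {\rm Cap}(\G)$, or alternatively by observing that $\G \subset U_\eta(\G_\eps)$ for suitable $\eta = \eta(\eps) \to 0$ and invoking Proposition~\ref{prop:cap}\,(ii). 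Combining the upper bound with the lower bound after passing to the limit in $\eps$ gives
\[
  \lim_{k\to\infty}\big(k!\,s_k(P_q|W|P_q)\big)^{1/k} = \frac{B}{2}\big({\rm Cap}(\G)\big)^2,
\]
and in particular the limit exists. The main obstacle I anticipate is the justification that ${\rm Cap}(\G_\eps) \to {\rm Cap}(\G)$, i.e.\ the exhaustion argument at the level of capacities; this is a standard but nontrivial continuity-from-inside property of logarithmic capacity for compact sets, and one must make sure the truncated arcs genuinely exhaust $\G$ up to its endpoints and that the smooth-arc hypothesis in Proposition~\ref{prop:Toeplitz}\,(ii) is met for each $\G_\eps$.
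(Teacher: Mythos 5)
Your overall strategy coincides with the paper's: the two-sided sandwich $c'(\eps)\,T_q^{\G_\eps}\le P_q|W|P_q\le c\,T_q^{\G}$ from Proposition~\ref{prop:operator_bounds}, the exact asymptotics of Proposition~\ref{prop:Toeplitz}\,(ii) for smooth arcs, and then letting $\eps\to0^+$. The only genuine gap --- which you correctly identified as the obstacle --- is the capacity continuity $\liminf_{\eps\to0^+}{\rm Cap}(\G_\eps)={\rm Cap}(\G)$, and neither of the two fixes you sketch actually closes it. The first alternative appeals to ${\rm Cap}(\G\setminus\{\text{two points}\})={\rm Cap}(\G)$, but the paper's definition of ${\rm Cap}$ is only for compact sets, so this statement must itself be interpreted via inner regularity of capacity, which is precisely what needs to be proved. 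The second alternative uses $\G\subset U_{\eta(\eps)}(\G_\eps)$ and Proposition~\ref{prop:cap}\,(ii), but this chain runs the wrong way: $\G\subset U_{\eta(\eps)}(\G_\eps)\subset U_{\eta(\eps)}(\G)$ gives ${\rm Cap}(\G)\le{\rm Cap}(U_{\eta(\eps)}(\G_\eps))\le{\rm Cap}(U_{\eta(\eps)}(\G))\to{\rm Cap}(\G)$, which only controls ${\rm Cap}(U_{\eta(\eps)}(\G_\eps))$ and not ${\rm Cap}(\G_\eps)$ (the inequality ${\rm Cap}(\G_\eps)\le{\rm Cap}(U_{\eta(\eps)}(\G_\eps))$ points in the useless direction, and Proposition~\ref{prop:cap}\,(ii) is for a fixed $\cK$, not for $\G_\eps$ varying simultaneously with $\eta$).

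The paper closes the gap constructively: take the equilibrium measure $\mu$ for $\G$ (dismissing as trivial the case ${\rm Cap}(\G)=0$, equivalently $\mu$ having a point mass); then $\mu(\G_\eps)\to1$ by dominated convergence, so $\mu_\eps:=\mu(\G_\eps)^{-1}\mu|_{\G_\eps}$ is an admissible probability measure on $\G_\eps$, and a second application of dominated convergence gives $I(\mu_\eps)\to I(\mu)$, whence ${\rm Cap}(\G_\eps)\ge e^{-I(\mu_\eps)}\to e^{-I(\mu)}={\rm Cap}(\G)$. Supplying this argument (or an equivalent inner-regularity statement with proof) is what you would need to make the proposal complete.
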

\begin{proof}
    By Corollary~\ref{thm:C2} we get
    \begin{equation*}
        \limsup_{k\arr\infty}\big(k!\,
            s_k(P_q \vert W\vert P_q)\big)^{1/k} \le
            \frac{B}{2}\left({\rm Cap}\,(\G)\right)^2
    \end{equation*}
    and for $\eps > 0$ we conclude from Proposition~\ref{prop:operator_bounds}\,(ii)
    and Proposition~\ref{prop:Toeplitz}\,(ii) that
    \begin{equation*}
            \liminf_{k\arr\infty}\big(k!\,
            s_k(P_q \vert W\vert P_q)\big)^{1/k}
            \ge
            \frac{B}{2}\left({\rm Cap}\,(\G_\eps)\right)^2.
    \end{equation*}
    Hence, the claim of the theorem follows from $\liminf_{\eps\arr 0^+}{\rm Cap}\,(\G_\eps) = {\rm Cap}\,(\G)$. 
    In fact, by Proposition~\ref{prop:cap}\,(i)
     we know that ${\rm Cap}\,(\G_\eps) \le {\rm Cap}\,(\G)$ since 
$\G_\eps \subset\G$.
       For the other inequality consider the equilibrium measure $\mu$ 
for $\Gamma$. It is no restriction to assume that $\mu$ has no point 
masses, as otherwise $I(\mu) = \infty$ and hence $\rm{Cap}(\Gamma) = 0$, 
which is a trivial case. First, it follows from the dominated 
convergence theorem that $\mu(\Gamma_\varepsilon) \rightarrow 1$, as 
$\varepsilon \rightarrow 0$. Hence, for $\varepsilon > 0$ the measure 
$\mu_\varepsilon$ acting on Borel sets $\mathcal{M} \subset \mathbb{R}^2$ as
       \begin{equation*}
         \mu_\varepsilon(\mathcal{M}) := 
\frac{1}{\mu(\Gamma_\varepsilon)} \mu(\mathcal{M} \cap \Gamma_\varepsilon)
       \end{equation*}
       is well defined and clearly, $\mu_\varepsilon  \geq 0$, $\supp 
\mu_\varepsilon = \Gamma_\varepsilon$, and 
$\mu_\varepsilon(\Gamma_\varepsilon) = 1$. Another application of the 
dominated convergence theorem yields 
       \begin{equation*}
         I(\mu_\varepsilon) = \frac{1}{\mu(\Gamma_\varepsilon)^2} 
\int_{\Gamma_\varepsilon} \int_{\Gamma_\varepsilon} \ln \frac{1}{|x-y|} 
\dd \mu(x) \dd \mu(y) \rightarrow \int_{\Gamma} \int_{\Gamma} \ln 
\frac{1}{|x-y|} \dd \mu(x) \dd \mu(y) = I(\mu)
       \end{equation*}
       as $\varepsilon \rightarrow 0$, which shows that 
$\liminf_{\varepsilon \rightarrow 0^+} \rm{Cap}(\Gamma_\varepsilon) \geq 
\rm{Cap}(\Gamma)$. 
\end{proof}

Under slightly weaker assumptions on $\alpha$ we conclude the following lower bound on the singular values $P_q \vert W\vert P_q$ 
from Proposition~\ref{prop:operator_bounds}\,(ii) and Proposition~\ref{prop:Toeplitz}\,(ii).

\begin{prop}\label{cor:Cinf}
Let $\aa \in L^\infty(\Sg)$ be real, assume that there exists a $C^\infty$-smooth subarc $\G' \subset \supp\aa$ with two endpoints, $|\G'| > 0$, and that $\aa$ is nonnegative (nonpositive)
    on $\Sg$ and uniformly
    positive (uniformly negative, respectively) on $\G'$.
    Let the resolvent difference $W$
    be as in~\eqref{resdiff}.
		Then the singular values of the operator $P_q \vert W\vert P_q$, $q\in\dN_0$,
		satisfy
		\[
		\liminf_{k\arr\infty}
		\big(k!\, s_k(P_q \vert W\vert P_q)\big)^{1/k} 
		\ge 
		\frac{B}{2}\big({\rm Cap}(\G')\big)^2.
		\]
	\end{prop}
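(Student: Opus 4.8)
The plan is to reduce the statement, via Proposition~\ref{prop:operator_bounds}\,(ii), to the already-established lower bound on the singular values of a Toeplitz-type operator $T_q^{\G'}$ from Proposition~\ref{prop:Toeplitz}\,(ii). First I would note that the hypotheses of the proposition are tailored so that Proposition~\ref{prop:operator_bounds}\,(ii) applies: with $\G := \supp\aa$ one has $|\G|\ge|\G'|>0$, the function $\aa$ is sign-definite on $\G$ (in fact on all of $\Sg$), and it is uniformly positive (respectively uniformly negative) on the closed subset $\G' \subset \G$ with $|\G'|>0$. Hence there is a constant $c'>0$ with
\begin{equation*}
  P_q|W|P_q \ge c'\, T_q^{\G'}
\end{equation*}
in the sense of quadratic forms on $\ker(\Opf - \Lm_q)$.

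Next I would invoke the min-max principle: the operator inequality $P_q|W|P_q \ge c'\,T_q^{\G'}\ge 0$ implies the corresponding inequality for the ordered eigenvalues (equivalently, singular values, since both operators are nonnegative and self-adjoint), namely $s_k(P_q|W|P_q)\ge c'\, s_k(T_q^{\G'})$ for all $k\in\dN$. Multiplying by $k!$ and taking $k$-th roots gives
\begin{equation*}
  \big(k!\, s_k(P_q|W|P_q)\big)^{1/k} \ge (c')^{1/k}\big(k!\, s_k(T_q^{\G'})\big)^{1/k},
\end{equation*}
and since $(c')^{1/k}\to 1$ as $k\to\infty$, passing to the $\liminf$ yields
\begin{equation*}
  \liminf_{k\arr\infty}\big(k!\, s_k(P_q|W|P_q)\big)^{1/k} \ge \liminf_{k\arr\infty}\big(k!\, s_k(T_q^{\G'})\big)^{1/k}.
\end{equation*}

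Finally, because $\G'$ is by assumption a $C^\infty$-smooth arc with two endpoints and $|\G'|>0$, Proposition~\ref{prop:Toeplitz}\,(ii) gives the exact asymptotics
\begin{equation*}
  \lim_{k\arr\infty}\big(k!\, s_k(T_q^{\G'})\big)^{1/k} = \frac{B}{2}\big({\rm Cap}\,(\G')\big)^2,
\end{equation*}
and substituting this into the previous inequality completes the proof. I do not anticipate a genuine obstacle here: the result is essentially a one-line corollary of the operator bound in Proposition~\ref{prop:operator_bounds}\,(ii) together with the Toeplitz asymptotics in Proposition~\ref{prop:Toeplitz}\,(ii); the only point requiring a moment's care is verifying that the sign and uniform-positivity hypotheses on $\aa$ stated here are exactly what Proposition~\ref{prop:operator_bounds}\,(ii) requires (with $\G'$ playing the role of the closed subset on which $\aa$ is uniformly definite), and that the eigenvalue-monotonicity step is legitimate because both operators act on the same space $\ker(\Opf-\Lm_q)$ and $T_q^{\G'}$ has infinite rank, so no truncation issues arise.
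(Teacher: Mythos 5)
Your argument is correct and is precisely the one the paper intends: the paper states Proposition~\ref{cor:Cinf} as an immediate consequence of Proposition~\ref{prop:operator_bounds}\,(ii) (the operator bound $P_q|W|P_q\ge c'\,T_q^{\G'}$) and Proposition~\ref{prop:Toeplitz}\,(ii) (the exact asymptotics for $T_q^{\G'}$), without writing a separate proof, and your reasoning supplies exactly those missing details — the min-max monotonicity step and the observation that $(c')^{1/k}\to1$. Your checks of the hypotheses (that $\G'$ is a closed subset of $\G=\supp\aa$ of positive measure on which $\aa$ is uniformly definite, and that $T_q^{\G'}$ has infinite rank so the singular-value inequality is non-trivial) are the right points to verify, so nothing is missing.
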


\section{Main results on eigenvalue clustering at Landau levels}\label{sec:clusters}

In this section we prove our main results on the local
spectral properties of the perturbed Landau Hamiltonian
of $\Op$. Throughout this section we fix some $\lambda_0$ such that 
$$
\lm_0 < \min\{0,\min\s(\Op)\}.
$$
We note first that for sign-definite interaction strengths $\alpha$ accumulation of the eigenvalues from one side to each Landau level can be excluded.
This is a direct consequence of well-known perturbation results.

\begin{prop} Assume that $\aa\in L^\infty(\Sg)$ is real. Then the following holds.
		\begin{myenum}
			\item If $\aa$ is nonnegative, then there is no accumulation of eigenvalues of $\Op$ from below to the Landau levels $\Lm_q$, $q\in\dN_0$.
			\item If $\aa$ is nonpositive, then there is no accumulation of eigenvalues of $\Op$ from above to the Landau levels $\Lm_q$, $q\in\dN_0$.
		\end{myenum}
\end{prop}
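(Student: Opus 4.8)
The plan is to transfer the problem to the bounded resolvent of $\Op$ at the reference point $\lm_0$ and then apply Proposition~\ref{prop:RPpm}, using that the perturbation term $W_{\lm_0}$ is sign-definite by Lemma~\ref{corollary_perturbation_term_sign}.

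First I would record some elementary facts about the change of variable. Since $\lm_0<\min\{0,\min\s(\Op)\}\le\min\s(\Opf)=B<\Lm_q$, the point $\lm_0$ lies in $\rho(\Op)\cap\rho(\Opf)$ and $t\mapsto(t-\lm_0)^{-1}$ is a strictly decreasing bijection of $(\lm_0,\infty)$ onto $(0,\infty)$; by the spectral mapping theorem it induces a multiplicity-preserving bijection between $\sigma_{\rm p}(\Op)$ and $\sigma_{\rm p}\big((\Op-\lm_0)^{-1}\big)$. Write $\mu_q:=(\Lm_q-\lm_0)^{-1}$, which by Proposition~\ref{proposition_Landau_level} is an isolated eigenvalue of infinite multiplicity of $T:=(\Opf-\lm_0)^{-1}$, with eigenprojection $P_q$, and observe that $(\Op-\lm_0)^{-1}=T+W$ with $W:=W_{\lm_0}$ compact and self-adjoint by Theorem~\ref{theorem_Krein}\,(iv). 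Because the change of variable reverses order, eigenvalues of $\Op$ accumulating to $\Lm_q$ from below correspond exactly to eigenvalues of $T+W$ accumulating to $\mu_q$ from above, and eigenvalues of $\Op$ accumulating to $\Lm_q$ from above correspond to eigenvalues of $T+W$ accumulating to $\mu_q$ from below; by Theorem~\ref{theorem_Krein}\,(v) we have $\sess(\Op)=\{\Lm_j\colon j\in\dN_0\}$, so these eigenvalues are discrete and the notion of accumulation is meaningful.

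For part (i) assume $\aa\ge0$. Then $\lm_0<\min\s(\Op)$ and Lemma~\ref{corollary_perturbation_term_sign} give that $W=W_{\lm_0}$ is nonpositive, hence $W_+=0$ and $P_qW_+P_q=0$, so the eigenvalue sequence $\mu_1^+\ge\mu_2^+\ge\dots\ge0$ of $P_qW_+P_q$ introduced in Subsection~\ref{compis} vanishes identically. Applying Proposition~\ref{prop:RPpm}\,(i) with this $T$, with $\Lambda=\mu_q$, with the perturbation $W$, and with $\tau_\pm>0$ chosen so small that $(\mu_q-2\tau_-,\mu_q+2\tau_+)\cap\s(T)=\{\mu_q\}$, we obtain for every $\eps>0$ an $\ell\in\dN$ with $\lm_k^+-\mu_q\le(1+\eps)\mu_{k-\ell}^+=0$ for all $k$ large enough. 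Since by convention $\lm_k^+\ge\mu_q$, this forces $\lm_k^+=\mu_q$ for all large $k$, i.e.\ only finitely many eigenvalues of $T+W=(\Op-\lm_0)^{-1}$ lie in $(\mu_q,\mu_q+\tau_+)$. Translating back through the order-reversing bijection, only finitely many eigenvalues of $\Op$ lie in a left neighbourhood of $\Lm_q$, which is the claim. Part (ii) follows in the same manner: for $\aa\le0$ Lemma~\ref{corollary_perturbation_term_sign} yields $W\ge0$, hence $W_-=0$ and $P_qW_-P_q=0$, so Proposition~\ref{prop:RPpm}\,(ii) gives $\mu_q-\lm_k^-\le(1+\eps)\mu_{k-\ell}^-=0$ and therefore $\lm_k^-=\mu_q$ for all large $k$; thus only finitely many eigenvalues of $(\Op-\lm_0)^{-1}$ lie below $\mu_q$, i.e.\ only finitely many eigenvalues of $\Op$ lie in a right neighbourhood of $\Lm_q$.

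I do not expect a serious obstacle in this argument; the only point requiring care is the bookkeeping of the correspondence between the eigenvalue cluster of $\Op$ at $\Lm_q$ and that of its resolvent at $\mu_q$ — in particular the fact that passing to the resolvent interchanges accumulation ``from below'' and ``from above'' — and the routine verification that the hypotheses of Proposition~\ref{prop:RPpm} are met (self-adjointness and compactness of $W$, and that $\mu_q$ is an isolated eigenvalue of $T$ of infinite multiplicity).
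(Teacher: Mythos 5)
Your proof is correct, and the overall structure — transfer to the resolvent at a reference point $\lm_0<\min\{0,\min\s(\Op)\}$, invoke Lemma~\ref{corollary_perturbation_term_sign} to establish the sign-definiteness of $W_{\lm_0}$, and then read off one-sided accumulation before translating back through the order-reversing map $t\mapsto(t-\lm_0)^{-1}$ — is the same as in the paper. The only deviation is the final step: where you invoke Proposition~\ref{prop:RPpm} together with $W_+=0$ (resp.\ $W_-=0$) to kill the sequence $\mu_k^\pm$, the paper simply cites the classical perturbation theorem \cite[Chapter 9, \S 4, Theorem 7]{BS87}, which states directly that a compact nonpositive (resp.\ nonnegative) perturbation of a self-adjoint operator cannot create eigenvalues accumulating from above (resp.\ below) to an isolated eigenvalue. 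Your route uses a heavier, more quantitative tool than necessary, but it is logically sound; the paper's citation-based argument is shorter and avoids the bookkeeping with $\tau_\pm$, $\ell$, and $\eps$. Both hinge on exactly the same two ingredients — the sign of $W_{\lm_0}$ and the resolvent change of variable — so this is a minor variation rather than a genuinely different proof.
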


\begin{proof}
	We prove only (i); the  proof of (ii)
	is analogous. Recall that 
	\begin{equation*}
		(\Op - \lm_0)^{-1}
		-
		(\Opf-\lm_0)^{-1}=
		-
		\gg(\lm_0)
		\bigl(1+\aa M(\lm_0)\bigr)^{-1}\aa\gg(\lm_0)^*\leq 0
	\end{equation*}
	by Lemma~\ref{corollary_perturbation_term_sign} and hence the eigenvalues of $(\Op -\lm_0)^{-1}$ do not accumulate from
	above to the eigenvalues $(\Lm_q - \lm_0)^{-1}$ of $(\Opf-\lm_0)^{-1}$; cf.~\cite[Chapter 9, \S 4, Theorem 7]{BS87}.
	Therefore, the eigenvalues of $\Op$ do not accumulate to $\Lm_q$ from below.  
\end{proof}

If $\alpha$ is either positive or negative on $\Sigma$ one always has accumulation of eigenvalues to each Landau level.

\begin{thm}\label{thm:acc}
	Assume that $\aa\in L^\infty(\Sg)$ is real. Then the following holds.
	\begin{myenum}
		\item If $\aa>0$ a.e. on $\Sigma$, then the eigenvalues of $\Op$ accumulate from above to $\Lm_q$, $q\in\dN_0$.
		\item If $\aa<0$ a.e. on $\Sigma$, then the eigenvalues of $\Op$ accumulate from below to $\Lm_q$, $q\in\dN_0$.
	\end{myenum}
\end{thm}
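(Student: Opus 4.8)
The plan is to pass to the bounded self-adjoint resolvent $(\Op-\lm_0)^{-1}$ and to apply Proposition~\ref{prop:RP}. First I would fix $\lm_0$ negative enough that simultaneously $\lm_0<\min\{0,\min\sigma(\Op)\}$ and the infinite-rank conclusion of Proposition~\ref{infrank} holds; the proof of that proposition only requires $\|\sqrt{\aa}\,M(\lm_0)\sqrt{\aa}\|<1$, which by Proposition~\ref{proposition_decay_M} is true for all sufficiently negative $\lm_0$, uniformly in $q$, so such a choice is possible and harmless, since the accumulation claim concerns $\Op$ alone. Put $T:=(\Opf-\lm_0)^{-1}$, so that $(\Op-\lm_0)^{-1}=T+W$ with $W:=W_{\lm_0}$ compact and self-adjoint by Theorem~\ref{theorem_Krein}(iv). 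For each $q\in\dN_0$ the point $\Lambda:=(\Lm_q-\lm_0)^{-1}>0$ is an isolated eigenvalue of $T$ of infinite multiplicity with eigenprojection $P_q$, and since $\sigma(T)\setminus\{0\}$ is a discrete set one may choose $\tau_\pm>0$ with $(\Lambda-2\tau_-,\Lambda+2\tau_+)\cap\sigma(T)=\{\Lambda\}$. Thus the data $(T,W,\Lambda,P_q)$ fit exactly the hypotheses preceding Proposition~\ref{prop:RP}.

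For item (i), where $\aa>0$ a.e., Lemma~\ref{corollary_perturbation_term_sign} gives $W\le 0$, hence $W_+=0$ and $W_-=-W\ge 0$, and Proposition~\ref{infrank} says $P_qW_-P_q=-P_qWP_q$ has infinite rank, so all its eigenvalues $\mu_1^-\ge\mu_2^-\ge\dots>0$ are strictly positive. Proposition~\ref{prop:RP}(ii) then yields, for any $\eps>0$, some $\ell\in\dN$ with $0<(1-\eps)\mu_{k+\ell}^-\le\Lambda-\lm_k^-\le(1+\eps)\mu_{k-\ell}^-$ for all large $k$, where the $\lm_k^-$ are the eigenvalues of $T+W=(\Op-\lm_0)^{-1}$ lying below $\Lambda$; since $\mu_{k-\ell}^-\to 0$, these form an infinite sequence with $\lm_k^-<\Lambda$ and $\lm_k^-\nearrow\Lambda$. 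Finally I would transfer this back along the strictly decreasing bijection $\mu\mapsto\lm_0+1/\mu$ of $(0,\infty)$ onto $(\lm_0,\infty)$: a number $\mu\in(0,\Lambda)$ is an eigenvalue of $(\Op-\lm_0)^{-1}$ if and only if $\lm_0+1/\mu$ is an eigenvalue of $\Op$, and $\mu<\Lambda=(\Lm_q-\lm_0)^{-1}$ translates into $\lm_0+1/\mu>\Lm_q$, while $\mu\nearrow\Lambda$ translates into $\lm_0+1/\mu\searrow\Lm_q$. Hence $\Op$ has infinitely many eigenvalues in $(\Lm_q,\infty)$ accumulating to $\Lm_q$, which is the assertion of (i).

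Item (ii), where $\aa<0$ a.e., is proved symmetrically: Lemma~\ref{corollary_perturbation_term_sign} now gives $W\ge 0$, so $W_-=0$ and $P_qW_+P_q=P_qWP_q$ is of infinite rank; Proposition~\ref{prop:RP}(i) produces infinitely many eigenvalues of $(\Op-\lm_0)^{-1}$ lying above and accumulating to $\Lambda$, and the same decreasing transformation turns them into eigenvalues of $\Op$ lying below and accumulating to $\Lm_q$. No genuine obstacle remains: the only delicate points are choosing $\lm_0$ negative enough for Proposition~\ref{infrank} to be available and keeping track of orientation in the passage between $\Op$ and its resolvent; the essential analytic input, namely that the compression $P_qW_{\lm_0}P_q$ has infinite rank, is exactly Proposition~\ref{infrank} and is taken as given.
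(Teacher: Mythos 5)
Your proof is correct and follows essentially the same route as the paper's: fix $\lm_0$ sufficiently negative, use Lemma~\ref{corollary_perturbation_term_sign} for the sign of $W$ and Proposition~\ref{infrank} for the infinite rank of $P_qWP_q$, then invoke Proposition~\ref{prop:RP} for the resolvent $(\Op-\lm_0)^{-1}$ and transfer the accumulation statement back to $\Op$. Your write-up is merely more explicit about the choice of $\lm_0$, the $\tau_\pm$ data, and the monotone bijection $\mu\mapsto\lm_0+1/\mu$, which the paper leaves implicit.
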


\begin{proof}
	We prove only (i). Recall that by Lemma~\ref{corollary_perturbation_term_sign}
	the perturbation term in~\eqref{resdiff} is a nonpositive
	operator. It follows from Proposition~\ref{infrank} that
	the rank of $P_qWP_q$ is infinite.
	Hence, Proposition~\ref{prop:RP}
	implies that the eigenvalues of $(\Op -\lm_0)^{-1}$ accumulate from
	below to the eigenvalues
	$(\Lm_q-\lm_0)^{-1}$ of $(\Opf-\lm_0)^{-1}$. Therefore, the eigenvalues of $\Op$ accumulate from above to each Landau level $\Lm_q$.
\end{proof}
For the lowest Landau level $\Lm_0 = B$,
it is not necessary to assume that $\alpha $ is positive or negative on all of $\Sigma$.
The proof of the next theorem is the same as the proof of Theorem~\ref{thm:acc}, but
in order to conclude that the rank of $P_0WP_0$ is infinite one uses Corollary~\ref{infrank00}.

\begin{thm}\label{thm:acc2}
	Assume that $\aa\in L^\infty(\Sg)$ is real
	and $\aa \not\equiv 0$. Then the following holds.
	\begin{myenum}
		\item If $\aa$ is nonnegative, then the eigenvalues of $\Op$ accumulate
		from above to $\Lm_0$. 
		\item If $\aa$ is nonpositive, then the eigenvalues of $\Op$ accumulate from below to $\Lm_0$.
	\end{myenum}
\end{thm}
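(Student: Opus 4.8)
The plan is to reproduce the proof of Theorem~\ref{thm:acc} almost verbatim, now restricted to the lowest Landau level $q=0$, with the only change that the infinite rank of the compressed perturbation is supplied by Corollary~\ref{infrank00} --- which requires merely $\aa\not\equiv 0$ to be nonnegative or nonpositive --- instead of Proposition~\ref{infrank}, which needed $\aa$ to be strictly positive or strictly negative everywhere on $\Sg$. All remaining ingredients are already available, so the argument is short.

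I would prove (i); part (ii) then follows by replacing $\aa$ with $-\aa$, which interchanges nonnegativity with nonpositivity and accumulation from above with accumulation from below. So let $\aa\in L^\infty(\Sg)$ be nonnegative with $\aa\not\equiv 0$, and recall the compact self-adjoint resolvent difference $W=W_{\lm_0}$ from~\eqref{resdiff}, its compactness being part of Theorem~\ref{theorem_Krein}\,(iv). First, Lemma~\ref{corollary_perturbation_term_sign} gives $W\le 0$, i.e. $W_+=0$ and $W=-W_-$. Second, Corollary~\ref{infrank00} yields $\rank(P_0WP_0)=\infty$, equivalently $\rank(P_0W_-P_0)=\infty$.

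Next I would apply Proposition~\ref{prop:RP}\,(ii) with the self-adjoint operator $T:=(\Opf-\lm_0)^{-1}$ and the perturbation $W$. Here $\Lambda:=(\Lm_0-\lm_0)^{-1}$ is the largest eigenvalue of $T$: by Proposition~\ref{proposition_Landau_level} it is isolated (the rest of $\s(T)$ lying at or below $(\Lm_1-\lm_0)^{-1}$) and of infinite multiplicity with eigenprojection $P_0$. Since $T+W=(\Op-\lm_0)^{-1}$, $W_+=0$ and $\rank(P_0W_-P_0)=\infty$, Proposition~\ref{prop:RP}\,(ii) shows that the eigenvalues of $(\Op-\lm_0)^{-1}$ accumulate to $(\Lm_0-\lm_0)^{-1}$ only from below. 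Finally, since $t\mapsto\lm_0+t^{-1}$ is a strictly decreasing bijection of $(0,\infty)$ onto $(\lm_0,\infty)$, eigenvalues of $(\Op-\lm_0)^{-1}$ lying just below $(\Lm_0-\lm_0)^{-1}$ correspond to eigenvalues of $\Op$ lying just above $\Lm_0$; hence the eigenvalues of $\Op$ accumulate to $\Lm_0$ from above, which is assertion (i).

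I do not expect a substantial obstacle here. The genuine content is already packaged: the Krein-type resolvent formula and compactness of $W$ (Theorem~\ref{theorem_Krein}), the sign of $W$ (Lemma~\ref{corollary_perturbation_term_sign}), the abstract eigenvalue comparison (Proposition~\ref{prop:RP}), and --- most importantly --- the infinite rank of $P_0WP_0$ for merely semidefinite $\aa$ (Corollary~\ref{infrank00}, which rests on the Luecking-type zero-set theorem~\cite{L08,RS10} via Propositions~\ref{prop:rank0} and~\ref{prop:operator_bounds}). The only points requiring care are verifying the hypotheses of Proposition~\ref{prop:RP} (isolatedness and infinite multiplicity of $\Lambda$, self-adjointness and compactness of $W$, and $W_+=0$) and correctly bookkeeping the side of accumulation under the resolvent transformation $\lm\mapsto(\lm-\lm_0)^{-1}$.
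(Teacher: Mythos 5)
Your proposal is correct and coincides with the paper's own argument: the paper proves Theorem~\ref{thm:acc2} by pointing to the proof of Theorem~\ref{thm:acc} and replacing the appeal to Proposition~\ref{infrank} by Corollary~\ref{infrank00}, which is exactly the substitution you make. Your more explicit verification of the hypotheses of Proposition~\ref{prop:RP}\,(ii) and of the monotone change of variables $t\mapsto\lm_0+t^{-1}$ matches the paper's intent step for step.
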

In order to formulate our main results on the rate of accumulation of the eigenvalues of 
$\Op$ to the Landau levels the following notation is convenient:
\begin{align*}
	&q = 0: 
	&I_0^-& := (-\infty, \Lm_0), 
	&I_0^+& :=(\Lm_0, \Lm_0 + B],\\
	&q \ge 1: 
	&I_q^-& := (\Lm_q - B, \Lm_q), 
	&I_q^+& := (\Lm_q, \Lm_q + B].
\end{align*}	
Note that
\[
	\dR  = 
	\bigcup_{q=0}^\infty I_q^-\,\cup\,
	\bigcup_{q=0}^\infty I_q^+\,\cup\,
	\bigcup_{q=0}^\infty \{\Lm_q\}.
\]

In the first theorem the $C^{1,1}$-smooth case is considered. We obtain regularized
summability of the discrete spectrum of $\Op$ over all
clusters and an asymptotic spectral estimate
within each cluster. We point out that these results  
are true for sign-changing $\aa$.
\begin{thm}\label{thm:C2est}
	Let $\{\lm_k^\pm(q)\}_k$, $q\in\dN_0$, be the 
	eigenvalues of $\Op$ lying in the interval
	$I_q^\pm$, ordered in such a way that the distance from
	$\Lm_q$ is nonincreasing and with multiplicities
	taken into account. 
	Then the following holds.
	\begin{myenum}
		\item 
		$\sum_{q=0}^\infty
		\frac{1}{(2q+1)^{2}}
		\left(
		\sum_k\left|\lm_k^+(q)	- \Lm_q\right|
		+ 	
		\sum_k\left|\lm_k^-(q)	- \Lm_q\right|
		\right) < \infty$. 
		\item $\limsup_{k\arr\infty}\left(k!\,|\lm_k^\pm(q) - \Lm_q|\right)^{1/k} \le \frac{B}{2}\left({\rm Cap}\,(\G)\right)^2$.
	\end{myenum}
\end{thm}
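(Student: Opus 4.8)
The plan is to transfer both statements to the bounded, compact‑perturbation setting of the resolvents prepared in Subsection~\ref{compis}. Set $C := (\Op - \lm_0)^{-1}$ and $D := (\Opf - \lm_0)^{-1}$, so that $W = C - D$ is self‑adjoint and, by Theorem~\ref{theorem_Krein}\,(iv) applied with $p = 1$, belongs to $\sS_1(L^2(\dR^2))$. The spectrum of $D$ equals $\{(\Lm_p - \lm_0)^{-1} : p\in\dN_0\}\cup\{0\}$, and each $(\Lm_p - \lm_0)^{-1}$ is an isolated eigenvalue of infinite multiplicity with eigenprojection $P_p$. Since $\lm\mapsto(\lm-\lm_0)^{-1}$ is an order‑reversing homeomorphism of $\s(\Op)$ onto $\s(C)$ that preserves multiplicities, $\sd(C)$ consists exactly of the numbers $(\lm_k^\pm(q) - \lm_0)^{-1}$, $q\in\dN_0$, $k\in\dN$. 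I shall use throughout the elementary identity
\begin{equation*}
  (\lm - \lm_0)^{-1} - (\Lm_q - \lm_0)^{-1} = \frac{\Lm_q - \lm}{(\lm - \lm_0)(\Lm_q - \lm_0)},
\end{equation*}
noting that for $\lm\in I_q^-\cup I_q^+$ the two factors in the denominator are bounded above by constant multiples of $2q+1$ and, for $q\geq1$, also bounded below by positive constant multiples of $2q+1$; for $q=0$ the set $I_0^-$ contains only a bounded part of $\s(\Op)$ because $\Op$ is bounded from below by Proposition~\ref{proposition_delta_form}.

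For part (i) I would apply Proposition~\ref{prop:H} to $C$ and $D$, obtaining $\sum_{\mu\in\sd(C)}{\rm dist}\,(\mu,\s(D)) < \infty$, i.e.
\begin{equation*}
  \sum_{q=0}^\infty\sum_k\Bigl({\rm dist}\,\bigl((\lm_k^+(q)-\lm_0)^{-1},\s(D)\bigr)+{\rm dist}\,\bigl((\lm_k^-(q)-\lm_0)^{-1},\s(D)\bigr)\Bigr)<\infty .
\end{equation*}
It then remains to produce a constant $c>0$ with ${\rm dist}\,\bigl((\lm-\lm_0)^{-1},\s(D)\bigr)\ge \frac{c}{(2q+1)^2}\,|\lm - \Lm_q|$ for every eigenvalue $\lm$ of $\Op$ in $I_q^\pm$ and every $q$; summing over all $q,k$ then gives (i). For $\lm\in I_q^\pm$ the point $(\lm-\lm_0)^{-1}$ lies strictly between two consecutive points of $\s(D)$, one of which is $(\Lm_q-\lm_0)^{-1}$, so its distance to $\s(D)$ is the smaller of the two distances to these neighbours. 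Using the displayed identity one checks that the maximal in‑cluster displacement $\sup_{\lm\in I_q^\pm}\bigl|(\lm-\lm_0)^{-1}-(\Lm_q-\lm_0)^{-1}\bigr|$ is asymptotically one half of the spectral gap $\bigl|(\Lm_q-\lm_0)^{-1}-(\Lm_{q\pm1}-\lm_0)^{-1}\bigr|$ — this is exactly because $I_q^\pm$ extends only to the midpoint between $\Lm_q$ and $\Lm_{q\pm1}$ — so that for all $q$ beyond some $q_0$ the two neighbour‑distances are comparable and ${\rm dist}\,\bigl((\lm-\lm_0)^{-1},\s(D)\bigr)\ge \tfrac23\bigl|(\lm-\lm_0)^{-1}-(\Lm_q-\lm_0)^{-1}\bigr| = \tfrac23\,\frac{|\lm-\Lm_q|}{(\lm-\lm_0)(\Lm_q-\lm_0)}$; the denominator estimate above gives the claim with a $q$‑independent $c$. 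For the finitely many remaining $q<q_0$ one argues cluster by cluster: only finitely many eigenvalues of $\Op$ in $I_q^\pm$ are bounded away from $\Lm_q$ (discreteness of the spectrum in a compact set avoiding $\sess(\Op)$), each contributing a finite amount, while for eigenvalues near $\Lm_q$ one has ${\rm dist}\,\bigl((\lm-\lm_0)^{-1},\s(D)\bigr) = \bigl|(\lm-\lm_0)^{-1}-(\Lm_q-\lm_0)^{-1}\bigr|$, which is comparable to $|\lm-\Lm_q|$, and the bounded factor $(2q+1)^{-2}$ is absorbed.

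For part (ii) I would fix $q\in\dN_0$ and apply Proposition~\ref{prop:RPpm} with $T := D$, perturbation $W=W_+-W_-$, and the isolated eigenvalue $\Lm := (\Lm_q-\lm_0)^{-1}$ of infinite multiplicity with eigenprojection $P_q$; the parameters $\tau_\pm$ are chosen so that $(\Lm-2\tau_-,\Lm+2\tau_+)$ meets $\s(D)$ only in $\Lm$ (for $q=0$ one may take $\tau_+$ arbitrarily large, $\Lm$ being then the top of $\s(D)$). Under $\lm\mapsto(\lm-\lm_0)^{-1}$ the eigenvalues of $\Op$ in $I_q^-$ correspond, with the distance‑to‑$\Lm_q$ ordering turned into the decreasing ordering of Proposition~\ref{prop:RPpm}, to the eigenvalues of $C=T+W$ above $\Lm$, and those in $I_q^+$ to the eigenvalues of $C$ below $\Lm$; only finitely many of them lie outside the window $(\Lm-\tau_-,\Lm+\tau_+)$, which is irrelevant for the asymptotics. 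Proposition~\ref{prop:RPpm}\,(i),(ii) then give, for each $\eps>0$, some $\ell\in\dN$ with
\begin{equation*}
  (\lm_k^-(q)-\lm_0)^{-1}-\Lm \le (1+\eps)\,s_{k-\ell}(P_qW_+P_q)
  \quad\text{and}\quad
  \Lm-(\lm_k^+(q)-\lm_0)^{-1} \le (1+\eps)\,s_{k-\ell}(P_qW_-P_q)
\end{equation*}
for all large $k$. By the displayed identity and the bound $\lm_k^\pm(q)-\lm_0\ge$ (a positive constant depending only on $q$), this becomes $|\lm_k^\pm(q)-\Lm_q|\le C_q\,s_{k-\ell}(P_qW_\pm P_q)$ for some $C_q>0$; taking $k$‑th roots, using $C_q^{1/k}\to1$ and $(k!/(k-\ell)!)^{1/k}\to1$ together with $\limsup_k\bigl(k!\,s_k(P_qW_\pm P_q)\bigr)^{1/k}\le \frac B2({\rm Cap}\,\G)^2$ from Corollary~\ref{thm:C2}, yields precisely the bound in (ii). (If $|\supp\aa|=0$ then $\Op=\Opf$ and both assertions are void; if some cluster contains only finitely many eigenvalues, the convention $\lm_k^\pm(q)=\Lm_q$ makes the corresponding assertion trivial.)

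I expect the genuine work to be the distance lower bound in part (i): one must rule out that the resolvent image $(\lm-\lm_0)^{-1}$ of a cluster eigenvalue is pushed so close to a \emph{neighbouring} Landau‑level point of $\s(D)$ that its distance to $\s(D)$ is much smaller than its distance to $(\Lm_q-\lm_0)^{-1}$. This is exactly the comparison of the in‑cluster displacement with the spectral gap $\bigl|(\Lm_q-\lm_0)^{-1}-(\Lm_{q\pm1}-\lm_0)^{-1}\bigr|$, and it works precisely because the clusters $I_q^\pm$ are truncated at the midpoints between consecutive Landau levels, so that the gap is asymptotically twice the maximal displacement. Everything else is bookkeeping with the spectral mapping theorem, Proposition~\ref{prop:H}, Proposition~\ref{prop:RPpm}, and Corollary~\ref{thm:C2}.
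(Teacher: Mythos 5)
Your proposal is correct and follows essentially the same route as the paper's proof: for part (i) you pass to the resolvents $C=(\Op-\lm_0)^{-1}$, $D=(\Opf-\lm_0)^{-1}$, note $W=C-D\in\sS_1$ from Theorem~\ref{theorem_Krein}\,(iv), apply Proposition~\ref{prop:H}, and lower-bound ${\rm dist}\bigl((\lm-\lm_0)^{-1},\s(D)\bigr)$ by $c\,(2q+1)^{-2}|\lm-\Lm_q|$ using precisely the midpoint truncation of $I_q^\pm$; the paper writes the same bound out explicitly via the minimum $\mathfrak{d}_k^\pm(q)$ and treats $q=0$ as you do. For part (ii) both arguments invoke Proposition~\ref{prop:RPpm} with $T=D$, $\Lm=(\Lm_q-\lm_0)^{-1}$, $P_\Lm=P_q$, then Corollary~\ref{thm:C2}, and pass to $k$-th roots using $a^{1/k}\to1$ and stability of $\limsup_k(k!\,\xi_k)^{1/k}$ under finite index shifts.

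One small slip: after your two displayed inequalities in (ii), the resulting bound should read $|\lm_k^\pm(q)-\Lm_q|\le C_q\,s_{k-\ell}(P_q W_{\mp} P_q)$, with the opposite sign on $W$ (eigenvalues of $\Op$ below $\Lm_q$ map above $\Lm$ and are controlled by $W_+$, and vice versa), exactly as in the paper. This is immaterial for the conclusion since Corollary~\ref{thm:C2} gives the same asymptotic bound for $P_qW_+P_q$, $P_qW_-P_q$, and $P_q|W|P_q$.
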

\begin{proof}
	(i) By Theorem~\ref{theorem_Krein}\,(iv) the resolvent difference $W$ in~\eqref{resdiff} belongs to the Schatten-von Neumann class
	$\sS_p(L^2(\dR^2))$ for all $ p > \frac13$
	and, in particular, for $p = 1$.
	Again we use that the spectrum of $D := (\Opf - \lm_0)^{-1}$
	consists of the infinite dimensional eigenvalues $\{(\Lm_q -\lm_0)^{-1}\}_{q\in\dN_0}$. Recall also that $\lm_0 < \min\{0,\min\s(\Op)\}$.
	One verifies that there exists $c_\pm,c_0 > 0$ such that for all $q \in\dN_0$ we have
	\[
	\begin{aligned}
		\mathfrak{d}_k^+(q) := &\, \text{\rm dist}\,\left(
		\frac{1}{\lm_k^+(q) -\lm_0},	\s(D)\right)\\
		 = &\,
		\min\left\{
		\frac{1}{\lm_k^+(q) -\lm_0}- \frac{1}{\Lm_{q+1} - \lm_0},
		\frac{1}{\Lm_q -\lm_0}- \frac{1}{\lm_k^+(q) - \lm_0}\right\}\\
		\ge &\, 
		\frac{c_+(\lm_k^+(q) - \Lm_q)}{\Lm_{q}^2},
	\end{aligned}
	\]	
	and for all $q \in\dN$ 
	\[
	\begin{aligned}
		\mathfrak{d}_k^-(q) 
		 :=& \,
		\text{\rm dist}\,
		\left(\frac{1}{\lm_k^-(q) -\lm_0},\s(D)\right)\\
		 =& \,
		\min\left\{
			\frac{1}{\lm_k^-(q) -\lm_0} - 
			\frac{1}{\Lm_{q} - \lm_0},
			\frac{1}{\Lm_{q-1} - \lm_0} -	\frac{1}{\lm_k^-(q) -\lm_0}
		\right\}\\
		\ge&\, 
		\frac{c_-(\Lm_q - \lm_k^-(q))}{\Lm_{q}^2},
		\end{aligned}
	\]
	and for $q=0$
	\begin{equation*}
		\mathfrak{d}_k^-(0)  :=  
		\text{\rm dist}\,\left(
		\frac{1}{\lm_k^-(0) -\lm_0},
		\s(D)\right)
		 = \frac{\Lm_0-\lm_k^-(0)}{(\Lm_{0} - \lm_0)
		 (\lm_k^-(0) - \lm_0)} 
		\ge \frac{c_0(\Lm_0 - \lm_k^-(0))}{\Lm_0^2}.
	\end{equation*}
	Hence, we get with $C = (\Op - \lm)^{-1}$ 
	\[
	\begin{split}
		\sum_{\lm\in\sd(C)}\!\text{\rm dist}\,(\lm,\s(D))
		&\!	=\!
		\sum_{q=0}^\infty
		\sum_k \left(\mathfrak{d}_k^+(q)
		+ \mathfrak{d}_k^-(q)\right)\\
		&\!\ge
		\sum_{q=0}^\infty
		\frac{c}{B^2(2q+1)^2}
		\sum_k
		\left(
		\left|\lm_k^+(q)	- \Lm_q\right|
		+ 	
		\left|\lm_k^-(q)	- \Lm_q\right|
		\right)
	\end{split}
	\]
	and the claim follows from
	Proposition~\ref{prop:H}.	
	
	(ii) We shall use Proposition~\ref{prop:RPpm}
	with %
	\begin{subequations}\label{eq:setting} 
	\begin{align} 
		& W = W_{\lm_0} \text{~in~\eqref{resdiff}},
		\!& T& = (\Opf -\lm_0)^{-1}, \!&\Lm &= \frac{1}{\Lm_q-\lm_0}, \\
		&P_\Lm = P_q,
		\!&\eps& = \frac12,
		\!&\tau_\pm& = \pm\frac{1}{2}\left[\frac{1}{\Lambda_q\mp B-\lambda_0}-\frac{1}{\Lm_q-\lambda_0}\right].
	\end{align}
	\end{subequations}
	Note that the eigenvalues of $T+W$
	in the interval $(\Lm -2\tau_-,\Lm+2\tau_+)$
	are given by
	\[
		\frac{1}{\lm_1^+(q) - \lm_0} \le 
		\frac{1}{\lm_2^+(q) - \lm_0} \le \dots
		\le
		\Lambda\le
		\dots \le \frac{1}{\lm_2^-(q) - \lm_0}
		\le
		\frac{1}{\lm_1^-(q) - \lm_0}.
	\]
	We conclude from Proposition~\ref{prop:RPpm} that there exists a constant $\ell = \ell(q) \in\dN$ such that
	\[
		\left|\frac{1}{\lm_k^\pm(q) - \lm_0} - \frac{1}{\Lm_q - \lm_0}\right| 
		\le 
		\frac32 s_{k-\ell}(P_q W_\mp P_q)\,,
	\]
	for all $k\in\dN$ large enough. 
	Using Corollary~\ref{thm:C2} we find 
	\[
	\begin{split}
		&\limsup_{k\arr\infty}
		\left(k!\,|\lm_k^\pm(q) - \Lm_q|\right)^{1/k}\\
		& \qquad =
		\limsup_{k\arr\infty}\,
		(\lm_k^\pm(q) - \lm_0)^{1/k}(\Lm_q - \lm_0)^{1/k}
		\left(k!\,\left|\frac{1}{\lm_k^\pm(q) -\lm_0}- \frac{1}{\Lm_q -\lm_0}\right|\right)^{1/k}\\
		& \qquad \leq
		\limsup_{k\arr\infty}\,
		\bigl(k!\,s_{k-\ell}(P_q W_\mp P_q)\bigr)^{1/k},\\
		& \qquad =
		\limsup_{k\arr\infty}\,
		\bigl(k!\,s_{k}(P_q W_\mp P_q)\bigr)^{1/k}\le \frac{B}{2}\bigl( {\rm Cap}\,(\G)\bigr)^2,
	\end{split}	
	\]
	where we have used $\lim_{k\arr\infty} a^{\frac{1}{k}} = 1$ for $a > 0$ and 
	$\limsup_{k\arr\infty}(k!\,\xi_{k \pm \ell})^{1/k}
	=
	\limsup_{k\arr\infty}(k!\,\xi_{k})^{1/k}$ for any nonincreasing nonnegative sequence $\{\xi_k\}_k$; cf. \cite[Section 2.2]{RP07}.
\end{proof}

Now we present our main result on the local spectral
asymptotics for $\Op$ within each cluster; here we rely on Theorem~\ref{thm:Cinf} and hence we have to assume that $\supp\aa$ is $C^\infty$-smooth.

\begin{thm}\label{thm:asymp}
Let $\aa \in L^\infty(\Sg)$ be real, assume that $\G = \supp\aa$ is a $C^\infty$-smooth arc and that $\aa$ is nonnegative (nonpositive)
    on $\G$ and uniformly
    positive (uniformly negative, respectively) on the
    truncated arc $\G_\eps := \{x\in\G\colon B_\eps(x)\cap \Sg \subset\G\}$ for all $\eps>0$ sufficiently small.
	Let $\{\lm_k(q)\}_k$, $q\in\dN_0$, be the 
	eigenvalues of $\Op$ lying in the interval
	$I_q^+$ ($I_q^-$, respectively).
	Then
	\[
		\lim_{k\arr\infty}\left(k!|\lm_k(q) - \Lm_q|\right)^{1/k} 
		= \frac{B}{2}\bigl({\rm Cap}\,(\G)\bigr)^2
	\]
	and, in particular, the eigenvalues of $\Op$
	accumulate to $\Lm_q$ from above (from below, respectively) for all $q\in\dN_0$.
\end{thm}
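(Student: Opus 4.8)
\textbf{Proof strategy for Theorem~\ref{thm:asymp}.}
The plan is to transfer the two-sided spectral asymptotics for the compressed resolvent difference $P_q|W|P_q$ established in Theorem~\ref{thm:Cinf} to the eigenvalue clusters of $\Op$ via the resolvent formula and the perturbation-theoretic comparison of Proposition~\ref{prop:RP}. First I would fix $q\in\dN_0$ and set up exactly the same dictionary as in the proof of Theorem~\ref{thm:C2est}\,(ii): put $T = (\Opf-\lm_0)^{-1}$, $\Lambda = (\Lm_q-\lm_0)^{-1}$, $P_\Lambda = P_q$, $W = W_{\lm_0}$ as in~\eqref{resdiff}, and choose $\tau_\pm$ separating $\Lambda$ from the neighbouring eigenvalues $(\Lm_{q\mp B}-\lm_0)^{-1}$ of $T$. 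The point of the extra hypothesis here (that $\aa$ is sign-definite with $\G = \supp\aa$ a $C^\infty$-arc) is that, by Lemma~\ref{corollary_perturbation_term_sign}, $W$ is of one fixed sign, say nonpositive when $\aa\ge 0$, so $W = -W_-$ with $W_+ = 0$; and by Proposition~\ref{infrank}, or more precisely via Proposition~\ref{prop:operator_bounds}\,(ii) together with Proposition~\ref{prop:Toeplitz}\,(ii), the compression $P_q W_- P_q$ has infinite rank. Hence the hypotheses of Proposition~\ref{prop:RP}\,(ii) are met (in the $\aa\le 0$ case one uses part~(i) symmetrically).

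Next I would apply Proposition~\ref{prop:RP}: for every $\eps>0$ there is $\ell\in\dN$ with
\[
 (1-\eps)\,\mu_{k+\ell}^- \;\le\; \Lambda - \frac{1}{\lm_k(q)-\lm_0} \;\le\; (1+\eps)\,\mu_{k-\ell}^-
\]
for all large $k$, where $\mu_j^- = s_j(P_q W_- P_q) = s_j(P_q|W|P_q)$ since $W_+=0$. Because $\lm_k(q)\to\Lm_q$ the algebraic identity
\[
 \Lambda - \frac{1}{\lm_k(q)-\lm_0} = \frac{\lm_k(q)-\Lm_q}{(\Lm_q-\lm_0)(\lm_k(q)-\lm_0)}
\]
shows $|\lm_k(q)-\Lm_q|$ is comparable, up to a factor tending to $(\Lm_q-\lm_0)^2$, to $\mu_{k\mp\ell}^-$. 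Taking $(k!\,\cdot\,)^{1/k}$, using $\lim_{k\to\infty} a^{1/k}=1$ for $a>0$, the standard fact that shifting the index by a fixed $\ell$ does not change $\limsup$ or $\liminf$ of $(k!\,\xi_k)^{1/k}$ for monotone $\xi_k$ (cf.~\cite[Section 2.2]{RP07}), and then letting $\eps\to 0^+$, I obtain
\[
 \lim_{k\arr\infty}\bigl(k!\,|\lm_k(q)-\Lm_q|\bigr)^{1/k} = \lim_{k\arr\infty}\bigl(k!\,s_k(P_q|W|P_q)\bigr)^{1/k} = \frac{B}{2}\bigl({\rm Cap}\,(\G)\bigr)^2,
\]
where the last equality is precisely Theorem~\ref{thm:Cinf}. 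The one-sided accumulation statement ($\lm_k(q)\searrow\Lm_q$ for $\aa\ge 0$) is already built into Proposition~\ref{prop:RP}\,(ii), and infinitude of the cluster follows from the infinite rank of $P_q|W|P_q$.

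I do not expect a serious obstacle here: the work has essentially all been done in Theorem~\ref{thm:Cinf} (the genuine analytic input, resting on the Toeplitz asymptotics of~\cite{RP07} and the capacity continuity argument) and in Proposition~\ref{prop:RP}. The only mild care needed is bookkeeping with the two monotone orderings --- the eigenvalues $1/(\lm_k(q)-\lm_0)$ of $T+W$ accumulate to $\Lambda$ from below when $\aa\ge 0$, which corresponds to $\lm_k(q)$ accumulating to $\Lm_q$ from above, so the interval $I_q^+$ in the statement is the correct one and the convention $\lm_k(q)=\Lm_q$ once the cluster is exhausted is never triggered because the cluster is infinite. For $\aa\le 0$ every sign and every interval $I_q^\pm$ flips, and one invokes Proposition~\ref{prop:RP}\,(i) in place of~(ii); the rest of the argument is verbatim.
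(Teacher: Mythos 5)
Your proposal is correct and follows essentially the same route as the paper: reduce to $P_qWP_q$ via Proposition~\ref{prop:RP} with the dictionary~\eqref{eq:setting}, use the sign-definiteness of $W$ to place it in the $W_+=0$ (resp.\ $W_-=0$) case, invoke the infinite rank and the two-sided Toeplitz asymptotics of Theorem~\ref{thm:Cinf}, and transfer back through the resolvent identity as in Theorem~\ref{thm:C2est}\,(ii). The only typographical slip is $\Lm_{q\mp B}$ in place of $\Lm_q\mp B$, which does not affect the argument.
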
	
\begin{proof}
	We discuss the case $\aa\geq 0$.
	By Theorem~\ref{thm:acc} the eigenvalues of $\Op$ accumulate to $\Lm_q$ from above and there is no accumulation from
	below. It follows from Theorem~\ref{thm:Cinf} that $\rank P_q W P_q = \infty$. Using Proposition~\ref{prop:RP}
	with $W$, $T$, $\Lm$, $P_\Lambda$, $\eps$, and $\tau_\pm$ as in~\eqref{eq:setting} we obtain that there exists a constant $\ell = \ell(q) \in\dN$ such that
	\[
		\frac12 s_{k+\ell}(P_qWP_q) 
		\le 
		\left|\frac{1}{\lm_k(q) - \lm} - 
		\frac{1}{\Lm_q - \lm}\right| 
		\le 
		\frac32 s_{k-\ell}(P_qWP_q)
	\]
	for all $k\in\dN$ sufficiently large.
	These estimates and the asymptotics of the singular values of $P_qWP_q$ in Theorem~\ref{thm:Cinf} yield the claim
	in the same way as in the proof of Theorem~\ref{thm:C2est}\,(ii).
\end{proof}		

Mimicking the proof of the above theorem,
but using Proposition~\ref{cor:Cinf} instead of Theorem~\ref{thm:Cinf}
we get an asymptotic lower bound
within each cluster under relaxed assumptions on $\aa$  and $\G$.
\begin{prop} \label{proposition_lower_bound}
Let $\aa \in L^\infty(\Sg)$ be real, assume that there exists a $C^\infty$-smooth subarc $\G' \subset \supp\aa$ with two endpoints, $|\G'| > 0$, and that $\aa$ is nonnegative (nonpositive)
    on $\Sg$ and uniformly
    positive (uniformly negative, respectively) on $\G'$.
	Let $\{\lm_k(q)\}_k$, $q\in\dN_0$, be the 
	eigenvalues of $\Op$ lying in the interval
	$I_q^+$ ($I_q^-$, respectively).
	Then
	\[
	\lim_{k\arr\infty}\left(k!|\lm_k(q) - \Lm_q|\right)^{1/k} 
	\ge \frac{B}{2}\left({\rm Cap}\,(\G')\right)^2
	\]
	and, in particular, the eigenvalues of $\Op$
	accumulate to $\Lm_q$
	from above (from below, respectively)
	for all $q\in\dN_0$.
\end{prop}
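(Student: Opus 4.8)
The plan is to mimic the proof of Theorem~\ref{thm:asymp} almost verbatim, replacing Theorem~\ref{thm:Cinf} (which yields two-sided asymptotics) by Proposition~\ref{cor:Cinf} (which yields only a lower asymptotic bound for the singular values of $P_q|W|P_q$); consequently only the lower bound survives. I would treat the case $\aa\ge 0$ on $\Sg$ with $\aa$ uniformly positive on the $C^\infty$-smooth subarc $\G'$; the nonpositive case is analogous, after interchanging the roles of $W_\pm$ and of $I_q^\pm$ and using Proposition~\ref{prop:RP}\,(i) instead of (ii). As in Section~\ref{sec:clusters}, fix $\lm_0<\min\{0,\min\s(\Op)\}$ with $\|\aa\|_\infty\|M(\lm_0)\|<1$, and let $W=W_{\lm_0}$ be the resolvent difference~\eqref{resdiff}. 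By Lemma~\ref{corollary_perturbation_term_sign}, $W$ is nonpositive, so $W_+=0$ and $P_qW_-P_q=P_q|W|P_q$ for every $q\in\dN_0$.

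Next I would establish the two facts about $P_q|W|P_q$ that are needed. Since $\G'$ is a $C^\infty$-smooth arc with two endpoints, Proposition~\ref{prop:Toeplitz}\,(ii) shows that the Toeplitz-type operator $T_q^{\G'}$ has infinite rank, while Proposition~\ref{prop:operator_bounds}\,(ii) gives $P_q|W|P_q\ge c'\,T_q^{\G'}\ge 0$ for some $c'>0$; hence $\rank\big(P_q|W|P_q\big)=\rank\big(P_qW_-P_q\big)=\infty$. Moreover, Proposition~\ref{cor:Cinf} provides the quantitative bound
\[
	\liminf_{k\arr\infty}\big(k!\,s_k(P_q|W|P_q)\big)^{1/k}\ \ge\ \frac{B}{2}\big({\rm Cap}(\G')\big)^2 .
\]
In particular the hypotheses $W_+=0$ and $\rank(P_qW_-P_q)=\infty$ of Proposition~\ref{prop:RP}\,(ii) are satisfied.

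Finally I would apply Proposition~\ref{prop:RP}\,(ii) to $T+W=(\Op-\lm_0)^{-1}$, with $T=(\Opf-\lm_0)^{-1}$, $\Lm=(\Lm_q-\lm_0)^{-1}$, $P_\Lm=P_q$, $\eps=\frac12$ and $\tau_\pm$ as in~\eqref{eq:setting}. Because $\lm\mapsto(\lm-\lm_0)^{-1}$ is decreasing on $(\lm_0,\infty)$ and $\lm_k(q)\in I_q^+$, the eigenvalues of $T+W$ lying below $\Lm$ are precisely $\tfrac{1}{\lm_k(q)-\lm_0}$, so that accumulation from below to $\Lm$ for $T+W$ corresponds to accumulation from above to $\Lm_q$ for $\Op$; Proposition~\ref{prop:RP}\,(ii) thus already gives the claimed one-sided accumulation. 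It also yields an $\ell=\ell(q)\in\dN$ with
\[
	\tfrac12\,s_{k+\ell}\big(P_q|W|P_q\big)\ \le\ \frac{1}{\Lm_q-\lm_0}-\frac{1}{\lm_k(q)-\lm_0}\ =\ \frac{\lm_k(q)-\Lm_q}{(\Lm_q-\lm_0)(\lm_k(q)-\lm_0)}
\]
for all sufficiently large $k$. Rearranging for $\lm_k(q)-\Lm_q$, taking $\big(k!\,\cdot\,\big)^{1/k}$-th roots, passing to the $\liminf$, and using $(\Lm_q-\lm_0)^{1/k}\to 1$, $(\lm_k(q)-\lm_0)^{1/k}\to 1$ (as $\lm_k(q)\to\Lm_q$) together with the index-shift invariance $\liminf_k(k!\,\xi_{k+\ell})^{1/k}=\liminf_k(k!\,\xi_k)^{1/k}$ for nonincreasing nonnegative $\{\xi_k\}_k$ (cf.~\cite[Section 2.2]{RP07}), exactly as in the proof of Theorem~\ref{thm:C2est}\,(ii), I would obtain
\[
	\liminf_{k\arr\infty}\big(k!\,|\lm_k(q)-\Lm_q|\big)^{1/k}\ \ge\ \frac{B}{2}\big({\rm Cap}(\G')\big)^2 ,
\]
which is the asserted bound. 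The argument is mostly bookkeeping once Proposition~\ref{cor:Cinf} is granted; the only points requiring a little care are verifying the hypotheses of Proposition~\ref{prop:RP}\,(ii) (handled via Lemma~\ref{corollary_perturbation_term_sign}, Proposition~\ref{prop:Toeplitz}\,(ii) and Proposition~\ref{prop:operator_bounds}\,(ii)) and keeping track of the orientation reversal under $\lm\mapsto(\lm-\lm_0)^{-1}$, so I do not anticipate a genuine obstacle.
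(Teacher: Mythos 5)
Your proof is correct and follows the paper's own route: the paper's proof is a one-liner saying to repeat the argument of Theorem~\ref{thm:asymp} with Proposition~\ref{cor:Cinf} in place of Theorem~\ref{thm:Cinf}, which is exactly what you have written out, keeping careful track of the orientation reversal under $\lm\mapsto(\lm-\lm_0)^{-1}$. You are in fact slightly more careful than the paper: you obtain the one-sided accumulation directly from Lemma~\ref{corollary_perturbation_term_sign} together with Proposition~\ref{prop:RP}\,(ii), rather than citing Theorem~\ref{thm:acc}, whose hypothesis ($\aa>0$ a.e.\ on all of $\Sg$) is not satisfied under the assumptions of this proposition.
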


The above proposition applies to several 
additional cases of interest. E.g., $\aa$ can be a nonnegative or nonpositive function which is continuous (and does not vanish identically), or
$\supp\aa$ may consist of finitely many disjoint arcs. In both situations one can choose a $C^\infty$-smooth subarc 
$\G' \subset \supp\aa$ with two endpoints, such that $|\G'| > 0$ and $\aa$ uniformly
    positive (or uniformly negative) on $\G'$.
Moreover, Proposition~\ref{proposition_lower_bound} can also be applied if
the support of $\aa$ is not $C^\infty$-smooth itself but contains a
$C^\infty$-smooth subarc with two endpoints
on which $\aa$ is uniformly positive (or uniformly negative).

\appendix

\section{Quasi boundary triples and their Weyl functions}\label{app:A}

In this appendix we provide a brief introduction to the abstract notion of quasi boundary triples and their Weyl functions from extension theory of
symmetric operators. For more details and complete proofs we refer the reader to \cite{BL07,BL12}.

In the following let $\cH$ be a Hilbert space and assume that $S$ is a densely defined closed symmetric operator in $\cH$. 

\begin{dfn}\label{qbtdef}
 Assume that $T$ is a linear operator in $\cH$ such that $\overline T=S^*$.
 A triple $\{\mathcal G,\Gamma_0,\Gamma_1\}$ is called a {\em quasi boundary triple} for $T\subset S^*$ if $\mathcal G$ is a Hilbert space and 
 $\Gamma_0,\Gamma_1:\dom T \rightarrow\mathcal G$ are linear mappings such that the following holds:
 \begin{itemize}
  \item [{\rm (i)}] The abstract Green identity
  \begin{equation*}
  (Tf,g)_{\cH}-(f,Tg)_{\cH}=(\Gamma_1 f,\Gamma_0 g)_{\cG}-(\Gamma_0 f,\Gamma_1 g)_{\cG}
 \end{equation*}
 is valid for all $f,g\in\dom T$.
 \item [{\rm (ii)}] The map $\Gamma=(\Gamma_0,\Gamma_1)^\top:\dom T\rightarrow\cG\times\cG$ has dense range.
 \item [{\rm (iii)}] The operator $A_0:=T\upharpoonright\ker\Gamma_0$ is self-adjoint in $\cH$.
 \end{itemize}
\end{dfn}

We recall that a quasi boundary triple $\{\mathcal G,\Gamma_0,\Gamma_1\}$ for $T\subset S^*$ exists
if and only if the deficiency indices $n_\pm(S)=\dim \ker(S^*\mp i)$ coincide, in which case
one has $\dim\cG=n_\pm(S)$. We also note that for a quasi boundary triple $\{\mathcal G,\Gamma_0,\Gamma_1\}$  for $T\subset S^*$ one automatically
has
$$\dom S=\ker\Gamma_0\cap\ker\Gamma_1$$
and that the extension $A_1:=T\upharpoonright\ker\Gamma_1$ of $S$ is symmetric in $\cH$ but in general not closed or self-adjoint. Furthermore, 
if $\dim\cG=n_\pm(S)$ is finite then $T$ and $S^*$ coincide, the abstract Green identity in Definition~\ref{qbtdef}~(i) holds for all $f,g\in\dom S^*$ and
the map $\Gamma=(\Gamma_0,\Gamma_1)^\top:\dom S^*\rightarrow\cG\times\cG$ in Definition~\ref{qbtdef}~(i) is surjective. A triple $\{\mathcal G,\Gamma_0,\Gamma_1\}$
with these two properties is an {\em ordinary boundary triple} in the sense of \cite{BGP08,DM91,GG91,S12}. Also recall the notion of {\it generalized boundary triples}:
If $\overline T=S^*$ and a triple $\{\mathcal G,\Gamma_0,\Gamma_1\}$ with linear mappings $\Gamma_0,\Gamma_1:\dom T \rightarrow\mathcal G$ satisfies (i) and (iii) 
in Definition~\ref{qbtdef} and instead of (ii) the stronger condition $\ran\Gamma_0=\cG$ then  $\{\mathcal G,\Gamma_0,\Gamma_1\}$ is said to be a 
{\it generalized boundary triple}; cf. \cite[Definition 6.1 and Lemma 6.1~(3)]{DM95}.

When determining a quasi boundary triple it is often nontrivial to prove that the operator $T$ satisfies $\overline T=S^*$. The following theorem from \cite[Theorem 2.3]{BL07}
offers a way to circumvent this problem. Theorem~\ref{ratebitte} is applied in proof of Theorem~\ref{theorem_triple}.

\begin{thm}\label{ratebitte}
Let $\cH$ and $\cG$ be Hilbert spaces, let
$T$ be a linear operator in $\cH$ and assume that there are linear mappings 
$\Gamma_0,\Gamma_1:\dom T\rightarrow\cG$ such that the following holds:
    \begin{itemize}
    \item[\rm (i)] For all $f,g\in\dom T$ one has
      \begin{equation*}
  (Tf,g)_{\cH}-(f,Tg)_{\cH}=(\Gamma_1 f,\Gamma_0 g)_{\cG}-(\Gamma_0 f,\Gamma_1 g)_{\cG}.
      \end{equation*}
    \item[\rm(ii)] The kernel and range of $\Gamma = ( \Gamma_0, \Gamma_1)^\top: \dom T \rightarrow \cG \times \cG$
      are dense in $\cH$ and $\mathcal G\times\mathcal G$, respectively.
    \item[\rm(iii)] The operator $T\upharpoonright\ker \Gamma_0$ contains a self-adjoint operator $A_0$.
  \end{itemize}
 Then 
 \begin{equation*}
  S:=T\upharpoonright\bigl(\ker\Gamma_0\cap\ker\Gamma_1\bigr)
 \end{equation*}
is a densely defined closed symmetric operator in $\cH$ and $\overline T=S^*$. Moreover, 
$\{\mathcal G,\Gamma_0,\Gamma_1\}$
is a quasi boundary triple for $T\subset S^*$ such that
$A_0=T\upharpoonright\ker\Gamma_0$.
\end{thm}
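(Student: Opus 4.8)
The plan is to reduce the entire statement to the single identity $T^{*}=S$, after which everything else is formal: $S=T^{*}$ is automatically closed, being an adjoint; its domain $\dom S=\ker\Gamma_0\cap\ker\Gamma_1=\ker(\Gamma_0,\Gamma_1)^{\top}$ is dense in $\cH$ by hypothesis (ii), so $S$ is densely defined, whence $T$ is closable and $S^{*}=(T^{*})^{*}=\overline T$ — which is exactly the standing assumption $\overline T=S^{*}$ required of a quasi boundary triple; combined with (i), (ii) and the self-adjointness of $A_0$ this makes $\{\cG,\Gamma_0,\Gamma_1\}$ a quasi boundary triple for $T\subset S^{*}$. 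Before attacking $T^{*}=S$ I would dispose of the routine facts. Green's identity (i) applied to $f,g\in\dom S$, where all four boundary values vanish, gives $(Tf,g)_{\cH}=(f,Tg)_{\cH}$, so $S$ is symmetric. Next I would upgrade the inclusion $A_0\subset T\upharpoonright\ker\Gamma_0$ of (iii) to an equality: if $f\in\dom T$ with $\Gamma_0 f=0$, then for every $g\in\dom A_0\subset\ker\Gamma_0$ Green's identity collapses to $(Tf,g)_{\cH}=(f,A_0 g)_{\cH}$, hence $f\in\dom A_0^{*}=\dom A_0$; therefore $A_0=T\upharpoonright\ker\Gamma_0$ is self-adjoint.

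For $T^{*}=S$, the inclusion $S\subset T^{*}$ follows immediately from Green's identity: for $f\in\dom S$ and arbitrary $g\in\dom T$ it reads $(Tg,f)_{\cH}=(g,Sf)_{\cH}$. The reverse inclusion $T^{*}\subset S$ is the crux, and I would prove it in two moves. Let $f\in\dom T^{*}$. First I would test the relation $(Tg,f)_{\cH}=(g,T^{*}f)_{\cH}$ only against $g\in\dom A_0$; using $A_0=A_0^{*}\subset T$ this yields $f\in\dom A_0^{*}=\dom A_0\subset\dom T$ together with $T^{*}f=A_0 f=Tf$. Now that $f$ sits inside $\dom T$, I would feed it back into Green's identity against an arbitrary $g\in\dom T$: since $(Tg,f)_{\cH}-(g,Tf)_{\cH}=0$, we get $(\Gamma_1 g,\Gamma_0 f)_{\cG}=(\Gamma_0 g,\Gamma_1 f)_{\cG}$ for all $g\in\dom T$, and the already-established $\Gamma_0 f=0$ reduces this to $(\Gamma_0 g,\Gamma_1 f)_{\cG}=0$ for all $g\in\dom T$, i.e. $\Gamma_1 f\perp\ran\Gamma_0$. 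Finally, density of $\ran(\Gamma_0,\Gamma_1)^{\top}$ in $\cG\times\cG$ forces $\ran\Gamma_0$ to be dense in $\cG$ (project onto the first component), so $\Gamma_1 f=0$; thus $f\in\ker\Gamma_0\cap\ker\Gamma_1=\dom S$ and $Sf=Tf=T^{*}f$.

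The main obstacle is precisely this second inclusion: an element of $\dom T^{*}$ is a priori only characterised by a duality relation, and one must show both that it lies in $\dom T$ and that it satisfies the two interface conditions $\Gamma_0 f=\Gamma_1 f=0$. The mechanism that makes it work is the two-stage testing — first against the smaller self-adjoint piece $\dom A_0$, which lands $f$ inside $\dom T$ and simultaneously yields $\Gamma_0 f=0$, and only afterwards against all of $\dom T$, where Green's identity combined with the density of $\ran\Gamma_0$ produces $\Gamma_1 f=0$. This is exactly where hypothesis (ii) is indispensable: without the density of $\ran(\Gamma_0,\Gamma_1)^{\top}$ the last step fails and $S$ could be a proper restriction of $T^{*}$.
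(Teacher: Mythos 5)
Your proof is correct and follows essentially the same strategy as the cited source \cite[Theorem~2.3]{BL07} (the paper itself does not reproduce the proof, merely citing it): reduce everything to the identity $T^{*}=S$, obtain the inclusion $T^{*}\subset S$ by two-stage testing (first against $\dom A_0$ using $A_0=A_0^{*}$ to land $f\in\dom A_0\subset\dom T$ with $\Gamma_0 f=0$ and $T^*f=Tf$, then against all of $\dom T$ via Green's identity and density of $\ran\Gamma_0$ to force $\Gamma_1 f=0$), and then derive closedness, symmetry, $\overline T=S^{*}$, and the quasi boundary triple properties formally. All the auxiliary steps — density of $\dom T$ and of $\dom S$ from (ii), density of $\ran\Gamma_0$ from density of $\ran\Gamma$, the upgrade of $A_0\subset T\upharpoonright\ker\Gamma_0$ to an equality — are carried out correctly.
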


Next the $\gamma$-field and Weyl function corresponding to a quasi boundary triple will be introduced; formally the definitions are the same as for ordinary and
generalized boundary triples, see \cite{DM91,DM95}. In the following let $\{\mathcal G,\Gamma_0,\Gamma_1\}$
be a quasi boundary triple for $T\subset S^*$ and consider the self-adjoint operator $A_0=T\upharpoonright\ker\Gamma_0$. It is not difficult to verify that for all 
$\lambda\in\rho(A_0)$ the following direct sum decomposition of $\dom T$ is valid:
\begin{equation*}
 \dom T=\dom A_0\,\dot+\,\ker(T-\lambda)=\ker \Gamma_0\,\dot+\,\ker(T-\lambda),\qquad \lambda\in\rho(A_0).
\end{equation*}
Therefore the restriction $\Gamma_0\upharpoonright\ker(T-\lambda)$ is invertible for all $\lambda\in\rho(A_0)$ and we define the
$\gamma$-field corresponding to $\{\mathcal G,\Gamma_0,\Gamma_1\}$ as the operator function 
\begin{equation*}
 \lambda\mapsto \gamma(\lambda):=\bigl(\Gamma_0\upharpoonright\ker(T-\lambda)\bigr)^{-1}
\end{equation*}
defined on $\rho(A_0)$. It is clear that the values $\gamma(\lambda)$ of the $\gamma$-field are densely defined linear operators from $\cG$ into $\cH$
with $\dom\gamma(\lambda)=\ran\Gamma_0$ and $\ran\gamma(\lambda)=\ker(T-\lambda)$. It can be shown that $\gamma(\lambda)$ is a bounded operator for all
$\lambda\in\rho(A_0)$ and hence admits a closure $\overline{\gamma(\lambda)}\in\sB(\cG,\cH)$. The function $\lambda\mapsto\overline{\gamma(\lambda)}\in\sB(\cG,\cH)$ is holomorphic
on $\rho(A_0)$. For the adjoint operators one verifies as a consequence of
the abstract Green identity the relation
\begin{equation}\label{starhaha}
 \gamma(\lambda)^*=\Gamma_1(A_0-\overline{\lambda})^{-1}\in\sB(\cH,\cG),\qquad \lambda\in\rho(A_0).
\end{equation}
For more properties and detailed proofs we refer the reader to \cite[Proposition 2.6]{BL07} and \cite[Proposition 6.13]{BL12}. 
An important analytic object associated with the  quasi boundary triple $\{\mathcal G,\Gamma_0,\Gamma_1\}$ is the Weyl function $M$. It is defined on $\rho(A_0)$ by
\begin{equation*}
 \lambda\mapsto M(\lambda)=\Gamma_1 \bigl(\Gamma_0\upharpoonright\ker(T-\lambda)\bigr)^{-1},
\end{equation*}
and it is clear from the definition that $M(\lambda)$, $\lambda\in\rho(A_0)$, is a densely defined linear operator in $\cG$ with $\dom M(\lambda)=\ran\Gamma_0$
and $\ran M(\lambda)\subset\ran\Gamma_1$. In contrast to ordinary and generalized boundary triples the values $M(\lambda)$ of the 
Weyl function can be unbounded and non-closed operators in $\cG$. However, one has the relation
\begin{equation*}
 M(\overline\lambda)\subset M(\lambda)^*,\qquad\lambda\in\rho(A_0),
\end{equation*}
and hence $M(\lambda)$ is a closable operator in $\cG$. Furthermore, the Weyl function and $\gamma$-field are connected via
\begin{equation*}
 M(\lambda)-M(\mu)^*=(\lambda-\overline\mu)\gamma(\mu)^*\gamma(\lambda),\qquad\lambda,\mu\in\rho(A_0);
\end{equation*}
cf. \cite[Proposition 2.6]{BL07} and \cite[Proposition 6.14]{BL12} for more details. For the present paper the special case that $\ran\Gamma_0=\cG$
holds is of particular interest. In this situation one has $\dom\gamma(\lambda)=\dom M(\lambda)=\cG$ and it follows, in particular, 
that the values $M(\lambda)$ of the Weyl function are bounded operators in $\cG$.

In the following our interest will be in restrictions of $T$ defined by
\begin{equation}\label{abchen}
  A_{[B]}f=Tf, \qquad
  \dom A_{[B]}=\bigl\{f\in\dom T:\Gamma_0 f= B\Gamma_1 f\bigr\},
\end{equation}
where $B$ is a linear operator in $\cG$. If $B$ is not defined on the whole space $\cG$ the boundary condition in \eqref{abchen} is understood for only those $f\in\dom T$
such that $\Gamma_1 f\in\dom B$. Typically the interest is to conclude from qualitative properties of $B$ qualitative properties of $A_{[B]}$. In the present
situation we will focus on self-adjointness. Suppose first that $B$ is a symmetric operator in $\cG$. Then it follows together with the abstract Green identity
in Definition~\ref{qbtdef}~(i) that for $f,g\in\dom A_{[B]}$ we have
\begin{equation*}
\begin{split}
 (A_{[B]}f,g)_\cH-(f,A_{[B]}g)_\cH&=(Tf,g)_\cH-(f,Tg)_\cH\\
 &=(\Gamma_1 f,\Gamma_0 g)_\cG-(\Gamma_0 f,\Gamma_1 g)_\cG\\
 &=(\Gamma_1 f,B\Gamma_1 g)_\cG-(B\Gamma_1 f,\Gamma_1 g)_\cG\\
 &=0
\end{split}
 \end{equation*}
and therefore the operator $A_{[B]}$ is symmetric in $\cH$. However, self-adjointness of $B$ in $\cG$ does not automatically imply that $A_{[B]}$ is
self-adjoint in $\cH$. In fact, this conclusion does not even hold for bounded self-adjoint operators $B$ and hence one has to impose additional
conditions. Such conditions may involve mapping properties of the Weyl function, the parameter $B$, or the boundary mappings $\Gamma_0$
and $\Gamma_1$. In this context we recall \cite[Corollary~4.4]{BLLR18} and a special case of it below. For more general boundary conditions we refer the reader to
\cite{BLLR18}.

\begin{thm}\label{gutescor}
Let $\{\cG,\Gamma_0,\Gamma_1\}$ be a quasi boundary triple for $T\subset S^*$
with corresponding $\gamma$-field $\gamma$ and Weyl function $M$.
Let $B\in\sB(\cG)$ be a self-adjoint operator and assume that for
some $\lambda_0\in\rho(A_0)\cap\dR$ the following conditions hold:
\begin{itemize}
 \item [{\rm (i)}]
 $1\in\rho(B\overline{M(\lambda_0)})$;
 \item [{\rm (ii)}]
 $B(\ran\overline{M(\lambda_0)})\subset\ran\Gamma_0$;
 \item [{\rm (iii)}]
 $B(\ran\Gamma_1)\subset\ran\Gamma_0\quad$ or $\quad\lambda_0\in\rho(A_1)$.
\end{itemize}
Then the operator $A_{[B]}$ in \eqref{abchen}
is a self-adjoint extension of $S$ in $\cH$ such that $\lambda_0\in\rho(A_{[B]})$. Furthermore, $\lambda \in \rho(A_0)$ is an 
eigenvalue of $A_{[B]}$ if and only if $1 \in \sigma_{\rm p}(BM(\lambda))$, 
for all $\lambda\in\rho(A_{[B]})\cap\rho(A_0)$ one has $(1-BM(\lambda))^{-1}\in\sB(\cG)$ and 
\begin{equation*}
  (A_{[B]}-\lambda)^{-1}=(A_0-\lambda)^{-1}
  + \gamma(\lambda)\bigl(1-BM(\lambda)\bigr)^{-1}B\gamma(\overline\lambda)^*.
\end{equation*}
\end{thm}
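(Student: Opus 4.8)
The plan is to reduce everything to showing that $A_{[B]}-\lambda_0$ is bijective for the given real $\lambda_0\in\rho(A_0)$, and then to read off the resolvent formula from the explicit solution of the boundary problem. First I would record two free facts: $S\subset A_{[B]}$, because $\dom S=\ker\Gamma_0\cap\ker\Gamma_1$ and $T$ extends $S$; and $A_{[B]}$ is symmetric, by the same computation with the abstract Green identity that is carried out just before the theorem, using that $B$ is symmetric. Hence, once $A_{[B]}-\lambda_0\colon\dom A_{[B]}\to\cH$ is shown to be a bijection, its inverse is an everywhere defined symmetric operator, so it is bounded by the closed graph theorem and therefore self-adjoint; consequently $A_{[B]}$ is self-adjoint and $\lambda_0\in\rho(A_{[B]})$.

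For injectivity, if $f\in\dom A_{[B]}$ with $(T-\lambda_0)f=0$, then $f\in\ker(T-\lambda_0)$ gives $f=\gamma(\lambda_0)\Gamma_0 f$ and $\Gamma_1 f=M(\lambda_0)\Gamma_0 f$, so the boundary condition $\Gamma_0 f=B\Gamma_1 f$ reads $(1-BM(\lambda_0))\Gamma_0 f=0$; since $M(\lambda_0)\subset\overline{M(\lambda_0)}$ and $1\in\rho(B\overline{M(\lambda_0)})$ by (i), this forces $\Gamma_0 f=0$, hence $f=0$. For surjectivity, given $g\in\cH$ I would use the ansatz $f=(A_0-\lambda_0)^{-1}g+\gamma(\lambda_0)\phi$ with $\phi\in\ran\Gamma_0$ to be determined; using $\Gamma_0(A_0-\lambda_0)^{-1}g=0$ and $\Gamma_1(A_0-\lambda_0)^{-1}g=\gamma(\lambda_0)^*g$ from~\eqref{starhaha} (here $\lambda_0\in\dR$), the requirement $f\in\dom A_{[B]}$ becomes $(1-BM(\lambda_0))\phi=B\gamma(\lambda_0)^*g$. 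Condition (i) produces a unique $\phi\in\cG$ with $(1-B\overline{M(\lambda_0)})\phi=B\gamma(\lambda_0)^*g$, and the crux is to show $\phi\in\ran\Gamma_0$, since then $\overline{M(\lambda_0)}\phi=M(\lambda_0)\phi$, $f\in\dom T$, $f\in\dom A_{[B]}$, and $(A_{[B]}-\lambda_0)f=g$.

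This is the main obstacle, and it is exactly where hypotheses (ii) and (iii) enter. Writing $\phi=B\gamma(\lambda_0)^*g+B\overline{M(\lambda_0)}\phi$, the term $B\overline{M(\lambda_0)}\phi$ lies in $\ran\Gamma_0$ by (ii), so it remains to see $B\gamma(\lambda_0)^*g\in\ran\Gamma_0$. If $B(\ran\Gamma_1)\subset\ran\Gamma_0$ this is immediate, since $\gamma(\lambda_0)^*g=\Gamma_1(A_0-\lambda_0)^{-1}g\in\ran\Gamma_1$. If instead $\lambda_0\in\rho(A_1)$, I would set $u:=(A_1-\lambda_0)^{-1}g\in\ker\Gamma_1$; then $(A_0-\lambda_0)^{-1}g-u\in\ker(T-\lambda_0)$, and applying $\Gamma_0$ and $\Gamma_1$ gives $(A_0-\lambda_0)^{-1}g=u-\gamma(\lambda_0)\Gamma_0 u$ and hence $\gamma(\lambda_0)^*g=\Gamma_1(A_0-\lambda_0)^{-1}g=-M(\lambda_0)\Gamma_0 u\in\ran M(\lambda_0)$, so that $B\gamma(\lambda_0)^*g\in B(\ran\overline{M(\lambda_0)})\subset\ran\Gamma_0$ by (ii) once more. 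Either way $\phi\in\ran\Gamma_0$, the solution $f$ is produced, and comparing $f$ with $g$ yields $(A_{[B]}-\lambda_0)^{-1}=(A_0-\lambda_0)^{-1}+\gamma(\lambda_0)(1-BM(\lambda_0))^{-1}B\gamma(\lambda_0)^*$, i.e. the claimed formula at $\lambda_0$.

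It remains to handle a general $\lambda$. The eigenvalue statement for $\lambda\in\rho(A_0)$ is the injectivity computation run in both directions: $f\in\ker(A_{[B]}-\lambda)$ corresponds bijectively, via $f=\gamma(\lambda)\psi$ with $\psi=\Gamma_0 f$, to nonzero $\psi\in\ran\Gamma_0$ with $(1-BM(\lambda))\psi=0$, that is, to $1\in\sigma_{\rm p}(BM(\lambda))$. Finally, for $\lambda\in\rho(A_{[B]})\cap\rho(A_0)$ I would propagate the resolvent formula from $\lambda_0$ by combining the resolvent identities for $A_{[B]}$ and for $A_0$ with the standard transformation rules for the $\gamma$-field and the Weyl function; the same manipulation gives $(1-BM(\lambda))^{-1}\in\sB(\cG)$. (In the application to Landau Hamiltonians one can instead verify the hypotheses at a single $\lambda_0$ far in the negative axis, where $\|BM(\lambda_0)\|<1$, and extend by the analytic Fredholm argument already used for Theorem~\ref{theorem_Krein}.) This is, up to the more general boundary conditions, the content of~\cite[Corollary~4.4]{BLLR18}.
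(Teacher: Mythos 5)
The paper does not actually prove Theorem~\ref{gutescor}: it is explicitly \emph{recalled} from \cite[Corollary~4.4]{BLLR18}, so there is no in-paper proof to compare against. I therefore evaluate your argument on its own.

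Your core strategy is the standard one for results of this type and it is carried out correctly where it matters. The reduction to bijectivity of $A_{[B]}-\lambda_0$ plus the Hellinger--Toeplitz step is fine. Injectivity from $(1-BM(\lambda_0))\Gamma_0 f=0$, $M(\lambda_0)\subset\overline{M(\lambda_0)}$ and hypothesis~(i) is correct. The surjectivity ansatz $f=(A_0-\lambda_0)^{-1}g+\gamma(\lambda_0)\phi$ and the boundary equation $(1-BM(\lambda_0))\phi=B\gamma(\lambda_0)^*g$ are right, and your verification that $\phi\in\ran\Gamma_0$ is exactly where (ii) and (iii) earn their keep. Both alternatives of (iii) are handled correctly; in particular the manipulation $\gamma(\lambda_0)^*g=-M(\lambda_0)\Gamma_0 u$ with $u=(A_1-\lambda_0)^{-1}g\in\ker\Gamma_1$ is a nice way to get $B\gamma(\lambda_0)^*g$ back into $\ran\Gamma_0$ via (ii). The eigenvalue characterization via the bijection $f\leftrightarrow\Gamma_0 f$ on $\ker(T-\lambda)$ is also complete.

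The genuine gap is the final paragraph. You assert that the resolvent formula and the claim $(1-BM(\lambda))^{-1}\in\sB(\cG)$ at a general $\lambda\in\rho(A_{[B]})\cap\rho(A_0)$ follow by ``combining resolvent identities with the transformation rules for the $\gamma$-field and Weyl function,'' but you do not carry this out, and it is not a one-line step. Writing any $f=(A_{[B]}-\lambda)^{-1}g$ as $(A_0-\lambda)^{-1}g+\gamma(\lambda)\phi$ with $\phi=\Gamma_0 f$ and reading off $(1-BM(\lambda))\phi=B\gamma(\overline\lambda)^*g$ does give existence of a solution in $\ran\Gamma_0$, and injectivity of $1-BM(\lambda)$ on $\ran\Gamma_0$ follows from $\lambda\in\rho(A_{[B]})$, so the Krein formula at $\lambda$ is recoverable; but the claim that $(1-BM(\lambda))^{-1}$ is an \emph{everywhere defined} bounded operator on $\cG$ requires surjectivity of $1-BM(\lambda)\colon\ran\Gamma_0\to\cG$, which does not fall out of the hypotheses at $\lambda_0$ without additional work (in the reference this is handled via careful domain and closure considerations). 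In the special case $\ran\Gamma_0=\cG$ — which is all the present paper needs, via Corollary~\ref{besserescor} — all of these domain issues disappear and your parenthetical remark about verifying the hypothesis at a single far-negative $\lambda_0$ and extending by analyticity is indeed a clean way to close the argument.
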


For our purposes it is convenient to state the following special case of Theorem~\ref{gutescor}, where the quasi boundary triple is
even a generalized boundary triple, that is, we require $\ran\Gamma_0=\cG$. In this situation it is clear that (ii) and (iii) in
Theorem~\ref{gutescor} hold and $\overline{M(\lambda_0)}=M(\lambda_0)\in\sB(\cG)$.

\begin{cor}\label{besserescor}
Let $\{\cG,\Gamma_0,\Gamma_1\}$ be a quasi boundary triple for $T\subset S^*$
with corresponding $\gamma$-field $\gamma$ and Weyl function $M$, and assume, in addition, that $\ran\Gamma_0=\cG$.
Let $B\in\sB(\cG)$ be a self-adjoint operator and assume that $1\in\rho(BM(\lambda_0))$ for
some $\lambda_0\in\rho(A_0)\cap\dR$. 
Then the operator $A_{[B]}$ in \eqref{abchen}
is a self-adjoint extension of $S$ in $\cH$ such that $\lambda_0\in\rho(A_{[B]})$. Furthermore, $\lambda \in \rho(A_0)$ is an 
eigenvalue of $A_{[B]}$ if and only if $1 \in \sigma_{\rm p}(BM(\lambda))$, for all $\lambda\in\rho(A_{[B]})\cap\rho(A_0)$ one has $(1-BM(\lambda))^{-1}\in\sB(\cG)$ and 
\begin{equation*}
  (A_{[B]}-\lambda)^{-1}=(A_0-\lambda)^{-1}
  + \gamma(\lambda)\bigl(1-BM(\lambda)\bigr)^{-1}B\gamma(\overline\lambda)^*.
\end{equation*} 
\end{cor}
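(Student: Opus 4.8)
The plan is to obtain Corollary~\ref{besserescor} as an immediate specialisation of Theorem~\ref{gutescor}: all one has to do is verify that the three conditions (i)--(iii) in Theorem~\ref{gutescor} are automatically fulfilled once $\ran\Gamma_0 = \cG$. First I would recall the facts collected in the paragraph preceding the statement: if $\ran\Gamma_0 = \cG$, then $\dom\gamma(\lambda) = \dom M(\lambda) = \cG$ for every $\lambda \in \rho(A_0)$, so the values of the Weyl function are bounded everywhere-defined operators on $\cG$; in particular $\overline{M(\lambda_0)} = M(\lambda_0) \in \sB(\cG)$. Consequently condition (i) of Theorem~\ref{gutescor}, that is $1 \in \rho(B\overline{M(\lambda_0)})$, coincides verbatim with the hypothesis $1 \in \rho(BM(\lambda_0))$ assumed in the corollary.

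Next I would note that conditions (ii) and (iii) of Theorem~\ref{gutescor} are trivial here: since $\ran\Gamma_0 = \cG$ is the whole target space of the boundary maps, the inclusions $B(\ran\overline{M(\lambda_0)}) \subset \ran\Gamma_0$ and $B(\ran\Gamma_1) \subset \ran\Gamma_0$ hold automatically, because $B$ maps $\cG$ into $\cG$ and $\ran\Gamma_1 \subset \cG$. With all three hypotheses of Theorem~\ref{gutescor} in force, that theorem, applied to the present quasi boundary triple and the bounded self-adjoint parameter $B$, yields at once: $A_{[B]}$ is a self-adjoint extension of $S$ with $\lambda_0 \in \rho(A_{[B]})$; for $\lambda \in \rho(A_0)$ the point $\lambda$ is an eigenvalue of $A_{[B]}$ if and only if $1 \in \sigma_{\rm p}(BM(\lambda))$; and for every $\lambda \in \rho(A_{[B]}) \cap \rho(A_0)$ the operator $1 - BM(\lambda)$ is boundedly invertible in $\cG$ and the stated Krein-type resolvent formula holds.

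There is no real obstacle in this argument; the single substantive observation is that passing from a general quasi boundary triple to a generalized boundary triple (the extra requirement $\ran\Gamma_0 = \cG$) upgrades the a priori only closable, possibly unbounded Weyl function to a bounded operator, which is precisely what makes condition (i) of Theorem~\ref{gutescor} reduce to the simple invertibility hypothesis stated in the corollary. No further estimates or constructions are required.
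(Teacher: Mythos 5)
Your argument is correct and is precisely the one the paper uses: the text preceding Corollary~\ref{besserescor} observes that under $\ran\Gamma_0=\cG$ conditions (ii) and (iii) of Theorem~\ref{gutescor} are automatic and $\overline{M(\lambda_0)}=M(\lambda_0)\in\sB(\cG)$, so (i) reduces to the stated hypothesis. Nothing to add.
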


A typical way to satisfy the condition $1\in\rho(BM(\lambda_0))$ in Corollary~\ref{besserescor} (or Theorem~\ref{gutescor})
is to prove that $\Vert M(\lambda_0)\Vert\rightarrow 0$ for $\lambda_0\rightarrow -\infty$ if $A_0$ is bounded from below.
The next result contains a useful sufficient condition for the decay of the Weyl function along the negative half-line. 
Theorem~\ref{wowsuperthm} is a special case of \cite[Theorem 6.1]{BLLR18}, where in a more general setting the decay of the Weyl functions 
in different sectors of $\dC$ is discussed.

\begin{thm}\label{wowsuperthm}
Let $\{\cG,\Gamma_0,\Gamma_1\}$ be a quasi boundary triple for $T\subset S^*$
with corresponding Weyl function $M$,
assume that $\ran\Gamma_0=\cG$, that $A_0$ is bounded from below and that
\begin{equation*}
  \Gamma_1|A_0 -\mu|^{-\beta} : \cH \supset \dom(\Gamma_1|A_0
  -\mu|^{-\beta}) \to \cG
\end{equation*}
is bounded for some $\mu\in\rho(A_0)$ and some $\beta\in\bigl(0,\frac{1}{2}\bigr]$.
Then  for all $w_0<\min\sigma(A_0)$
there exists $D > 0$ such that
\begin{equation*}
   \|M(\lambda)\|
   \leq \frac{D}{\bigl(\text{\rm dist}(\lambda,\sigma(A_0))\bigr)^{1-2\beta}}
\end{equation*}
holds for all $\lambda<w_0$.\end{thm}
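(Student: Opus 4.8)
The plan is to derive an explicit factorisation of $M(\lambda)$ in which all the $\lambda$-dependence sits inside a single resolvent of $A_0$, and then to read off the decay from the spectral theorem. Fix the real point $\lambda_0:=w_0\in\rho(A_0)\cap\dR$; since $w_0<\min\sigma(A_0)$ the operator $A_0-\lambda_0$ is positive, so $|A_0-\lambda_0|=A_0-\lambda_0$. First I would observe that the hypothesis is independent of the base point of the fractional power: for $\mu,\mu'\in\rho(A_0)$ the operator $|A_0-\mu'|^{\beta}|A_0-\mu|^{-\beta}$ is, by the spectral theorem, bounded with bounded inverse (its symbol $t\mapsto(|t-\mu'|/|t-\mu|)^{\beta}$ is continuous on the bounded-below set $\sigma(A_0)$ and tends to $1$ at infinity), so boundedness of $\Gamma_1|A_0-\mu|^{-\beta}$ is equivalent to boundedness of $\Gamma_1(A_0-\lambda_0)^{-\beta}$. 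Let $C\in\sB(\cH,\cG)$ denote the bounded closure of $\Gamma_1(A_0-\lambda_0)^{-\beta}$.

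Next I would translate this into a mapping property of the $\gamma$-field. From $\gamma(\lambda_0)^*=\Gamma_1(A_0-\lambda_0)^{-1}$ (see \eqref{starhaha}) we get $\gamma(\lambda_0)^*(A_0-\lambda_0)^{1-\beta}=\Gamma_1(A_0-\lambda_0)^{-\beta}\subseteq C$ on the dense subspace $\dom(A_0-\lambda_0)^{1-\beta}$; testing against this subspace and using self-adjointness of $(A_0-\lambda_0)^{1-\beta}$ yields $\ran\gamma(\lambda_0)\subset\dom(A_0-\lambda_0)^{1-\beta}$ together with $(A_0-\lambda_0)^{1-\beta}\gamma(\lambda_0)=C^{*}$, i.e.\ $\gamma(\lambda_0)=(A_0-\lambda_0)^{-(1-\beta)}C^{*}$. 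Inserting the standard $\gamma$-field identity $\gamma(\lambda)=\bigl(I+(\lambda-\lambda_0)(A_0-\lambda)^{-1}\bigr)\gamma(\lambda_0)$, valid for $\lambda\in\rho(A_0)$ (cf.\ \cite[Proposition 2.6]{BL07}), and simplifying the commuting functions of $A_0$ via the spectral calculus gives $\gamma(\lambda)=(A_0-\lambda_0)^{\beta}(A_0-\lambda)^{-1}C^{*}$. Because $(A_0-\lambda)^{-1}C^{*}$ maps into $\dom A_0\subset\dom(A_0-\lambda_0)^{2\beta}$ — this is the point where $\beta\le\tfrac12$ is used — the vector $\gamma(\lambda)f$ lies in $\dom(A_0-\lambda_0)^{\beta}$, and it also lies in $\ker(T-\lambda)\subset\dom T=\dom\Gamma_1$; on $\dom T\cap\dom(A_0-\lambda_0)^{\beta}$ the defining relation of $C$ gives $\Gamma_1=C(A_0-\lambda_0)^{\beta}$. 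Applying $\Gamma_1$ to $\gamma(\lambda)$ therefore produces
\[
  M(\lambda)=\Gamma_1\gamma(\lambda)=C\,(A_0-\lambda_0)^{2\beta}(A_0-\lambda)^{-1}\,C^{*},\qquad \lambda<w_0 .
\]

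The estimate then follows in one line from the spectral theorem. For $t\in\sigma(A_0)$ and $\lambda\le\lambda_0=w_0<\min\sigma(A_0)$ one has $0<t-\lambda_0\le t-\lambda$ and $2\beta-1\le0$, hence
\[
  \bigl\|(A_0-\lambda_0)^{2\beta}(A_0-\lambda)^{-1}\bigr\|
  =\sup_{t\in\sigma(A_0)}\frac{(t-\lambda_0)^{2\beta}}{t-\lambda}
  \le\sup_{t\in\sigma(A_0)}(t-\lambda)^{2\beta-1}
  \le\bigl(\text{\rm dist}(\lambda,\sigma(A_0))\bigr)^{2\beta-1},
\]
using $t-\lambda\ge\min\sigma(A_0)-\lambda=\text{\rm dist}(\lambda,\sigma(A_0))$. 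Combined with the factorisation this gives $\|M(\lambda)\|\le\|C\|^{2}\,\text{\rm dist}(\lambda,\sigma(A_0))^{-(1-2\beta)}$ for all $\lambda<w_0$, so $D:=\|C\|^{2}$ works. I expect the only delicate part to be the operator-domain bookkeeping in the middle step: the identification of $\Gamma_1(A_0-\lambda_0)^{-1}$ with a restriction of $C$ on the right dense subspaces, the inclusion $\ran\gamma(\lambda)\subset\dom(A_0-\lambda_0)^{\beta}$, and the ensuing identity $\Gamma_1\gamma(\lambda)=C(A_0-\lambda_0)^{\beta}\gamma(\lambda)$; the assumption $\beta\in(0,\tfrac12]$ enters precisely there (through $2\beta\le1$) and once more in the final spectral bound (through $2\beta-1\le0$).
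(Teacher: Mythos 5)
The paper does not give its own proof of this theorem; it simply cites it as a special case of \cite[Theorem 6.1]{BLLR18}, so there is no internal argument to compare against. Your proof is correct and self-contained, and the strategy — factoring $\gamma(\lambda)$ and hence $M(\lambda)$ through fractional powers of $A_0-\lambda_0$ via the identity $\gamma(\lambda_0)^*=\Gamma_1(A_0-\lambda_0)^{-1}$ and the standard $\gamma$-field translation formula, then reading off the bound $\|(A_0-\lambda_0)^{2\beta}(A_0-\lambda)^{-1}\|\le\mathrm{dist}(\lambda,\sigma(A_0))^{2\beta-1}$ from the spectral theorem — is exactly the factorization method that underlies \cite[Theorem 6.1]{BLLR18}, streamlined here because you only treat $\lambda$ on the negative real ray rather than in sectors of $\mathbb{C}$. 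The domain bookkeeping you flag as the delicate part is handled correctly: the boundedness hypothesis is indeed independent of the base point $\mu$ because $|A_0-\mu'|^\beta|A_0-\mu|^{-\beta}$ is boundedly invertible; $\dom(A_0-\lambda_0)^{1-\beta}$ is a dense subset of $\dom(\Gamma_1(A_0-\lambda_0)^{-\beta})$ on which the adjoint argument gives $\gamma(\lambda_0)=(A_0-\lambda_0)^{-(1-\beta)}C^*$; the inclusion $\dom A_0\subset\dom(A_0-\lambda_0)^{2\beta}$ and the step $\Gamma_1=C(A_0-\lambda_0)^\beta$ on $\dom T\cap\dom(A_0-\lambda_0)^\beta$ both use $\beta\le\tfrac12$ in the right places. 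No gaps.
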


\section{Proof of Theorem~\ref{theorem_approximation}} 
\label{appendix_approximation}

In order to prove Theorem~\ref{theorem_approximation}, we show that the quadratic forms corresponding to 
$\sfH_\eps$ and $\Op$ are close to each other in a suitable sense. 
We fix a sufficiently small $\beta > 0$ such that the map in~\eqref{tubular_coordinates}
is bijective.
Let $\fra_\aa$ be the quadratic form associated
to $\Op$ introduced in~\eqref{eq:form222} and define for $\eps \in (0, \beta)$
\begin{equation} \label{def_form_approximation}
	\frh_\eps[f, g] 
	:= 
	\big( \nbA f, \nbA g \big)_{L^2(\dR^2; \dC^2)}
	+ 
	( V_\eps f, g \big)_{L^2(\dR^2)},\qquad 
	\dom \frh_\eps := \cH^1_\bA(\dR^2).
\end{equation}
It is not difficult to see that $\frh_\eps$ is a densely defined, closed, symmetric, and 
semi-bounded form which is associated to $\sfH_\eps$.
In the first lemma we show that the forms $\frh_\eps$ are uniformly bounded from below.
\begin{lem} \label{lemma_uniformly_bounded}
	Let  $\eps \in (0, \bb)$ and consider the form $\frh_\eps$ in~\eqref{def_form_approximation}.
	Then there exists a constant $\lm_1 \in \dR$ such that $\frh_\eps \geq \lm_1$
	for all $\eps \in (0, \bb)$. In particular, $(-\infty, \lm_1) \subset \rho(\sfH_\eps)$
	for all $\eps \in (0,\bb)$.
\end{lem}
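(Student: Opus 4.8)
The plan is to control the potential term $(V_\varepsilon f, f)_{L^2(\dR^2)}$ uniformly in $\varepsilon$ by the magnetic gradient term plus a multiple of the $L^2$-norm, with the gradient coefficient strictly less than one, and then invoke the previous lemmas to absorb it. Concretely, I would estimate
\[
  \bigl| (V_\varepsilon f, f)_{L^2(\dR^2)} \bigr| \le \int_{\Sigma_\varepsilon} |V_\varepsilon(x)|\, |f(x)|^2 \dd x
\]
by changing to the tubular coordinates $(x_\Sigma, t) \mapsto x_\Sigma + t\nu(x_\Sigma)$ from \eqref{tubular_coordinates}, under which $\Sigma_\varepsilon$ corresponds to $\Sigma \times (-\varepsilon,\varepsilon)$ with a Jacobian bounded above and below uniformly for $\varepsilon \in (0,\beta)$ (this is standard for $C^{1,1}$-curves, cf.~\cite{Fu89}). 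After rescaling $s = \frac{\beta}{\varepsilon} t$ the factor $\frac{\beta}{\varepsilon}$ in the definition \eqref{def_V_eps} of $V_\varepsilon$ cancels the Jacobian of the dilation, so one is left with
\[
  \int_{\Sigma_\varepsilon} |V_\varepsilon|\,|f|^2 \dd x \le C \int_{-\beta}^{\beta}\!\!\int_\Sigma |V(x_\Sigma + s\nu(x_\Sigma))|\, |f(x_\Sigma + \tfrac{\varepsilon}{\beta} s\nu(x_\Sigma))|^2 \dd \sigma(x_\Sigma)\dd s.
\]

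The key step is then to bound the right-hand side by the trace-type quantity on parallel curves. For fixed $s$, the inner integral is essentially $\|V\|_{L^\infty} \|\, |f|\, \|_{L^2(\Sigma_{\varepsilon s/\beta})}^2$, where $\Sigma_r$ denotes the curve $\Sigma$ shifted by $r$ in the normal direction, and the family of trace inequalities of Corollary~\ref{corollary_Sobolev_spaces} applies uniformly to these parallel curves (for $\varepsilon$ small the curves $\Sigma_r$, $|r| \le \varepsilon$, are uniformly $C^{1,1}$). More directly, I would use the one-dimensional Sobolev embedding in the normal variable: writing $|f(x_\Sigma + t\nu)|^2$ as $|f(x_\Sigma)|^2$ plus a term controlled by $\int_{-\beta}^\beta |\partial_t |f|\,|\,|f|\dd t$, together with the diamagnetic inequality \eqref{diamagnetic_inequality_gradient} $|\nabla|f|| \le |\nbA f|$, one obtains for any $\delta > 0$
\[
  \bigl| (V_\varepsilon f, f)_{L^2(\dR^2)} \bigr| \le \delta\, \|\nbA f\|_{L^2(\dR^2;\dC^2)}^2 + C_\delta \|f\|_{L^2(\dR^2)}^2
\]
with $C_\delta$ independent of $\varepsilon \in (0,\beta)$. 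Choosing $\delta = \tfrac12$ gives
\[
  \frh_\varepsilon[f,f] \ge \tfrac12 \|\nbA f\|_{L^2(\dR^2;\dC^2)}^2 - C_{1/2}\|f\|_{L^2(\dR^2)}^2 \ge -C_{1/2}\|f\|_{L^2(\dR^2)}^2,
\]
so $\lambda_1 := -C_{1/2}$ works for all $\varepsilon \in (0,\beta)$. The conclusion $(-\infty,\lambda_1) \subset \rho(\sfH_\varepsilon)$ is then immediate from the first representation theorem, since $\frh_\varepsilon$ is the form of the self-adjoint operator $\sfH_\varepsilon$ and is bounded below by $\lambda_1$.

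The main obstacle is the uniformity in $\varepsilon$ of the constant $C_\delta$: one must check that the bounds on the Jacobian of the tubular coordinate map and on the trace/normal-embedding constants for the parallel curves $\Sigma_r$ do not degenerate as $\varepsilon \to 0$. This is where the $C^{1,1}$-regularity of $\Sigma$ (equivalently, a uniform bound on its curvature) is essential — it guarantees that the map \eqref{tubular_coordinates} and its inverse, together with the associated surface measures on $\Sigma_r$, have $\varepsilon$-independent bilipschitz constants for all $r$ with $|r|$ below the fixed threshold $\beta$. Everything else — the change of variables, the rescaling that absorbs the $\beta/\varepsilon$ factor, and the one-dimensional Sobolev/diamagnetic estimate — is a routine computation once this uniformity is in place.
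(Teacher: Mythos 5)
Your proof is correct and uses the same underlying ideas as the paper: tubular coordinates, the scaling cancellation in $V_\eps$, a one-dimensional Sobolev estimate in the normal direction, the trace estimate of Corollary~\ref{corollary_Sobolev_spaces}, and the diamagnetic inequality to pass from $\nabla|f|$ to $\nbA f$. The route is organized a bit differently, though. The paper's proof is a two-line reduction: it cites \cite[Proposition~3.1]{BEHL17} for the $\varepsilon$-uniform lower bound $\|\nabla|f|\|^2 + (V_\eps f,f) \ge \lambda_1\|f\|^2$ of the \emph{non}-magnetic form and then simply replaces $\|\nabla|f|\|^2$ by the larger $\|\nbA f\|^2$ using \eqref{diamagnetic_inequality_gradient}. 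You instead re-derive the relative form bound $|(V_\eps f,f)| \le \delta\|\nbA f\|^2 + C_\delta\|f\|^2$ uniformly in $\eps$ by a direct computation in the magnetic setting — which is essentially the content of the paper's own Lemma~\ref{lemma_V_eps_relatively_bounded}, proved just after this one — and then conclude semiboundedness from that. Both are valid; yours is self-contained while the paper outsources the tubular/Sobolev bookkeeping to the cited reference. Your discussion of the uniformity issue (Jacobian and trace constants not degenerating as $\eps\to 0^+$ because $\beta$ is fixed and $\Sigma$ is $C^{1,1}$) correctly identifies the only point where one must be careful.
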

\begin{proof}
	It follows from~\cite[Proposition~3.1]{BEHL17}\footnote{\label{note1}Note that this result is formulated in \cite{BEHL17} only for $C^2$-hypersurfaces but remains valid in the slightly less regular situation considered here. In fact, the key ingredient in the proof of \cite[Proposition~3.1]{BEHL17} that needs to be ensured for a regular, closed $C^{1,1}$-curve in $\dR^2$ is \cite[Hypothesis~2.3~(c)]{BEHL17}, which follows from \cite[Theorem~5.1 and Theorem~5.7]{DZ94}.} 
	that there exists $\lm_1 \in \dR$ such that
	\begin{equation*}
		\big( \nb |f|, \nb |f| \big)_{L^2(\dR^2; \dC^2)} + 
		( V_\eps f, f \big)_{L^2(\dR^2)}
		\geq 
		\lm_1 \| f \|^2_{L^2(\dR^2)}
	\end{equation*}
	holds for all $f \in C_0^\infty(\dR^2)$. 
	Combining this with the diamagnetic 
	inequality~\eqref{diamagnetic_inequality_gradient} one concludes that 
	$\frh_\eps[f] \geq \lm_1 \| f \|^2_{L^2(\dR^2)}$
	for all $f \in C_0^\infty(\dR^2)$. Now the result follows from the fact that $C_0^\infty(\dR^2)$
	is dense in $\cH^1_\bA(\dR^2)$.
\end{proof}

Next, we verify that the form $\fra_0$ corresponding to Landau Hamiltonian $\Opf$
is relatively bounded with respect to the form $\frh_\eps$
with constants which are independent of $\eps$.

\begin{lem} \label{lemma_V_eps_relatively_bounded}
	Let $V \in L^\infty(\dR^2)$ be real and supported in $\Sg_\bb$, let $\eps \in (0, \bb)$,
	define the function $V_\eps$ as in~\eqref{def_V_eps}, and let the quadratic form 
	$\mathfrak{h}_\varepsilon$ be as in~\eqref{def_form_approximation}. Then there exist constants $c_1, c_2 > 0$ independent of $\varepsilon$ such that 
	\begin{equation} \label{equation_bound2}
		\| \nbA f \|_{L^2(\dR^2;\dC^2)}^2
		\leq 
		c_1 \mathfrak{h}_\varepsilon[f]
		+ 
		c_2 \| f \|_{L^2(\dR^2)}^2
	\end{equation}
	holds for all $f \in \mathcal{H}^1_\bA(\mathbb{R}^2)$.
\end{lem}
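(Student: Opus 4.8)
**Proof plan for Lemma B.3 (the relative bound $\|\nbA f\|^2 \le c_1\frh_\eps[f] + c_2\|f\|^2$ with $\eps$-independent constants).**

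The plan is to absorb the potentially large negative contribution of $V_\eps$ using the trace estimate of Corollary~\ref{corollary_Sobolev_spaces} together with a change of variables into the tubular coordinates near $\Sigma$. The key point is that although $\|V_\eps\|_{L^\infty}$ blows up like $\beta/\eps$, the potential is supported on the thin tube $\Sigma_\eps$ of width $2\eps$, so its form contribution is controlled by an $L^2$-norm over $\Sigma_\eps$, which in turn is controlled by a trace-type inequality with constants that do \emph{not} degenerate as $\eps\to 0$.

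First I would estimate $|(V_\eps f,f)_{L^2(\dR^2)}|$. Using the bijective tubular change of variables $(x_\Sigma,t)\mapsto x_\Sigma + t\nu(x_\Sigma)$ on $\Sigma_\eps$ and the scaling in~\eqref{def_V_eps}, one rewrites this integral as an integral over $\Sigma\times(-\beta,\beta)$ of $|V(x_\Sigma+s\nu(x_\Sigma))|\,|f(x_\Sigma+\tfrac{\eps}{\beta}s\,\nu(x_\Sigma))|^2$ against the (bounded, $\eps$-uniformly comparable to arc length times Lebesgue) Jacobian. Since $V\in L^\infty$, this is bounded by a constant times $\int_\Sigma \int_{-\eps}^{\eps} |f(x_\Sigma + r\nu(x_\Sigma))|^2\, dr\, d\sigma(x_\Sigma)$, i.e.\ by $C\|f\|_{L^2(\Sigma_\eps)}^2$ with $C$ independent of $\eps$. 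The next step is the standard one-dimensional trace/Poincaré inequality on each normal fibre: for $f\in H^1$ along the fibre, $\int_{-\eps}^\eps |f(x_\Sigma+r\nu)|^2 dr \le \eps |f(x_\Sigma)|^2 + \eps \int_{-\eps}^\eps |\partial_r f(x_\Sigma+r\nu)|^2 dr$ (or a variant with both endpoint values). Integrating over $\Sigma$ and using $|\partial_r f|\le|\nabla f|$ together with the diamagnetic inequality~\eqref{diamagnetic_inequality_gradient} (applied to $|f|$), one gets
\[
  \|f\|_{L^2(\Sigma_\eps)}^2 \le \eps\,\|f|_\Sigma\|_{L^2(\Sigma)}^2 + \eps\,\|\nbA f\|_{L^2(\Sigma_\eps;\dC^2)}^2.
\]
Now apply Corollary~\ref{corollary_Sobolev_spaces} to bound $\|f|_\Sigma\|_{L^2(\Sigma)}^2$ by $\delta\|\nbA f\|^2 + c(\delta)\|f\|^2$. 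Choosing $\eps<\beta$ small and $\delta$ appropriately, and combining with the $C$ above, yields
\[
  |(V_\eps f,f)_{L^2(\dR^2)}| \le \tfrac12 \|\nbA f\|_{L^2(\dR^2;\dC^2)}^2 + C'\|f\|_{L^2(\dR^2)}^2
\]
with $C'$ independent of $\eps$.

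Finally, from $\frh_\eps[f] = \|\nbA f\|^2 + (V_\eps f,f)$ one obtains $\|\nbA f\|^2 \le \frh_\eps[f] + |(V_\eps f,f)| \le \frh_\eps[f] + \tfrac12\|\nbA f\|^2 + C'\|f\|^2$, hence $\tfrac12\|\nbA f\|^2 \le \frh_\eps[f] + C'\|f\|^2$, which is~\eqref{equation_bound2} with $c_1=2$, $c_2=2C'$; density of $C_0^\infty(\dR^2)$ in $\cH^1_\bA(\dR^2)$ extends the estimate to all of $\cH^1_\bA(\dR^2)$. The main obstacle is the bookkeeping for the $\eps$-uniformity: one must be careful that the Jacobian of the tubular map stays bounded above and below uniformly for $\eps\in(0,\beta)$ (this is exactly why $\beta$ is fixed small so that~\eqref{tubular_coordinates} is bijective), and that the factor $\eps$ gained from integrating over the thin fibre precisely compensates the $\beta/\eps$ in the definition of $V_\eps$; once this is set up, the trace inequality from Corollary~\ref{corollary_Sobolev_spaces} and the diamagnetic inequality do the rest.
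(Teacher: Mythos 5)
Your approach (tubular coordinates and a one-dimensional fibre trace inequality) is genuinely different from the paper's, which reduces the magnetic estimate to a resolvent bound for $-\Delta$ via the diamagnetic inequality \eqref{diamagnetic_inequality_resolvent} and an auxiliary estimate from~\cite{BEHL17}. However, your version contains a bookkeeping error that masks a real obstruction. After the change of variables $s\mapsto r=\tfrac{\eps}{\beta}s$ the Jacobian reintroduces a factor $\tfrac{\beta}{\eps}$, so the correct statement is $|(V_\eps f,f)|\le C\tfrac{\beta}{\eps}\|f\|_{L^2(\Sigma_\eps)}^2$, not $C\|f\|_{L^2(\Sigma_\eps)}^2$ with $C$ independent of $\eps$. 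Taken literally, your chain of inequalities would force $(V_\eps f,f)\to 0$ as $\eps\to 0$, contradicting the known limit $\int_\Sigma \alpha|f|_\Sigma|^2\,\dd\sigma$.

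Once the $\tfrac{\beta}{\eps}$ factor is restored, the fibre trace estimate (with the correct coefficient $\eps^2$ on the gradient term) gives $|(V_\eps f,f)|\le C_1\|f|_\Sigma\|_{L^2(\Sigma)}^2 + C_2\,\eps\,\|\nbA f\|_{L^2(\dR^2;\dC^2)}^2$ with $C_1,C_2\sim\|V\|_\infty\beta$. Over the range $\eps\in(0,\beta)$ the second coefficient is only bounded by $C_2\beta=O(\|V\|_\infty\beta^2)$, which is a fixed number that need not be smaller than one; thus the relative form bound you obtain cannot in general be made $<1$ uniformly in $\eps$, which is exactly what \eqref{equation_bound2} requires. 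The paper avoids this because the resolvent argument produces a relative bound $\delta$ that can be made arbitrarily small by pushing $\lm_0\to-\infty$, independently of $\eps$, $\beta$, and $\|V\|_\infty$. Your route can be repaired by splitting into a range $\eps\ge\eps_0$ (where $\|V_\eps\|_\infty\le\tfrac{\beta}{\eps_0}\|V\|_\infty$ is uniformly bounded and the form bound is trivial) and $\eps<\eps_0$ with $\eps_0$ chosen so that $C_2\eps_0<\tfrac12$, but as written the argument does not close.
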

\begin{proof}
	Fix $\delta > 0$ and let $v_\varepsilon := \sqrt{|V_\varepsilon|}$.
	Using the diamagnetic inequality~\eqref{diamagnetic_inequality_resolvent} and a similar estimate as in \cite[Proposition~3.1~(ii)]{BEHL17}\footnoteref{note1} we deduce that there is a $\lm_0 < 0$ depending on $\delta$
	such that for all $\lm \leq \lm_0$ and all $g \in L^2(\dR^2)$
	\begin{equation*}
	  \begin{split}
		\| (\Opf - \lambda)^{-1/2} v_\varepsilon  g \|_{L^2(\dR^2)}^2
		&= \big( v_\varepsilon (\Opf - \lambda)^{-1} v_\varepsilon g, g \big)_{L^2(\mathbb{R}^2)} \\
		&\leq \big\| v_\varepsilon (\Opf - \lambda)^{-1} v_\varepsilon g \big\|_{L^2(\mathbb{R}^2)} \cdot \|g\|_{L^2(\mathbb{R}^2)}\\
		&\leq 
		\big\| v_\varepsilon (-\Delta - \lambda)^{-1} v_\varepsilon |g| \big\|_{L^2(\mathbb{R}^2)} \cdot \|g\|_{L^2(\mathbb{R}^2)}
		\leq 
		\dl \| g \|^2_{L^2(\dR^2)}
      \end{split}
	\end{equation*}
	is true. By taking adjoint we get that also $\| v_\varepsilon (\Opf - \lambda)^{-1/2}  g \|_{L^2(\dR^2)}^2 \leq \delta \|g\|_{L^2(\mathbb{R}^2)}^2$ 
	for all $g \in L^2(\mathbb{R}^2)$. This implies for $f \in \mathcal{H}^2_\bA(\mathbb{R}^2)$
	\begin{equation} \label{equation_relative_bounded}
	\begin{split}
		\big|(V_\varepsilon f, f)_{L^2(\mathbb{R}^2)}\big| &\leq 
		\| v_\eps(\Opf - \lm)^{-1/2}(\Opf - \lm)^{1/2} f\|_{L^2(\dR^2)}^2 \\
		&\leq
		\dl\|(\Opf - \lm)^{1/2} f\|_{L^2(\dR^2)}^2 
		= \delta \big( (\Opf - \lambda) f, f \big)_{L^2(\mathbb{R}^2)} \\
		&=
		\dl \|\nbA f\|_{L^2(\dR^2;\dC^2)}^2 -
		\dl \lm\|f\|^2_{L^2(\dR^2)}
	\end{split}
	\end{equation}
	and since $\mathcal{H}^2_\bA(\mathbb{R}^2)$ is dense in $\mathcal{H}^1_\bA(\mathbb{R}^2)$ this estimate extends to $f \in \mathcal{H}^1_\bA(\mathbb{R}^2)$.
	Eventually, 
	from~\eqref{equation_relative_bounded} we conclude
	\begin{equation*}
	\begin{split}
		\| \nbA f \|_{L^2(\dR^2;\dC^2)}^2 
		&=
		\mathfrak{h}_\varepsilon[f] -
		(V_\eps f, f )_{L^2(\dR^2)} \\
		&\leq 
		\mathfrak{h}_\varepsilon[f] +
		\delta \| \nbA f \|_{L^2(\dR^2;\dC^2)}^2  - \dl \lambda \| f \|_{L^2(\mathbb{R}^2)}^2.
	\end{split}
	\end{equation*}
	Choosing $\delta \in (0, 1)$ this implies the claim \eqref{equation_bound2}.
\end{proof}

Let us denote by $\kp = \dot \gg_2 \ddot \gg_1 - \dot \gg_1 \ddot \gg_2$ the signed curvature of $\Sg$, 
where $\gg = (\gg_1, \gg_2) \colon I \arr \dR^2$ is any natural parametrization of $\Sg$ ($|\dot\gg| = 1$).
In the following we will often make use of the transformation to tubular coordinates, 
which yields for $h \in L^1(\Sg_\eps)$ (see e.g. \cite[Proposition~2.6]{BEHL17} or \cite{EI01})
\begin{equation} \label{transformation_tubular_coords}
	\int_{\Sg_\eps} h(x) \dd x 
	= 
	\int_\Sg \int_{-\eps}^\eps 
	h\big(x_\Sg + t \nu(x_\Sg)\big) 
	\big(1 - t \kp(x_\Sg) \big)\dd t \dd \s(x_\Sg).
\end{equation}

In the next lemma we show a variant of the trace theorem which will be very useful for the proof of Theorem~\ref{theorem_approximation}.
For the sake of brevity, we use the following notation
\[
	\map(x_\Sg,s) := x_\Sg + s\nu(x_\Sg)\and
	\cJ(x_\Sg, s) := 1 - s\kp(x_\Sg).
\]
\begin{lem} \label{lemma_trace_theorem}
	Let $\Sg$ be the boundary of the simply connected $C^{1,1}$-domain $\Omega_{\rm i}$ and let $\bb > 0$ be such that the mapping in~\eqref{tubular_coordinates}
	is bijective. 
	Then there exists a constant $C > 0$
	independent of $s\in (-\bb, \bb)$ such that 
	\begin{equation*}
		\int_\Sg 
		\big|f(\map(x_\Sg,s))\big|^2 \dd \s(x_\Sg)
		\leq 
		C \|  f \|_{\mathcal{H}^1_\bA(\dR^2)}^2
	\end{equation*}
	holds for all $f \in \cH_\bA^1(\dR^2)$.
\end{lem}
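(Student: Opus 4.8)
The plan is to prove the lemma by reducing the bound on the slice $\{x_\Sg + s\nu(x_\Sg) : x_\Sg\in\Sg\}$ to the ordinary trace inequality on $\Sg$ (Corollary~\ref{corollary_Sobolev_spaces}) applied to a translated function. First I would fix $s\in(-\bb,\bb)$ and define the shifted function $f_s$ by $f_s(x_\Sg + t\nu(x_\Sg)) := f(x_\Sg + (t+s)\nu(x_\Sg))$ for $x_\Sg\in\Sg$ and $t$ in a neighbourhood of $0$, extended suitably away from the tubular neighbourhood so that $f_s\in\cH^1_\bA(\dR^2)$; then $f_s|_\Sg(x_\Sg) = f(\map(x_\Sg,s))$, so
\[
  \int_\Sg |f(\map(x_\Sg,s))|^2\,\dd\s(x_\Sg) = \|f_s|_\Sg\|_{L^2(\Sg)}^2 \le c(1)\,\|f_s\|_{\cH^1_\bA(\dR^2)}^2,
\]
by Corollary~\ref{corollary_Sobolev_spaces} (with a fixed value of the parameter $\eps$, say $\eps=1$). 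It then remains to bound $\|f_s\|_{\cH^1_\bA(\dR^2)}$ by $C\|f\|_{\cH^1_\bA(\dR^2)}$ with $C$ independent of $s$.

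The $L^2(\dR^2)$-norm of $f_s$ is controlled via the change of variables~\eqref{transformation_tubular_coords}: inside the tubular neighbourhood the substitution $t\mapsto t+s$ shifts the domain of integration and turns the Jacobian factor $\cJ(x_\Sg,t)=1-t\kp(x_\Sg)$ into $1-(t+s)\kp(x_\Sg)$, and since $\kp\in L^\infty(\Sg)$ (as $\Sg$ is $C^{1,1}$) and $|s|<\bb$ with $\bb$ small, the ratio of the two Jacobians is bounded above and below by constants independent of $s$. For the magnetic gradient term I would use that $\nbA = \ii\nb + \bA$ and that $\bA$ is a fixed continuous (indeed linear) function on the compact tubular neighbourhood; hence $\|\nbA f_s\|_{L^2}^2 \lesssim \|\nb f_s\|_{L^2}^2 + \|f_s\|_{L^2}^2$, and $\nb f_s$ is, up to the $s$-uniformly bounded geometry of the tubular coordinate map (the metric coefficients and their inverses being controlled by $\cJ$ and hence uniformly bounded for $|s|<\bb$), a shift of $\nb f$ composed with bounded transition factors. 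Comparing back via~\eqref{transformation_tubular_coords} one obtains $\|\nb f_s\|_{L^2}^2 \le C'(\|\nb f\|_{L^2}^2 + \|f\|_{L^2}^2)$ with $C'$ independent of $s$, and then $\|\nbA f\|_{L^2}$ is reinserted using that $\bA$ is bounded on the (compact) region where the shift is nontrivial.

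Carrying this out cleanly, it is convenient to first reduce to $f\in C_0^\infty(\dR^2)$ (dense in $\cH^1_\bA(\dR^2)$) and to localise: pick a cut-off $\omega\in C_0^\infty(\dR^2)$ that is $\equiv 1$ on $\Sg_{\bb/2}$ and supported in $\Sg_\bb$, write $f = \omega f + (1-\omega)f$, and note that only $\omega f$ contributes to the integral over the slice when $|s|<\bb/2$; for $\omega f$ all the shift/Jacobian estimates take place on the fixed compact set $\Sg_\bb$ where $\kp$, $\bb$, $\bA$, and the tubular-coordinate metric are all under uniform control, and $\|\omega f\|_{\cH^1_\bA}\le C''\|f\|_{\cH^1_\bA}$ by Leibniz. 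The main obstacle I anticipate is purely bookkeeping: tracking that every geometric factor (the Jacobian $\cJ$, the metric of the tubular coordinate chart, its inverse, and the derivatives of the coordinate change, which for a $C^{1,1}$-curve involve $\kp\in L^\infty$) is bounded uniformly in $s\in(-\bb,\bb)$ for $\bb$ small enough — there is no analytic difficulty, only the need to be careful that the low regularity $C^{1,1}$ still suffices (it does, exactly because only $\kp\in L^\infty$, not its derivative, enters). Once the $s$-uniform equivalence of the weighted norms is established, the lemma follows by combining it with Corollary~\ref{corollary_Sobolev_spaces} and taking the supremum over $s$ of the resulting constants.
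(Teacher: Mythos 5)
Your proposal takes a genuinely different route from the paper. The paper does not construct a shifted function at all; instead it compares $|f(\map(x_\Sg,s))|^2$ with $|f(x_\Sg)|^2$ directly by the fundamental theorem of calculus along the normal segment $\{\map(x_\Sg,rt):r\in[0,1]\}$, bounds the resulting difference by $\int_0^\beta\big(|\nb|f||^2+|f|^2\big)(\map(x_\Sg,r))\,\dd r$, and then pulls this back to an integral over $\Sg_\beta\subset\dR^2$ via the Jacobian $\cJ$ (which is uniformly comparable to $1$ because $\kp\in L^\infty$ and $\beta$ is small). The diamagnetic inequality then replaces $\nb|f|$ by $\nbA f$, and Corollary~\ref{corollary_Sobolev_spaces} handles the $s=0$ slice. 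The whole argument takes place inside $\Sg_\beta$, where the tubular coordinates are valid, and gives a constant manifestly independent of $s\in(-\beta,\beta)$.

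Your shift-and-trace strategy runs into a gap that you hint at but do not close. To define $f_s$ near $\Sg$ by $f_s(\map(x_\Sg,t))=f(\map(x_\Sg,t+s))$ and cut off, you need $t+s$ to stay in $(-\beta,\beta)$ on the support of the cutoff $\omega$, which forces $\supp\omega$ to shrink to $\Sg$ as $|s|\to\beta$ (your own phrase ``when $|s|<\bb/2$'' shows you have taken $\omega\equiv1$ only on $\Sg_{\beta/2}$). But the trace constant in Corollary~\ref{corollary_Sobolev_spaces} applied to $f_s$ is then contaminated by $\|\nb\omega\|_\infty\|f\|_{L^2}$, and $\|\nb\omega\|_\infty\sim(\beta-|s|)^{-1}\to\infty$. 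So as written your argument only yields a uniform constant for $s$ in a compact subinterval of $(-\beta,\beta)$, not for the full range claimed in the lemma (and the full range is what the proof of Theorem~\ref{theorem_approximation} actually uses, since $\varepsilon$ there can approach $\beta$). The bookkeeping you flag as the ``main obstacle'' (bi-Lipschitz control of the tubular chart and its Jacobian, $\bA$ bounded on compacta, Leibniz for the cutoff) is indeed all fine for a $C^{1,1}$ curve, but it is incidental; the real obstruction is the shrinking cutoff. The paper's one-dimensional FTC argument sidesteps this entirely and is also substantially shorter, since it never needs to invert the tubular chart or compute the Jacobian of a shift map.
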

\begin{proof}
	Throughout the proof $c > 0$ denotes a generic
	positive constant, which varies from line to line.
	It suffices to show the claim for functions in the dense subspace $C_0^\infty(\mathbb{R}^2)$ of $\mathcal{H}_\bA^1(\mathbb{R}^2)$.
      For $f \in C_0^\infty(\mathbb{R}^2)$ 
	the main theorem of calculus, the chain rule, and $\frac{\dd }{\dd t} \map(x_\Sigma, st) = s \nu(x_\Sigma)$ yield
	\begin{equation} \label{eq:diff}
	\begin{split}
		\Big| \big|f(\map(x_\Sg,s))\big|^2 
		&- \big|f(\map(x_\Sg, 0))\big|^2 \Big|	= 
		\left|\int_0^1 \frac{\dd}{\dd t} (|f|^2)(	\map(x_\Sg,st)) \dd t \right| \\
		&\leq
		\int_0^1 \left|\left\langle \nb (|f|^2)(\map(x_\Sg,st)), 
		s \nu(x_\Sg) \right\rangle \right| \dd t \\
		&\leq
		2 |s| \int_0^1 \big||f| \cdot \nb (|f|)(\map(x_\Sg,st)) \big| \dd t \\
		&\leq 
		|s| \int_0^1 
		\left[ 
		\big|\nb (|f|) (\map(x_\Sg,st))\big|^2 + 
		\big|f(\map(x_\Sg,st))\big|^2 
		\right] \dd t \\
		&\leq 
		\int_{0}^\bb 
		\left[ \big|\nb (|f|) (\map(x_\Sg,r))\big|^2 + 
		\big|f(\map(x_\Sg,r)) \big|^2 \right] \dd r,
	\end{split}
	\end{equation}
	where the substitution $r = s t$ was employed in the last step.
	Next, by applying Corollary~\ref{corollary_Sobolev_spaces} we obtain
	\begin{equation}\label{eq:I1}
		I_1 := \int_\Sg \big|f(x_\Sg)\big|^2 \dd \s(x_\Sg) \le 
		c \big( 
		\| \nbA f \|_{L^2(\dR^2; \mathbb{C}^2)}^2 + 
		\| f \|_{L^2(\dR^2)}^2 \big).
	\end{equation}
	Using that there is some $c>0$ such that $1 \leq c\cJ(x_\Sg,r)$ 
	for all sufficiently small $r \leq \beta$, formula~\eqref{transformation_tubular_coords}, the diamagnetic inequality
	\eqref{diamagnetic_inequality_gradient},
	and~\eqref{eq:diff} we get 
	\begin{equation} \label{eq:I2}
	\begin{split}
		I_2 & := 
		\left|\int_\Sg \left(
		\big|f(\map(x_\Sg,s))\big|^2 
		- \big|f(\map(x_\Sg, 0)) \big|^2
		\right) \dd \s(x_\Sg)\right|\\
		&\leq 
		 \int_\Sg 
		\int_0^{\bb} 
		\left[
			\left|\nb (|f|) (\map(x_\Sg,r))\right|^2 
			+ 
			\left|f(\map(x_\Sg,r))\right|^2 
		\right]  \dd r \dd \s(x_\Sg) \\
		&
		\leq 
		c \int_\Sg \int_{0}^\bb 
		\left[
			\left|\nb (|f|) (\map(x_\Sg,r))\right|^2 
			+ 
			\left|f(\map(x_\Sg,r))\right|^2
		\right]\! 
		\cJ(x_\Sg,r)  \dd r \dd \s(x_\Sg)\\
		&
		\leq 
		c \big( 
		\| \nbA f \|_{L^2(\dR^2; \mathbb{C}^2)}^2 + 
		\| f \|_{L^2(\dR^2)}^2 \big).
	\end{split}
	\end{equation}
	Combining~\eqref{eq:I1} and~\eqref{eq:I2}
	we arrive at 
	\begin{equation*}
		\int_\Sg 
		\big|f(\map(x_\Sg, s))\big|^2 \dd \s(x_\Sg)
		\le
		I_1 + I_2\le	C \big( 
		\| \nbA f \|_{L^2(\dR^2; \mathbb{C}^2)}^2 + 
		\| f \|_{L^2(\dR^2)}^2 \big)
	\end{equation*}
	which is the claim of this lemma.
\end{proof}

\begin{proof}[Proof of Theorem~\ref{theorem_approximation}]
	 According to Lemma~\ref{lemma_uniformly_bounded}
	 the operators $\sfH_\eps$, $\eps\in (0,\beta)$, are uniformly boun\-ded
	 from below by $\lm_1 \in\dR$. Moreover, by Proposition~\ref{proposition_delta_form}
	 the operator $\Op$ is semibounded.
	 From now on we fix $\lm_0 \in \rho(\Op)\cap(-\infty,\lm_1)$ and we use the notations
    $\sfR_\eps := (\sfH_\eps - \lm_0)^{-1}$ and  $\sfR_{\aa}' := (\Op - \lm_0)^{-1}$. Note that $\Vert \sfR_\eps\Vert \leq (\lambda_1-\lambda_0)^{-1}$ for
    $\eps\in (0,\beta)$.
    We claim that there is a constant $c > 0$ such that
    \begin{equation} \label{equation_convergence}
        \big\| \sfR_{\eps} - \sfR_{\aa}' \big\|
        \leq
        c \sqrt{\eps},\qquad \eps\in (0,\beta).
    \end{equation}
    In fact, note first that
    \begin{equation*}
    \begin{split}
        \big\| \sfR_{\eps} - \sfR_{\aa}'  \big\|
        &= \sup_{\| u \|, \| v \| = 1}
            \Big|\big( \big( \sfR_{\eps} - \sfR_{\aa}'\big) u, v \big)_{L^2(\mathbb{R}^2)}\Big| \\
        &= \sup_{\| u \|, \| v \| = 1}
            \Big| \big( \sfR_{\eps} u, (\Op - \lm_0) \sfR_{\aa}' v \big)_{L^2(\mathbb{R}^2)}
        -\big( (\sfH_{\eps} - \lm_0)\sfR_{\eps} u,
             \sfR_{\aa}' v \big)_{L^2(\mathbb{R}^2)} \Big| \\
        &= \sup_{\| u \|, \| v \| = 1}
            \Big|\fra_{\aa} \big[\sfR_{\eps} u,
            \sfR_{\aa}'  v \big]
            -
            \frh_{\eps}
            \big[\sfR_{\eps} u,  \sfR_{\aa}' v \big] \Big|.
    \end{split}
    \end{equation*}
    The estimate~\eqref{equation_convergence} follows if we prove 
    \begin{equation} \label{estimate_forms}
        \big|\fra_{\aa}[f,g] - \frh_{\eps}[f,g]\big|
       \leq
       c \sqrt{\eps}
       \big(\|f \|_{\mathcal{H}^1_\bA(\mathbb{R}^2)}^2+ \|g \|_{\mathcal{H}^1_\bA(\mathbb{R}^2)}^2\big),\quad f, g \in \cH^1_\bA(\dR^2),
    \end{equation}
    since with the choice $f = \sfR_{\eps} u$ and $g = \sfR_\aa' v$ the inequality \eqref{estimate_forms} together with
    \eqref{equation_form_bound_delta_form} and Lemma~\ref{lemma_V_eps_relatively_bounded}
    shows
    \begin{equation*}
      \begin{split}
        \Big|\fra_{\aa} \big[\sfR_{\eps} u,
            \sfR_{\aa}'  v \big]
            &-
            \frh_{\eps}
            \big[\sfR_{\eps} u,  \sfR_{\aa}' v \big] \Big| \\
            &\leq 
            c \sqrt{\eps}
       \big(\frh_\eps[\sfR_{\eps} u] + \| \sfR_\eps u \|_{L^2(\mathbb{R}^2)}^2 +  \fra_\aa[\sfR_\aa' v] + \| \sfR_\aa' v \|_{L^2(\mathbb{R}^2)}^2 \big) \\
       &=c \sqrt{\eps}
       \bigl( (\sfR_{\eps} u,u)_{L^2(\mathbb{R}^2)} + (1+\lambda_0)\| \sfR_\eps u \|_{L^2(\mathbb{R}^2)}^2 \\
       &\qquad\qquad\qquad\qquad +  (\sfR_\aa' v,v)_{L^2(\mathbb{R}^2)} + (1+\lambda_0)\| \sfR_\aa' v \|_{L^2(\mathbb{R}^2)}^2 \big) \\
       &\leq 
            c \sqrt{\eps}
       \big(\| u \|_{L^2(\mathbb{R}^2)}^2 + \| v \|_{L^2(\mathbb{R}^2)}^2 \big),
      \end{split}
    \end{equation*}
    where $\Vert \sfR_\eps\Vert \leq (\lambda_1-\lambda_0)^{-1}$ was used in the last estimate.
    Thanks to the polarization identity it suffices to prove~\eqref{estimate_forms} for $f=g$.
    Furthermore, it is sufficient to consider $f \in C_0^\infty(\dR^2)$.
    By the definition of the forms $\fra_\aa$ and $\frh_\eps$, using $\supp V_\eps \subset \Sg_\eps$, and~\eqref{transformation_tubular_coords} we find
    \begin{equation*}
    \begin{split}
        \fra_\aa[f] - \frh_\eps[f]
       &= \int_\Sigma \alpha(x_\Sg) |f(x_\Sigma)|^2 \dd \sigma(x_\Sigma) - 
       \int_{\mathbb{R}^2} V_\eps(x) |f(x)|^2 \dd \sigma(x_\Sigma) \\
       &= \int_\Sigma \int_{-\beta}^\beta V( \map(x_\Sg, t)) |f(x_\Sg)|^2 \dd t \dd \sigma(x_\Sigma) \\ 
       &\qquad- \frac{\beta}{\eps} \int_\Sg \int_{-\eps}^\eps
       V\big(\map(x_\Sg, \tfrac{\bb s}{\eps})\big)
       \big| f\big(\map(x_\Sg, s)\big) \big|^2 \cJ(x_\Sg, s) \dd s \dd \s(x_\Sg),
    \end{split}
    \end{equation*}
    where in the last step the definitions of $\aa$ and $V_\eps$ from~\eqref{def_alpha_limit} and~\eqref{def_V_eps}
    were substituted. Using the transformation $t = \frac{\bb}{\eps} s$ in the last integral on the right hand side we find
    \begin{equation} \label{difference_form_tubular_coords}
    \begin{split}
      \fra_\aa[f] - \frh_\eps[f]
      &=
      \frac{\eps}{\bb}
       \int_\Sg \int_{-\bb}^\bb
       V\big(\map(x_\Sg, t)\big)
       \big| f\big(\map(x_\Sg, \tfrac{\eps t}{\bb})\big)\big|^2 t \kp(x_\Sg) \dd t \dd \s(x_\Sg) \\
        &\quad \!+ \!
      \int_\Sg \int_{-\bb}^\bb
      V\big(\map(x_\Sg, t)\big)
      \big[
      |f(x_\Sg)|^2
      \!- \!
      \big|f \big(\map(x_\Sg,\tfrac{\eps t}{\bb})\big)\big|^2 \big] \dd t \dd \s(x_\Sg)\\
      & := I_1+ I_2. 
    \end{split}
    \end{equation}
    Since $\kappa, V \in L^\infty(\dR^2)$ we obtain from Lemma~\ref{lemma_trace_theorem} for the first integral $I_1$ in \eqref{difference_form_tubular_coords}
    the estimate
    \begin{equation} \label{estimate_I_1}
        |I_1| \leq c \eps \|f\|^2_{\mathcal{H}^1_\bA(\mathbb{R}^2)}.
    \end{equation}
    In order to estimate the second integral $I_2$ in \eqref{difference_form_tubular_coords} we note first that by the main theorem of calculus 
    \begin{equation*} 
    \begin{split}
       \Big|\big| f\big(\map(x_\Sg,0)\big)
        \big|^2
        -
        \big|f\big(\map(x_\Sg, \tfrac{\eps t}{\bb}) \big) \big|^2 \Big| 
        &=
        \left|\int_{0}^{\eps}
        \frac{\dd}{\dd r} (|f|^2)
        \big(\map(x_\Sg, \tfrac{rt}{\bb}) \big) \dd r \right| \\
         &\leq
         \int_0^\eps
         \left|\left\langle \nb (|f|^2)
         \big(\map(x_\Sg,\tfrac{r t}{\bb}\big)\big),
        \tfrac{t}{\bb}  \nu(x_\Sigma) \right\rangle \right| \dd r  \\
        &\leq
         \frac{|t|}{\beta} \int_0^\eps \left|\nb (|f|^2)
         \big(\map(x_\Sg, \tfrac{rt}{\beta})\big)\right| \dd r  \\
        &\leq
        c \int_0^\eps
        \left|\nb (|f|) \big(\map(x_\Sg, s) \big) \right| \cdot
        \left|f \big(\map(x_\Sg, s)\big) \right| \dd s ,
    \end{split}
    \end{equation*}
    where the substitution $s = \tfrac{1}{\beta} r t$ was used in the last step. This and the Cauchy-Schwarz inequality lead to
    \begin{equation} \label{estimate_I_3}
    \begin{split}
        |I_2|^2 &\leq c
        \left(\int_\Sg \int_{-\bb}^\bb \int_0^\eps
        \left|\nb (|f|) \big(\map(x_\Sg, s) \big) \right| \cdot
        \left|f \big(\map(x_\Sg, s)\big) \right|\big) \dd s \dd t \dd \s(x_\Sg) \right)^2 \\
        &\leq c \int_\Sg \int_0^\eps
        \big|\nb (|f|) \big(\map(x_\Sg, s) \big) \big|^2 \dd s \dd \s(x_\Sg) \cdot \int_\Sg \int_0^\eps
        \big|f \big(\map(x_\Sg, s)\big) \big|^2 \dd s \dd \s(x_\Sg).
    \end{split}
    \end{equation}
    Choose a constant $c$ such that $1 \leq c \cJ(x_\Sg,s)$. Then using formula~\eqref{transformation_tubular_coords} and the diamagnetic inequality~\eqref{diamagnetic_inequality_gradient} we find that the first integral in the last equation can be estimated by
    \begin{equation*}
     \begin{split}
       c \int_\Sg \int_{-\eps}^\eps
        \big|\nb (|f|) \big(\map(x_\Sg, s) \big) \big|^2 \cJ(x_\Sg,s) \dd s \dd \s(x_\Sg)
        \leq c \int_{\Sg_\eps} |\nbA f|^2 \dd x
      &\leq c \|f\|^2_{\mathcal{H}^1_\bA(\mathbb{R}^2)}.
     \end{split}
    \end{equation*}
    Moreover, the second integral on the right hand side of~\eqref{estimate_I_3} can be estimated with Lemma~\ref{lemma_trace_theorem} by $c \eps \|f\|_{\mathcal{H}^1_\bA(\mathbb{R}^2)}^2$.
    Combining this with~\eqref{estimate_I_1} and~\eqref{difference_form_tubular_coords} we deduce~\eqref{estimate_forms} and hence~\eqref{equation_convergence}.

    Finally, we extend the result from~\eqref{equation_convergence} from $\lm_0 \in \rho(\Op)\cap(-\infty,\lm_1)$ to all $\lm \in \dC\sm \dR$.
    For this we consider
    $\sfD_\eps(\lm) := (\sfH_\eps - \lm)^{-1} - (\Op - \lm)^{-1}$. A simple computation shows
    \begin{equation*}
        \sfD_\eps(\lm)
        =
        \big[ 1 + (\lm - \lm_0)
        \big(\Op - \lm\big)^{-1} \big]\cdot \sfD_\eps(\lm_0)\cdot
        \big[ 1 + (\lm - \lm_0)
        \big(\sfH_\eps - \lm \big)^{-1} \big].
    \end{equation*}
    Hence the claimed convergence result is true for all $\lm \in \dC \sm \dR$
    and the order of convergence is $\sqrt{\eps}$.
    This finishes the proof of Theorem~\ref{theorem_approximation}.
\end{proof}

\bibliographystyle{alpha}

\newcommand{\etalchar}[1]{$^{#1}$}

\end{document}